\pgfplotsset{compat=1.18}
\DeclareMathOperator{\Var}{Var}
\newcommand{\brw}{\textsc{brw}\xspace}
\newcommand{\brws}{\textsc{brw}s\xspace}
\newtheorem{assumptions}{Assumptions}
\newcommand{\wM}{\widetilde{M}}
\renewcommand{\kx}{s}
\newcommand{\repscheme}{reproduction scheme\xspace}
\newcommand{\repschemes}{reproduction schemes\xspace}
\author{\textsc{Thomas Lehéricy}\footnote{Universität Zürich}}
\title{Maximum displacement of critical centered branching random walks under minimal assumptions}
\begin{document}

\maketitle

\begin{abstract}
We study the critical centered branching random walk, with offspring and displacement distributions having finite variance, under minimal assumptions on its structure. We show that the probability that the position of the right-most particle is larger than $r$ decays like an explicit constant times $r^{-2}$; this generalizes an earlier result by Lalley and Shao. In addition, we obtain the convergence in distribution of the progeny of the branching random walk conditioned on the position of the right-most particle being large. Our results are applied to multitype branching random walks under minimal assumptions. 
\end{abstract}

\tableofcontents

\section{Introduction}

A \emph{branching random walk} (\brw{}) models a population of individuals that evolves in discrete time and in space. Start with a single individual at generation 0. Afterwards, at every generation, each individual reproduces independently of the others and then dies, giving birth to a random number of offsprings. Each offspring inherits the position of its parent, plus some random displacement. 
The maximum displacement of a \brw, i.e. the maximum position of a particle of the \brw, is a quantity of interest in the study of populations, see e.g. \cite{sawyer1979maximum}. Its behavior is connected to the Fisher-KPP equation \cite{mckean1975application}. The \brw is related to continuous-time \emph{branching Lévy processes}, where the genealogy of the population is given by a Yule process and where each particle evolves according to a Lévy process. A central example of branching Lévy process is the branching Brownian motion. The behavior of \brws and branching Lévy processes are often very similar, although the methods to study them differ. 

A key criterion that determines the behavior of the \brw is the expected number of offsprings $m$ of a particle. 
The \brw is \emph{subcritical} if $m<1$, \emph{critical} if $m=1$, and \emph{supercritical} if $m>1$. In the subcritical and critical case, the \emph{drift}, i.e. the expected mean displacement of the offsprings of a particle, also matters; we say that the \brw is \emph{centered} if this drift is zero. 



A well-known case is when the \brw is supercritical; in this case, the population diverges towards $+\infty$ with positive probability. \cite{aidekon2013convergence} has given a precise asymptotic for the maximum displacement at generation $n$, see also \cite{bramson1978maximal} for the related case of branching Brownian motion. The behavior of the branching Brownian motion near its maximum has also been described, see e.g. \cite{aidekon2013branching}.

When the population is subcritical or critical, the \brw goes extinct almost surely. 
The \emph{maximum displacement} is the maximal position attained by an individual across all generations. Results vary depending on whether the walk is centered or not; on whether the offspring distribution has finite variance or is in the domain of attraction of a $\gamma$-stable distribution with $\gamma<2$; on whether the distribution of displacements has sufficiently high moments; and on whether the \brw is critical or subcritical. 

The quantity of interest to us is the tail of the maximum displacement, i.e. the asymptotic behavior of the quantity $\P(M\geq t)$ as $t\to\infty$ where $M$ is the maximum displacement of the \brw. Before describing the literature, let us list a few other questions that were considered. \cite{kesten1995branching} shows the convergence in distribution of the maximum displacement at generation $n$ of the critical \brw (with sufficiently high moments on its offspring and displacement distributions) conditioned to survive until $\beta n$ for some $\beta>0$. \cite{neuman2021maximal} study the maximum displacement at a given generation of a slightly supercritical \brw with bounded displacements and offspring distributions having a third moment. 
Another interesting case is presented in \cite{fu2025maximal}, which considers a critical \brw in random environment with sufficiently high moments on the offspring distribution and with Gaussian displacements; the authors reveal a significant difference in the behavior of the maximum displacement compared to the case of constant environment. 
Killed branching Lévy processes, i.e. where particles are killed when they reached the negative half-line, are also a subject of interest \cite{zheng2015limiting, ren2025asymptotic, hou2025tailextinction, hou2025asymptotic}. 

Earlier results on the tail of the maximum displacement are found in \cite{sawyer1979maximum}, which establishes the tail behavior of the maximum displacement of critical and subcritical branching Brownian motion where the offspring distribution has finite third moment. This was generalized in \cite{lalley2015maximal} to centered critical \brw with offspring distribution having finite third moment and displacement distribution having finite $4+\varepsilon$ moment. In virtually all articles dealing with \brw, restrictive assumptions on the reproduction scheme (i.e. the joint distribution of the displacements of the offsprings) are made to simplify computations. The goal of our article is to establish some results of \cite{lalley2015maximal} (and some new ones) under minimal hypotheses. 

Heavy-tailed displacements have mostly been considered in the case of Lévy branching processes. 
Offspring distributions having finite variance are found in 
\cite{lalley2016maximalstable}, which works on a critical branching Lévy process with binary branching where the displacement follows a symmetric stable process with index $\alpha$. They obtain an asymptotic of the tail on the maximum displacement in $\P(M \geq t) \sim (2/\alpha)^{1/2} t^{-\alpha/2}$, see also \cite{profeta2021extreme} when the displacement is a critical or subcritical $\alpha$-stable Lévy process with positive jumps, and \cite{profeta2024maximal} for spectrally negative Lévy processes---more on that later. 
This is generalized in \cite{hou2025tail}, where the offspring distribution is in the domain of attraction of a non-Gaussian stable distribution. More precisely, using our notations from Section \ref{sec:preliminaries}, they assume $\P(\chi(\R)\geq t)\sim \kappa t^{-\alpha}$ for some $\kappa>0$ and $\alpha\in (1,2)$. 

Subcritical branching processes exhibit a different behavior. 
\cite{neuman2017maximal} consider a subcritical \brw where the offspring distribution has finite third moment and the distribution of the displacements has an exponential moment, and shows that the maximum displacement has an exponential tail; the exponential moment on the displacements is crucial. Such a result was already established for subcritical branching Brownian motion in \cite{sawyer1979maximum}. Building upon this, 
\cite{hou2025maximal} consider subcritical \brw under the $L \log L$ condition and with displacement having sufficiently high exponential moments, and determines that the tail of the maximum displacement is exponential or subexponential, establishing a sharp transition between the two regimes. These results also hold when the \brw is killed when reaching the negative half-line. 
\cite{profeta2024maximal} considers critical and subcritical branching spectrally negative Lévy processes, where the offspring distribution has a third moment. They show that the maximum displacement has an exponential tail in the subcritical case, and investigate the tail in the critical case when the Lévy process is centered or has positive or negative drift respectively. Finally, \cite{hou2025tailsub} consider critical branching Lévy processes where the offspring distribution belongs to the domain of attraction of a $\gamma$-stable distribution with $\gamma\in (1,2]$ (more precisely $\P(\chi(\R)>t)\sim \kappa t^{-\gamma}$ for some $\kappa\in (0,\infty)$ if $\gamma<2$, or $\E[\chi(\R)^2]<\infty$ if $\gamma=2$), and subcritical branching Lévy processes where the offspring distribution satisfies the $L \log L$ condition, and where the displacement follows an $\alpha$-stable Lévy process with positive jumps. 

In the rest of the article, we work on a critical, centered \brw with offspring distribution having finite variance and displacement having---essentially---a moment of order 4. The closest works are \cite{sawyer1979maximum} for the branching Brownian motion and \cite{lalley2015maximal} for the \brw. 
While the distribution of the maximum displacement of a critical \brw depends on the \emph{reproduction scheme}, i.e. the joint distribution of the displacements of offsprings, and is not universal, the tail is: \cite{lalley2015maximal} established that the probability that the maximum displacement exceeds $r$ behaves like a constant times $r^{-2}$ as $r\to\infty$, where the constant depends on the \repscheme. In order to keep their proof simple, they restricted the \repschemes of the \brw.

This article establishes that the asympotic of the tail proven in \cite[Theorem 1]{lalley2015maximal} holds for \brw in full generality (Theorem \ref{th:main-tail} and Corollary \ref{cor:main-tail}), most importantly without any assumption on the dependencies in the \repscheme. We aim to use the weakest hypotheses possible (Assumptions \ref{assum}); see Section \ref{sec:minimality} for a discussion on their minimality. While our method follows that of \cite{lalley2015maximal}, significant work is required to establish the key estimates, for example the Feynman-Kac representation (our Corollary \ref{cor:Feynman-Kac}, \cite[Corollary 7]{lalley2015maximal}). In addition, we provide in Lemma \ref{lem:continuity-w-broad} a new estimate that supersedes \cite[Lemma 11]{lalley2015maximal}, the end of which proof contains a circular argument. 


Furthermore, we find in Theorem \ref{th:main-condition} an asymptotic on the probability that the maximum displacement \emph{equals} $r$ as $r\to\infty$, when the displacement takes integer values. 
We then prove in Theorem \ref{th:main-volume-smaller} and \ref{th:main-condition-vol} the convergence in distribution of the total progeny of the \brw, and of related quantities, when we condition the maximum displacement to be large. Our method for these two Theorems makes use of a re-interpretation of the problem as a study of the maximum displacement of an almost-critical subcritical \brw. 
As an application, we present how to generalize our results to multitype \brws (Section \ref{sec:extensions}), and use this to obtain information on the geometry of generic critical bipartite Boltzmann planar maps (Section \ref{sec:mobile}). We then comment in Section \ref{sec:snake} on the connection between the \brw in the regime we study and the Brownian snake. 

Section \ref{sec:main-proof} establishes Theorem \ref{th:main-tail}. A brief overview of our method can be found at the beginning of the Section. Section \ref{sec:proof-tail} is devoted to the proof of Theorem \ref{th:main-condition}, and Section \ref{sec:2Volume-general} to the proof of Theorems \ref{th:main-volume-smaller} and \ref{th:main-condition-vol}. Finally, we gathered a useful technical Lemma and some proofs in the Appendix.

\section{Main results}

\subsection{Preliminaries}
\label{sec:preliminaries}

\paragraph{Planar trees.}

We use Neveu's notation \cite{neveu1986arbres}. Define $\UU := \cup_{n\geq 0} \N_*^n$ the infinite Ulam--Harris tree of finite sequences of natural numbers, where $\N_*^0 = \{\emptyset\}$, $\N_* = \{1, 2, \dots\}$ and $\N = \N_* \cup \{0\}$. If $u,v \in \UU$, denote $uv$ their concatenation and $|u|$ the length of $u$. We say that $u$ is an ancestor of $v$ if and only if there exists $w\in\UU$ such that $v = uw$; if $|w|=1$ then $u$ is the parent of $v$, denoted by $p(v)$. For every $u\in \UU$, the ancestral line of $u$ is the sequence $(u_0 = \emptyset, u_1, \dots, u_{|u|} = u)$ where for every $0\leq j \leq |u|$, $u_j$ is the only ancestor of $u$ with $|u_j|=j$. A tree $T \subset \UU$ is a subset of $\UU$ such that $\emptyset \in T$, and for every $u = p(u)k$ with $k\in \N_*$, $p(u) \in T$ and $p(u) j \in T$ for every $0\leq j \leq k$. 

\paragraph{Bienaymé--Galton--Watson trees and processes.} 
Let $k_v$ be i.i.d. $\N$-valued random variables with common distribution $\mu$. The Bienaymé--Galton--Watson tree with offspring distribution $\mu$ is the tree $T$ such that $\emptyset \in T$, and for every $u\in \UU$, $u \in T$ if and only if $p(u)\in T$ and $u = p(u)j$ with $1\leq j \leq k_{p(u)}$. The number of vertices in $T$ at generation $n$, $N_n := \#\{ v \in \N_*^n, v\in T\}$ follows a Bienaymé--Galton--Watson process. If $\sum_{n\geq 0} n \mu(n) = 1$ we say that the Bienaymé--Galton--Watson tree (resp. process) is critical. Assuming $\mu(1) < 1$, the process goes extinct almost surely, i.e. $N_n \to 0$ almost surely as $n\to\infty$. 

\paragraph{Point processes.}
A (real-valued) point process is a random variable with values in the set $\M$ of all counting measures over $\R$, i.e. the set of all measures $\mu$ such that $\mu(A) \in \N \cup \{\infty\}$ for every Borel set $A$, equipped with the sigma-algebra generated by the collection of $\mu \mapsto \mu(A)$ over all Borel sets $A$. 
We let $\sup \mu$ be the supremum of the support of $\mu$, with $\sup\mu = -\infty$ if $\mu = 0$. 
A point process $\chi$ is finite if $\chi(\R)<\infty$ almost surely; in this case there exists a measurable enumeration $(X_i)_{1\leq i \leq \chi(\R)}$ of the atoms of $\chi$, i.e. a family of random variables such that $\chi = \sum_{1\leq i \leq \chi(\R)} \delta_{X_i}$ with $\delta_x$ the Dirac mass at $x$ --- for example, we can enumerate the atoms in increasing order. 

The mean measure of a point process $\chi$ is the measure $M : A \mapsto \E[\chi(A)]$ for every Borel set $A$. Following \cite{baccelli2009stochastic}, define the measure $C$ on $\R\times \M$ by 
\[ C(A \times \Gamma) = \E\left[ \int_A \ind{\chi - \delta_x \in \Gamma} \d \chi(x) \right] . \]
Since $C(\cdot , \Gamma)$ is absolutely continuous with respect to $M$ for every $\Gamma$, we have 
\[ C(A\times \Gamma) = \int_A P_x(\Gamma) \d M(x) . \]
When $M$ is locally finite, the function $(x,\Gamma) \mapsto P_x(\Gamma)$ can be made into a stochastic kernel; we call $P_x$ the reduced Palm distribution of $\chi$ at $x$. We then have the reduced Campbell--Little--Mecke formula: for every positive and measurable, resp. bounded and measurable $f$, 
\begin{equation}\label{eq:Campbell-Little-Mecke}
\E\left[\int f(x, \chi-\delta_x) \d \chi(x) \right] = \int f(x, \phi) \d P_x(\phi) \d M(x) .
\end{equation}
A point process distributed under $P_x$ has itself a mean measure, which we denote by $M_x$. For every positive and measurable, resp. bounded and measurable $f$,  
\begin{equation}\label{eq:Campbell-Little-Mecke-mean}
\E\left[\int \left(\int f(x,y) \d(\chi-\delta_x)(y)\right) \d\chi(x) \right] = \int \left( \int f(x, y) \d M_x(y) \right) \d M(x) .
\end{equation}

\paragraph{Decorated Branching Random Walk.} 
Let $\BB := (\chi, \Lambda, D, (X_i)_{1\leq i \leq \chi(\R)})$, where $\chi$ is a point process, $\Lambda\geq 0$ a random variable with $\Lambda\geq \sup\chi$ a.s., $D\geq 0$, and $(X_i)_{1\leq i \leq \chi(\R)}$ is a measurable enumeration of the atoms of $\chi$.  We define the decorated \brw $(T, (D_v)_{v\in T}, (X_v)_{v\in T}, (\Lambda_v)_{v\in T})$ with \emph{reproduction scheme} $\BB$ as follows. Let $(\chi_u, \Lambda^{(u)}, D_u, (X^{(u)}_i)_{1\leq i \leq \chi_u(\R)})_{u\in\UU}$ be i.i.d. copies of $\BB$. For every $u\in\UU$, let $k_u := \chi_u(\R)$, and let $T$ be the tree constructed from the $(k_u)_u$ (it is a Bienaymé--Galton--Watson tree). Define $X_\emptyset = 0$, $\Lambda_\emptyset = \Lambda^{(\emptyset)}$, and for every $u = p(u) j \in T$, define $X_u = X_{p(u)} + X^{(p(u))}_j$ and $\Lambda_u = X_u + \Lambda^{(u)}$. 
The joint process $(T, (D_u)_{u\in T}, (X_u)_{u\in T}, (\Lambda_u)_{u\in T})$ defines the decorated \brw with \repscheme $\BB$. 
We write $G_n := \sum_{u\in T, |u|=n} \delta_{X_u}$ for the point process of its vertices at generation $n$.

\subsection{Statement of the theorems}
\label{sec:main-statements}

In this article, we make the following assumptions:

\begin{assumptions}\label{assum}
\begin{enumerate}
\item the \brw is critical: $\E[\chi(\R)] = M(\R) = 1$,
\item the \brw is centered: $\E\left[ \int x \d\chi(x)\right] = \int x \d M(x) = 0$,
\item $\E[\chi(\R)^2] = \sigma^2 + 1 \in (1, \infty)$, 
\item $\E\left[\int x^2 \d\chi(x)\right] = \int x^2 \d M(x) = \eta^2 \in (0, \infty)$,
\item $\P(\Lambda>r) = o(r^{-4})$ as $r\to\infty$, 
\item $\E[D] = 1$. 
\end{enumerate}
\end{assumptions}

Note that 3. and 4. imply that the \repscheme is non-degenerate, i.e. that $\P(\chi(\R) = 1) < 1$ and $M(\{0\}) < 1$. Point 5. holds if $\E[\Lambda^4]<\infty$, see Lemma \ref{lem:refined-Markov}. Point 6. is here to simplify notations and could be replaced by ``$D$ is an integrable, non-negative and not a.s. zero random variable'' up to an extra constant in the statements of Theorems \ref{th:main-volume-smaller} and \ref{th:main-condition-vol}.

We are interested in studying the distribution of its maximum displacement $\sup_{v\in T} X_v$, or more generally of its maximum decoration $\sup_{v\in T} \Lambda_v$. More precisely, we establish the following results.

\begin{theorem}\label{th:main-tail}
Under Assumptions \ref{assum}, asymptotically as $r\to\infty$, 
\[ \P\left(\sup_{v\in T} \Lambda_v > r\right) \sim \frac{6\eta^2}{\sigma^2 r^2} . \]
\end{theorem}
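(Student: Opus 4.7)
The starting point is the functional recursion obtained by conditioning on the first generation and exploiting the fact that the subtrees rooted at the children of $\emptyset$ are i.i.d.\ copies of the whole \brw, shifted by $X_v$:
\begin{equation*}
1 - q(r) = \E\left[\ind{\Lambda_\emptyset \leq r}\prod_{|v|=1}(1 - q(r - X_v))\right],
\end{equation*}
where $q(r) := \P(\sup_{v\in T}\Lambda_v > r)$. Expanding the product and splitting off the root event $\{\Lambda_\emptyset > r\}$, this rearranges to
\begin{equation*}
q(r) = \P(\Lambda_\emptyset > r) + \E\Big[\sum_{|v|=1} q(r-X_v)\Big] - \E\Big[\sum_{v<v',\,|v|=|v'|=1} q(r-X_v)q(r-X_{v'})\Big] + R(r),
\end{equation*}
with $R(r)$ the cubic-and-higher remainder. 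The heuristic is that under Assumptions \ref{assum} the first-order term equals $\int q(r-x)\,dM(x) \approx q(r) + \tfrac{\eta^2}{2}q''(r)$ (using $M(\R)=1$, $\int x\,dM=0$, $\int x^2\,dM=\eta^2$) and the second-order term is asymptotically $\tfrac{\sigma^2}{2}q(r)^2$ (using $\E[K(K-1)] = \sigma^2$ for $K := \chi(\R)$). Since $\P(\Lambda_\emptyset > r) = o(r^{-4})$ by Assumption 5, the recursion collapses at leading order to $\tfrac{\eta^2}{2}u'' = \tfrac{\sigma^2}{2}u^2$, whose unique positive decaying solution of the form $A/r^2$ is $u(r) = 6\eta^2/(\sigma^2 r^2)$, matching the stated constant.

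To execute this rigorously I would first establish the a priori bound $q(r) = O(r^{-2})$---which follows from Kolmogorov's $\sim 2/(\sigma^2 n)$ estimate for critical BGW survival combined with the fact that reaching height $r$ requires of order $r^2/\eta^2$ generations via Donsker---and then, writing $w(r) := r^2 q(r)$, show that $w(r) \to 6\eta^2/\sigma^2$ by ruling out subsequential oscillation. The Feynman--Kac representation of Corollary \ref{cor:Feynman-Kac} is the decisive tool: it expresses $q$ (and related functionals) as path integrals along a random-walk lineage weighted by an exponential-of-occupation functional, from which one extracts the uniform regularity of $w$ formalized in Lemma \ref{lem:continuity-w-broad}. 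Plugged back into the expanded recursion, this regularity upgrades the ODE heuristic to a genuine approximate identity in $w$, and a uniqueness argument then forces every subsequential limit of $w$ to equal $6\eta^2/\sigma^2$.

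The main obstacle is precisely this regularity step. A naive second-order Taylor expansion of $\int q(r-x)\,dM(x)$ would demand that $q$ be twice differentiable, which is false a priori; the Feynman--Kac identity provides a substitute by turning the discrete second difference $\int q(r-x)\,dM(x) - q(r)$ into an expectation amenable to moment estimates under the hypothesis $\int x^2\,dM < \infty$. A related subtlety is the treatment of the second-order term under arbitrary dependence in the \repscheme: one cannot simply factor $\E[\sum_{v<v'} q(r-X_v)q(r-X_{v'})]$ as $\tfrac{1}{2}\E[K(K-1)]q(r)^2$ when the positions $X_v$ are correlated with each other and with $K$, and this requires working with the reduced Palm distribution and the Campbell--Mecke formula \eqref{eq:Campbell-Little-Mecke-mean} to rewrite the sum as an integral against $M$ and its Palm mean measure $M_x$. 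Controlling the cubic remainder $R(r)$ without a third-moment assumption on $K$ is a further delicate point, handled by truncation together with the a priori $O(r^{-2})$ bound on $q$.
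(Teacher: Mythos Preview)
Your heuristic is correct and you have identified the right obstacles (no differentiability of $q$, dependence in the \repscheme\ handled via Palm measures, no third moment on $\chi(\R)$ handled via truncation). However, the logical architecture you propose is circular, and the bridge from heuristic to proof is not the one you sketch.

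You write that Corollary~\ref{cor:Feynman-Kac} is the tool ``from which one extracts the uniform regularity of $w$ formalized in Lemma~\ref{lem:continuity-w-broad}''. This is backwards. Lemma~\ref{lem:continuity-w-broad} is proved \emph{first} and independently, by iterating the one-step Markov identity $n\approx s/w(r)$ times, applying the many-to-one formula to a centred random walk, and passing to a Brownian limit via a second-moment bound; no Feynman--Kac representation is used. In fact Lemma~\ref{lem:continuity-w-broad} (together with the elementary lower bound of Lemma~\ref{lem:lower-bound-w}) is a prerequisite for Corollary~\ref{cor:Feynman-Kac}: it is needed in Lemma~\ref{lem:discount-UI} to show that the discount process is a uniformly integrable supermartingale (specifically to control $w(r-R)^2/w(r-K)$), without which the optional stopping step collapses.

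The second gap is your ``uniqueness argument'' forcing subsequential limits of $r^2 q(r)$ to coincide. The paper does not attempt to show directly that $r^2 q(r)$ satisfies an approximate ODE. Instead it introduces the intermediate object
\[
\phi(y)=\lim_{k\to\infty}\frac{w\bigl(r_k+y/\sqrt{w(r_k)}\bigr)}{w(r_k)},
\]
shows via Corollary~\ref{cor:Feynman-Kac} and a Donsker-type limit that any such $\phi$ satisfies the Brownian Feynman--Kac identity $\phi(y)=\E^{y/\eta}[\exp(-\tfrac{\sigma^2}{2}\int_0^{\tau_0}\phi(\eta B_u)\,du)]$ (Proposition~\ref{prop:func-equation-phi}), and then invokes Kac's theorem to convert this into the ODE $\phi''=(\sigma^2/\eta^2)\phi^2$ with $\phi(0)=1$ bounded, whose unique solution $\phi(y)=(1+\sigma y/(\eta\sqrt 6))^{-2}$ is explicit. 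Only from this explicit $\phi$ does one read off $w(r)\sim 6\eta^2/(\sigma^2 r^2)$. Your sketch skips this ratio-limit step entirely, and without it there is no clean object on which to run a uniqueness argument: the discrete recursion for $q$ itself is too rough (no second derivative, remainder terms not $o(q^2)$ uniformly) to isolate the constant directly.
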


Taking $\Lambda = \sup(0, \sup\chi)$, we get the immediate corollary. 

\begin{corollary}\label{cor:main-tail}
Under Assumptions \ref{assum}, asymptotically as $r\to\infty$, 
\[ \P\left(\sup_{v\in T} X_v > r\right) \sim \frac{6\eta^2}{\sigma^2 r^2} . \]
\end{corollary}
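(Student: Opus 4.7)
My plan follows the strategy of~\cite{lalley2015maximal}, with the technical refinements needed under Assumptions~\ref{assum}. Set $F(r) := \P(\sup_{v\in T}\Lambda_v > r)$. Conditioning on the offspring of the root and the independent subtrees attached to each child, one obtains the fundamental identity
\begin{equation*}
1 - F(r) \;=\; \E\!\Bigl[\ind{\Lambda \le r}\prod_{i=1}^{\chi(\R)} \bigl(1 - F(r - X_i)\bigr)\Bigr].
\end{equation*}
Expanding the product, using criticality ($M(\R)=1$), centering, $\E[\chi(\R)(\chi(\R)-1)] = \sigma^2$, $\int x^2\,\mathrm{d}M(x) = \eta^2$, and $\P(\Lambda > r) = o(r^{-4})$, a second-order Taylor expansion leads heuristically to the ODE $\tfrac{\eta^2}{2}\,F''(r) = \tfrac{\sigma^2}{2}\,F(r)^2$, whose unique positive solution decaying at infinity is $F(r) = 6\eta^2/(\sigma^2 r^2)$. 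This already pinpoints the claimed constant.

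To turn this heuristic into a theorem I would proceed in three steps. First, establish the a priori bound $F(r) = O(r^{-2})$, which should match Kolmogorov's survival estimate $\P(N_n \ge 1) \sim 2/(\sigma^2 n)$ combined with a diffusive bound on the position of a tagged surviving particle after $n \asymp r^2$ generations. Second, rewrite the expansion as $(\mathrm{Id}-L)F = \phi$, where $L$ denotes convolution against the mean measure $M$ and the source $\phi$ collects $\P(\Lambda > r)$ together with the nonlinear Palm term $\tfrac{1}{2}\iint F(r-x)F(r-y)\,\mathrm{d}M_x(y)\,\mathrm{d}M(x)$ obtained from the reduced Campbell--Little--Mecke formula; then iterate this identity along the centered random walk $(S_k)$ with step law $M$, stopped at its first passage below $0$. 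This yields the Feynman--Kac representation of Corollary~\ref{cor:Feynman-Kac}, in which $F$ appears both as the unknown and as a self-consistent killing rate via the quadratic term. Third, set $g(r) := r^2 F(r)$; use the a priori bound to extract a subsequential limit, rescale diffusively so that $(S_{\lfloor t r^2\rfloor}/r)$ converges to a Brownian motion with variance $\eta^2$, and pass to the limit in the Feynman--Kac formula. Any subsequential limit of $g$ must then satisfy the above ODE and tend to $0$ at infinity, pinning it down to $6\eta^2/\sigma^2$.

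The main obstacle, under Assumptions~\ref{assum}, is the control of the remainders of the product expansion coming from the terms $\E\!\bigl[\sum_{i_1 < \cdots < i_k} F(r - X_{i_1}) \cdots F(r - X_{i_k})\bigr]$ with $k \ge 3$: one cannot afford the naive bound $\E[\chi(\R)^k]\|F\|_\infty^k$ since only $\E[\chi(\R)^2]<\infty$ is assumed, and no independence structure of the sibling displacements is available. This forces one to combine the a priori $O(r^{-2})$ decay with a continuity/Hölder estimate on $g$ that is strong enough to allow summation of $\phi$ along the random walk path in Step~3. This is precisely the role played by the new Lemma~\ref{lem:continuity-w-broad}, which supersedes the circular argument at the end of~\cite[Lemma 11]{lalley2015maximal}, and it is where I expect the bulk of the technical work to concentrate.
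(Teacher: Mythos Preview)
The paper's proof of this Corollary is a single line: apply Theorem~\ref{th:main-tail} with the specific choice $\Lambda = \sup(0,\sup\chi)$. Since the reproduction scheme already carries some $\Lambda$ with $\Lambda \ge \sup\chi$ and $\Lambda \ge 0$, one has $\sup(0,\sup\chi) \le \Lambda$, so Assumption~\ref{assum}.5 is inherited by the new choice; the remaining assumptions do not involve $\Lambda$. For this choice $\sup_{v\in T}\Lambda_v = \sup_{v\in T} X_v$, and Theorem~\ref{th:main-tail} yields the Corollary immediately.

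Your proposal is not a proof of the Corollary at all: it is an outline of the proof of Theorem~\ref{th:main-tail} itself (you even set $F(r)=\P(\sup_v\Lambda_v>r)$ rather than $\P(\sup_v X_v>r)$). As a sketch of Theorem~\ref{th:main-tail} it is broadly aligned with the paper's Section~\ref{sec:main-proof}---convolution identity, Feynman--Kac representation along the random walk with step law $M$, diffusive rescaling, identification of the limiting ODE---and you correctly flag the remainder control and the role of Lemma~\ref{lem:continuity-w-broad} as the crux. But none of this is needed here: once Theorem~\ref{th:main-tail} is in hand, the Corollary costs nothing. If the intent was to prove the Corollary independently of Theorem~\ref{th:main-tail}, you have not gained anything, since your argument goes through the general decoration $\Lambda$ anyway and is therefore exactly the proof of Theorem~\ref{th:main-tail}.
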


This corollary is a generalization of \cite[Theorem 1]{lalley2015maximal}, which is established for the specific \repscheme $\chi = L \delta_X$ with $X$ in $L^{4+\varepsilon}(\Z)$, $\E[X]=0$, $\E[L^3]<\infty$ and $\E[L]=1$ (and which can be extended to $X\in L^4$ and $L\in L^2$ with minimal effort). We extend it to very general \repschemes, with a \brw that is not necessarily in $\Z$, and with minimal moment assumptions. 

In addition, we prove a range of new results. The first one considers the probability that the maximum of the decorations takes a specific value, when the \brw is $\Z$-valued. The maximum span of a random variable is the largest $d$ such that there exists $x$ such that the support of the random variable is contained in $x+d\Z$.

\begin{theorem}\label{th:main-condition}
Under Assumptions \ref{assum}, if $M$ is supported on $\Z$ and has maximal span $1$ and if $\Lambda$ is $\N$-valued, then asymptotically as $r\to\infty$ with $r\in \N$,  
\[ \P\left(\sup_{v\in T} \Lambda_v = r\right) \sim \frac{12 \eta^2}{\sigma^2 r^3} . \]
\end{theorem}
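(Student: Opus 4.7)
The natural starting point is the discrete identity
\[\P\left(\sup_{v \in T} \Lambda_v = r\right) = \P\left(\sup_{v \in T} \Lambda_v > r-1\right) - \P\left(\sup_{v \in T} \Lambda_v > r\right),\]
valid for $r \in \N$ since $\Lambda$ is $\N$-valued. Plugging in the leading-order asymptotic of Theorem \ref{th:main-tail} gives, heuristically,
\[\frac{6\eta^2}{\sigma^2}\left(\frac{1}{(r-1)^2} - \frac{1}{r^2}\right) = \frac{6\eta^2}{\sigma^2}\cdot\frac{2r-1}{r^2(r-1)^2} \;\sim\; \frac{12\eta^2}{\sigma^2 r^3},\]
which is precisely the target constant. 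The entire content of the theorem thus reduces to justifying this discrete differentiation rigorously: the $o(r^{-2})$ error produced by Theorem \ref{th:main-tail} must be controlled well enough that its contribution to the above difference is swamped by the $r^{-3}$ main term.

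The plan I would follow is to revisit the proof of Theorem \ref{th:main-tail} sketched in Section \ref{sec:main-proof} with an extra shift parameter, aiming for a uniform version of the form $r^2 \P(\sup_v \Lambda_v > r + a) = 6\eta^2/\sigma^2 + o(1/r)$ as $r \to \infty$, uniformly in $a$ over bounded sets. The continuity estimate Lemma \ref{lem:continuity-w-broad}, advertised in the introduction as the new replacement for the flawed \cite[Lemma 11]{lalley2015maximal}, is almost certainly the ingredient that delivers the quantitative control needed here; it should govern the smoothness of the tail function on scales of order $1$ and allow one to compare the Feynman--Kac representation (Corollary \ref{cor:Feynman-Kac}) of $\P(M > r-1)$ to that of $\P(M > r)$ up to an $o(r^{-3})$ error, after which the heuristic computation above becomes a proof.

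A technically cleaner alternative would be to establish directly that $F(r) := \P(\sup_v \Lambda_v > r)$ is eventually convex as a sequence on $\N$, which amounts to $\Delta F(r) := F(r-1) - F(r)$ being eventually non-increasing. Combined with $r^2 F(r) \to 6\eta^2/\sigma^2$ from Theorem \ref{th:main-tail}, Karamata's Tauberian theorem for monotone densities then forces $\Delta F(r) \sim 12\eta^2/(\sigma^2 r^3)$; the sandwich is the elementary $n \Delta F(r+n-1) \leq F(r-1) - F(r+n-1) \leq n \Delta F(r)$ taken with $n = \lfloor \varepsilon r\rfloor$ and $\varepsilon \downarrow 0$. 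Such a convexity property could plausibly be read off from the recursive branching fixed-point equation for $F$, using monotonicity and the structure of the reproduction scheme, though this is not routine.

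The main obstacle, and what will absorb the bulk of the work, is that a purely qualitative statement like $r^2 F(r) \to C$, even lifted to uniform convergence in an integer shift, does not on its own imply the asymptotic of $F(r-1) - F(r)$ at the $r^{-3}$ scale: a genuine next-order estimate or a monotonicity property is required, and producing either from the branching structure is delicate. Both the integer-lattice hypothesis, which makes the discrete identity exact, and the maximal-span-$1$ hypothesis, which rules out hidden periodicities that could make $F(r-1) - F(r)$ vanish on sparse subsequences, play an indispensable role.
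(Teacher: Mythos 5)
Your high-level diagnosis is correct --- the theorem is a discrete differentiation of Theorem \ref{th:main-tail}, the factor of two is exactly $-\frac{d}{dr}r^{-2} = 2r^{-3}$, and a qualitative $r^2 F(r)\to C$ is not by itself sufficient. But neither of the two routes you propose is what the paper does, and both are overambitious. Your first route requires a genuine next-order estimate $r^2F(r) = C + o(1/r)$; the Feynman--Kac machinery of Section \ref{sec:main-proof} is built to give leading order only, and extracting a next-order error bound from it would be a substantial separate project. Your second route requires eventual monotonicity of $g(r) = F(r-1)-F(r)$, which you yourself flag as ``not routine''; in fact there is no reason to expect it to hold for a general reproduction scheme, and the paper makes no attempt to prove it.

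What the paper actually proves (Section \ref{sec:proof-tail}) is a third, logically weaker, intermediate statement: \emph{approximate constancy} of $g$ over a window of macroscopic relative width, namely
\[
\lim_{\varepsilon\to 0}\limsup_{r\to\infty}\sup_{|y-r|\le\varepsilon r}\Bigl|\tfrac{g(y)}{g(r)}-1\Bigr| = 0 .
\]
Once this holds, $\sum_{|y-r|<\nu r/2} g(y) \sim \nu r\, g(r)$, while the same sum equals $w((1-\nu/2)r)-w((1+\nu/2)r)$ up to negligible terms, and the known asymptotic $w(r)\sim 6\eta^2/(\sigma^2 r^2)$ pins down $g(r)\sim 12\eta^2/(\sigma^2 r^3)$ after letting $\nu\to 0$. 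The mechanism for the regularity of $g$ is also not a second-order refinement of Corollary \ref{cor:Feynman-Kac}: starting from a one-sided recursive inequality for $g$ in the spirit of Proposition \ref{prop:convolution-equation} (equation \eqref{eq:convolution-LLT}), one builds a positive supermartingale involving $g(S_n)$ along the random walk $S_n$, stops it at the first hitting time $T_y$ of a target level $y$ with $|y-r|\le\nu r$, and uses the estimates of Section \ref{sec:RW} (probability that a lattice random walk visits a given point within $n$ steps) to show that $T_y$ is small with high probability. This gives $g(r)\ge(1-6\varepsilon)g(y)$, and the symmetric bound follows by swapping $r$ and $y$; together they yield the required regularity. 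That hitting-time mechanism --- and the observation that approximate constancy of $g$ over a growing window, rather than monotonicity or a next-order tail expansion, is all one needs --- is the key idea missing from your proposal.
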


It is likely that such an asymptotic could be established for displacements that have a density with respect to the Lebesgue measure, but we chose not to investigate this. 

The next results establish the asymptotic distribution of the total weight (the sum of $D_v$ over all $v$ in the tree) in the \brw when we condition it on its maximum decoration. This naturally generalizes to multivariate weights: simply let $D_v$ be the scalar product of the multivariate weight against a fixed vector. These Theorems highlight that the behavior of the total weight is dictated by the total progeny of the \brw, while the distribution of the weight, and a fortiori how it depends of the rest of the \repscheme, plays a marginal role.

\begin{theorem}\label{th:main-volume-smaller}
Let $\psi$ be the unique positive solution of
\[ \psi'' = \psi^2 + \psi \]
with $\psi(x)\to\infty$ as $x\to 0$ and $\psi(x)\to 0$ as $x\to\infty$. For every $\alpha>0$, define 
\[\RR(\alpha) = \frac{6 \eta^2}{\sigma^2} \left(\frac{2\alpha}{\sigma^2} \psi\left( \sqrt{\frac{12\alpha}{\sigma^2}}\right) - 1 + \frac{\alpha}{\sigma^2} \right).\]
Then $\RR$ can be extended to $0$ by continuity with a series expansion as $\alpha\searrow 0$
\[ \frac{\sigma^2}{6\eta^2} \RR(\alpha) = \frac{3}{5}\left(\frac{\alpha}{\sigma^2}\right)^2 - \frac{2}{7}\left(\frac{\alpha}{\sigma^2}\right)^3 + \frac{3}{25}\left(\frac{\alpha}{\sigma^2}\right)^4 + \dots  \]
and under Assumptions \ref{assum}, 
if $t(r,\alpha) = \frac{\alpha^2}{2\sigma^2} \left(\frac{6\eta^2}{\sigma^2 r^2}\right)^2$, 
\[ r^2\left( 1 - \E\left[ \e^{-t(r,\alpha)\sum_{v\in T} D_v} \, | \, \sup_v \Lambda_v \leq r \right]\right) \ulim r \infty \RR(\alpha) \]
and
\[ \E\left[ \e^{-t(r,\alpha)\sum_{v\in T} D_v} \, | \, \sup_v \Lambda_v > r \right] \ulim r \infty \frac{2\alpha}{\sigma^2}\psi\left( \sqrt{\frac{12\alpha}{\sigma^2}} \right) . \]
\end{theorem}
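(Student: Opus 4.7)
The plan is to analyze the Laplace transform
\[
H_t(s) := \E\!\left[e^{-t \sum_{v\in T} D_v}\,\mathbf{1}_{\sup_v \Lambda_v \leq s}\right]
\]
at the scale $t = t(r,\alpha) \asymp r^{-4}$. Since the weights $D_v$ are i.i.d.\ conditionally on the decorated tree, $H_t(s) = \E[\varphi(t)^{|T|}\,\mathbf{1}_{\sup_v \Lambda_v \leq s}]$ with $\varphi(t) := \E[e^{-tD}]$. A first-generation decomposition then yields
\[
H_t(s) = \varphi(t)\,\E\!\left[\mathbf{1}_{\Lambda \leq s} \prod_i H_t(s - X_i)\right],
\]
which is the recursion satisfied by $\P(\sup_v \Lambda_v \leq s)$ for a critical \brw, except for the prefactor $\varphi(t) < 1$ at each vertex. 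This is the sense in which the problem reduces to studying the maximum of an almost-critical subcritical \brw whose killing rate $1 - \varphi(t) \sim t \to 0$ measures the departure from criticality.

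Setting $g_t := 1 - H_t$ and Taylor-expanding the product to second order, Assumptions~\ref{assum} produce the asymptotic ODE
\[
g_t''(s) = \frac{\sigma^2}{\eta^2}\, g_t(s)^2 + \frac{2t}{\eta^2}\bigl(g_t(s) - 1\bigr) + (\text{lower order}),
\]
where $\sigma^2$ arises from $\E[\chi(\R)(\chi(\R)-1)]$, $\eta^2$ from $\int x^2\,\d M(x)$, and the linear term from $\varphi(t) = 1 - t + O(t^2)$. The constant solution $c_0(t)$ of $\sigma^2 c_0^2 = 2t(1-c_0)$ satisfies $c_0(t) \sim \sqrt{2t/\sigma^2}$ as $t\to 0$ and equals $\lim_{s\to\infty} g_t(s) = 1 - F(\varphi(t))$, with $F(s) := \E[s^{|T|}]$ the total-progeny generating function. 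Writing $g_t = c_0 + v_t$, the function $v_t$ solves (to leading order) $v'' = (\sigma^2/\eta^2) v^2 + B v$ with $B = 2\sigma^2 c_0/\eta^2 + 2t/\eta^2$, and the substitution $v_t(s) = (B\eta^2/\sigma^2)\,\psi(\sqrt{B}\,s)$ reduces this to $\psi'' = \psi^2 + \psi$. Existence and uniqueness of the positive solution blowing up at $0^+$ and decaying to $0$ at $+\infty$ follow from a standard phase-plane argument.

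Specializing to $t = t(r,\alpha)$ gives $c_0 \sim 6\alpha\eta^2/(\sigma^4 r^2)$, $\sqrt{B}\,r \to \sqrt{12\alpha/\sigma^2}$, and $B\eta^2/\sigma^2 \sim 2c_0$, hence $g_t(r) = c_0 + 2c_0\,\psi\!\bigl(\sqrt{12\alpha/\sigma^2}\bigr) + o(r^{-2})$. The two conditional limits then follow from the elementary identities
\[
1 - \E\bigl[e^{-t\sum D_v}\,\big|\, \sup_v \Lambda_v \leq r\bigr] = \frac{g_t(r) - \P(\sup_v \Lambda_v > r)}{\P(\sup_v \Lambda_v \leq r)}
\]
and
\[
\E\bigl[e^{-t\sum D_v}\,\big|\, \sup_v \Lambda_v > r\bigr] = \frac{g_t(r) - c_0(t)}{\P(\sup_v \Lambda_v > r)},
\]
combined with the tail asymptotic $r^2 \P(\sup_v \Lambda_v > r) \to 6\eta^2/\sigma^2$ of Theorem~\ref{th:main-tail}. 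The series expansion of $\RR$ near $0$ comes from matching coefficients in $\psi'' = \psi^2 + \psi$ to get $\psi(x) = 6/x^2 - 1/2 + x^2/40 + O(x^4)$ as $x\to 0^+$, then substituting into the definition of $\RR(\alpha)$.

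The hard part will be making the passage from the recursion to the ODE rigorous in the near-critical regime $t \to 0$: one must control the smoothness of $g_t$ to justify Taylor expansion, replace $\E[\sum_{i\neq j} g_t(s-X_i) g_t(s-X_j)]$ by $\sigma^2 g_t(s)^2$ using only the second factorial moment of $\chi$ (with no factorization hypothesis), and derive error bounds uniform in $s$ at the critical scale. This should be accomplished by adapting the Feynman--Kac representation (Corollary~\ref{cor:Feynman-Kac}) and the continuity estimate (Lemma~\ref{lem:continuity-w-broad}) developed for Theorem~\ref{th:main-tail}, now incorporating the additional linear-in-$g$ term generated by $\varphi(t) < 1$.
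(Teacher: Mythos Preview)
Your heuristic reaches the right ODE and the right constants, and the overall route---treat the Laplace-weighted tail as the maximum of a near-critical subcritical \brw, reduce to $\psi''=\psi^2+\psi$---is the paper's. Two concrete gaps need filling.

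First, the factorization $H_t(s)=\E[\varphi(t)^{|T|}\,\mathbf 1_{\sup_v\Lambda_v\le s}]$ is false under Assumptions~\ref{assum}: $D$ is part of the reproduction scheme $(\chi,\Lambda,D,\dots)$ and can be correlated with $\chi$ and $\Lambda$ (e.g.\ $D=\chi(\R)$). The correct one-step recursion keeps $e^{-tD}$ inside the expectation, coupled to $\chi$; see \eqref{eq:2convolution-equation-1}. It so happens that the leading-order ODE is unaffected, because the cross term $\E[\chi(\R)(1-e^{-tD})]=o(t^{1/2})$ (Lemma~\ref{lem:phit-taylor-expansion}), but this must be argued. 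The paper handles it via the change of measure $\E^{(t)}$ of Section~\ref{sec:2change-of-measure}, under which $1-\E^{(t)}[\chi(\R)]\sim\sqrt{2t\sigma^2}$; it is this subcritical drift, not $1-\varphi(t)\sim t$, that generates the extra linear term in the functional equation of Proposition~\ref{prop:2func-equation-phi-vol}.

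Second, writing $v_t(s)=(B\eta^2/\sigma^2)\,\psi(\sqrt{B}\,s)$ with no shift in the argument of $\psi$ is unjustified: the ODE alone only determines $v_t$ up to translation. You need to explain why the translate is zero---equivalently, why the rescaled boundary condition is $V(0^+)=+\infty$. The matched-asymptotics reason is that for $s$ of order $1$ one has $v_t(s)\approx w(s)\sim 6\eta^2/(\sigma^2 s^2)$ by Theorem~\ref{th:main-tail}, which matches $\psi(x)\sim 6/x^2$ as $x\to 0$; but this is absent from the proposal. The paper handles the same issue differently: it normalizes via the ratio $\phi_\alpha(y)=\lim w_t(r+y/\sqrt{w_t(r)})/w_t(r)$ so that $\phi_\alpha(0)=1$ is automatic, at the price of having the unknown $\LL(\alpha)=\lim r^2 w_t(r)$ appear in the ODE coefficient; $\LL(\alpha)$ is then pinned down by a self-consistency relation between different values of $\alpha$, letting one of them tend to $0$ and invoking $\LL(0)=6\eta^2/\sigma^2$. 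The ``hard part'' you acknowledge is carried out in Sections~\ref{sec:2continuity-tail}--\ref{sec:Feynman-Kac-volume} via continuity estimates uniform in $t\in[0,t(r,\alpha)]$ and a coupling (Lemma~\ref{lem:coupling-explicit}) between the step distributions under $\E$ and $\E^{(t)}$.
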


We refer to Appendix \ref{sec:appendix-EDO} for the series expansion of $\psi$, from which we derived that of $\RR$.

\begin{theorem}\label{th:main-condition-vol}
Under Assumptions \ref{assum}, 
if $M$ is supported on $\Z$ and has maximal span $1$ and if $\Lambda$ is $\N$-valued, then 
\[ \E\left[ \e^{-t(r,\alpha)\sum_{v\in T} D_v} \, | \, \sup_v \Lambda_v = r \right] \ulim r \infty \frac{2\alpha}{\sigma^2}\psi\left( \sqrt{\frac{12\alpha}{\sigma^2}} \right) . \]
\end{theorem}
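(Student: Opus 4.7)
The plan is to obtain Theorem \ref{th:main-condition-vol} as the \emph{atom} analog of Theorem \ref{th:main-volume-smaller}, paralleling the way Theorem \ref{th:main-condition} relates to Theorem \ref{th:main-tail}. I would pursue two complementary routes in parallel.

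The first is a discrete differencing. Since $\Lambda$ is $\N$-valued, we may write
\[
\E\!\left[e^{-t(r,\alpha)\sum_v D_v}\ind{\sup_v\Lambda_v = r}\right]
= F(r-1,\alpha_r) - F(r,\alpha),
\]
where $F(s,\beta) := \E\bigl[e^{-t(s,\beta)\sum_v D_v}\ind{\sup_v\Lambda_v > s}\bigr]$ and $\alpha_r := \alpha(r-1)^2/r^2$ is chosen so that $t(r,\alpha) = t(r-1,\alpha_r)$, with $\alpha_r\to\alpha$. By Theorems \ref{th:main-tail} and \ref{th:main-volume-smaller}, $F(s,\beta) \sim (6\eta^2/\sigma^2)\Phi(\beta)/s^2$ with $\Phi(\beta) := (2\beta/\sigma^2)\psi(\sqrt{12\beta/\sigma^2})$. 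Dividing the difference by $\P(\sup_v\Lambda_v = r) \sim 12\eta^2/(\sigma^2 r^3)$ from Theorem \ref{th:main-condition} should yield the claimed limit, provided the relevant $r^{-3}$ contribution of $F$ can be pinned down.

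The second, more intrinsic route follows the reinterpretation used in Section \ref{sec:2Volume-general} for Theorem \ref{th:main-volume-smaller}: tilting the reproduction scheme by $e^{-tD}$ turns the critical \brw into an almost-critical subcritical one, with mean offspring $1-O(r^{-2})$ at $t = t(r,\alpha)$. Under this tilt, $\E[e^{-t(r,\alpha)\sum_v D_v}\ind{\sup_v\Lambda_v = r}]$ equals the probability that the tilted maximum decoration equals $r$. I would then apply the Feynman--Kac representation of Corollary \ref{cor:Feynman-Kac} to the tilted \brw and run the same ODE analysis as in Section \ref{sec:2Volume-general}, now with the boundary condition ``atom at $r$'' in place of ``tail beyond $r$''; the ODE $\psi'' = \psi^2 + \psi$ should select the announced limiting function.

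The main obstacle, in my estimation, is the cancellation of the leading $r^{-2}$ order in the differencing approach, which forces one to control $F(s,\beta)$ at order $r^{-3}$ with an error uniform in $\beta$ on compact sets. Producing this refinement---the atom-level upgrade of Theorem \ref{th:main-volume-smaller}, analogous to how Theorem \ref{th:main-condition} refines Theorem \ref{th:main-tail}, but with the Laplace weight carried throughout---is likely the bulk of the technical work. Once in hand, the algebraic collapse delivering the closed form $(2\alpha/\sigma^2)\psi(\sqrt{12\alpha/\sigma^2})$ should come from the structure of the ODE $\psi''=\psi^2+\psi$.
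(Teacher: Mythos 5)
Your high-level plan is sound and matches the paper's structure: Theorem \ref{th:main-condition-vol} is indeed obtained as the atom analogue of Theorem \ref{th:main-volume-smaller}, by mirroring the way Theorem \ref{th:main-condition} refines Theorem \ref{th:main-tail}, under the tilted measure $\E^{(t)}$. However, the concrete mechanism you propose in route~2 does not correspond to the argument that works, and route~1 is left unresolved.

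The issue with route~2 is that there is no ``Feynman--Kac with atom boundary condition'' and no fresh ODE analysis for the atom. The ODE $\psi''=\psi^2+\psi$ only enters through Theorem \ref{th:main-volume-smaller} (already proved), which gives $r^2 w_{t(r,\alpha)}(r)\to\LL(\alpha)$; the constant $\frac{2\alpha}{\sigma^2}\psi(\sqrt{12\alpha/\sigma^2})=\LL(\alpha)/\LL(0)$ is then inherited wholesale. What is actually needed at the atom level is a \emph{local Lipschitz regularity} of the tilted atom mass $g_t(r):=\E[\e^{-t\sum_v D_v}\ind{\sup_v\Lambda_v=r}]$: namely $(1-6\varepsilon)g_t(r)\le g_t(y)\le(1+7\varepsilon)g_t(r)$ uniformly for $|y-r|\le\nu r/2$. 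This is proved exactly as in Section \ref{sec:LLT}, by writing a one-step convolution inequality for $g_t$ (analogue of \eqref{eq:convolution-LLT}) under $\E^{(t)}$, building a positive supermartingale from it, and stopping at the first hitting time $T_y$ of the coupled random walk (Lemma \ref{lem:coupling-explicit}) to transport $g_t(y)$ to $g_t(r)$; the subcritical tilt only costs an extra factor $\e^{n\ln\E^{(t)}[\chi(\R)]}$, which is controlled because $1-\E^{(t)}[\chi(\R)]=O(r^{-2})$ at $t=t(r,\alpha)$. One then sums over the window: $\sum_{|y-r|\le\nu r/2} g_t(y)\approx \nu r\,g_t(r)$ on one side, and equals $w_t((1-\nu/2)r)-w_t((1+\nu/2)r)\approx 2\nu\LL(\alpha)/r^2$ on the other, giving $r^3 g_t(r)\to 2\LL(\alpha)$. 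Dividing by $g(r)\sim 2\LL(0)/r^3$ (Theorem \ref{th:main-condition}) concludes. This summation step is also exactly what resolves the $r^{-2}$ cancellation you flagged as the obstacle in route~1 --- you don't need a uniform $r^{-3}$ expansion of the tail in $\beta$; the local constancy of $g_t$ substitutes for it.
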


\subsection{On the minimality of our hypotheses}
\label{sec:minimality}

Let us explain why assumptions \ref{assum} are essentially minimal. First, the dependency structure between the displacement of the offsprings of any vertex can be arbitrary, which generalizes previous results \cite{lalley2015maximal} significantly. This generalizability was already foreseen by the authors of \cite{lalley2015maximal}, who 
chose to restrict themselves to a technically convenient framework. 

As already observed in \cite{lalley2015maximal} in the case $\chi = N \delta_X$ with $N$ and $X$ independent, there are counter-examples when $X \notin L^4$. In our case, this counterexample covers the case where $t^4\P(\Lambda>t)$ does not converge to $0$ as $t\to\infty$. 

The condition $\E[\int x^2 \d\chi(x)] < \infty$ is also required; \cite{lalley2016maximalstable} considers a case where the displacement is in the domain of attraction of a stable distribution, in particular where this quantity is infinite, and deduce asymptotics that differ from ours. When $\E[\int x \d\chi(x)] \neq 0$ we also expect a different behavior, namely: if it is positive, then the probability in Theorem \ref{th:main-tail} should behave like $r^{-1}$, and if it is negative then the tail should be much lighter. 

Clearly the fact that $\E[\chi(\R)]=1$ is important; if this was $<1$ then the the probability in Theorem \ref{th:main-tail} would decrease exponentially fast (see e.g. \cite{hou2025maximal}), while the supremum is infinite if this is $>1$. The fact that $\E[\chi(\R)^2] <\infty$ is also important, see \cite{hou2025tail} for the case of Lévy branching processes.

The fact that we consider decorated \brws allows for a unified approach to a range of related models. For example, we are able to handle multitype \brws with little extra work, as we illustrate in the Section \ref{sec:extensions}.

\subsection{Multitype branching random walks}
\label{sec:extensions}

A natural generalization are multitype \brws. In multitype \brws, each vertex has a type, and its reproduction scheme depends on its type. Multitype \brws appear naturally in the field of random planar maps, for example as the image of bipartite Boltzmann maps by the Bouttier--Di Francesco--Guitter bijection \cite{miermont2006invariance,stephenson2018local}. 

The scaling limit of multitype Bienaymé--Galton--Watson trees has been investigated 
in the case of finite number of types \cite{miermont2008invarianceGW} and of infinite number of types \cite{raphaelis2017scaling}. In this section, we explain how to extend our results to the case of \brws with finitely many types; we follow most of the notations of \cite{miermont2008invarianceGW}. We do not handle the case of infinitely many type in order to reduce technicalities to a minimum; we do not expect to find obstacles to applying the same idea to \brws with infinitely many types.

Theorem 3 in \cite{miermont2008invarianceGW} establishes the convergence of the multitype \brw towards the Brownian snake. The assumption requires exponential moments on the offspring distribution (in our monotype setting, this is the distribution of $\chi(\R)$) and ---roughly speaking--- moments of order $8+\varepsilon$ on the displacement for some $\varepsilon>0$ (in our monotype setting this would mean $\E[\int x^{8+\varepsilon} \d\chi(x)]<\infty$). The authors conjectured in the remark after \cite[Theorem 4]{miermont2008invarianceGW} that this assumption could be reduced to a moment of order $4+\varepsilon$. We plan to extend in a future paper this result to the case of moments of order 2 on the offspring distribution and of order 4 on the displacement. 

Let us make our setting more precise. 
A multitype planar tree is a tree $T\subset \UU$ together with a type function $\bs:T\to \kX$, where $\kX$ is a set of types (in our case finite). Assume that for every $x\in\kX$ we have a point process $\chi^{(x)}$ on $\R\times\kX$, together with a measurable numbering $(X^{(x)}_i, \kx^{(x)}_i)_{1\leq i \leq \chi^{(x)}(\R\times\kX)}$ of its atoms. For every $y\in\kX$, the point process $\chi^{(x)}(\cdot \times \{y\})$ represents the point process of offsprings that are of type $y$, and for every $1\leq i \leq \chi^{(x)}(\R\times\kX)$, $X^{(x)}_i$ is the displacement the $i$-th child respective to its parent, and $\kx^{(x)}_i$ is its type. 

Given a multitype \repscheme $\BB := (\chi^{(x)}, \Lambda^{(x)}, D^{(x)}, (X^{(x)}_i, \kx^{(x)}_i)_{1\leq i \leq \chi^{(x)}(\R\times\kX)})_{x\in\kX}$, where $D^{(x)}\geq 0$, $\Lambda^{(x)}\geq \sup\chi^{(x)}$ a.s. for every $x\in\kX$ and $(X^{(x)}_i, \kx^{(x)}_i)_{1\leq i \leq \chi^{(x)}(\R\times\kX)}$ is a measurable numbering of $\chi^{(x)}$, 
we define the multitype decorated \brw $(T, \bs, (D_v)_{v\in T}, (X_v)_{v\in T}, (\Lambda_v)_{v\in T})$ with \repscheme $\BB$, where $T\subset \UU$, $\bs:T\to\kX$, $D_v\geq 0$, and $X_v, \Lambda_v \in \R$ for every $v\in T$, and the probabilities $(\P_{x})_{x\in\kX}$ with associated expectations $(\E_x)_{x\in\kX}$, by induction as follows. 
For every $\kx_\emptyset\in\kX$, under $\P_{\kx_\emptyset}$, let $\left((\chi^{(x)}_u, \Lambda^{(x)}_u, (X^{(x)}_{u,i}, \kx^{(x)}_{u,i})_{1\leq i \leq \chi^{(x)}_u(\R\times\kX)})_{x\in\kX}\right)_{u\in \UU}$ be an i.i.d. family with common distribution the same as $\BB$. Then $\emptyset \in T$, $\bs(\emptyset) = \kx_\emptyset$, and $X_\emptyset = 0$, and for every $n\geq 1$ and $u\in\UU$ with $|u|=n$ and $u = vw$ with $v=p(u)$,  
\begin{itemize}
\item $u\in T$ if and only if $1\leq w \leq \chi^{(\bs(v))}_v(\R\times\kX)$,
\item $\bs(u) = \kx^{(\bs(v))}_{v, w}$, 
\item $X_u = X_v + X^{(\bs(v))}_{v,w}$. 
\end{itemize}
In addition, for every $u\in T$, we define
\begin{itemize}
\item $\Lambda_u = X_u + \Lambda^{(\bs(u))}_u$,
\item $D_u = D^{(\bs(u))}_u$. 
\end{itemize}
When $\kX$ is a singleton, we recover the monotype decorated \brw. 
Write $\zeta_x$ for every $x\in\kX$ for the distribution of $(\kx^{(x)}_i)_{1\leq i \leq \chi^{(x)}(\R\times \kX)}$; if we forget the spatial components $(X_v)_{v\in T}$ and $(\Lambda_v)_{v\in T}$, the multitype tree $(T,\bs)$ follows the distribution of a multitype Bienaymé--Galton--Watson tree with offspring distributions $(\zeta_x)_{x\in\kX}$. 

An approach used among others in \cite{miermont2008invarianceGW, raphaelis2017scaling} for multitype Bienaymé--Galton--Watson trees and forests with finitely many, resp. countably infinitely many types, consists in building a ``reduced'' tree (resp. forest) that only contains vertices of a particular type, see \cite[Section 1.3]{raphaelis2017scaling}. 
Provided the \repscheme mixes ``sufficiently well'' between types, the reduced tree is ``close'' to the original tree. 
This approach naturally extends to \brw, with the reduced \brw being monotype and thus falling within the scope of Theorems \ref{th:main-tail} to \ref{th:main-condition-vol}. The question then reduces to determining whether the reduced \brw satisfies Assumptions \ref{assum}. This can be provided in part by \cite{miermont2008invarianceGW, raphaelis2017scaling}; for example, Proposition 2 in \cite{raphaelis2017scaling} ensures that the reduced \brw satisfies 1. and 3. in Assumptions \ref{assum}. We now aim to define the reduction, and to provide conditions under which Assumptions \ref{assum} hold for the reduction.

Assume that the process is non-degenerate in the sense of \cite{miermont2008invarianceGW}, i.e. that there exists at least one $x\in\kX$ such that $\P(\chi^{(x)}(\R\times\kX) = 1)<1$. 
Define for every $x,y\in\kX$
\begin{equation}\label{eq:def-M}
M_{x,y} = \E\left[\chi^{(x)}(\R \times \{y\}) \right]
\end{equation}
the average number of type $y$ offspring of a type $x$ vertex, and assume that $M_{x,y} < \infty$ for every $x,y$. We see them as the $(x,y)$ entry of matrix $M$, whose iterates we write $M^n = (M^n_{x,y})_{x,y\in\kX}$. We suppose that $M$ is irreducible, i.e. for every $x,y$ there exists $n\geq 1$ such that $M^n_{x,y}>0$. By the Perron-Frobenius theorem $M$ has a largest eigenvalue $\rho$ and a left eigenvector $\ba = (a_x)_{x\in\kX}$ and a right eigenvector $\bb = (b_x)_{x\in\kX}$ associated to this eigenvalue, both unique up to multiplication by a constant and both with positive entries. 

\begin{remark}
In many cases we can handle a non-irreducible $M$, in particular when there are subsets of the space of of states that are transient (hence not visited if we start from a recurrent $x$). Naturally we can discard any state that cannot be reached from the initial one, i.e. any $y$ with $M^n_{x,y} = 0$ for every $n$. We can also often handle the presence of multiple communicating classes, i.e. subsets $S$ of states such that for every $y,z\in S$ we have $M^n_{y,z}>0$ for some $n$. For every communicating class we can restrict the matrix $M$ to it. The restriction is irreducible. Then $M$ will have maximal eigenvalue $1$ if and only if for at least one of its communicating classes, the restriction of $M$ on this class has maximal eigenvalue $1$ (such a class is ``critical’’); and if the restriction of $M$ on each of its communicating classes has eigenvalue at most $1$ (if it is $<1$ the class is ``subcritical’’). If $M$ has only one critical communicating class then we can incorporate the other classes into the decorations $D$ and $\Lambda$ of the states of the unique critical communicating class, and define a reduced \brw with state space the critical class. In this case it is necessary that the state through which we reduce belongs to the critical class. 
\end{remark}

Fix $x\in\kX$ and work under $\E_x$, so that $\bs(\emptyset) = x$: we will construct a reduced \brw that ``contains only the vertices of type $x$ of the initial \brw{}''. For every $u,v\in\UU$, write $u \preceq v$ if $u$ is an ancestor of $v$ (i.e. if there exists $w\in\UU$ such that $v = uw$) and $u\prec v$ if $u\preceq v$ and $u\neq v$. 
Following the idea of \cite[Definition 1]{raphaelis2017scaling}, for every $u\in\UU$ define random trees 
\[ \TT_u = \{ v\in T : \forall u \prec w \prec v, \bs(w) \neq x \} \quad , \quad
 \widetilde\TT_u = \{v\in T: \forall u \preceq w \prec v, \bs(w) \neq x\} , \]
subsets
\[ 
\partial\TT_u = \{ v\in \TT_u: \bs(v) = x, v\neq u\} \quad , \quad \partial\widetilde\TT_u = \{v\in\widetilde\TT_u : \bs(v)=x\} 
\]
ordered in lexicographic order, 
\[ \TT_u^\circ = \TT_u \setminus \partial\TT_u \quad , \quad \widetilde\TT_u^\circ = \widetilde\TT_u \setminus \partial\widetilde\TT_u , \]
and point processes 
\[
 \LL_u = \sum_{v\in\partial\TT_u, v\neq\emptyset} \delta_{X_v} \quad , \quad \widetilde\LL_u = \sum_{v\in\partial\widetilde\TT_u} \delta_{X_v} .
\]
Note that if $\bs(u) \neq x$ and $u\in T$ then $\TT_u = \widetilde\TT_u$ and $\partial\TT_u = \partial\widetilde\TT_u$, while if $\bs(u) = x$ and $u\in T$ then $\widetilde{\TT}_u = \partial\widetilde\TT_u = \{u\}$ while $u\notin\partial\TT_u$. 

Define $\BB^r_u = (\LL_u, \sup_{v\in\TT_u^\circ} \Lambda_v, \sum_{v\in\TT_u^\circ} D_v, (X_i)_{i \in \partial\TT_u} )$. 
Clearly $\BB^r_u$ is a deterministic function of $\TT_u$. In addition, we can define a new tree $\T$, a collection $(\B_u)_{u\in\T}$ and a function $t:\T\to T$ by induction with $\emptyset\in\T$, $t(\emptyset) = \emptyset$, and if $u\in\T$ then $\B_u = \BB^r_{t(u)}$, and letting $(v_1, \dots, v_d)$ be the elements of $\partial\TT_{t(u)}$, we let $ui\in \T$ and $t(ui) = v_i$ for every $1\leq i \leq d$. Then the family $(\B_u)_{u\in\T}$ is i.i.d. conditionally on $\T$, and from $\T$ and the family $(\B_u)_{u\in\T}$ we can construct the reduced (monotype) \brw $(\T, (\D_u)_{u\in\T}, (\X_u)_{u\in\T}, (\Lambda^\T_u)_{u\in\T})$, and we can check that 
\[ \sum_{u\in\T} \D_u = \sum_{u\in T} D_u \quad , \quad \sup_{u\in\T} \Lambda^\T_u = \sup_{u\in T} \Lambda_u . \]
This means that Theorems \ref{th:main-tail} to \ref{th:main-condition-vol} will hold for the multitype \brw if we can show that the reduced \brw satisfies Assumptions \ref{assum}.

Recall our assumption that the \repscheme is non-degenerate and that the matrix $M$ from \eqref{eq:def-M} is irreducible and has finite coefficients. For every $y,z\in\kX$ let
\[ \wM_{y,z} = 
\begin{cases}
M_{y,z} &\text{ if } y\neq x\\
0 &\text{ if } y=x 
\end{cases}
\quad , \quad 
N_{y,z} = \E\left[\int t \ind{\kx=z} \d\chi^{(y)}(t,\kx)\right] 
\quad , \quad
O_{y,z} = \E\left[\int t^2 \ind{\kx=z} \d\chi^{(y)}(t,\kx)\right] 
\]
and recall that $\bb$ is the right eigenvector of $M$ associated to its largest eigenvalue. We will check in the proof of Proposition \ref{prop:equiv-assum} that $I-\wM$ is invertible, where $I$ is the identity.

\begin{assumptions}\label{assum-multitype}
\begin{enumerate}
\item the matrix $M$ has maximal eigenvalue $1$,
\item $\left\{\left(M(I-\wM)^{-1} + I\right) N \bb\right\}_x = 0$, 
\item for every $y\in\kX$, $\E[\chi^{(y)}(\R\times\kX)^2] < \infty$, 
\item for every $y\in\kX$, $\E\left[\int t^2 \d\chi^{(y)}(t,\kx) \right] < \infty$, and
\[ \eta^2 = \left\{ \left(M (I-\wM)^{-1} + I\right) \left( O + N(I-\wM)^{-1} N \right) \bb \right\}_x / b_x \]
is non-zero (this is the variance of the displacement of the reduced \brw), 
\item for every $y\in\kX$, $\P(\Lambda^{(y)}>r) = o(r^{-4})$ as $r\to\infty$, 
\item for every $y\in\kX$, $\E[D^{(y)}]<\infty$, and there exists one $y\in\kX$ with $\E[D^{(y)}]>0$. 
\end{enumerate}
\end{assumptions}

The assumptions are the analogues to their counterpart (number by number) in Assumption \ref{assum}. For example, Assumption \ref{assum-multitype}.2 ensures that the reduced \brw is centered.

\begin{proposition}\label{prop:equiv-assum}
The reduced \brw satisfies Assumptions \ref{assum} (replacing 6. by $\E[\D]\in (0,\infty)$), hence the multitype \brw satisfies Theorems \ref{th:main-tail} to \ref{th:main-condition-vol}, if and only if the multitype \brw satisfies Assumptions \ref{assum-multitype}. 
\end{proposition}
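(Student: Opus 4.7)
I would prove the equivalence by translating, one by one, each of the six conditions in Assumptions \ref{assum-multitype} into the corresponding condition of Assumptions \ref{assum} on the reduced monotype \brw $(\T,(\D_u),(\X_u),(\Lambda^\T_u))$ under $\P_x$. For each quantity of interest attached to the root of the reduced \brw---offspring count $\#\partial\TT_\emptyset$, displacement sum $\int t\,d\LL_\emptyset$, their second moments, the maximum decoration $\Lambda^\T_\emptyset$, and the total weight $\D_\emptyset$---I would decompose over the first generation of the original multitype \brw and solve the resulting linear recursion on the type of the root. Because the $x$-th row of $\wM$ is zero and $M_{\bar x,\bar x}$ has spectral radius $<1$ (by Perron--Frobenius, $M$ being irreducible with top eigenvalue $\rho=1$), the inverse $K := (I-\wM)^{-1}=\sum_{n\geq 0}\wM^n$ exists and is finite; its entries encode the expected number of excursion vertices of each type, and it appears in every closed form below.

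\textbf{Items 1, 2, and 6.} Let $H_y := \E_y[\#\partial\TT_\emptyset]$. The first-generation decomposition gives
\[ H_y \;=\; M_{y,x} + \sum_{z\neq x} M_{y,z}\, H_z, \]
whose unique nonnegative solution is $H=\bb/b_x$ (verified by substitution using $M\bb=\bb$); in particular $H_x=1$, which is the reduced criticality (Assumption \ref{assum}.1) and is equivalent to Assumption \ref{assum-multitype}.1. Applied to $\bar X_y := \E_y[\sum_{v\in\partial\TT_\emptyset} X_v]$, the same decomposition gives
\[ \bar X_y \;=\; (NH)_y + \sum_{z\neq x} M_{y,z}\, \bar X_z, \]
and inverting the block-triangular operator $I$ minus $M$ with its $x$-th column zeroed expresses $\bar X_x$ as an explicit linear functional of $N\bb$; its vanishing is Assumption \ref{assum-multitype}.2. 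For the total weight, $\E_x[\D_\emptyset]=\E[D^{(x)}]+\sum_{y\neq x}(MK)_{x,y}\,\E[D^{(y)}]$ is finite and positive precisely under Assumption \ref{assum-multitype}.6 together with the irreducibility of $M$.

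\textbf{Items 3 and 4 (second moments).} Squaring $\#\partial\TT_\emptyset$ and expanding via the first-generation decomposition introduces, in addition to terms in $H_z^2$, (i) a variance contribution from each $\chi^{(y)}(\R\times\kX)$ (finite iff Assumption \ref{assum-multitype}.3) and (ii) cross-covariances between sibling subtrees, which factorize because those subtrees are independent conditionally on their types. The resulting linear recursion is solved by the same block-inversion and gives a finite answer iff Assumption \ref{assum-multitype}.3 holds. The computation of $\E_x[(\int t\,d\LL_\emptyset)^2]$ is analogous, using $O$ for the per-type sum of squared displacements and $N$ for their means; after algebraic simplification one recovers the closed form for $\eta^2$ announced in Assumption \ref{assum-multitype}.4.

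\textbf{Item 5 (tail of $\Lambda^\T$).} This is the main obstacle. One bounds
\[ \Lambda^\T_\emptyset \;\leq\; \sup_{v\in\TT^\circ_\emptyset}|X_v| \;+\; \sup_{v\in\TT^\circ_\emptyset}\Lambda^{(\bs(v))}_v, \]
and splits $\{\Lambda^\T_\emptyset>r\}$ along the two halves. The decoration half is dispatched by a union bound: each $\Lambda^{(y)}$ has tail $o(r^{-4})$ by Assumption \ref{assum-multitype}.5, and $\TT^\circ_\emptyset$ is a subcritical multitype tree (since $\wM$ has spectral radius $<1$), so its size has exponential tail and finite expectation. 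The displacement half is delicate: one uses the exponential tail on $|\TT^\circ_\emptyset|$ to restrict to sizes $\leq r^\alpha$ at cost $o(r^{-4})$, then applies Doob's $L^2$-maximal inequality along each of the (at most $r^\alpha$) ancestral lines, combining the individual offspring tail $\P(|X|>r)=o(r^{-4})$ (a consequence of Assumption \ref{assum-multitype}.5 via $\Lambda^{(y)}\geq\sup\chi^{(y)}$) and the second moment from Assumption \ref{assum-multitype}.4. Finally, the reverse direction of the equivalence is automatic because each identity above expresses a reduced-\brw moment as an invertible linear functional of the multitype data.
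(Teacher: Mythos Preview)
Your treatment of items 1--4 and 6 is essentially what the paper does: first-generation decomposition, linear recursion in the type, and inversion via $(I-\wM)^{-1}$. The identification $H=\bb/b_x$ and the analogous formulas for the first and second displacement moments match the paper's computations.

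The gap is in item 5, and it is genuine. First, the exponential tail on $|\TT^\circ_\emptyset|$ that you invoke is \emph{not} available under Assumption \ref{assum-multitype}.3 alone: a subcritical Galton--Watson tree with only a second moment on its offspring need not have exponentially decaying progeny (exponential moments on the offspring would be required). The \emph{height} has exponential tail, since $\P(H>h)\leq C\rho^h$ with $\rho$ the spectral radius of $\wM$, but the size does not in general. Second, even granting a restriction to size (or height) $\leq r^\alpha$, Doob's $L^2$-maximal inequality along an ancestral line only yields $\P(\sup_j |S_j|>r)\leq C r^{\alpha}/r^2$; after a union over $\leq r^\alpha$ vertices this is $O(r^{2\alpha-2})$, which is never $o(r^{-4})$ for $\alpha>0$. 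Third, the claim $\P(|X|>r)=o(r^{-4})$ is false: from $\Lambda^{(y)}\geq\sup\chi^{(y)}$ you only get the one-sided bound $\P(X>r)=o(r^{-4})$, and there is no assumption controlling the left tail of a single step. No combination of these three ingredients as stated produces $o(r^{-4})$.

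The paper avoids the displacement/decoration split entirely. It writes a one-step recursion directly on the tail $v_y(r)=\tilde b_y^{-1}\P_y(\sup_{u\in\TT^\circ_\emptyset}\Lambda_u>r)$, averages against the left Perron vector of $\wM$, and obtains
\[
v(r)\;\leq\;\sum_y \tilde a_y\,\P(\Lambda^{(y)}>r)\;+\;\rho\,\E[v(r-\bY)]
\]
where $\rho<1$ is the spectral radius of $\wM$ and $\bY$ is a stochastic upper bound on the one-step displacement with $\P(\bY>t)=o(t^{-4})$. Restricting to $\{\bY\leq\varepsilon r\}$ turns this into $M(r)\leq o(1)+\rho(1-\varepsilon)^{-4} M(r(1-\varepsilon))$ for $M(r)=r^4 v(r)$, and choosing $\varepsilon$ small so that $\rho(1-\varepsilon)^{-4}<1$ gives first boundedness and then $M(r)\to 0$. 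This recursive contraction is the key idea you are missing; it uses only the first-moment subcriticality of $\wM$ and the one-sided tail hypothesis, with no appeal to the size of $\TT^\circ_\emptyset$ or to moment inequalities along ancestral lines.
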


We postpone the proof to Appendix \ref{sec:proof-multitype}. The following Lemma provides sufficient, simpler conditions to check that Assumption \ref{assum-multitype}.2 holds and that $\eta^2$ is non-zero. Its proof is also in Appendix \ref{sec:proof-multitype}.

\begin{lemma}\label{lem:simpler-assum}
\begin{enumerate}
\item Assumption \ref{assum-multitype}.2 holds if $N_{y,z} = 0$ for every $y,z\in\kX$.

\item The condition $\eta^2>0$ in assumption \ref{assum-multitype}.4 holds if for at least one pair $(y,z)\in\kX^2$ the mean measure $A \mapsto \E[\chi^{(y)}(A\times\{z\})]$ is non-zero and not supported on a single point.  
\end{enumerate}
\end{lemma}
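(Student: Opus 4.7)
The first part is immediate: if $N_{y,z}=0$ for every $y,z\in\kX$, then $N$ is the zero matrix, so $N\bb=0$ and hence $\left(M(I-\wM)^{-1}+I\right)N\bb=0$; in particular its $x$-coordinate vanishes, which is Assumption \ref{assum-multitype}.2.

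For the second part, the plan is to exploit the probabilistic interpretation of $\eta^2$ announced in Assumption \ref{assum-multitype}.4: up to the positive normalization by $b_x$, the formula for $\eta^2$ equals the second-moment quantity $\E_x\!\left[\sum_{v\in\partial\TT_\emptyset} X_v^2\right]$, which is manifestly non-negative. Hence $\eta^2=0$ iff $X_v=0$ almost surely for every $v\in\partial\TT_\emptyset$, and the task reduces to exhibiting, under the stated hypothesis, a vertex $v\in\partial\TT_\emptyset$ with $\P_x(X_v\neq 0)>0$.

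Pick a pair $(y,z)$ for which the mean measure $A \mapsto \E[\chi^{(y)}(A\times\{z\})]$ is non-zero and not supported on a single point. Since a non-zero measure concentrated at $\{0\}$ would itself be a Dirac mass, this measure must charge some $t_0\neq 0$; hence, conditional on a type-$y$ vertex having a type-$z$ offspring (an event of positive probability, as $M_{y,z}>0$), the displacement of that offspring relative to its parent is not almost surely constant. The next step is to construct, under $\P_x$ and with positive probability, an ancestral chain $\emptyset = u_0, u_1, \dots, u_k = v$ in $T$ with $\bs(u_0)=\bs(u_k)=x$, $\bs(u_i)\neq x$ for $1\leq i\leq k-1$, and $(\bs(u_{j-1}),\bs(u_j))=(y,z)$ at some index $j$: by irreducibility of $M$ (and hence positivity of $\bb$), one routes from type $x$ to type $y$ without revisiting $x$, then from type $z$ back to $x$ without intermediate visits to $x$ (the boundary cases $y=x$ or $z=x$ being trivial), each such route having positive probability. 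On this event $v\in\partial\TT_\emptyset$. Conditionally on the realised chain, the step displacements along it are independent, and the $(y,z)$-step has a non-degenerate conditional distribution by the preceding remark. Since a sum of independent random variables with at least one non-degenerate summand is itself non-degenerate, $X_v$ is not almost surely zero, yielding $\eta^2>0$.

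The main obstacle I anticipate is the bookkeeping for the conditioning: one must work on a positive-probability tree event specifying the types along a spine while preserving the correct conditional distribution of the step displacement at position $j$ (in particular its non-degeneracy, inherited from the hypothesis on the mean measure of $\chi^{(y)}(\cdot\times\{z\})$ via a standard size-biasing argument). This is a direct spinal decomposition for multitype branching processes, in the spirit of \cite{raphaelis2017scaling}; modulo this, the argument reduces to the non-degeneracy of a sum of independent random variables.
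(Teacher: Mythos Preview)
Your proof of part 1 is identical to the paper's.

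For part 2 you take a genuinely different route. The paper works algebraically: it expands $\E_x[\int t^2\,d\LL_\emptyset(t)]$ via a one-generation recursion in terms of the size-biased first-step displacements $X_{x,z}$ (defined by $M_{x,z}\E[f(X_{x,z})]=\E[\int f(t)\ind{\kx=z}\,d\chi^{(x)}(t,\kx)]$) and the recursive moments $\E_z[\int t^i\,d\widetilde\LL_\emptyset(t)]$, then applies two Cauchy--Schwarz inequalities, $\E[X_{x,z}^2]\ge\E[X_{x,z}]^2$ and $\E_z[\widetilde\LL_\emptyset(\R)]\,\E_z[\int t^2\,d\widetilde\LL_\emptyset]\ge\E_z[\int t\,d\widetilde\LL_\emptyset]^2$, and iterates to conclude that $\eta^2=0$ forces every $X_{y,z}$ to be almost surely constant. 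This is self-contained and avoids any spine machinery.

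Your probabilistic approach is sound in outline, but the ``conditioning on the realised chain'' step as you phrase it does not go through: once you fix a particular spine (e.g.\ the leftmost chain with the prescribed types), the conditional law of its increments need \emph{not} be the mean-measure law, and can be degenerate even when the latter is not. For instance, if $\chi^{(x)}$ equals $\delta_{(0,y)}$ or $\delta_{(0,y)}+\delta_{(1,y)}$ each with probability $1/2$, then $A\mapsto\E[\chi^{(x)}(A\times\{y\})]=\delta_0+\tfrac12\delta_1$ is not a Dirac, yet the first type-$y$ child sits at $0$ almost surely. The fix you yourself point to in the last paragraph is the right one, and should be the primary argument rather than an afterthought: replace conditioning by the many-to-one identity
\[
\E_x\Bigl[\sum_{\substack{v\in\partial\TT_\emptyset\\ \text{spine types}=s}} X_v^2\Bigr]
=\Bigl(\prod_{i=1}^k M_{s_{i-1},s_i}\Bigr)\,\E\Bigl[\Bigl(\sum_{i=1}^k X_{s_{i-1},s_i}\Bigr)^2\Bigr]
\]
for each type-sequence $s=(x,s_1,\dots,s_{k-1},x)$ with $s_i\ne x$ for $0<i<k$, where the $X_{s_{i-1},s_i}$ are independent with law $M_{s_{i-1},s_i}^{-1}\E[\chi^{(s_{i-1})}(\cdot\times\{s_i\})]$. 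Choosing $s$ (via irreducibility, as you describe) to contain the $(y,z)$-step makes the right-hand side strictly positive since $\mathrm{Var}(X_{y,z})>0$, and $\eta^2$ dominates this nonnegative summand.
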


\subsection{Mobiles of generic critical pointed and rooted Boltzmann planar maps}
\label{sec:mobile}

Our results allow a control of the volume of generic critical rooted and pointed Boltzmann planar maps, conditioned on the distance between their root vertex and marked vertex being large. It also gives us access to the tail of the distribution of this distance. Both are useful to investigate the fine geometric properties of Boltzmann planar maps. We plan to investigate further consequences of our results (and the corollary below) in an upcoming work. 

Let us make our point more precise. 	 
A planar map is a cellular embedding of a connected finite planar graph on the sphere, considered up to orientation-preserving homeomorphism. By cellular, we mean that the image of edges do not intersect except possibly at their endpoints, and that the connected component of the complement of the union of all edges form a collection of disjoint simply connected open sets, which we call the faces of the map. The degree of a face, resp. of a vertex is the number of edges incident to that face, resp. that vertex (if both ``sides'' of an edge are incident to the same face, or both ``endpoints'' are the same vertex, the edge is counted twice). A map is bipartite if all its faces have even degree. A rooted and pointed map $(\bm, e, v)$ is a map $m$ with a distinguished oriented edge $e$, called the root edge, and a distinguished vertex $v$. A bipartite map is said to be positive if is oriented towards its endpoint that is closest to $v$ for the graph distance in the map.

In this section, all the maps we consider are positive rooted and pointed bipartite planar maps. 
By convention, we allow the map $\dagger$ with a single vertex, no edge, and a single face of degree $0$, and we consider it to be rooted and pointed. 

Given nonnegative numbers $(\bq_k)_{k\geq 1}$ that are not identically zero, define for every map $\bm$
\[ w_\bq(\bm) := \prod_{f\in F(\bm)} \bq_{\deg(f)} , \]
where $F(\bm)$ is the set of faces of $\bm$, and with the convention $w_\bq(\dagger) = 1$. Let $Z_\bq = \sum_{\bm} w_\bq(\bm)$, where the sum is over all positive rooted and pointed bipartite planar maps. Proposition 1 in \cite{marckert2007invariance}, rephrased in \cite[Section 3.2.2]{curien2019peeling} (we follow this latter reference) gives a condition for $Z_\bq < \infty$ (we say that the sequence $(\bq_k)_{k\geq 1}$ is admissible), namely that the function
\[ f_\bq(x) = 1 + \sum_{k\geq 1} \bq_\bk {{2k-1}\choose k} x^k \]
is such that $f_\bq(x) = x$ admits a positive solution; $Z_\bq$ is then its smallest positive solution. The weight sequence is critical if $f_\bq'(Z_\bq) = 1$, and generic critical if in addition $f_\bq''(Z_\bq) < \infty$. In this section, we always assume that $\bq$ is generic critical. 

Positive pointed and rooted bipartite maps are in bijection by the Bouttier--Di Francesco--Guitter bijection \cite{bouttier2004planar} with rooted mobiles; when the map is distributed under the Boltzmann distribution, the mobile has the distribution of a two-type (undecorated) \brw. Let us follow \cite[Section 2.3]{marckert2007invariance} to make this more precise, with two caveats: first, our $f_\bq(x)$ equals $1+xf_\bq(x)$ with their $f_\bq$; second, we work with \emph{rooted and pointed} maps, while \cite{bouttier2004planar} works with \emph{pointed} maps and \cite{marckert2007invariance} works with \emph{rooted} maps. Since the bijection is essentially the same in all three settings, we focus on describing the distribution of the mobile, using the formalism of Section \ref{sec:extensions}. 

Let $\kX = \{F,V\}$. Vertices at even generations are of type $V$ while vertices at odd generations are of type $F$. Vertices of type $F$, resp. type $V$, follow a \repscheme $\BB_F$, resp. $\BB_V$ defined as follows. Let 
$\mu_F(k) = \frac{1}{1-Z_\bq^{-1}} {{2k+1} \choose {k+1}} \bq_{k+1} Z_\bq^{k}$ for every $k\geq 0$, and $\mu_V(k) = Z_\bq^{-1} (1-Z_\bq^{-1})^k$ for every $k\geq 0$; and let $N_F \sim \mu_F$, resp. $N_V \sim \mu_V$. 
First, let $D_V\geq 0$ be any integrable random variable, that may depend on $N_V$, and let 
\[ \BB_V = ( N_V \delta_{(0,F)} \ , \ 0 \ , \ D_V \ , \ (0,F)_{1\leq n \leq N_V}) . \]
Conditionally on $N_F$, define $(b_k)_{0\leq k\leq 2N_F+2}$ be a uniform bridge with steps $\pm 1$ with $b_0 = b_{2N_F+2} = 0$ and $b_1 = -1$. An index $0\leq k < 2N_F+2$ is called a downstep if $b_{k+1} = b_k-1$. Write $\DD$ for the (random) set of downsteps of $(b_k)_{0\leq k \leq 2N_F+2}$, excluding $0$. Then let $D_F\geq 0$ be an integrable random variable, that may depend on $N_F$, $\DD$ and $(b_k)_{k\in\DD}$, and define 
\[ \BB_F = \left( \sum_{k\in\DD} \delta_{(b_k,V)} \ , \ \max_{k\in\DD} b_k \ , \ D_F \ , \  (b_k,V)_{k\in\DD} \right) . \]


\begin{proposition}\label{prop:mobile}
Assume that $D_V$ and $D_F$ are not both a.s. zero. Then the two-type \brw 
\[ (T, \bs, (D_v)_{v\in T}, (X_v)_{v\in T}, (\Lambda_v)_{v\in T}) \]
satisfies Assumptions \ref{assum-multitype}. 
\end{proposition}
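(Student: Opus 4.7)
The strategy is to verify the six items of Assumptions \ref{assum-multitype} in turn, using the explicit descriptions of $\BB_V$ and $\BB_F$ together with the three scalar identities $f_\bq(Z_\bq)=Z_\bq$, $f_\bq'(Z_\bq)=1$ and $f_\bq''(Z_\bq)<\infty$ coming from the generic criticality of $\bq$. Since the children of a type-$V$ (resp.\ type-$F$) vertex are all of type $F$ (resp.\ $V$), the matrix $M$ has vanishing diagonal. Using that $N_V$ is geometric with parameter $Z_\bq^{-1}$ gives $M_{V,F}=\E[N_V]=Z_\bq-1$; the index change $j=k+1$ in $\sum_{k\ge 0} k\mu_F(k)$, combined with $f_\bq(Z_\bq)=Z_\bq$ and $f_\bq'(Z_\bq)=1$, gives $M_{F,V}=\E[N_F]=1/(Z_\bq-1)$. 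Thus $M$ is irreducible, with largest eigenvalue $1$ and right eigenvector $\bb=(Z_\bq-1,1)$, proving Assumption \ref{assum-multitype}.1.

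For Assumption \ref{assum-multitype}.2 I will show that $N\equiv 0$, so that Lemma \ref{lem:simpler-assum}.1 applies. The only potentially nonzero entry is $N_{F,V}=\E[\sum_{k\in\DD} b_k]$, since type-$V$ vertices produce offspring at displacement $0$. Conditionally on $N_F=n$, the remaining $2n+1$ steps $(s_1,\dots,s_{2n+1})$ of the bridge are uniform over arrangements of $n$ minus signs and $n+1$ plus signs, so by exchangeability $\E[s_i\mid s_k=-1]=1/n$ for $i\neq k$. Expanding $b_k=-1+\sum_{i<k}s_i$ and summing over $\{k:s_k=-1\}$ gives
\[
\E\!\left[\sum_{k\in\DD}b_k\,\Big|\,N_F=n\right]=-n+\binom{2n+1}{2}\cdot\frac{1}{2n+1}=0.
\]
Assumption \ref{assum-multitype}.3 reduces to $\E[N_F^2]<\infty$, which by the same index shift equals $(Z_\bq f_\bq''(Z_\bq)-Z_\bq^{-1})/(1-Z_\bq^{-1})$, finite by generic criticality. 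For Assumption \ref{assum-multitype}.4, the standard bound $\E[b_k^2\mid N_F=n]=O(n)$ for uniform $\pm 1$ bridges gives $O_{F,V}\le C\,\E[N_F^2]<\infty$; and the positivity of $\eta^2$ follows from Lemma \ref{lem:simpler-assum}.2, because criticality forces $\mu_F(k_0)>0$ for some $k_0\ge 1$ and on $\{N_F=k_0\}$ the values $(b_k)_{k\in\DD}$ are not concentrated at a single point (already for $k_0=1$ the three equally likely bridges place the unique free downstep at $+1$, $0$ or $-1$). Assumption \ref{assum-multitype}.6 is immediate from the integrability of $D_V, D_F$ and the hypothesis that they are not both a.s.\ zero.

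The main obstacle is Assumption \ref{assum-multitype}.5, the decay $\P(\Lambda^{(F)}>r)=o(r^{-4})$, since generic criticality yields only $\P(N_F\ge r^2)=o(r^{-4})$, with no margin. The plan is to combine the reflection-principle bound $\P(\Lambda^{(F)}>r\mid N_F=n)\le Ce^{-cr^2/n}$ (valid because $\Lambda^{(F)}$ is bounded by the maximum of a $\pm 1$ bridge of length $2n+2$) with the elementary inequality $r^4 e^{-cr^2/n}\le C'n^2$, obtained by maximizing the left-hand side in $r$. This gives
\[
r^4\,\P(\Lambda^{(F)}>r)\;\le\;CC'\sum_n n^2\mu_F(n)\cdot\frac{r^4 e^{-cr^2/n}}{C'n^2},
\]
where each summand tends to $0$ as $r\to\infty$ and is dominated by the summable sequence $n^2\mu_F(n)$ from Assumption \ref{assum-multitype}.3. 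Dominated convergence then yields $r^4\,\P(\Lambda^{(F)}>r)\to 0$, completing the verification.
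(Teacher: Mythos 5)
Your proposal is correct and proceeds exactly as the paper does---checking each item of Assumptions~\ref{assum-multitype} in turn, with the same key tools (the three identities from generic criticality, Lemma~\ref{lem:simpler-assum}, and the reflection principle for $\pm 1$ bridges). The eigenvector computation, the identification of $M$, and the verification of items 1, 3, 6 match the paper. A few of the detailed verifications differ slightly, all for the better or equivalently: for item 2 you use an exchangeability computation (counting pairs $j<k$ and using $\E[s_j\,|\,s_k=-1]=1/n$) where the paper uses a reflection symmetry of the bridge---both are valid and about the same length; for item 4 you content yourself with the rough bound $\E[b_k^2\,|\,N_F=n]=O(n)$, whereas the paper pushes through the hypergeometric calculation to get the exact second moment $\frac{n^2+n}{3}$ (which it then reuses to give the explicit value of $\eta^2$); for item 5 you go directly to $r^4\P(\Lambda^{(F)}>r)\to 0$ via the maximization $r^4 e^{-cr^2/n}\le C'n^2$ and dominated convergence, whereas the paper instead shows $\E[(\sup_k b_k)^4]<\infty$ and invokes Lemma~\ref{lem:refined-Markov}---your route is a bit more direct and avoids the auxiliary lemma. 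Two small omissions you should add: you do not verify the preliminary non-degeneracy condition required before Assumptions~\ref{assum-multitype} (trivial: $\mu_V$ is a nondegenerate geometric law, so $\P(N_V=1)<1$); and in the positivity argument for $\eta^2$ the parenthetical only treats $k_0=1$---for Lemma~\ref{lem:simpler-assum}.2 one should note that for any $k_0\ge 1$ with $\mu_F(k_0)>0$ the conditional mean measure of $\sum_{k\in\DD}\delta_{b_k}$ is non-degenerate, so the unconditional one is too.
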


\begin{proof}
See Appendix \ref{sec:prop-mobile}.
\end{proof}

\begin{corollary}\label{cor:maps}
Let $\bm$ be a positive pointed and rooted planar map distributed under the probability distribution $Z_\bq^{-1} w_\bq(\cdot)$ (this is the Boltzmann distribution), and let $\bd$ be the distance between its root vertex and its pointed vertex and $N_V$, resp. $N_F$ and $N_E$ its number of vertices, resp. faces or edges. Then letting $\sigma^2 = Z_\bq^2 f_\bq''(Z_q)$ and $c_V = 1$, $c_E=Z_\bq$ and $c_F = Z_\bq-1$, 
as $r\to\infty$, 
\begin{align}
	\P(\bd > r) &\sim 2 r^{-2} , \\
	\P(\bd = r) &\sim 4 r^{-3} ,
\end{align}
and under $\P(\, \cdot \, | \bd = r)$, resp. under $\P(\, \cdot \, | \bd>r)$, we have the convergence in distribution
\[ \frac{1}{\sigma^2 r^4} \left(\frac{N_V}{c_V} , \frac{N_F}{c_F}, \frac{N_E}{c_E}\right) \ulimd r \infty (\bn, \bn, \bn) \]
where recalling $\psi$ from Theorem \ref{th:main-volume-smaller}
\[ \E\left[ \e^{-2\alpha^2 \bn} \right] = \frac{2\alpha}{\sigma^2} \psi\left(\sqrt{\frac{12\alpha}{\sigma^2}}\right) . \]
\end{corollary}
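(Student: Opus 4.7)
The plan is to transport the Boltzmann distribution on $\bm$ through the Bouttier--Di Francesco--Guitter bijection to the two-type mobile from Proposition~\ref{prop:mobile}, and then feed the resulting \brw into Theorems~\ref{th:main-tail}--\ref{th:main-condition-vol} via the multitype reduction of Section~\ref{sec:extensions}. Under the BDG bijection in the positive rooted-pointed convention of \cite{marckert2007invariance}, the label $X_v$ of every type-$V$ vertex of the mobile equals (up to a deterministic additive constant) the distance in $\bm$ from the corresponding vertex to the pointed vertex; this identifies $\bd$ with $1+\sup_{v\in T}\Lambda_v$. Moreover, $N_V, N_F, N_E$ are read off the mobile as sums $\sum_{v\in T}D_v$ --- up to $O(1)$ corrections that are negligible at scale $r^4$ --- for the three decoration choices $(D^{(V)},D^{(F)})\in\{(1,0),(0,1),(0,k+1)\}$, where in the third case $k$ denotes the number of type-$V$ children of the corresponding type-$F$ vertex (a face of degree $2(k+1)$ contributes $k+1$ edges).

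Proposition~\ref{prop:mobile} then ensures that Assumptions~\ref{assum-multitype} hold for each of these decoration choices, so Proposition~\ref{prop:equiv-assum} passes the assumptions to the reduced monotype \brw obtained by keeping only the type-$V$ vertices. Because the two types alternate, each chunk $\TT_u^\circ$ with $\bs(u)=V$ consists simply of $u$ and its type-$F$ children, which makes the first two moments of the reduction very explicit. Using that the mean of $\mu_V$ equals $Z_\bq-1$ and that the criticality of the reduced \brw forces the mean of $\mu_F$ to be $1/(Z_\bq-1)$, I would compute $\E[\D^{(V)}] = 1$, $\E[\D^{(F)}] = Z_\bq-1$ and $\E[\D^{(E)}] = Z_\bq$, matching the constants $c_V$, $c_F$ and $c_E$. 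A variance computation via the law of total variance identifies the offspring variance of the reduced \brw with $\sigma^2_r = Z_\bq^2 f_\bq''(Z_\bq)$, matching the $\sigma^2$ of the statement. For the displacement variance, I would unpack the closed-form expression of Assumption~\ref{assum-multitype}.4 using the explicit $\pm 1$-bridge distribution at type-$F$ vertices and obtain $\eta^2_r = \sigma^2_r/3$.

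With these constants in hand, the tail asymptotics follow from Theorem~\ref{th:main-tail} (via $6\eta^2_r/\sigma^2_r = 2$) and Theorem~\ref{th:main-condition} (via $12\eta^2_r/\sigma^2_r = 4$), after verifying that the lattice hypotheses of the latter are satisfied by the mobile. For the volume convergence, Theorem~\ref{th:main-volume-smaller} applied to each decoration normalised so that $\E[\D]=1$ yields the marginal convergence
\[
\E\bigl[\e^{-2\alpha^2 N_i/(c_i\sigma^2 r^4)}\,\bigm|\,\bd>r\bigr] \ulim r \infty \frac{2\alpha}{\sigma^2}\psi\!\left(\sqrt{\tfrac{12\alpha}{\sigma^2}}\right) \qquad (i\in\{V,F,E\}),
\]
and similarly for the $\bd=r$ conditioning via Theorem~\ref{th:main-condition-vol}. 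To upgrade this to joint convergence with a \emph{common} $\bn$, I would apply the same theorems to an arbitrary positive linear combination $D=\lambda_V D^{(V)}+\lambda_F D^{(F)}+\lambda_E D^{(E)}$: since $\E[\D]$ scales linearly in the $\lambda_i$, the limiting Laplace transform of $(\lambda_V N_V+\lambda_F N_F+\lambda_E N_E)/(\sigma^2 r^4)$ coincides with that of $(\lambda_V c_V+\lambda_F c_F+\lambda_E c_E)\bn$, and by Cram\'er--Wold this forces the joint limit of $(N_V/c_V, N_F/c_F, N_E/c_E)/(\sigma^2 r^4)$ to be concentrated on the diagonal, i.e.\ $(\bn,\bn,\bn)$.

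The main obstacle I expect is the computation of $\eta^2_r$: unpacking the formula of Assumption~\ref{assum-multitype}.4 involves the second moments of a uniform $\pm 1$-bridge at its downsteps weighted against $\mu_F$, and the algebraic simplification has to land precisely on $\sigma^2_r/3$. The remaining work --- the BDG bookkeeping (positive convention, exceptional roles of the pointed vertex, root edge and root face, and the $\N$-valued lattice check needed to invoke Theorem~\ref{th:main-condition}) --- is essentially routine.
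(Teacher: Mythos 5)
Your plan is correct and follows essentially the same route as the paper's one-paragraph proof: identify $\bd - 1$ with $\sup_v\Lambda_v$ under the BDG bijection, choose decorations $D^{(V)},D^{(F)}$ so that $\sum_v D_v$ counts $N_V$, $N_F$ or $N_E$ up to $O(1)$, and feed the reduced \brw through Theorems~\ref{th:main-tail}, \ref{th:main-condition}, \ref{th:main-volume-smaller}, \ref{th:main-condition-vol} using the constants $6\eta^2/\sigma^2=2$ computed in the proof of Proposition~\ref{prop:mobile}. The only deviations are cosmetic — the paper counts edges by taking $D_V=D_F=1$ and invoking Euler's formula, whereas you set $D^{(F)}=k+1$ and count edge-sides per face (both yield $\E[\D]=Z_\bq=c_E$) — and you usefully make explicit the Laplace--Cram\'er--Wold argument for joint convergence to the diagonal $(\bn,\bn,\bn)$, which the paper only hints at in the remark preceding Theorem~\ref{th:main-volume-smaller} about multivariate weights.
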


This follows from the interpretation of the maximum displacement of the mobile as minus one, plus the distance between the root vertex and pointed vertex of $\bm$. 
To count the number of vertices one takes $D_V = 1$ and $D_F = 0$; to count the number of faces one takes $D_V = 0$ and $D_F = 1$; and recalling Euler's formula, to count the number of edges we takes $D_V = D_F = 1$ (up to an additive constant which does not contribute to the formulas of Corollary \ref{cor:maps} as $r\to\infty$).

\subsection{Connection with the Brownian snake}
\label{sec:snake}

Under stronger hypotheses than ours, it has been shown e.g. in \cite{janson2005convergence,miermont2008invarianceGW,raphaelis2017scaling} and references therein that the \brw converges towards the Brownian snake. Let us make this more precise in a simple case, and explain how this can shed light on our results. 

We follow \cite{janson2005convergence}. In their case, $\chi = \sum_{i=1}^\xi \delta_{U_i}$ and $\Lambda = \sup\chi$, where the $(U_i)_{i\geq 1}$ are i.i.d. with $\E[U_1]=0$, $\Var(U_1) = \eta^2 >0$ and $\P(U_1>t) = o(t^{-4})$, and $\E[\xi]=1$, $\Var(\xi) = \sigma^2 > 0$ and $\E[\e^{a\xi}]<\infty$ for some $\alpha>0$. The condition of exponential moment on the offspring distribution is present in most of the literature, although it has been noted e.g. in \cite{miermont2008invarianceGW} that it could be weakened. The structure of $\chi$ was also weakened e.g. in \cite{miermont2008invarianceGW}. Still, to the best of our knowledge the state of the art still falls short of our Assumptions \ref{assum}, which we conjecture are sufficient to establish the convergence towards the Brownian snake in the sense that follows. 

Condition the \brw to have progeny $n$, i.e. $n$ individuals in total across all generations. Let $f:[0,2n] \to T$ be the depth-first enumeration of the vertices of $T$, as defined in \cite[Section 1.3]{janson2005convergence}, let $V_n(t) = |f(t)|$ be the generation of $f(t)$ (its height in the tree) and $R_n(t) = X_{f(t)}$ its label (the discrete head process), defined for integer values and interpolated linearly for non-integer values. These two processes will be enough for our purposes. Then letting $v_n(t) = n^{-1/2} V_n(2nt)$ and $r_n(t) = n^{-1/4} R_n(2nt)$ for every $t\in [0,1]$, by \cite[Theorem 2 and Corollary 1]{janson2005convergence}, as well as the remark in their Section 1.4 for the normalization, we have that $((\sigma/2) v_n, (\sigma/2)^{1/2} \eta^{-1} r_n)\to (v,r)$ in distribution as $n\to\infty$ in $C([0,1])^2$, where $(v,r)$ is the head of the Brownian snake with lifetime process $v$ a normalized Brownian excursion. 

The quantities of interest for us are then easily defined: the total progeny $\bV$ of the \brw is analogous to the lifetime $\zeta$ of the excursion, while the maximum displacement $M$ is analogous to the maximum value attained by the head of the snake, $r_* = \sup_{[0,\zeta]} r$. As a consequence of the above convergence, we get that for every $x\in(0,\infty)$, 
\begin{equation}\label{eq:convergence-max-towards-snake}
\P(M\geq x n^{1/4} \ | \ \bV = n) \ulim n \infty \P(r_*\geq (\sigma/2)^{1/2} \eta^{-1} x) .
\end{equation}
It is well-known that as $n\to\infty$ (using e.g. Kemperman's formula and a local limit theorem for the Lukasiewicz walk of the Bienaymé--Galton--Watson tree)
\[ \P(\bV = n) \sim \frac{1}{\sqrt{2\pi\sigma^2}} n^{-3/2} . \]
Combining it with \eqref{eq:convergence-max-towards-snake}, we get that as $r\to\infty$,
\begin{align*}
\P(M\geq r) &= \sum_{n\geq 1} \P(\bV = n) \P(M\geq r \ | \ \bV=n) \\
&\sim (2\pi\sigma^2)^{-1/2} \sum_{n\geq 1} n^{-3/2} \P(r_* \geq \sigma^{1/2} \eta^{-1} r n^{-1/4}) \\
&\sim (2\pi\sigma^2)^{-1/2} r^{-2} \sum_{n\geq 1} r^{-4} \left( \frac{r^4}{n} \right)^{3/2} \P\left(r_* \geq \frac{\sigma^{1/2}}{\eta \sqrt{2}} \frac{r}{n^{1/4}} \right) \\
&\sim \frac{1}{r^2 \sqrt{2\pi\sigma^2}} \int_0^\infty y^{-3/2} \P\left(r_* \geq \frac{\sigma^{1/2}}{\eta\sqrt{2}} \ | \ \zeta = y \right)  \d y \\
&\sim \frac{2}{\sigma r^2} \N_0\left( r_* \geq \frac{\sigma^{1/2}}{\eta\sqrt{2}} \right) ,
\end{align*}
where $\N_0$ is the excursion measure of the Brownian snake started from $0$ (see \cite[Section III.5, Section IV.I and Section IV.6]{gall1999spatial}.) Since 
\[ \N_0(r_* \geq y) = \frac{3}{2y^2} \]
we finally get 
\[ \P(M\geq r) \sim \frac{6\eta^2}{\sigma^2 r^2} , \]
recovering our Corollary \ref{cor:main-tail}. Similarly, we can find that
\begin{align*}
\E\left[ \e^{-t\bV r^{-4}} \ | \ M\geq r \right] \ulim r \infty \N_0\left[ \e^{-t\zeta} \ | \ r_*\geq \frac{\sigma^{1/2}}{\eta\sqrt{2}} \right] .
\end{align*}
This gives us an interpretation of the limits in Theorem \ref{th:main-volume-smaller} and \ref{th:main-condition-vol}. 

So why prove our results directly? Most importantly, our hypotheses are weaker than the state-of-the-art for the convergence of discrete snakes towards the Brownian snake. In fact, we aim to use our estimates to prove the convergence towards the Brownian snake for \brw satisfying Assumptions \ref{assum} in an upcoming work.


\section{Proof of the tail estimate}
\label{sec:main-proof}

The goal of this section is to prove Theorem \ref{th:main-tail}. Our proof follows that of \cite[Theorem 1]{lalley2015maximal}, with adaptations. The main idea is to use the Markov property of the \brw to express our object of interest with a martingale: in our case, the function $w(r) = \frac{1}{\P(\sup_{v\in T} \Lambda_v \leq r)}-1 \approx \P(\sup_{v\in T}\Lambda_v > r)$ will be expressed as
\begin{equation}\label{eq:summary-w}
w(r) \approx \E\left[ \kW^{(r)}_n w(S_n) \right] 
\end{equation}
where $(S_n)$ is a random walk started at $r$ and $\kW^{(r)}_n$ is a weighting factor that depends on $w$ and on the trajectory of the random walk. This is established in Proposition \ref{prop:almost-martingale}, after first proving the martingale property in Proposition \ref{prop:convolution-equation} and \ref{prop:convolution-w}. Applying the optional stopping theorem at the first time that $S_n$ goes below a certain value $x$ gives us Corollary \ref{cor:Feynman-Kac}:
\[ \frac{w(r)}{w(x)} \approx \E\left[ \kW^{(r)}_{\tau_x} \right] , \]
from which we deduce a functional equation on the limit of the ratio $w(r)/w(x)$ as $x$ and $r$ go to infinity (Proposition \ref{prop:func-equation-phi}). Solving this equation then allows us to obtain the asymptotics of $w$. 

The difference in our approach happens, for the most part, in the build-up to Corollary \ref{cor:Feynman-Kac} and in the proof of Proposition \ref{prop:func-equation-phi}. For starters, proving \eqref{eq:summary-w} is much harder in our case than in \cite{lalley2015maximal}, because we cannot take advantage of a specific nice structure of the \repscheme. This forces us to truncate the possible values of the branching steps (including those of $\Lambda$); as a consequence, \eqref{eq:summary-w} will only hold up to a remainder that we need to control. The proof of Proposition \ref{prop:convolution-w} requires to devote significant effort to the control of the remainder, while its equivalent (Proposition 5) in \cite{lalley2015maximal} barely warrants a proof.

In addition, a crucial technical ingredient in the proof is a control of the ``continuity'' of $w$. This is done in \cite{lalley2015maximal} by Proposition 8 and Lemma 11. In this article, I use a new estimate on the continuity of $w$ (Lemma \ref{lem:continuity-w-broad}), which contains significant information on the behavior of $w$ (giving, for example, a lower bound that is optimal up to a multiplicative constant). This provides an alternative to a few steps in \cite{lalley2015maximal}. In particular, it provides a justification to the last convergence in the proof of \cite[Lemma 11]{lalley2015maximal}, which does not seem to follow from the estimates in \cite{lalley2015maximal}. 

Finally, to establish Corollary \ref{cor:Feynman-Kac} we need to control the uniform integrability of $\kW^{(r)}_n w(S_n)$ before applying the optional stopping theorem. This is easy in \cite{lalley2015maximal} where $\kW^{(r)}_n \leq 1$ almost surely; in our case it is given by Lemma \ref{lem:discount-UI}.

\subsection{The Markov property of the Branching Random Walk}
\label{sec:Markov-ppty-BRW}

Let $h(r) = \P(\sup_{v\in T} \Lambda_v \leq r)$. The function $h$ is increasing and right-continuous, and under Assumptions \ref{assum} we have $h(0)<1$, $h(r)=0$ for every $r<0$, and $h(r)\to 1$ as $r\to\infty$. We define $w(r) = \frac{1}{h(r)}-1$ for every $r\geq 0$. 

\begin{proposition}\label{prop:convolution-equation}
With the convention that $\ln 0 = -\infty$, for every $r\geq 0$
\begin{equation}
h(r) = \E\left[ \ind{\Lambda\leq r} \exp\left( \int \ln h(r-x) \d \chi(x) \right) \right] .
\end{equation}
\end{proposition}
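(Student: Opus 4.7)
\medskip

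\textbf{Plan of proof.} The idea is to decompose the event $\{\sup_{v\in T}\Lambda_v\leq r\}$ by conditioning on the reproduction scheme at the root, and then use the branching property to express the contribution of each subtree independently.

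First, I will condition on the root decoration $\Lambda = \Lambda^{(\emptyset)}$, the offspring point process $\chi$, and the enumeration $(X_i)_{1\leq i\leq \chi(\R)}$. The event $\{\sup_{v\in T}\Lambda_v\leq r\}$ holds if and only if (i) the root contributes a decoration $\Lambda_\emptyset = \Lambda \leq r$, and (ii) for every child $i$, the maximum decoration of the subtree rooted at $i$ is at most $r$. By construction of the decorated \brw, this subtree is distributed as an independent copy of the \brw translated by $X_i$, and the subtrees are mutually independent conditionally on the first generation. Hence
\[ \P\left(\sup_{v: i\preceq v}\Lambda_v \leq r \, \Big| \, \chi, \Lambda, (X_j)_j\right) = h(r - X_i) . \]
Crucially, on the event $\{\Lambda\leq r\}$, the assumption $\Lambda \geq \sup\chi$ a.s. ensures $X_i \leq r$ for every $i$, so $r - X_i \geq 0$ and $h(r-X_i)$ is well-defined.

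Multiplying over children and taking expectation gives
\[ h(r) = \E\left[\ind{\Lambda\leq r}\prod_{i=1}^{\chi(\R)} h(r-X_i)\right] . \]
Rewriting the finite product as an exponential of a sum and recognising the sum as the integral of $x\mapsto \ln h(r-x)$ against the counting measure $\chi = \sum_i \delta_{X_i}$ yields the claimed identity. The convention $\ln 0 = -\infty$ simply handles the (possibly empty) case where $h(r-X_i)=0$ for some $i$, in which case both sides of the corresponding contribution vanish consistently.

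\textbf{Anticipated difficulty.} The argument is essentially a direct application of the branching property, so I expect no substantive obstacle; the only points requiring minor care are the measurability of the conditional expectation when $\chi$ is a general point process (handled by the existence of the measurable enumeration $(X_i)$ built into the \repscheme) and the boundary case $h(r-X_i)=0$, which is accommodated by the stated convention. The real work in the paper begins in the subsequent propositions, where this identity is iterated and the function $w = 1/h - 1$ is shown to satisfy the near-martingale relation \eqref{eq:summary-w}.
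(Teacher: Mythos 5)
Your proof is correct and takes essentially the same route as the paper's: condition on the root's reproduction scheme, invoke the branching (Markov) property to obtain $h(r) = \E[\ind{\Lambda\leq r}\prod_{i=1}^{\chi(\R)} h(r-X_i)]$, then rewrite the product as an exponential of an integral against $\chi$. Your side remark on $\Lambda\geq\sup\chi$ ensuring $r-X_i\geq 0$ is not actually needed, since the paper defines $h(x)=0$ for $x<0$ and the convention $\ln 0=-\infty$ already makes both sides vanish consistently in that case.
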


This is the analogue of \cite[Proposition 5]{lalley2015maximal}. If $\Lambda = \sup(0, \sup\chi)$, then defining the Laplace functional $\LL_\chi(f) := \E[\exp(-\int f \d\chi)]$, then we recognize the equation $h(r) = \LL_\chi(-\ln h(r-\cdot))$.

\begin{proof}
This is a simple application of the Markov property. Let $(\chi, \Lambda, (X_i)_{1\leq i \leq \chi(\R)}) \sim \BB$ be independent of $(T, (X_v)_{v\in T}, (\Lambda_v)_{v\in T})$, then
\begin{align*}
h(r) &= \E\left[ \ind{\Lambda\leq r} \prod_{i=1}^{\chi(\R)} \P\left( \sup_{v\in T} \Lambda_v \leq r-X_i \right) \right] = \E\left[ \ind{\Lambda\leq r} \prod_{i=1}^{\chi(\R)} h(r-X_i) \right]
\end{align*}
from which we recognize the expression of the Proposition. 
\end{proof}

\subsection{Continuity of the tail}
\label{sec:lower-bound-ratio}

An original contribution that allows to circumvent a number of steps of \cite{lalley2015maximal} is the following Lemma.

\begin{lemma}\label{lem:continuity-w-broad}
There exists a neighborhood $I$ of $0$ and two non-increasing continuous functions $F_-, F_+$ on $I$ with $F_-(0) = F_+(0) = 1$ such that for every $y\in I$, 
\begin{align}\label{eq:continuity-ratio-right}
F_-\left( \frac{\sigma}{\eta} y \right) 
&\leq \liminf_{r\to\infty} \frac{w\left( r + \frac{y}{\sqrt{w(r)}}\right)}{w(r)} \\
&\leq \limsup_{r\to\infty} \frac{w\left( r + \frac{y}{\sqrt{w(r)}}\right)}{w(r)} \leq F_+\left( \frac{\sigma}{\eta} y \right) . \label{eq:continuity-ratio-left}
\end{align}
We can take 
\begin{equation*}
F_-(y) = \begin{cases} 
1-y\sqrt{\frac{8}{\pi}} \geq 1-2y &\text{if } y\geq 0 \\
1 						&\text{if } y<0 
\end{cases} \qquad , \qquad 
F_+(y) = \begin{cases}
1 &\text{if } y\geq 0 \\
f^{-1}(-y) &\text{if }y<0 \text{ large enough}
\end{cases}
\end{equation*}
where $f: [1,3] \to [0, \frac{\sqrt{\pi}}{3\sqrt{6}}] , x\mapsto \frac{1-x^{-1}}{\sqrt{x}}\sqrt{\pi/8}$. 

\end{lemma}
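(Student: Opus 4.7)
Two of the four inequalities come for free from monotonicity of $h$: if $y \geq 0$ then $r + y/\sqrt{w(r)} \geq r$, so $w(r + y/\sqrt{w(r)}) \leq w(r)$ and $\limsup \leq 1 = F_+(\sigma y/\eta)$; symmetrically, $\liminf \geq 1 = F_-(\sigma y/\eta)$ when $y \leq 0$. Only the lower bound at $y > 0$ and the upper bound at $y < 0$ require real work.

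The main tool is a second-order expansion of the convolution equation of Proposition \ref{prop:convolution-equation}. Writing $h = 1/(1+w)$ and using $\ln(1+w) = w - w^2/2 + O(w^3)$, together with the many-to-one identity $\E[\sum_i f(X_i)] = \int f\,dM$ (valid because $M$ is a probability measure), the identity $\E[\chi(\R)(\chi(\R)-1)] = \sigma^2$, and the hypothesis $\P(\Lambda > r) = o(r^{-4})$ to discard the exceedance indicator, one derives the approximate discrete mean-value relation
\[
\E[w(r - S_1)] - w(r) = \frac{\sigma^2}{2}\, w(r)^2 + o(w(r)^2),
\]
where $S_1$ has distribution $M$ (centered, variance $\eta^2$). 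Iterating this $n$ times gives $\E[w(r - S_n)] = w(r) + O(n\, w(r)^2)$ with $S_n$ a centered random walk of variance $n\eta^2$. An a priori bound $w(r) = O(r^{-2})$ available from earlier in Section \ref{sec:main-proof} keeps the error term meaningful.

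For the lower bound at $y > 0$, set $\delta = y/\sqrt{w(r)}$ and apply the iterated identity at $r$ and $r + \delta$; subtraction together with monotonicity of $w$ gives $w(r) - w(r + \delta) \leq \E[w(r - S_n) - w(r + \delta - S_n)] + O(n w(r)^2)$. Choosing $n$ of order $1/w(r)$, so that $\eta\sqrt{n}$ sits at the right scale relative to $\delta$, the Gaussian CLT for $S_n$ gives
\[
\frac{w(r + \delta)}{w(r)} \geq 1 - \frac{\sigma y}{\eta}\sqrt{\tfrac{8}{\pi}} + o(1),
\]
where the factor $\sqrt{8/\pi} = 2\,\E[|\mathcal{N}(0,1)|]$ arises from a rearrangement of the Gaussian density (and the crude bound $\sqrt{8/\pi} \leq 2$ yields the alternative form $F_-(y) \geq 1 - 2y$). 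For the upper bound at $y < 0$, the same computation run in the opposite direction forces $x := \limsup w(r+\delta)/w(r)$ to satisfy the self-consistency inequality $-\sigma y/\eta \leq f(x)$ with $f(x) = (1 - 1/x)/\sqrt{x}\cdot \sqrt{\pi/8}$, valid whenever $x \in [1, 3]$; inverting $f$ on this range yields $F_+ = f^{-1}$. The cap at $3$ is a technical artefact of the method, not a true constraint on $w$.

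The main obstacle is balancing the iteration depth against the accumulated error: at the CLT-relevant scale $n \sim 1/w(r)$, the error $O(n\, w(r)^2)$ is comparable to $w(r)$ itself, so extracting the sharp constant $\sqrt{8/\pi}$ requires careful error control, and must be carried out without assuming the final asymptotic of $w$. This is where the truncation scheme introduced earlier in Section \ref{sec:main-proof} is indispensable: it isolates the rare large jumps of $\chi$ (for which the Taylor expansion of $\ln(1+w)$ is not benign), and lets the argument close in a self-contained way, thereby resolving the circular dependence left open in \cite[Lemma 11]{lalley2015maximal}.
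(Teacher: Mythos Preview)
Your monotonicity observations for the two ``free'' inequalities match the paper exactly. The overall strategy for the nontrivial lower bound --- iterate the convolution equation, rescale, pass to a Gaussian limit --- is also correct in spirit. But there is a genuine gap in how you close the argument.

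You correctly identify the obstacle: iterating $n \sim 1/w(r)$ times produces an accumulated error of the same order as $w(r)$, and controlling that error requires knowing $w(r-S_k) \approx w(r)$ on the relevant scale, which is precisely the continuity statement you are proving. You then claim this circularity is broken by ``the truncation scheme introduced earlier in Section~\ref{sec:main-proof}''. This is wrong on two counts. First, the truncation events $E(r,R,\delta)$ and Proposition~\ref{prop:convolution-w} come \emph{after} the present Lemma in the logical structure; the Lemma must be proved self-contained. Second, and more importantly, truncation is not how the paper breaks the circularity. The paper's device is an \emph{iterative bootstrap}: it starts from the trivial lower bound $f(y) = \ind{y \leq 0}$, applies a one-step improvement operator built from the Brownian motion (not the discrete walk), and iterates. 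The limit of this iteration is expressed through the Brownian first-passage time $\tau_y$ as $\P(\tau_y \leq x) - \tfrac{\sigma^2}{2\eta^2}\E[\tau_y \ind{\tau_y \leq x}]$, and optimising over $x$ yields the constant $\sqrt{8/\pi}$. Your ``rearrangement of the Gaussian density'' is not an adequate substitute for this.

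Your treatment of the upper bound at $y<0$ is also too vague. ``The same computation run in the opposite direction'' does not describe what happens: the paper derives it by an \emph{inversion} of the lower bound. One applies the already-proved lower bound not at $r$ but at the point $r_{Aw(r)} = \inf\{s : w(s) \leq A w(r)\}$, and shows that if $w$ dropped too fast to the left of $r$ then the lower bound at $r_{Aw(r)}$ would force $w(r) > w(r)$. This is where the function $f(A) = (1-A^{-1})/\sqrt{A} \cdot \sqrt{\pi/8}$ and its inverse enter. You have not indicated any mechanism of this kind.
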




\begin{figure}[h]
\begin{center}
\begin{tikzpicture}
\begin{axis}[
    axis lines = left,
    xlabel = \(y\),
]

\addplot [
    domain=-0.5:1, 
    samples=100, 
    color=green,
]
{(1+x/sqrt(6))^(-2)};

\addplot [
    domain=0:0.62666, 
    samples=100, 
    color=red,
]
{1 - x * sqrt(8 / 3.141592654)};
\addplot [
    domain=0.62666:1,
    samples=100, 
    color=red,
]
{0};
\addplot [
    domain=-0.5:0, 
    samples=100, 
    color=red,
]
{1};

\addplot [
    domain=1:3.05, 
    samples=100, 
    color=blue,
]
(-(1-1/x) * sqrt(3.141592654) / sqrt(8*x), x);
\addplot [
    domain=0:1, 
    samples=100, 
    color=blue,
]
{1};

\node[circle,fill,inner sep=2pt,color=blue] at (axis cs:-0.2412,3) {};

\end{axis}
\end{tikzpicture}
\end{center}
\caption{Lower bound on the $\liminf$ (in red) and upper bound on the $\limsup$ (in blue) in Lemma \ref{lem:continuity-w-broad}, when $\eta = \sigma$. Note that the expressions in \eqref{eq:continuity-ratio-right} for $y\geq 0$ and \eqref{eq:continuity-ratio-left} for $y\leq 0$ define a function that is continuous and with continuous derivative at $y=0$. The actual limit from Theorem \ref{th:main-tail} is drawn in green for reference; it diverges at $-\sqrt{6} \approx -2.45$ while the blue one has finite value $3$ and infinite derivative at $-\frac{\sqrt{\pi}}{3\sqrt{6}} \approx -0.2412$.} 
\end{figure}
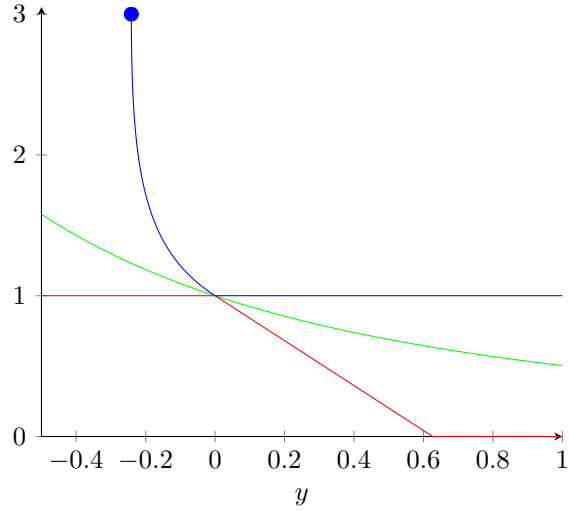

A useful consequence is that because $w(r)\to 0$ as $r\to\infty$ and because $w$ is decreasing, we have for every fixed $K>0$ 
\[ \lim_{r\to\infty} \sup_{|y| \leq K} \left| \frac{w(r+y)}{w(r)} -1\right| = 0 , \]
which is the analogue of \cite[Lemma 13]{lalley2015maximal}.

\begin{proof}[Proof of Lemma \ref{lem:continuity-w-broad}]
We first prove \eqref{eq:continuity-ratio-right}. The result is immediate if $y\leq 0$. 
Rewrite Proposition \ref{prop:convolution-equation}: for every $r,u\geq 0$, using that $\ind{\dots}\leq 1$,
\[ h(r+u) \leq \E\left[ \prod_{x\in G_1} h(r+u-x) \right] . \]
Apply the Markov property $n$ times:
\[ h(r+u) \leq \E\left[ \prod_{x\in G_n} h(r+u-x) \right] . \]
Let us assume that there exists a decreasing function $0\leq f \leq 1$ such that for every $y>0$
\begin{equation}
\liminf_{r\to\infty} \frac{w\left(r + \frac{y}{\sqrt{w(r)}}\right)}{w(r)} \geq f(y) .
\end{equation}
Clearly $f(y) = \ind{y\leq 0}$ works. 
Bound $h(x) \geq h(x) \wedge h(r)$:
\[ h(r+u) \leq \E\left[ \exp\left(\ln h(r) G_n([u,\infty)) + \int_{(-\infty, u]} \ln h(r+u-x) \d G_n(x) \right) \right] . \]
Write $X$ for the expression inside the exponential. 
Since $X<0$ we have $\e^{X} \leq 1 + X + \frac{X^2}{2}$. We then use 
the many-to-one formula: letting $(U_n)_{n\geq 0}$ be a random walk started at $0$ with i.i.d. steps distributed under $M$ the mean measure of $\chi$, 
\[ \E[X] = (\ln h(r)) \P(U_n\geq u) + \E\left[\ind{U_n<u} \ln h(r+u-U_n)\right] . \]
Take $u = y / \sqrt{w(r)} = y \left( \frac{1}{h(r)}-1\right)^{-1/2} \geq y (-\ln h(r))^{-1/2}$ for $y>0$ and $n = \left\lfloor \frac{s}{\eta^2 w(r)} \right\rfloor$ for some $s>0$, so that $\frac{U_n}{\sqrt{n\eta^2}} \ulim r \infty N$ in distribution where $N\sim\NN(0,1)$: 
\begin{align*}
\liminf_{r\to\infty} \frac{-\E[X]}{w(r)} &\geq \liminf_{r\to\infty} \left( \P(U_n\geq u) + \E\left[\ind{U_n<u} \frac{-\ln h(r+u-U_n)}{w(r)} \right] \right) \\
&\geq \P(N\geq y\sqrt{s}) + \liminf_{r\to\infty} \E\left[\ind{U_n\in(-\infty,u)} f\left(y - U_n\sqrt{w(r)}\right) \right] \\
&\geq \P(B_s\geq y) + \E\left[\ind{B_s<y} f\left(y - B_s\right) \right] .
\end{align*} 
where $(B_v)_{v\geq 0}$ is a centered standard Brownian motion. 
On the other hand, 
\[
\E[X^2] \leq \E[G_n(\R)^2] (\ln h(r))^2 = (1+n\sigma^2) (\ln h(r))^2
\]
so that 
\[ \limsup_{r\to\infty} \frac{\E[X^2]}{w(r)} \leq \limsup_{r\to\infty} \frac{s \sigma^2}{\eta^2 w(r)^2} (\ln h(r))^2 = \frac{s \sigma^2}{\eta^2} . \]
We deduce
\begin{equation*}
\liminf_{r\to\infty} \frac{w(r+u)}{w(r)} = \liminf_{r\to\infty} \frac{1-h(r+u)}{w(r)} \geq \E\left[\ind{B_s\geq y} + \ind{B_s<y} f\left(y - B_s\right) \right] - \frac{s \sigma^2}{2\eta^2} .
\end{equation*}
Consider the sequence $f_s^{(m)}$ defined by $f_s^{(0)}(y) = \ind{y\leq 0}$ and 
\[ f_s^{(m+1)}(y) = \E\left[\ind{B_{s}\geq y} + \ind{B_{s}<y}f_s^{(m)}\left(y-B_{s}\right)\right] - \frac{s \sigma^2}{2\eta^2} . \]
We just proved that for every $m$ and $s$, 
\[ \liminf_{r\to\infty} \frac{w(r+u)}{w(r)} \geq f_s^{(m)}(y) . \]
We can check by the Markov property that
\begin{align*}
f_s^{(m)}(y) &= \sum_{k=0}^{m-1} \E\left[\ind{B_0 < y, B_s<y, B_{2s}<y, \dots, B_{ks}<y} \left(\ind{B_{(k+1)s}\geq y} - \frac{s \sigma^2}{2\eta^2} \right) \right] \\
&= \P(\tau_y^{(s)} \leq ms) - \frac{\sigma^2}{2\eta^2} \E\left[ s + \tau_y^{(s)}\ind{\tau_y^{(s)} \leq ms} \right]
\end{align*}
where $\tau_y^{(s)} = \inf \{k\geq 0\ :\ B_{ks}\geq y \}$. Letting $\tau_y = \inf\{x\geq 0 \ : \ B_x\geq y\}$, taking the limit $s\to 0$ with $ms\to x$, we find that 
\[ f(y) = \P(\tau_y \leq x) - \frac{\sigma^2}{2\eta^2} \E[\tau_y \ind{\tau_y \leq x}] \]
is suitable for the Lemma. Using that $\tau_y$ and $y^2 \tau_1$ have the same distribution, and that $\P(\tau_1>x) \leq 2(2\pi x)^{-1/2}$, hence by the layer-cake formula $\E[\tau_1 \ind{\tau_1\leq x}] = \int_0^x \P(\tau_1>x) \leq (2\pi)^{-1} 4\sqrt{x}$, then taking $x = y^2 \eta^2 / \sigma^2$
\begin{align*}
f(y) \geq 1 - \frac{2}{\sqrt{2\pi}}\inf_{x>0} \left(\frac{1}{\sqrt{x}} + \frac{y^2 \sigma^2 \sqrt{x}}{\eta^2} \right) = 1 - y \frac{\sigma}{\eta}\sqrt{\frac{8}{\pi}} .
\end{align*}
Note that using the correct asymptotic would give us $f(y)-1 \sim - y \frac{2\sigma}{\eta\sqrt{6}}$ as $y\to 0$, which is compatible with the $f$ above.

Let us now deduce \eqref{eq:continuity-ratio-left}. Assume that \eqref{eq:continuity-ratio-right} holds for some constant $c$. The result is immediate if $y\leq 0$. For every $x>0$, define $r_x := \inf\{r\geq 0 : w(r) \leq x \}$; by right continuity of $h$ and thus $w$, we have $w(r_x) \leq x$. On the other hand, by the asymptotic continuity of $w$ given after Lemma \ref{lem:continuity-w-broad}, which is a consequence of only \eqref{eq:continuity-ratio-right}, we have that $\limsup_{x\to 0} x^{-1} \lim_{s\nearrow r_x} w(s) = 1$ and $\lim_{x\to 0} x^{-1} w(r_x) = 1$. 
Since $r_x \to \infty$ as $x\to 0$, we have by \eqref{eq:continuity-ratio-left} that for every $y>0$ 
\[ \liminf_{x\to 0} \frac{w\left(r_x + \frac{y}{\sqrt{w(r_x)}}\right)}{w(r_x)} \geq 1 - c\frac{\sigma}{\eta} y . \]
Fix $A > 1$ and consider $r_{Aw(r)}$ as $r\to\infty$: for every $y < (1-A^{-1})\frac{\eta}{c\sigma}$, 
\begin{equation*}
\liminf_{r\to \infty} \frac{w\left(r_{Aw(r)} + \frac{y}{\sqrt{w(r_{Aw(r)})}}\right)}{w(r)} = A \liminf_{r\to \infty} \frac{w\left(r_{Aw(r)} + \frac{y}{\sqrt{w(r_{Aw(r)})}}\right)}{w(r_{Aw(r)})} \geq A \left(1 - c\frac{\sigma}{\eta} y\right) > 1 , 
\end{equation*}
so that for every $r$ large enough, $r > r_{Aw(r)} + \frac{y}{\sqrt{w(r_{Aw(r)})}} \geq r_{Aw(r)} + \frac{y/\sqrt{A}}{\sqrt{w(r)}}$. Then 
\[ r_{Aw(r)} \leq r - \frac{y/\sqrt{A}}{\sqrt{w(r)}} \implies \limsup_{r\to\infty} \frac{ w\left( r - \frac{1}{\sqrt{w(r)}} \frac{y}{\sqrt{A}} \right)}{w(r)} \leq \limsup_{r\to\infty} \frac{ w\left( r_{Aw(r)} \right)}{w(r)} \leq A . \]
Write $f(x) = \frac{1-x^{-1}}{c\sqrt{x}}$, defined for $x>1$. This is bijective from $[1, 3]$ to $[0, \frac{2}{3c\sqrt{3}}]$, and is such that for every $A\in(1,3)$ and every $0\leq z < \frac{\eta}{\sigma} f(A)$, 
\[ \limsup_{r\to\infty} \frac{ w\left( r - \frac{z}{\sqrt{w(r)}} \right)}{w(r)} \leq A , \]
or equivalently, that for every $0<z<\frac{2}{3c\sqrt{3}} \frac{\eta}{\sigma}$, 
\[ \limsup_{r\to\infty} \frac{ w\left( r - \frac{z}{\sqrt{w(r)}} \right)}{w(r)} \leq f^{-1}\left(\frac{\sigma}{\eta} z\right) . \]
\end{proof}

We can already see that Lemma \ref{lem:continuity-w-broad} contains significant information on the tail. For example, we can deduce from it that $\liminf_{r\to\infty} r^2 w(r) \geq \frac{\pi \eta^2}{54 \sigma^2} \approx 0.00969 \frac{6 \eta^2}{\sigma^2}$. Indeed, if this was not the case then we could take $z$ such that \eqref{eq:continuity-ratio-left} would fail at the limit $r\to\infty$ because $r-\frac{z}{\sqrt{w(r)}}$ would take negative values and the limsup would thus be unbounded. For the sake of illustration, we provide here an independent and elementary proof of this fact. 

\begin{lemma}\label{lem:lower-bound-w}
$\liminf_{r\to\infty} r^2 w(r) > 0 .$
\end{lemma}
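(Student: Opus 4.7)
The plan is to apply the Paley--Zygmund second moment method to the number of particles at a well-chosen generation $n$ that reach position at least $r$. Fix $\alpha > 0$ (take for concreteness $\alpha = 1$), set $n := \lfloor \alpha r^2 \rfloor$, and consider
\[ Z_n := G_n([r, \infty)) . \]
Since $\Lambda^{(v)} \geq 0$ we have $\Lambda_v \geq X_v$ for every $v \in T$, hence
\[ \{ Z_n \geq 1\} \ \subseteq\  \left\{ \sup_{v \in T} X_v \geq r \right\} \ \subseteq \ \left\{ \sup_{v \in T} \Lambda_v \geq r \right\} . \]

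For the first moment, the many-to-one formula gives $\E[Z_n] = \P(U_n \geq r)$, where $(U_k)_{k \geq 0}$ is a random walk with i.i.d.\ steps of law $M$, having mean $0$ and variance $\eta^2$ by Assumptions \ref{assum}.2 and \ref{assum}.4. With $n = \lfloor \alpha r^2\rfloor$, the classical central limit theorem yields $\E[Z_n] \ulim r \infty c_\alpha$ for some explicit constant $c_\alpha > 0$.

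For the second moment, the crude bound $Z_n \leq N_n := G_n(\R)$ already suffices. The standard recursion for the critical Bienaymé--Galton--Watson process associated with $\chi(\R)$, which requires only Assumptions \ref{assum}.1 and \ref{assum}.3, yields $\E[N_n^2] = 1 + n \sigma^2$, and thus $\E[Z_n^2] \leq 1 + n\sigma^2 = O(r^2)$. The Paley--Zygmund inequality then gives
\[ \P\left( \sup_{v \in T} \Lambda_v \geq r \right) \ \geq\  \P(Z_n \geq 1)\  \geq \ \frac{\E[Z_n]^2}{\E[Z_n^2]} \ \geq\  \frac{c}{r^2} \]
for some $c > 0$ and every $r$ large enough. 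A shift $r \mapsto r + 1$ converts $\geq$ into $>$ at the sole cost of adjusting the constant, so $1 - h(r) = \P(\sup_{v \in T} \Lambda_v > r) \geq c' r^{-2}$. Since $w(r) = (1 - h(r))/h(r) \geq 1 - h(r)$, we conclude $\liminf_{r \to \infty} r^2 w(r) \geq c' > 0$.

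No substantial obstacle arises here: precisely because we only seek a lower bound of the correct order, the trivial bound $Z_n \leq N_n$ is enough for the second moment, and we never need to analyze the correlations between the positions of distinct particles at generation $n$. A sharper second-moment analysis, via the many-to-two formula for $\chi$, would be needed to identify the exact constant, which is of course the content of Theorem \ref{th:main-tail}.
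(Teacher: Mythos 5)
Your proposal is correct and follows essentially the same approach as the paper: a Paley--Zygmund second-moment argument, with the first moment of $G_n([r,\infty))$ computed via many-to-one and the CLT, and the second moment bounded by $\E[G_n(\R)^2] = 1 + n\sigma^2$. The only differences are cosmetic (the paper fixes $\alpha=1$, and you carefully handle the $\geq$ versus $>$ distinction, which the paper glosses over).
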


\begin{proof}
We use the second moment method. By the many-to-one formula, if $(U_n)_{n\geq 0}$ is a random walk with $U_0=0$ and step distribution given by $M$, 
\[ \E[G_{r^2}([r, \infty))] = \P(U_{r^2} \geq r) \ulim r \infty \P( N \geq \eta^{-1} ) > 0 \]
where $N\sim\NN(0,1)$. On the other hand, 
\[ \E[G_{r^2}(\R)^2] = 1 + r^2\sigma^2 . \]
By the Paley--Zygmund inequality,
\[ \P(G_{r^2}([r,\infty)) > 0) \geq \frac{\E[G_{r^2}([r, \infty))]^2}{\E[G_{r^2}(\R)^2]} \]
thus giving $\liminf_{r\to\infty} r^2(1-h(r)) > 0 $. 
\end{proof}

\subsection{The convolution equation}

For every $0\leq R \leq r$ and every $\delta>0$, define 
\begin{equation}\label{eq:event-E-rRd}
E(r,R,\delta) := \left\{\chi(\R)\leq \frac{\delta}{w(r-R)} \right\} \cap \{ \Lambda \leq R \} .
\end{equation} 
Let $(Z, I^{(r,R,\delta)})$ be the couple of random variables with values in $\R\times \{0,1\}$ such that for every positive and measurable $f$, 
\begin{equation}\label{eq:def-ZI}
\E\left[ f(Z, I^{(r,R,\delta)}) \right] = \E\left[ \int f(x,\ind{E(r,R,\delta)}) \d\chi(x) \right] .
\end{equation}
This does indeed define a random variable because $\E[\chi(\R)]=1$. We also define the Markov kernel $((z,i), \Gamma) \mapsto P^{(r,R,\delta)}_{z,i}(\Gamma)$, where $\Gamma$ is a measurable subset of the set of point measures, $i\in\{0,1\}$ and $z\in\R$, such that for every positive and measurable $g$, if $(X_i)_{1\leq i \leq \chi(\R)}$ is a measurable numbering of the atoms of $\chi$, 
\begin{equation}\label{eq:def-Palm}
\E\left[ \sum_{i=1}^{\chi(\R)} g\left(X_i , \ind{E(r,R,\delta)}, \chi - \delta_{X_i} \right) \right] = \E\left[ \int g\left(Z, I^{(r,R,\delta)}, \phi \right) \d P^{(r,R,\delta)}_{Z, I^{(r,R,\delta)}}(\phi) \right] .
\end{equation} 
This Markov kernel defines a ``mean measure function'' $M^{(r,R,\delta)}_{z}$ such that for every Borel subset $A$ of $\R$, 
\begin{equation}\label{eq:def-Palm-mean}
M^{(r,R,\delta)}_{z}(A) = \int \phi(A) \d P^{(r,R,\delta)}_{z,1}(\phi) . 
\end{equation}
Note that we only define it when $i=1$ in $P^{(r,R,\delta)}_{z,i}$, since the case $i=0$ will not be of interest to us.

\begin{proposition}\label{prop:convolution-w}
Under Assumptions \ref{assum} 1. to 4., there exists $\delta^0>0$ and $r_0>0$ such that for every $\delta \in (0,\delta^0)$, $r_\bullet \geq r_0$, $r\geq 2r_\bullet$ and for every $r_\bullet \leq R \leq  r-r_\bullet$, 
\begin{multline}\label{eq:convolution-w}
w(r) = \E\Bigg[ I^{(r,R,\delta)} w(r-Z) \exp\Big( - w(r-Z) - \frac{1}{2} \int w(r-y) \d M^{(r,R,\delta)}_{Z} (y) \\
+ \E\left[ I^{(r,R,\delta)} w(r-Z) \right] \Big) \Bigg] + \mathrm{Remainder}(r,R,\delta) ,
\end{multline}
where
\begin{equation*} 
\frac{1}{90}|\mathrm{Remainder}(r,R,\delta)| \leq \P(\Lambda>R) + \P\left(\chi(\R)>\frac{\delta}{w(r-R)}\right) + (\sigma^2 + 1) w(r-R)^2 \left( w(r-R) + \delta \right) .
\end{equation*}
\end{proposition}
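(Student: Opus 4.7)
Starting from Proposition~\ref{prop:convolution-equation} and the identity $h(r) = 1/(1+w(r))$, I would rewrite the convolution equation as $w(r) = (1+w(r))(1-h(r))$ and decompose
\[ 1-h(r) = \underbrace{\E\bigl[I^{(r,R,\delta)}(1-e^X)\bigr]}_{=: A} + \underbrace{\E\bigl[\ind{\Lambda \le r,\,E(r,R,\delta)^c}(1-e^X)\bigr]}_{=: B} + \underbrace{\P(\Lambda > r)}_{=: C} , \]
where $X := -\int \ln(1+w(r-x))\,\d\chi(x) \le 0$. Since $|1-e^X| \le 1$, the pieces $B$ and $C$ satisfy $|B|+C \le \P(\Lambda > R) + \P(\chi(\R) > \delta/w(r-R))$, so $(1+w(r))(B+C)$ is absorbed into the stated Remainder.

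On $E(r,R,\delta)$, each atom $x$ of $\chi$ satisfies $x \le \Lambda \le R$, so $w(r-x) \le w(r-R)$; combined with $\chi(\R) \le \delta/w(r-R)$ this gives the pointwise bound $Y := \int w(r-x)\,\d\chi(x) \le w(r-R)\chi(\R) \le \delta$, hence $|X| \le Y \le \delta$. Taking $r_0$ large enough that $w(r-R) \le 1$ and $\delta^0$ small, a second-order Taylor expansion of $\ln(1+u)$ and of $e^X$ gives, on $E(r,R,\delta)$,
\[ 1 - e^X = Y - \tfrac{1}{2}\textstyle\int w(r-x)^2\,\d\chi(x) - \tfrac{1}{2}Y^2 + \rho , \]
with $|\rho|$ controlled by cubic-in-$w$ terms such as $\int w(r-x)^3\,\d\chi(x) \le w(r-R)^2 Y$ and $(w(r-R)+\delta)\,Y^2$. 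Applying the reduced Campbell--Little--Mecke identities~\eqref{eq:def-ZI}--\eqref{eq:def-Palm-mean} with $L := \E[I^{(r,R,\delta)}w(r-Z)]$, $E_1 := \E[I^{(r,R,\delta)}w(r-Z)^2]$ and $E_2 := \E[I^{(r,R,\delta)}w(r-Z)\int w(r-y)\,\d M^{(r,R,\delta)}_Z(y)]$, I would obtain $\E[\ind{E(r,R,\delta)}\,Y] = L$, $\E[\ind{E(r,R,\delta)} \int w(r-x)^2\,\d\chi(x)] = E_1$, and $\E[\ind{E(r,R,\delta)}\,Y^2] = E_1 + E_2$, which gives $A = L - E_1 - \tfrac{1}{2}E_2 + \rho_A$. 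The $(\sigma^2+1)$ factor in the Remainder enters here through $\E[\ind{E(r,R,\delta)}\,Y^2] \le w(r-R)^2 \E[\chi(\R)^2] = (\sigma^2+1)w(r-R)^2$, a direct consequence of the pointwise bound $Y \le w(r-R)\chi(\R)$.

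I would then Taylor-expand the target
\[ f(r) := \E\bigl[I^{(r,R,\delta)} w(r-Z) \exp\bigl(-w(r-Z) - \tfrac{1}{2}\textstyle\int w(r-y)\,\d M^{(r,R,\delta)}_Z(y) + L\bigr)\bigr] . \]
A crucial observation is that under the Palm kernel $P^{(r,R,\delta)}_{Z,1}$ the random measure $\phi$ satisfies $\phi(\R) \le \delta/w(r-R) - 1$ a.s., hence $\int w(r-y)\,\d M^{(r,R,\delta)}_Z(y) \le \delta$, so the exponent is a bounded (random) function of $Z$. A first-order expansion then yields $f(r) = L + L^2 - E_1 - \tfrac{1}{2}E_2 + \rho_f$, i.e.\ $f(r) = A + L^2 + (\rho_f - \rho_A)$. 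Meanwhile, from $w(r) = (A+B+C)(1+w(r))$ one extracts $w(r) - A = w(r)A + (1+w(r))(B+C)$, and subtracting gives
\[ w(r) - f(r) = \bigl(w(r)A - L^2\bigr) + (1+w(r))(B+C) - (\rho_f - \rho_A) . \]
Writing $w(r)A - L^2 = L(w(r)-L) + w(r)(A-L)$ and using the bounds $L \le w(r-R)$, $w(r) \le w(r-R)$, $|A-L| = O((\sigma^2+1)w(r-R)^2)$, and $|w(r)-A| = O(w(r-R)^2 + |B| + C)$ (the latter straight from the rearrangement above and $A \le w(r)$), one concludes $w(r)A - L^2 = O((\sigma^2+1) w(r-R)^3) + O(\text{tails})$.

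The main technical obstacle is the careful bookkeeping of the many small remainders, to force them inside $(\sigma^2+1) w(r-R)^2(w(r-R) + \delta)$ with a universal multiplicative constant such as the stated $90$. The two critical ingredients are the pointwise bound $Y \le w(r-R)\chi(\R)$ on $E(r,R,\delta)$ (which produces the $(\sigma^2+1)$ factor via $\E[\chi(\R)^2] = \sigma^2+1$) and the a.s.\ bound $\int w(r-y)\,\d M^{(r,R,\delta)}_Z(y) \le \delta$ under the Palm kernel (which tames the Taylor expansion of the exponential). Every cross term arising in the second-order expansion of $f(r)$---such as $\delta E_2 \le (\sigma^2+1)\delta w(r-R)^2$, $w(r-R) E_2 \le (\sigma^2+1) w(r-R)^3$, $L E_1 \le w(r-R)^3$, and $L^3 \le w(r-R)^3$---then fits within the stated Remainder.
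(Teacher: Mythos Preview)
Your proposal is correct and follows essentially the same approach as the paper: split on the event $E(r,R,\delta)$, exploit the pointwise bound $Y \le w(r-R)\chi(\R) \le \delta$ to Taylor-expand both $\ln(1+w)$ and the exponential to second order, rewrite via the Palm identities \eqref{eq:def-ZI}--\eqref{eq:def-Palm-mean}, and match the resulting polynomial in $L,E_1,E_2$ with the expansion of the target exponential. The only organizational difference is that the paper first obtains an expansion of $h(r)=1/(1+w(r))$ and then inverts it via $1/(1-X)\approx 1+X+X^2$ to recover $w(r)$, whereas you work directly with the identity $w(r)=(1+w(r))(1-h(r))$ and handle the cross term $w(r)A - L^2$; this is the same algebra in a different order and produces the same remainder structure.
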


This proposition plays the role of \cite[(11)]{lalley2015maximal}. 

\begin{remark}
Let us anticipate the final result to see what is the order of magnitude of the remainder. We will typically take $R = \frac{y}{\sqrt{w(r)}}$ for some $y>0$, so that $\P(\Lambda>R) = o(w(r)^2)$ by Lemma \ref{lem:refined-Markov}. On the other hand, we will show that for this choice of $R$ then $w(r-R) \leq c w(r)$, hence $\P\left(\chi(\R)>\frac{\delta}{w(r-R)}\right) = o(\delta^{-2} w(r-R))$. We may choose $\delta \to 0$ very slowly as $r\to\infty$ so that this, and the remainder as a whole, is $o(w(r)^2)$. This ensures that if we sum $\approx w(r)^{-1}$ occurences of this remainder, this error term will still be asymptotically negligible in front of the expectation, which will be of order $w(r)$. 
\end{remark}

The rest of this section is devoted to the proof of Proposition \ref{prop:convolution-w}. Since stating right now the conditions that $r_0$ and $\delta^0$ must satisfy would detract readability, I choose instead to state the conditions at the point where they appear naturally in the process of the proof. I will then state ``by taking $r_0$ larger if necessary we may assume that some new assertion holds'' (resp. by taking $\delta^0>0$ smaller if necessary) to mean that there exists a choice of $r_0$, resp. of $\delta^0>0$ such that the new assertion holds, in addition to all the assertions that have already been stated before. 
In everything that follows, we assume that $\delta \in (0,\delta^0)$, $r_\bullet \geq r_0$, $r\geq 2r_\bullet$ and that $r_\bullet \leq R \leq  r-r_\bullet$.

Let $Y_r = \int -\ln h(r-x)\d\chi(x)$, again using the convention $\ln 0 = -\infty$. 
Recall \eqref{eq:event-E-rRd}
\begin{equation*}
E(r,R,\delta) := \left\{\chi(\R)\leq \frac{\delta}{w(r-R)} \right\} \cap \{ \Lambda \leq R \} ,
\end{equation*} 
and write $(*) := \P\left( \chi(\R) > \frac{\delta}{w(r-R)} \right)$ and $(**) := \P(\Lambda > R)$. 
Since $-\ln h$ is decreasing, using that $-\ln h(x) = \ln(1+w(x)) \leq w(x)$ we have on $E(r,R,\delta)$
\[ Y_r \leq (-\ln h(r-R)) \chi(\R) \leq w(r-R) \frac{\delta}{w(r-R)} \leq \delta . \]
By Proposition \ref{prop:convolution-equation}
\begin{align}\label{eq:z1}
h(r) = \E\left[ \ind{\Lambda\leq r} \e^{-Y_r} \right] = \E\left[ \ind{E(r,R,\delta)} \e^{-Y_r} \right] + \E\left[\ind{E(r,R,\delta)^c} \ind{\Lambda\leq r} \e^{-Y_r}\right] .
\end{align}
Let $\omega_\e(x) = \e^{-x} - \left(1-x+\frac{x^2}{2}\right)$. Taking $\delta^0>0$ small enough we can ensure that $|\omega_\e(x)| \leq x^3$ for every $0\leq x \leq \delta^0$. 
Bounding the second term of \eqref{eq:z1} by $\P(E(r,R,\delta)^c) \leq (*) + (**)$ and writing the expansion of $\e^{-Y_r}$ in the first one, we get  
\begin{equation*}
\left| h(r) - 1 + \E[\ind{E(r,R,\delta)} Y_r] - \frac{1}{2}\E\left[ \ind{E(r,R,\delta)} Y_r^2 \right] \right| \leq (*) + (**) + \delta \, \E\left[ \ind{E(r,R,\delta)} Y_r^2  \right] .
\end{equation*}

\paragraph{Approximating $\E[Y_r \ind{E(r,R,\delta)}]$.}
Let $\omega_\ell(x) = \ln(1+x) - x + \frac{x^2}{2}$; for every $x\geq 0$ we have $0\leq \omega_\ell(x) \leq x^3$, hence  
\begin{align*}
\E[Y_r \ind{E(r,R,\delta)}] &= \E\left[\ind{E(r,R,\delta)} \int \left( w(r-x) - \frac{w(r-x)^2}{2} - \omega_\ell(w(r-x)) \right) \d \chi(x) \right]  .
\end{align*}
Since $w$ is decreasing, for every $x\leq R$ we have $w(r-x) \leq w(r-R)$. We deduce 
\begin{multline}\label{eq:approx-Yr1}
\left| \E[Y_r \ind{E(r,R,\delta)}] - \E\left[ \ind{E(r,R,\delta)} \int \left( w(r-x) - \frac{w(r-x)^2}{2} \right) \d\chi(x) \right] \right| \\
\leq w(r-R)^3 \E\left[ \ind{E(r,R,\delta)} \chi(\R) \right] \leq w(r-R)^3 .
\end{multline}
Since $w(x)\to 0$ as $x\to\infty$, taking $r_0$ larger if necessary we may assume that $w(x) \leq 1$ for every $x\geq r_0$. 
Then $0\leq w(r-x)-\frac{w(r-x)^2}{2} \leq w(r-x) $ for every $x\leq R$, hence 
\begin{equation}\label{eq:bound-Yr1}
0\leq \E\left[ \ind{E(r,R,\delta)} \int \left( w(r-x) - \frac{w(r-x)^2}{2} \right) \d\chi(x) \right] \leq \E\left[ \ind{E(r,R,\delta)} \int w(r-x) \d\chi(x) \right] \leq w(r-R) . 
\end{equation}

\paragraph{Approximating $\E[Y_r^2 \ind{E(r,R,\delta)}]$.}

On the other hand, using that $0\leq \ln(1+x) \leq x$ and $|\ln(1+x)-x| \leq x^2$ for $x>0$, 
\begin{align*}
\E[Y_r^2 \ind{E(r,R,\delta)}] =& \E\left[\ind{E(r,R,\delta)} \left(\int \left( w(r-x) + \ln(1+w(r-x)) - w(r-x) \right) \d \chi(x)\right)^2 \right] \\
=& \E\left[\ind{E(r,R,\delta)} \left(\int w(r-x) \d\chi(x) \right)^2\right] \\
&+ \E\Bigg[\ind{E(r,R,\delta)} \Bigg\{ 2\int w(r-x) (\ln(1+w(r-y))-w(r-y)) \d\chi(x)\d\chi(y) \\
&+ \int (\ln(1+w(r-x))-w(r-x)) (\ln(1+w(r-y))-w(r-y)) \d\chi(x)\d\chi(y) \Bigg\} \Bigg]
\end{align*}
so that 
\begin{multline}\label{eq:approx-Yr2}
\left| \E[Y_r^2 \ind{E(r,R,\delta)}] - \E\left[\ind{E(r,R,\delta)} \left(\int w(r-x) \d\chi(x) \right)^2\right] \right| \\
 \leq (\sigma^2 +1) (2w(r-R)^3 + w(r-R)^4) \leq 3 (\sigma^2 +1) w(r-R)^3
\end{multline}
since $r-R \geq r_0$ hence $w(r-R)\leq 1$. 
As before, we have
\begin{equation}\label{eq:bound-Yr2}
\E\left[\ind{E(r,R,\delta)} \left(\int w(r-x) \d\chi(x) \right)^2\right] \leq (\sigma^2 +1) w(r-R)^2 ,
\end{equation}
and from \eqref{eq:approx-Yr2} and \eqref{eq:bound-Yr2} we deduce (taking $r_0$ larger if necessary)
\begin{equation*}
\E\left[\ind{E(r,R,\delta)} Y_r^2 \right] \leq 2(\sigma^2 + 1) w(r-R)^2 .
\end{equation*}

\paragraph{Approximating $w(r)$}

Summing our estimates, letting 
\begin{multline}\label{eq:expansion-1-over-1-plus-w(r)}
\mathrm{Rem}(r,R,\delta) =  \frac{1}{1+w(r)} - 1 + \E\left[ \ind{E(r,R,\delta)} \int \left( w(r-x) - \frac{w(r-x)^2}{2} \right) \d\chi(x) \right] \\
- \frac{1}{2} \E\left[\ind{E(r,R,\delta)} \left(\int w(r-x) \d\chi(x) \right)^2\right] , 
\end{multline}
then 
\begin{equation}\label{eq:bound-rem}
|\mathrm{Rem}(r,R,\delta)| \leq (*) + (**) + (\sigma^2 + 1) w(r-R)^2 \left( 2\delta + 4w(r-R) \right) .
\end{equation}
Write $\mathrm{Bound}(r,R,\delta)$ for the right-hand side of \eqref{eq:bound-rem}. 
Taking $r_0$ larger if needed and taking $\delta^0>0$ smaller if needed, we have  
\begin{equation}\label{eq:bound-rem-2}
\mathrm{Bound}(r,R,\delta) \leq (*) + (**) + w(r-R)^2 .
\end{equation} 
Since 
\[ w(r) = \frac{1}{\frac{1}{1+w(r)}} -1 \]
we apply the fact that for every $|X|\leq \frac 12$
\[ \left|\frac{1}{1-X} - 1 - (X+X^2) \right| \leq 2 |X|^3 \]
to
\begin{multline*}
X := \E\left[ \ind{E(r,R,\delta)} \int \left( w(r-x) - \frac{w(r-x)^2}{2} \right) \d\chi(x) \right] \\
 - \frac{1}{2} \E\left[\ind{E(r,R,\delta)} \left(\int w(r-x) \d\chi(x) \right)^2\right] - \mathrm{Rem}(r,R,\delta) .
\end{multline*}
We first to check, using \eqref{eq:bound-Yr1}, \eqref{eq:bound-Yr2} and \eqref{eq:bound-rem-2}, that 
\[ |X| \leq (*) + (**) + (\sigma^2+1)^{1/3} w(r-R) \] 
hence that it is smaller than $1/2$ by taking $r_0$ larger if necessary. Together with the bound $(a+b+c)^3 \leq 9 (a^3 + b^3 + c^3)$ for every $a,b,c\geq 0$ and the fact that $(*)\leq 1$ and $(**)\leq 1$ we deduce 
\begin{align*}
\left| w(r) - (X + X^2) \right| \leq 18((*)^3 + (**)^3 + (\sigma^2+1) w(r-R)^3) \leq 18 \, \mathrm{Bound}(r,R,\delta) 
\end{align*}
since $(*)^3 + (**)^3 + (\sigma^2+1)w(r-R)^3 \leq (*) + (**) + 4(\sigma^2+1)w(r-R)^3$. 
On the other hand, 
\begin{multline*}
\Bigg| -(X + X^2) + \E\left[ \ind{E(r,R,\delta)} \int \left( w(r-x) - \frac{w(r-x)^2}{2} \right) \d\chi(x) \right] \\
- \frac{1}{2} \E\left[\ind{E(r,R,\delta)} \left(\int w(r-x) \d\chi(x) \right)^2\right] + \E\left[ \ind{E(r,R,\delta)} \int \left( w(r-x) - \frac{w(r-x)^2}{2} \right) \d\chi(x) \right]^2 \Bigg| \\
\leq \mathrm{Bound}(r,R,\delta)(1+w(r-R)+(\sigma^2+1)w(r-R)^2 + \mathrm{Bound}(r,R,\delta)) \\
+ 2 (\sigma^2+1) w(r-R)^3 + (\sigma^2+1)^2 w(r-R)^4 
\leq 3 \, \mathrm{Bound}(r,R,\delta) ,
\end{multline*}
where the last inequality holds by taking $r_0$ larger if needed, and observing that $3(\sigma^2+1)w(r-R)^3 \leq \mathrm{Bound}(r,R,\delta)$. Finally, we bound 
\begin{multline*}
\left| \E\left[ \ind{E(r,R,\delta)} \int \left( w(r-x) - \frac{w(r-x)^2}{2} \right) \d\chi(x) \right]^2 - \E\left[ \ind{E(r,R,\delta)} \int w(r-x) \d\chi(x) \right]^2 \right| \\
\leq 2w(r-R)^3 + w(r-R)^4 \leq 3 w(r-R)^3 \leq \mathrm{Bound}(r,R,\delta)
\end{multline*}
to get 
\begin{multline}\label{eq:approx-w-without-exp-pp}
\Bigg| - w(r) + \E\left[ \ind{E(r,R,\delta)} \int \left( w(r-x) - \frac{w(r-x)^2}{2} \right) \d\chi(x) \right] \\
- \frac{1}{2} \E\left[\ind{E(r,R,\delta)} \left(\int w(r-x) \d\chi(x) \right)^2\right] + \E\left[ \ind{E(r,R,\delta)} \int w(r-x) \d\chi(x) \right]^2 \Bigg| \\
\leq 22 \, \mathrm{Bound}(r,R,\delta) .
\end{multline}

\paragraph{Palm measures.} 

Recall \eqref{eq:def-ZI}, \eqref{eq:def-Palm} and \eqref{eq:def-Palm-mean}. 
We can rewrite the different terms in \eqref{eq:approx-w-without-exp-pp} as follows:
\begin{align*}
\E\left[ \ind{E(r,R,\delta)} \int \left( w(r-x) - \frac{w(r-x)^2}{2} \right) \d\chi(x) \right] = \E\left[ I^{(r,R,\delta)} \left( w(r-Z) - \frac{w(r-Z)^2}{2} \right) \right] 
\end{align*}
and
\begin{align*}
\E\left[ \ind{E(r,R,\delta)} \int w(r-x) \d\chi(x) \right] = \E\left[ I^{(r,R,\delta)} w(r-Z) \right] .
\end{align*}
Let us consider the last term more carefully. Because in the definition of $P^{(r,R,\delta)}_{z,i}$ we work with $\chi - \delta_{X_i}$, we have 
\begin{align*}
&\E\left[\ind{E(r,R,\delta)} \left\{ \left(\int w(r-x) \d\chi(x) \right)^2 - \int w(r-x)^2 \d\chi(x) \right\} \right] \\
&= \E\left[\ind{E(r,R,\delta)} \int w(r-x) \left( \int w(r-y) \d\chi(y) - w(r-x) \right) \d\chi(x) \right] \\ 
&= \E\left[\ind{E(r,R,\delta)} \int w(r-x) \left( \int w(r-y) \d(\chi-\delta_x)(y) \right) \d\chi(x) \right] \\ 
&= \E\left[ I^{(r,R,\delta)} w(r-Z) \int \left(\int w(r-y) \d(\phi-\delta_x)(y) \right) \d P^{(r,R,\delta)}_{Z,I^{(r,R,\delta)}} (\phi) \right] \\ 
&= \E\left[ I^{(r,R,\delta)} w(r-Z) \int w(r-y) \d M^{(r,R,\delta)}_{Z} (y) \right] .
\end{align*}
We can finally rewrite \eqref{eq:approx-w-without-exp-pp}:
\begin{multline}\label{eq:approx-w-without-exp-rw}
\Bigg| - w(r) + \E\left[ I^{(r,R,\delta)} \left( w(r-Z) - w(r-Z)^2 \right) \right] \\
- \frac{1}{2} \E\left[ I^{(r,R,\delta)} w(r-Z) \int w(r-y) \d M^{(r,R,\delta)}_{Z} (y) \right] + \E\left[ I^{(r,R,\delta)} w(r-Z) \right]^2 \Bigg| \\
= \Bigg| - w(r) + \E\Bigg[ I^{(r,R,\delta)} w(r-Z) \Big\{ 1 - w(r-Z) - \frac{1}{2} \int w(r-y) \d M^{(r,R,\delta)}_{Z} (y) \\ 
+ \E\left[ I^{(r,R,\delta)} w(r-Z) \right] \Big\} \Bigg] \Bigg| 
\leq 22 \, \mathrm{Bound}(r,R,\delta) .
\end{multline}

\paragraph{Introduce an exponential.}
Next, write 
\begin{align*}
\xi := - w(r-Z) - \frac{1}{2} \int w(r-y) \d M^{(r,R,\delta)}_{Z} (y) + \E\left[ I^{(r,R,\delta)} w(r-Z) \right] .
\end{align*}
From the definition of $P^{(r,R,\delta)}_{z,i}$, using $g(z,i,\phi) = i \max\left( \ind{\phi(\R)> \frac{\delta}{w(r-R)}-1} , \ind{\sup\phi > R} \right)$, 
\begin{align*}
 \E\left[ \int g\left(Z, I^{(r,R,\delta)}, \phi \right) \d P^{(r,R,\delta)}_{Z, I^{(r,R,\delta)}}(\phi) \right] = \E\left[ \sum_{i=1}^{\chi(\R)} g\left(X_i , \ind{E(r,R,\delta)}, \chi - \delta_{X_i} \right) \right] = 0 ,
\end{align*}
because for every $j\geq 1$ we have 
\[ \ind{j\leq \chi(\R)} \ind{E(r,R,\delta)} \ind{(\chi-\delta_{X_j})(\R) > \frac{\delta}{w(r-R)}-1} = 0 \quad \text{and} \quad  \ind{j\leq \chi(\R)} \ind{E(r,R,\delta)} \ind{\sup(\chi - \delta_{X_j}) > R} = 0 . \] 
Since $g \geq 0$ it means that we can choose $P^{(r,R,\delta)}_{z,i}$ such that for every $z$, 
\[ \int g\left(z, 1, \phi \right) \d P^{(r,R,\delta)}_{z,1}(\phi) = 0 = P_{z,1}^{(r,R,\delta)}\left( \left\{ \phi : \phi(\R) > \frac{\delta}{w(r-R)}-1 \text{ or } \sup\phi > R \right\}\right) . \]
We deduce from the definition of $M^{(r,R,\delta)}_z$ that for every $z$, 
\begin{equation}\label{eq:Palm-controle-M}
M^{(r,R,\delta)}_z(\R) \leq \frac{\delta}{w(r-R)}-1 \quad , \quad M^{(r,R,\delta)}_z((R,\infty)) = 0 .
\end{equation}
This allows us to bound $\xi$: since $Z \leq R$ on $\{ I^{(r,R,\delta)} = 1\}$, 
\begin{align}\label{eq:bound-xi}
\xi \leq w(r-R) \quad \text{and} \quad 
|\xi| \leq w(r-R) + \frac{1}{2} w(r-R) \frac{\delta}{w(r-R)} \leq w(r-R) + \frac{\delta}{2} .
\end{align}
Since $0 \leq e^x - (1+x) \leq x^2$ for every $x\leq 1$, 
\begin{multline*}
\left| \E\left[ I^{(r,R,\delta)} w(r-Z) (1 + \xi) \right] - \E\left[ I^{(r,R,\delta)} w(r-Z) \e^\xi \right] \right| \leq \E\left[ I^{(r,R,\delta)} w(r-Z) \xi^2 \right] \\
\leq \left( w(r-R) + \frac\delta 2\right) \E\Bigg[ I^{(r,R,\delta)} w(r-Z) \left\{ w(r-Z) + \frac{1}{2} \int w(r-y) \d M^{(r,R,\delta)}_{Z} (y) + \E\left[ I^{(r,R,\delta)} w(r-Z) \right] \right\} \Bigg] \\
\leq \left( w(r-R) + \frac\delta 2\right) \left( \frac{\sigma^2}{2} + 2\right) w(r-R)^2 ,
\end{multline*}
where we used that 
\[ \E\left[ I^{(r,R,\delta)} M^{(r,R,\delta)}_Z(\R) \right] = \E\left[ I^{(r,R,\delta)} \int \phi(\R) \d P^{(r,R,\delta)}_{Z,  I^{(r,R,\delta)}}(\phi) \right] = \E\left[ \ind{E(r,R,\delta)} \int (\chi(\R)-1) \d\chi(x) \right] \leq \sigma^2 . \]
Combining the estimates we have established so far gives Proposition \ref{prop:convolution-w}, with the remainder bounded by
\begin{multline*}
|\mathrm{Remainder}(r,R,\delta)| \leq \left(w(r-R)+\frac{\delta}{2}\right)\left(\frac{\sigma^2}{2}+2\right) w(r-R)^2 \\
+ 22 \left((*) + (**) + (\sigma^2 + 1) w(r-R)^2 \left( 2\delta + 4w(r-R) \right) \right) \\
\leq 22 \left((*) + (**) \right) + (\sigma^2 + 1) w(r-R)^2 \left( 45\delta + 90 w(r-R) \right) .
\end{multline*}
Increasing the numerical constants gives the Proposition.

\subsection{The martingale}
\label{sec:the-martingale}

We define a martingale by using Proposition \ref{prop:convolution-w} and the Markov property. This reflects \cite[Proposition 6]{lalley2015maximal}. Note that this martingale is not bounded, unlike in \cite{lalley2015maximal}; to use the optional stopping theorem, we will need to prove in Lemma \ref{lem:discount-UI} that it is uniformly integrable.

We use the convention that $\sum_{n=1}^0$ is zero. Recall $r_0$ and $\delta^0$ from Proposition \ref{prop:convolution-w}. Fix $r_\bullet \geq r_0$ and $r \geq 2 r_\bullet$, and let $(S_n)_{n\geq 0}$, $(R_n)_{n\geq 0}$, $(\delta_n)_{n\geq 0}$ and $(I_n)_{n\geq 1}$ such that $S_0 = r$ and for every $n\geq 0$, writing $\FF_n = \sigma\left( S_k, R_k, \delta_k, I_k : k\leq n\right)$, then conditionally on $\FF_n$
\begin{equation}
(S_{n+1} - S_n , I_{n+1}) \eqdistr (- Z, I^{(S_n,R_n, \delta_n)}) .
\end{equation}
In particular, $(S_n)_{n\geq 0}$ is a random walk started at $S_0 = r$ with i.i.d. steps, such that $S_n-S_{n+1}$ has distribution $M$ for every $n$.

\begin{proposition}\label{prop:almost-martingale}
For every $n\geq 0$, define the ``discount process'' 
\begin{multline}\label{eq:def-Z}
\kW^{(r)}_n := \left(\prod_{k=0}^{n-1} I_{k+1}\right) \exp\Bigg(- \sum_{k=0}^{n-1} \left( w(S_{k+1}) - \E\left[ I_{k+1} w(S_{k+1}) \, | \, \FF_k \right] \right) \\
- \frac 12 \sum_{k=0}^{n-1} \int w(S_k-y) \d M^{(S_k,R_k,\delta_k)}_{S_k-S_{k+1}}(y) \Bigg) . 
\end{multline}
Define the stopping time 
\[ T = \inf\{n\geq 0 : S_n < 2r_\bullet \text{ or } R_n \notin [r_\bullet, S_n - r_\bullet] \text{ or } \delta_n \notin (0,\delta^0) \} . \]
Define the processes 
\begin{equation}
W^{(r)}_n = \kW^{(r)}_n w(S_n) \quad , \quad Y_n := W^{(r)}_n - \sum_{k=0}^{n-1} \kW^{(r)}_k  \mathrm{Remainder}(S_k, R_k, \delta_k) .
\end{equation}
Then $(Y_{n\wedge T})_{n\geq 0}$ is a martingale, i.e. on the event $\{T > n\}$, 
\begin{equation}\label{eq:almost-mg-ppty}
\E\left[ W^{(r)}_{n+1} | \FF_n\right]  -  W^{(r)}_n = \kW^{(r)}_n \mathrm{Remainder}(S_n, R_n, \delta_n) ,
\end{equation}
where the remainder is given by Proposition \ref{prop:convolution-w}. 
\end{proposition}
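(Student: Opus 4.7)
The plan is to interpret Proposition \ref{prop:convolution-w} as a one-step conditional identity along the walk $(S_n)$, and to check that the definition \eqref{eq:def-Z} of $\kW^{(r)}_n$ is engineered so that the exponential factor appearing there becomes precisely the multiplicative increment $\kW^{(r)}_{n+1}/\kW^{(r)}_n$. The martingale property of $(Y_{n\wedge T})$ then follows directly.

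First, on the event $\{T>n\}$, the definition of $T$ guarantees that the triple $(S_n, R_n, \delta_n)$ satisfies the hypotheses of Proposition \ref{prop:convolution-w}. By construction of the chain, the conditional law of $(S_n - S_{n+1}, I_{n+1})$ given $\FF_n$ coincides with the law of the pair $(Z, I^{(S_n, R_n, \delta_n)})$ appearing in that proposition. Applying Proposition \ref{prop:convolution-w} conditionally on $\FF_n$, with $r = S_n$, $R = R_n$, $\delta = \delta_n$, we obtain, on $\{T>n\}$,
\[
w(S_n) = \E\!\left[\, I_{n+1}\, w(S_{n+1})\, e^{\xi_n} \,\big|\, \FF_n \,\right] + \mathrm{Remainder}(S_n, R_n, \delta_n),
\]
where $\xi_n := -w(S_{n+1}) - \frac{1}{2} \int w(S_n - y)\, \d M^{(S_n, R_n, \delta_n)}_{S_n - S_{n+1}}(y) + \E[I_{n+1} w(S_{n+1}) \mid \FF_n]$. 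Here we used that the quantity $r-Z$ from Proposition \ref{prop:convolution-w} becomes $S_n - (S_n - S_{n+1}) = S_{n+1}$, and that the outer expectation $\E[I^{(S_n, R_n, \delta_n)} w(S_n - Z)]$ becomes the conditional expectation $\E[I_{n+1} w(S_{n+1}) \mid \FF_n]$.

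Second, a direct inspection of \eqref{eq:def-Z} shows that passing from index $n$ to $n+1$ appends exactly the factor $I_{n+1}$ to the product and the quantity $\xi_n$ to the exponent, so $\kW^{(r)}_{n+1} = \kW^{(r)}_n \, I_{n+1} \, e^{\xi_n}$. Since $\kW^{(r)}_n$ is $\FF_n$-measurable, multiplying the previous displayed identity by $\kW^{(r)}_n$ and recognizing $W^{(r)}_{n+1} = \kW^{(r)}_{n+1} w(S_{n+1})$ on the right yields \eqref{eq:almost-mg-ppty} (up to a sign convention in the Remainder, which is irrelevant downstream since only bounds on its absolute value will be used).

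Finally, the martingale property of $(Y_{n\wedge T})_n$ is then immediate: on $\{T > n\}$,
\[
\E[Y_{n+1} - Y_n \mid \FF_n] = \E[W^{(r)}_{n+1} - W^{(r)}_n \mid \FF_n] - \kW^{(r)}_n \mathrm{Remainder}(S_n, R_n, \delta_n) = 0
\]
by \eqref{eq:almost-mg-ppty}, and on $\{T \leq n\}$ the stopped process is constant. There is no substantive obstacle: the proposition is essentially a Markov-property repackaging of Proposition \ref{prop:convolution-w} in which the per-step error becomes the compensator of $W^{(r)}_n$. The only actual bookkeeping is the identification $\kW^{(r)}_{n+1}/\kW^{(r)}_n = I_{n+1} e^{\xi_n}$ from \eqref{eq:def-Z}.
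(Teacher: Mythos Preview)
Your proof is correct and follows exactly the approach the paper indicates: it is the Markov property applied to Proposition \ref{prop:convolution-w}, with the key bookkeeping step being the identification $\kW^{(r)}_{n+1} = \kW^{(r)}_n\, I_{n+1}\, e^{\xi_n}$. The parenthetical about a ``sign convention'' in the Remainder is unnecessary, since the Remainder is defined by the equality in Proposition \ref{prop:convolution-w} and so \eqref{eq:almost-mg-ppty} holds exactly as stated.
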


\begin{proof}
It is a straightforward consequence of the Markov property with Proposition \ref{prop:convolution-w}. 
\end{proof}

Equivalently, we can add some $I_{k+1}$ in the expression: 
\begin{multline*}
\kW^{(r)}_n = \left(\prod_{k=0}^{n-1} I_{k+1}\right) \exp\Bigg(- \sum_{k=0}^{n-1} \left( I_{k+1} w(S_{k+1}) - \E\left[ I_{k+1} w(S_{k+1}) \, | \, \FF_k \right] \right) \\
- \frac 12 \sum_{k=0}^{n-1} I_{k+1} \int w(S_k-y) \d M^{(S_k,R_k,\delta_k)}_{S_k-S_{k+1}}(y) \Bigg) . 
\end{multline*}

\subsection{Control of the discount process}

\begin{lemma}\label{lem:discount-UI}
Up to taking $r_0$ larger, there exists $a_0>0$ such that for every $a \in (0,a_0)$, for every $r_\bullet \geq r_0$ and for every $r\geq 2r_\bullet$, writing $T_a = T \wedge \inf \{ n\geq 0 : R_n > a/\sqrt{w(S_n)} \}$, the stopped process $(\kW^{(r)}_{n\wedge T_a})_{n\geq 0}$ is a uniformly integrable supermartingale. 
\end{lemma}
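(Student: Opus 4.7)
The plan is to verify the supermartingale property and uniform integrability via a careful Taylor expansion of $e^{\xi_{n+1}}$, where $\xi_{n+1} = -w(S_{n+1}) - B_{n+1} + A_n$ with $A_n := \E[I_{n+1}w(S_{n+1}) \mid \FF_n]$ and $B_{n+1}$ as in \eqref{eq:def-Z}. Throughout, on the event $\{T_a > n\}$, the condition $R_n \leq a/\sqrt{w(S_n)}$ together with Lemma \ref{lem:continuity-w-broad} yields $w(S_n - R_n) \leq C_a w(S_n)$ with $C_a \to 1$ as $a \to 0^+$, and on $\{I_{n+1}=1\}$ the estimates \eqref{eq:Palm-controle-M} force $w(S_{n+1}) \leq w(S_n - R_n)$ and $B_{n+1} \leq \delta_n / 2$, so $|\xi_{n+1}|$ is uniformly small once $r_0$ is large and $a_0, \delta^0$ are small.

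For the supermartingale step I apply $e^{-x} \leq 1 - x + x^2/2$ (valid for $x \geq 0$) to $x = w(S_{n+1}) + B_{n+1}$ and factor out $e^{A_n}$ to obtain
\[ \E[I_{n+1} e^{\xi_{n+1}} \mid \FF_n] \leq e^{A_n}(p_n - A_n) - e^{A_n} v_n + \frac{e^{A_n}}{2}\E[I_{n+1}(w(S_{n+1}) + B_{n+1})^2 \mid \FF_n] , \]
with $p_n := \P(I_{n+1} = 1 \mid \FF_n) \leq 1$ and $v_n := \E[I_{n+1} B_{n+1} \mid \FF_n]$. A calculus check shows $e^{A_n}(p_n - A_n) \leq p_n \leq 1$ (the map $A \mapsto e^A(p-A)$ is non-increasing on $[0,\infty)$ for $p \leq 1$), so it suffices to show $\frac{1}{2}\E[I_{n+1}(w(S_{n+1})+B_{n+1})^2 \mid \FF_n] \leq v_n$. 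Writing $\chi_n$ for the $n$-th reproduction, $E_n := E(S_n, R_n, \delta_n)$, and $S^\chi := \int w(S_n - y)\d\chi_n(y)$, the Palm-kernel identities
\[ \E[I_{n+1}(w(S_{n+1}) + B_{n+1})^2 \mid \FF_n] = \E[\ind{E_n}(S^\chi)^2 \mid \FF_n] + \E[I_{n+1} B_{n+1}^2 \mid \FF_n] , \]
\[ 2 v_n = \E[\ind{E_n}(\chi_n(\R) - 1) S^\chi \mid \FF_n] , \]
obtained from \eqref{eq:Campbell-Little-Mecke}--\eqref{eq:Campbell-Little-Mecke-mean} applied to $P^{(S_n, R_n, \delta_n)}$, combined with $S^\chi \leq \chi_n(\R) w(S_n - R_n) \leq \delta_n$ and $B_{n+1} \leq \delta_n/2$ on $E_n$, reduce the inequality to $v_n \geq w(S_n-R_n) A_n / (2 - 2 w(S_n-R_n) - \delta_n/2)$. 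Under Assumptions \ref{assum} the second Palm identity combined with $\sigma^2 > 0$ ensures $v_n$ is asymptotically of order at least $\sigma^2 A_n / 2$ as $r_0 \to \infty$, while $w(S_n - R_n) \to 0$, so the inequality holds for $r_0$ large and $a_0, \delta^0$ small.

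For uniform integrability, I would apply the same scheme to $e^{(1+\varepsilon)\xi_{n+1}}$ for small $\varepsilon > 0$, obtaining $\E[I_{n+1} e^{(1+\varepsilon)\xi_{n+1}} \mid \FF_n] \leq 1 + C(a,\varepsilon) w(S_n)^2$. Iterating and using $w(S_k) \leq w(2 r_\bullet)$ on $\{k < T\}$ plus a crude bound on $\E[T_a]$ via the random-walk structure of $(S_n)$ gives $\sup_n \E[(\kW^{(r)}_{n \wedge T_a})^{1+\varepsilon}] < \infty$, whence uniform integrability by de la Vallée-Poussin. The main obstacle is the lower bound on $v_n$ needed in the supermartingale step: a naive estimate $\E[I_{n+1}(w+B)^2] \leq 2(\E[I_{n+1} w^2] + \E[I_{n+1} B^2])$ would destroy the crucial cancellation, so one must use the two Palm identities above to express the second moment of $w+B$ in terms of $(S^\chi)^2$ and exploit $\sigma^2>0$ via the covariance structure of $(\chi_n(\R), S^\chi)$ to extract the correct order from $v_n$.
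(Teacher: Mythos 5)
Your supermartingale step takes a genuinely different and rather elegant route from the paper. You Taylor-expand $e^{-(w+B)}$ directly with $e^{-x}\leq 1-x+x^2/2$, pull out the factor $e^{A_n}$, observe that $A\mapsto e^A(p-A)$ is non-increasing for $p\leq 1$, and reduce the whole thing to a single drift inequality $v_n\geq\tfrac12\E[I(w+B)^2\mid\FF_n]$. Your two Palm identities are correct, and your final reduction $v_n\geq w(S_n-R_n)A_n/(2-2w(S_n-R_n)-\delta_n/2)$ follows from $S^\chi\leq\chi(\R)w(r-R)$ and $B\leq\delta/2$. The content that makes this close---a uniform lower bound $v_n\geq c\,w(S_n-K)$ for some fixed $c>0,K\in\R$---is exactly what the paper establishes for its $\E[B]$ (by restricting the integral in $2v_n=\E[\ind{E}(\chi(\R)-1)S^\chi\mid\FF_n]$ to $\chi([K,\infty))$ and using dominated convergence plus $\sigma^2>0$), so your informal claim ``$v_n$ is of order $\sigma^2 A_n/2$'' can be made rigorous the same way, combined with $w(r-R)^2/w(r-K)\to 0$ from Lemma~\ref{lem:continuity-w-broad}. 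You also leave implicit the use of Lemma~\ref{lem:lower-bound-w} to ensure $a/\sqrt{w(S_n)}\leq S_n/2$ so that $S_n-R_n\geq r_0$, which is needed to apply the estimates; the paper states this explicitly and chooses $a_0=\sqrt{C/8}$ accordingly. The paper avoids all of this accounting by instead proving $\E[I\e^{2\xi}]\leq 1$, splitting $\xi=A+B$, bounding $\E[\e^{2B}]$ by the drift and $\E[A\e^{2B}],\E[A^2\e^{2B}]$ by $O(w(r-R)^2)$; both approaches ultimately hinge on the same comparison $w(r-K)\gg w(r-R)^2$.

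The uniform-integrability step, however, has a real gap. You propose to show $\E[I_{n+1}e^{(1+\varepsilon)\xi_{n+1}}\mid\FF_n]\leq 1+C(a,\varepsilon)w(S_n)^2$ and then finish via ``a crude bound on $\E[T_a]$ via the random-walk structure.'' That bound does not exist: $T_a$ dominates the first passage time of the mean-zero random walk $(S_n)$ below $2r_\bullet$, started from $r\geq 2r_\bullet$, which has infinite expectation (and a fortiori no exponential moment), so the iterated product $\prod_{k<T_a}(1+Cw(S_k)^2)$ has no uniformly bounded expectation. The fix is simply to push your own drift argument one step further and aim for $\E[I_{n+1}e^{(1+\varepsilon)\xi_{n+1}}\mid\FF_n]\leq 1$ outright: the same cancellation that gives you $\leq 1$ at power $1$ gives it at power $1+\varepsilon$ (you need $v_n\geq\tfrac{1+\varepsilon}{2}\E[I(w+B)^2\mid\FF_n]$, and the quadratic term is still $O(w(r-R)^2)=o(v_n)$), whence $\sup_n\E[(\kW^{(r)}_{n\wedge T_a})^{1+\varepsilon}]\leq 1$ and UI by de la Vall\'ee Poussin. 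This is exactly what the paper does with $\varepsilon=1$: $\E[I\e^{2\xi}]\leq 1$ yields $\E[(\kW^{(r)}_{n\wedge T_a})^2]\leq 1$, and the supermartingale property drops out for free via $\E[I\e^\xi]\leq(\E[I\e^{2\xi}])^{1/2}\leq 1$, making the single computation do double duty.
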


\begin{proof}
Recall Lemma \ref{lem:lower-bound-w}: writing $C = \liminf_{r\to\infty} r^2 w(r)$ and taking $r_0$ larger if needed we have $r^2 w(r) \geq C/2$ for every $r\geq r_0$. Set $a_0 = \sqrt{C/8}$, then for every $a\in(0,a_0)$ and every $r\geq r_0$ we have $a/\sqrt{w(r)} \leq r/2$, hence for every $r_\bullet \geq r_0$, every $r\geq 2r_\bullet$ and every $R \leq a/\sqrt{w(r)}$ we have $r-R \geq r_\bullet \geq r_0$. In particular, $T_a$ is the same as $T$ with the condition on $R_n$ replaced by $R_n \notin [r_\bullet, a/\sqrt{w(S_n)}]$. 

Just as in the proof of Proposition \ref{prop:convolution-w}, write 
\[ \xi := -w(r-Z) + \E\left[I^{(r,R,\delta)} w(r-Z)\right] - \frac{1}{2} \int w(r-y) \d M_Z^{(r,R,\delta)}(y). \]
We first prove that for every $\delta \in (0,\delta^0)$, every $r\geq r_\bullet$ and every $r_\bullet \leq R \leq a/\sqrt{w(r)}$ we have $\E[I^{(r,R,\delta)} \e^{2\xi}] \leq 1$. Let us show how the lemma follows. Let $\xi_n$ be obtained from $\xi$ by replacing $r$ by $S_n$, $R$ by $R_n$ and $\delta$ by $\delta_n$: on $\{T_a>n\}$ it satisfies $\delta_n \in (0,\delta^0)$, $S_n\geq 2r_\bullet$ and $r_\bullet \leq R_n \leq a/\sqrt{w(r)}$, hence on $\{ T_a > n \}$, 
\begin{equation*}
\E\left[ \left(\kW^{(r)}_{n+1}\right)^2 \, | \, \FF_n\right] 
= \left(\kW^{(r)}_n\right)^2 \E\left[I^{(S_n, R_n, \delta_n)} \e^{2\xi_n} \ | \ \FF_n\right] 
\leq \left(\kW^{(r)}_n\right)^2 .
\end{equation*}
Together with $\kW^{(r)}_0=1$ we conclude that $\E[ (\kW^{(r)}_{n \wedge T_a})^2 ] \leq 1$ for every $n$, which by De La Vallée Poussin's theorem implies that $(\kW^{(r)}_{n \wedge T_a})_{n\geq 0}$ is uniformly integrable. On the other hand, on $\{T_a>n\}$
\[ \E\left[\kW^{(r)}_{n+1} \, | \, \FF_n \right] = \kW^{(r)}_{n} \E\left[I^{(S_n, R_n, \delta_n)} \e^{\xi_n} \, | \, \FF_n \right] . \]
Since $\E[I^{(S_n,R_n,\delta_n)} \e^{\xi_n} \, | \, \FF_n ] \leq (\E[I^{(S_n,R_n,\delta_n)}\e^{2\xi_n} \, | \, \FF_n ])^{1/2}\leq 1$ on $\{T_a>n\}$, the second half of the Lemma follows by induction. 

Let us now check that $\E[I^{(r,R,\delta)} \e^{2\xi}] \leq 1$. Write $I^{(r,R,\delta)}\xi = A + B$, with $A = -I^{(r,R,\delta)} w(r-Z) + \E\left[I^{(r,R,\delta)} w(r-Z)\right] $ and $B = - \frac{1}{2} I^{(r,R,\delta)} \int w(r-y) \d M_Z^{(r,R,\delta)}(y)$. Almost surely $|A| \leq w(r-R)$, and $\E[A]=0$. On the other hand, by \eqref{eq:Palm-controle-M} we have $M_z^{(r,R,\delta)}(\R) \leq \frac{\delta}{w(r-R)}-1$ for every $z, r, R, \delta$, hence $-\frac\delta 2 \leq B\leq 0$. 
In addition, 
\begin{align*}
\E[B] &= -\frac{1}{2}\E\left[ I^{(r,R,\delta)} \int w(r-y) \d M_Z^{(r,R,\delta)}(y) \right] \\
&= - \frac{1}{2} \E\left[ \ind{E(r,R,\delta)} \int \left(\int w(r-y) \d(\chi-\delta_x)(y)\right) \d\chi(x) \right] \\
&= - \frac{1}{2} \E\left[ \ind{E(r,R,\delta)} \int (\chi(\R)-1) w(r-x) \d\chi(x) \right] \\
&\geq -\frac{\sigma^2}{2}w(r-R) .
\end{align*}
Because for every $K\in\R$
\[ \E\left[ \ind{E(r,R,\delta)} (\chi(\R)-1) \chi([K, \infty)) \right] \to \E\left[ (\chi(\R)-1) \chi([K,\infty)) \right] \]
as $r_0\to\infty$ uniformly on every $\delta \in (0,\delta^0)$, $r\geq 2r_0$ and $r_0 \leq R \leq r-r_0$, and because this limit converges to $\sigma^2>0$ as $K\to-\infty$, we can find $c>0$ and $K\in\R$ such that, taking $r_0$ larger if necessary, for every $r_\bullet \geq r_0$, $r\geq 2r_\bullet$ and $r_\bullet \leq R \leq a/\sqrt{w(r)}$, 
\[ \E[B] \leq - c w(r-K) . \]
We then use that $\e^{x} \leq 1 + x + x^2$ for every $|x|\leq 1$ and taking $r_0$ larger if necessary so that $2w(r-R) \leq 1$, to get
\begin{align*}
\E\left[ I^{(r,R,\delta)} \e^{2(A+B)} \right] \leq \E\left[\e^{2B}\right] + \E\left[2A \e^{2B}\right] + \E\left[4A^2 \e^{2B} \right] .
\end{align*}
Since $B\leq 0$ a.s. the last term is bounded by $4\E[A^2] \leq 4 w(r-R)^2$. Taking $\delta^0$ smaller if necessary so that $\frac{1-e^{-\delta}}{\delta} \geq \frac 12$, we use that $\e^{x} \leq 1 + \frac{1-e^{-\delta}}{\delta} x \leq 1 + \frac{x}{2}$ for every $-\delta \leq x \leq 0$ to bound the first term by
\[ \E[\e^{2B}] \leq 1 + \frac{1}{2} \E[2B] \leq 1 - c w(r-K) . \]
For the middle term, write 
\[ \E[A\e^{2B}] = \E[A] + \E[A(\e^{2B}-1)] \leq w(r-R) \E[1-\e^{2B}] \leq 2w(r-R) \E[|B|] \leq \sigma^2 w(r-R)^2 \]
since $\E[|B|] = \E[-B] \leq \frac{\sigma^2}{2}w(r-R)$. We deduce that
\begin{equation}\label{eq:smaller-one}
\E\left[ I^{(r,R,\delta)} \e^{2(A+B)} \right] \leq 1 - c w(r-K) + (2\sigma^2 + 4) w(r-R)^2 .
\end{equation}
By Lemma \ref{lem:continuity-w-broad} and up to taking $a_0$ smaller, for every $a \in (0,a_0)$ we have $\sup_{r\geq 2r_0} \sup_{r_0\leq R \leq \frac{a}{\sqrt{w(r)}}} \frac{w(r-R)^2}{w(r-K)} \to 0$ as $r_0\to\infty$, 
hence up to taking $r_0$ larger if necessary the right-hand side of \eqref{eq:smaller-one} is smaller than $1$. 
This finishes the proof.
\end{proof}

\subsection{The Feynman--Kac representation}

\begin{corollary}\label{cor:Feynman-Kac}
There exists a function $\underline{\delta} : [0,\infty) \to (0,\delta^0)$ decreasing with $\underline{\delta}(y) \to 0$ as $y\to\infty$ and a function $g:(0,1] \to [0,\infty)$ increasing with $g(x)\to 0$ as $x\to 0$ such that the following holds. 

For every $a\in (0,a_0)$, $A>0$, $y>0$, $\RR>0$, and $r \geq x \geq r_0$ where $a_0, r_0$ is given by Lemma \ref{lem:discount-UI}, setting $\delta_n = \underline{\delta}(y)$ for every $n$ and writing 
\begin{equation*}
\begin{cases}
\tau_x = \inf\{ n\geq 0 : S_n \leq x \} \\
\rho = \inf\{n\geq 0 : S_n-R_n < y \text{ or } R_n < \RR\} \\
T' = \inf(\tau_x, T_a, \rho, A/w(x) \}
\end{cases}
\end{equation*}
then 
\begin{equation}\label{eq:FK}
\left| w(r) - \E\left[ W^{(r)}_{T'}\right] \right| \leq  \frac{100 A}{w(x)} \left( \P(\Lambda > \RR) + w(y)^2 g(w(y))\right) .
\end{equation}
\end{corollary}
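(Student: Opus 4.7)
The plan is to apply the optional stopping theorem to the martingale $(Y_{n\wedge T})_{n\geq 0}$ of Proposition~\ref{prop:almost-martingale} at the bounded stopping time $T'$. Since $T' \leq T_a \leq T$ by construction and $T' \leq A/w(x)$ is deterministically bounded, OST yields $w(r) = Y_0 = \E[Y_{T'}]$, which rewrites as
\begin{equation*}
w(r) - \E\left[W^{(r)}_{T'}\right] = -\E\left[\sum_{k=0}^{T'-1} \kW^{(r)}_k \, \mathrm{Remainder}(S_k, R_k, \delta_k)\right].
\end{equation*}
Everything reduces to bounding this expected sum by $\frac{100A}{w(x)}\bigl(\P(\Lambda>\RR) + w(y)^2 g(w(y))\bigr)$.

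For the per-step bound, on $\{k < T'\}$ the very definitions of $\tau_x$, $T_a$ and $\rho$ force $S_k > x \geq r_0$, $R_k \in [\RR \vee r_\bullet, a/\sqrt{w(S_k)}]$, $S_k - R_k \geq y$, and $\delta_k = \underline{\delta}(y) \in (0, \delta^0)$. Since $w$ is decreasing, $w(S_k - R_k) \leq w(y)$, so Proposition~\ref{prop:convolution-w} gives the deterministic bound
\begin{equation*}
|\mathrm{Remainder}(S_k, R_k, \delta_k)| \leq 90\left[\P(\Lambda>\RR) + \P\!\left(\chi(\R) > \tfrac{\underline{\delta}(y)}{w(y)}\right) + (\sigma^2+1) w(y)^2(w(y)+\underline{\delta}(y))\right].
\end{equation*}

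To sum this over $k < T'$, I would invoke Lemma~\ref{lem:discount-UI}: $(\kW^{(r)}_{n\wedge T_a})_{n\geq 0}$ is a nonnegative supermartingale starting from $1$, and since $T' \leq T_a$ we get $\E[\kW^{(r)}_k \ind{k<T'}] \leq \E[\kW^{(r)}_{k\wedge T'}] \leq 1$. Summing over $k \leq A/w(x)-1$ gives $\E[\sum_{k=0}^{T'-1} \kW^{(r)}_k] \leq A/w(x)$. Multiplying with the deterministic per-step estimate and identifying
\begin{equation*}
g(w) := \frac{\P(\chi(\R) > \underline{\delta}/w)}{w^2} + (\sigma^2+1)(w+\underline{\delta})
\end{equation*}
will recover the Corollary's stated inequality, with some slack between the factor $90$ above and $100$ in the statement.

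The main obstacle is constructing $\underline{\delta}$ and $g$ with the right behavior: $g(w) \to 0$ as $w \to 0$, and $\underline{\delta}$ decreasing with $\underline{\delta}(y) \to 0$. The finite variance of $\chi(\R)$ gives $M^2 \P(\chi(\R) > M) \to 0$ as $M \to \infty$, so for any fixed $\delta_0 \in (0, \delta^0)$,
\begin{equation*}
\frac{\P(\chi(\R) > \delta_0/w)}{w^2} = \frac{1}{\delta_0^2}\left(\frac{\delta_0}{w}\right)^2 \P\!\left(\chi(\R) > \tfrac{\delta_0}{w}\right) \xrightarrow{w\to 0} 0.
\end{equation*}
Hence the infimum of $\P(\chi(\R) > \delta/w)/w^2 + (\sigma^2+1)(w+\delta)$ over $\delta \in (0, \delta^0)$ is bounded above by $(\sigma^2+1)\delta_0 + o(1)$ as $w \to 0$; letting $\delta_0 \to 0$ shows this infimum tends to $0$. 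Choosing $\underline{\delta}(y)$ to approximately realize this infimum at $w = w(y)$ automatically forces $\underline{\delta}(y) \to 0$ (since the $(\sigma^2+1)\underline{\delta}(y)$ term alone is controlled by the infimum), and monotonicity can be enforced by passing to a decreasing envelope. The delicate balance between $\underline{\delta}$ being small enough that $(\sigma^2+1)\underline{\delta}$ is negligible and large enough that $\P(\chi(\R)>\underline{\delta}/w)/w^2$ is negligible is the only subtle point in the proof; once it is set up, the optional stopping and supermartingale steps are routine.
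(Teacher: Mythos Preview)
Your proposal is correct and follows essentially the same approach as the paper: apply optional stopping to the martingale $(Y_{n\wedge T})$ of Proposition~\ref{prop:almost-martingale} at the bounded stopping time $T'$, bound each remainder term on $\{k<T'\}$ via Proposition~\ref{prop:convolution-w} using $S_k-R_k\geq y$ and $R_k\geq\RR$, then use the supermartingale property from Lemma~\ref{lem:discount-UI} to get $\E[\kW^{(r)}_{k\wedge T'}]\leq 1$ and sum over at most $A/w(x)$ terms. Your construction of $\underline{\delta}$ and $g$ is in fact more explicit than the paper's, which simply asserts that such functions can be found; your balancing argument via $M^2\P(\chi(\R)>M)\to 0$ is exactly the right idea.
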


\begin{proof}
By Proposition \ref{prop:almost-martingale} the process
\[ Y_n := W_n^{(r)} - \sum_{k=0}^{n-1} \kW^{(r)}_k \mathrm{Remainder}(S_k, R_k, \delta_k) , \]
is such that $(Y_{n\wedge T})_{n\geq 0}$ is a $(\FF_n)_{n\geq 0}$-martingale. Apply the stopping theorem with the bounded stopping time $T'$:
\begin{align}\label{eq:sum-for-FK}
w(r) = \E\left[ W^{(r)}_0 \right] = \E\left[ W_{T'}^{(r)} \right] - \E\left[ \sum_{k=0}^{T'-1} \kW^{(r)}_k \mathrm{Remainder}(S_k, R_k, \delta_k) \right] .
\end{align}
By Proposition \ref{prop:convolution-w} 
\[ \frac{1}{90} |\mathrm{Remainder}(r, R, \delta)| \leq \P(\Lambda>R) + \P\left(\chi(\R)>\frac{\delta}{w(r-R)}\right) + (\sigma^2+1) w(r-R)^2(w(r-R)+\delta) , \]
thus on $\{ k < T'\}$ we have
\begin{align*}
\frac{1}{90} |\mathrm{Remainder}(S_k, R_k, \delta_k)| \leq \P(\Lambda>\RR) + \P\left(\chi(\R)>\frac{\underline{\delta}(y)}{w(y)}\right) + (\sigma^2+1) w(y)^2(w(y)+\underline{\delta}(y)) .
\end{align*}
We can then find a function $\underline{\delta}(y)\to 0$ as $y\to\infty$ decreasing and positive, together with a function $g(x) \to 0$ as $x\to 0$ increasing and positive, such that 
\begin{align*}
\P\left(\chi(\R)>\frac{\underline{\delta}(y)}{w(y)}\right) + (\sigma^2+1) w(y)^2(w(y)+ \underline{\delta}(y)) \leq \frac{1}{100} w(y)^2 g(w(y)) .
\end{align*}
Since by Lemma \ref{lem:discount-UI} the process $(\kW^{(r)}_{n\wedge T_a})_{n\geq 0}$ is a supermartingale we have 
\[ \E\left[ \kW^{(r)}_{n\wedge T'} \right] \leq 1 , \]
and given that $T'\leq A/w(x)$ we bound the sum in the right-hand side of \eqref{eq:sum-for-FK} by $A/w(x)$ times the upper bound for the remainder. This finishes the proof. 
\end{proof}

\subsection{The ratio limit}
\label{sec:ratio-limit}

Now that we have established a Feynman--Kac representation for $w$, we follow the method described by \cite[Section 2.3]{lalley2015maximal}, which centers around the object $\phi$ defined as follows. Given a sequence $r_k \to \infty$ as $k\to\infty$, up to extracting a subsequence we can make sense of the limit
\begin{equation}\label{eq:phi}
\phi(y) = \lim_{k\to\infty} \frac{w\left( r_k + \frac{y}{\sqrt{w(r_k)}}\right)}{w(r_k)}
\end{equation}
for every fixed $y\geq 0$, and thus by a diagonal argument, jointly for every rational $y\geq 0$.

The Proposition 8 in \cite{lalley2015maximal} (our Proposition \ref{prop:continuity-phi}), their Lemma 11, and their Lemma 13 (see the end of Section \ref{sec:lower-bound-ratio}) follow easily from our Lemma \ref{lem:continuity-w-broad}. We prove the Feynman--Kac representation for the ratio $\phi$ in Proposition \ref{prop:func-equation-phi} (their Proposition 9), then deduce our Theorem \ref{th:main-tail} (their Theorem 1).

\begin{proposition}\label{prop:continuity-phi}
Any limit $\phi$ in \eqref{eq:phi} extends to a continuous, non-increasing, positive function of $[0,\infty)$. Hence, the convergence \eqref{eq:phi} holds uniformly over every compact of $[0,\infty)$. 
\end{proposition}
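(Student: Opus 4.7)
The plan is to derive a functional inequality relating $\phi(q)$ and $\phi(q+h)$ from Lemma \ref{lem:continuity-w-broad}, applied at shifted arguments $r_k' := r_k + q/\sqrt{w(r_k)}$, and from it read off positivity and continuity; uniform convergence on compacts then follows from a Dini-type argument for monotone functions.

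Initially $\phi$ is defined on non-negative rationals (after the subsequence extraction) and is non-increasing with $\phi(0)=1$, hence valued in $[0,1]$. The key step is to show that for rational $q \geq 0$ and $h$ of sufficiently small absolute value, provided $\phi(q) > 0$,
\begin{equation}\label{eq:phi-functional}
F_-\!\left(\tfrac{\sigma}{\eta} h \sqrt{\phi(q)}\right) \phi(q) \leq \phi(q+h) \leq F_+\!\left(\tfrac{\sigma}{\eta} h \sqrt{\phi(q)}\right) \phi(q).
\end{equation}
To prove \eqref{eq:phi-functional}, set $r_k' := r_k + q/\sqrt{w(r_k)}$, which tends to $\infty$ and satisfies $w(r_k')/w(r_k) \to \phi(q)$, and write
\[
\frac{w(r_k + (q+h)/\sqrt{w(r_k)})}{w(r_k)} = \frac{w(r_k' + h_k/\sqrt{w(r_k')})}{w(r_k')} \cdot \frac{w(r_k')}{w(r_k)},
\]
with $h_k := h\sqrt{w(r_k')/w(r_k)} \to h\sqrt{\phi(q)}$. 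Since $w$ is non-increasing, one can sandwich $h_k$ between $h\sqrt{\phi(q)} \pm \epsilon$ for arbitrary $\epsilon > 0$, apply Lemma \ref{lem:continuity-w-broad} at the sequence $r_k' \to \infty$ with the fixed argument $h\sqrt{\phi(q)} \pm \epsilon$, and let $\epsilon \to 0$ using continuity of $F_\pm$.

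With \eqref{eq:phi-functional} in hand, positivity of $\phi$ follows by a Grönwall-type bootstrap: the explicit form $F_-(y) = 1 - y\sqrt{8/\pi}$ on $y \geq 0$ gives $\phi(q+h) \geq \phi(q)(1 - Ch\sqrt{\phi(q)})$ for $C = (\sigma/\eta)\sqrt{8/\pi}$, and setting $\psi := 1/\sqrt{\phi}$ one obtains $\psi(q+h) - \psi(q) \leq Ch + o(h)$ as long as $\phi$ is bounded away from $0$, which iterates along a fine rational partition of $[0,y]$ to yield $\psi(y) \leq 1 + C y$ and hence $\phi(y) \geq (1+Cy)^{-2} > 0$ on every compact. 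Continuity at every $y_0 \geq 0$ is then immediate: by monotonicity the one-sided limits $\phi(y_0^\pm)$ exist, and are positive by the previous step; applying \eqref{eq:phi-functional} to rationals $q < q'$ both tending to $y_0$ yields $\phi(q')/\phi(q) \geq F_-(\tfrac{\sigma}{\eta}(q'-q)\sqrt{\phi(q)}) \to 1$, forcing $\phi(y_0^+) = \phi(y_0^-)$. This determines the continuous, positive, non-increasing extension of $\phi$ to $[0,\infty)$. Finally, uniform convergence on a compact $[0,Y]$ follows from the standard fact that a sequence of non-increasing functions converging pointwise on a dense set to a continuous limit converges uniformly on compacts: given $\epsilon > 0$, pick a rational partition $0 = y_0 < \dots < y_n = Y$ with $\phi(y_i) - \phi(y_{i+1}) \leq \epsilon$ (possible by uniform continuity of $\phi$) and sandwich $w(r_k + y/\sqrt{w(r_k)})/w(r_k)$ between its values at the $y_i$'s for $k$ large.

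The main obstacle is establishing \eqref{eq:phi-functional}: one must apply Lemma \ref{lem:continuity-w-broad} along a subsequence $(r_k')$ that itself depends on $q$, and combine the semicontinuity of $\liminf$ and $\limsup$ with the monotonicity of $w$ to bridge the gap between the varying $h_k$ and the fixed argument $h\sqrt{\phi(q)}$ demanded by the lemma.
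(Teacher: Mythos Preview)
Your argument is correct and the continuity part matches the paper's exactly: both shift to $r_k' = r_k + q/\sqrt{w(r_k)}$ and invoke the $F_-$ bound of Lemma~\ref{lem:continuity-w-broad}, and the Dini argument you spell out is precisely what the paper leaves implicit in the word ``Hence''. The one substantive difference is positivity: the paper simply cites Lemma~\ref{lem:lower-bound-w}, while you bootstrap it from your functional inequality~\eqref{eq:phi-functional} via a Gr\"onwall iteration. Your route is self-contained in Lemma~\ref{lem:continuity-w-broad} and delivers the explicit lower bound $\phi(y)\geq (1+Cy)^{-2}$; the paper's is a one-liner. A small wrinkle: your Gr\"onwall step ``$\psi(q+h)-\psi(q)\leq Ch+o(h)$ as long as $\phi$ is bounded away from $0$'' looks circular, but it is easily repaired by noting that $\phi\leq 1$ forces $\sqrt{\phi(q)}\leq 1$, so $\phi(q+h)\geq\phi(q)(1-Ch)$ holds \emph{exactly} for every rational $0<h<1/C$, and positivity propagates in fixed-size steps without any error term.
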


\begin{proof}
The fact that $\phi$ is decreasing and its continuity at $0$ follow from Lemma \ref{lem:continuity-w-broad}, while its positivity follows from Lemma \ref{lem:lower-bound-w}.  
We thus only need to check the continuity of $\phi$. 
Fix $y_1, y_2>0$, both rational, and let $r'_k = r_k + \frac{y_1}{\sqrt{w(r_k)}}$. First, 
\[ \lim_{k\to\infty} \frac{w(r'_k)}{w(r_k)} = \phi(y_1) . \]
By Lemma \ref{lem:continuity-w-broad}, 
\[ \liminf_{k\to\infty} \frac{w\left(r'_k + \frac{y_2}{\sqrt{w(r'_k)}}\right)}{w(r'_k)} \geq 1 - \frac{2\sigma}{\eta} y_2 . \]
Since $r'_k \geq r_k$ we have $\frac{y_2}{\sqrt{w(r'_k)}} \geq \frac{y_2}{\sqrt{w(r_k)}}$, thus 
\[ \phi(y_1) \geq \phi(y_1+y_2) \geq \liminf_{k\to\infty} \frac{w\left(r_k + \frac{y_1+y_2}{\sqrt{w(r_k)}}\right)}{w(r_k)} \geq \phi(y_1) \left(1 - \frac{2\sigma}{\eta} y_2\right) \to \phi(y_1) \]
as $y_2\to 0$, proving the continuity of $\phi$. 
\end{proof}


We now prove the following equivalent of \cite[Proposition 9]{lalley2015maximal}. Let $(B_u)_{u\geq 0}$ and $\E^x$ such that under $\E^x$, $(B_u)_{u\geq 0}$ is a standard Brownian motion with $B_0 = x$ a.s., and let $\tau_0^{B} = \inf\{t\geq 0 : B_t\leq 0\}$.  

\begin{proposition}\label{prop:func-equation-phi}
Any limit $\phi$ in \eqref{eq:phi} satisfies
\begin{equation}
\phi(y) = \E^{y/\eta}\left[ \exp\left(-\frac{\sigma^2}{2} \int_0^{\tau_0^B} \phi\left(\eta B_u\right) \d u \right) \right] \quad \text{ for every } y\geq 0 .
\end{equation}
\end{proposition}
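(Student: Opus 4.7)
The plan is to pass to the limit $k\to\infty$ in the Feynman--Kac representation of Corollary \ref{cor:Feynman-Kac}, applied with $r = r_k + y/\sqrt{w(r_k)}$ and $x = r_k$. To make the error in \eqref{eq:FK} negligible compared to $w(r_k)$, I would take $R_n = \RR$ and $\delta_n = \underline\delta(y)$ constant along the trajectory, and let both $\RR$ and $A$ grow slowly as $k\to\infty$; Assumption \ref{assum}.5 together with Lemma \ref{lem:continuity-w-broad} and the decay of $\underline\delta$ ensures that $(A/w(r_k))\cdot (\P(\Lambda>\RR)+w(y)^2 g(w(y)))$ is $o(w(r_k))$. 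Dividing \eqref{eq:FK} by $w(r_k)$, the left-hand side converges to $\phi(y)$ by definition of $\phi$, so it remains to identify the limit of $\E[\kW^{(r)}_{T'}\, w(S_{T'})/w(r_k)]$. Consider the rescaled walk $\bar S^{(k)}_n := (S_n - r_k)\sqrt{w(r_k)}$, which starts at $y$ and has centered i.i.d.\ steps of variance $\eta^2 w(r_k)$. Donsker's invariance principle gives $\bar S^{(k)}_{\lfloor u/w(r_k)\rfloor} \Rightarrow \eta B_u$ in the Skorokhod topology, where $B$ is a Brownian motion under $\E^{y/\eta}$; the rescaled hitting time $w(r_k)\tau_{r_k}$ converges to $\tau_0^B$, the overshoot $r_k - S_{\tau_{r_k}}$ is tight (finite step variance), and $w(S_{T'})/w(r_k)\to 1$ by Lemma \ref{lem:continuity-w-broad}.

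The core of the proof is to show that in probability
\[
\log \kW^{(r)}_{T'} \;\longrightarrow\; -\frac{\sigma^2}{2}\int_0^{\tau_0^B} \phi(\eta B_u)\,du.
\]
In the expression \eqref{eq:def-Z} for $\kW^{(r)}_n$, the sum $-\sum_k (w(S_{k+1}) - \E[I_{k+1} w(S_{k+1})|\FF_k])$ decomposes into a genuine $\FF_n$-martingale with quadratic variation dominated by $\sum_k w(S_{k+1})^2 \le w(r_k) \sum_k w(S_k) = O(w(r_k))\to 0$ in $L^2$, plus a nonnegative drift $\sum_k \E[(1-I_{k+1}) w(S_{k+1})|\FF_k]$ bounded by $w(r_k)$ times the expected number of failures of the events $E(S_k,R_k,\delta_k)$, itself $o(1)$ by our choice of parameters. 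For the compensator $-\tfrac12\sum_k \int w(S_k-z)\,dM^{(S_k,R_k,\delta_k)}_{S_k-S_{k+1}}(z)$, the Campbell--Little--Mecke formula \eqref{eq:Campbell-Little-Mecke-mean} gives
\[
\E\!\left[\int w(S_k-z)\,dM^{(S_k,R_k,\delta_k)}_{S_k-S_{k+1}}(z)\,\Big|\,\FF_k\right] = \E\!\left[\ind{E}(\chi(\R)-1)\!\int\! w(S_k-z)\,d\chi(z)\right],
\]
and applying Lemma \ref{lem:continuity-w-broad} to replace $w(S_k-z)$ by $w(S_k)$ uniformly over the support of $\chi$ (after truncation by $R_k=\RR$), together with the identity $\int M_x(\R)\,dM(x)=\sigma^2$ obtained from taking $f\equiv 1$ in \eqref{eq:Campbell-Little-Mecke-mean} and Assumption \ref{assum}.3, shows that this conditional expectation is asymptotic to $\sigma^2 w(S_k)$; fluctuations around the mean are controlled in $L^2$ by Assumption \ref{assum}.3. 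A Riemann-sum approximation with step $w(r_k)$ then yields
\[
\tfrac{\sigma^2}{2}\sum_{k<\tau_{r_k}}w(S_k) = \tfrac{\sigma^2}{2}\,w(r_k) \sum_k \phi(\bar S^{(k)}_k)(1+o(1)) \;\longrightarrow\; \tfrac{\sigma^2}{2}\int_0^{\tau_0^B}\phi(\eta B_u)\,du.
\]

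The last step is to exchange expectation and limit. The uniform integrability of $(\kW^{(r)}_{n\wedge T_a})$ granted by Lemma \ref{lem:discount-UI}, together with the tightness of the overshoot controlling $w(S_{T'})/w(r_k)$ through Lemma \ref{lem:continuity-w-broad}, makes it legitimate to pass the limit inside $\E$, yielding
\[
\phi(y) = \E^{y/\eta}\!\left[\exp\!\left(-\tfrac{\sigma^2}{2}\!\int_0^{\tau_0^B}\phi(\eta B_u)\,du\right)\right].
\]
The hardest step will be the compensator: although the Palm-measure computation pinpoints the correct constant $\sigma^2$, upgrading it to convergence in probability of the integrated sum requires a careful truncation ensuring $w(S_k-z)\approx w(S_k)$ uniformly over the effective support of the Palm kernel, together with a second-moment estimate relying on Assumptions \ref{assum}.3 and \ref{assum}.4. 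The martingale cancellation in the first sum is technically lighter but still demands careful bookkeeping of the indicator $I_{k+1}$ and of the boundary behavior near $\tau_{r_k}$.
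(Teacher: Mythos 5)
Your strategy is exactly the paper's: apply Corollary \ref{cor:Feynman-Kac} with $r = r_k + y/\sqrt{w(r_k)}$, $x = r_k$, decompose the exponent of $\kW^{(r)}_n$ into a martingale part and a compensator, show the martingale fluctuations vanish by Doob/$L^2$, identify the compensator's drift as $\sigma^2 w(S_k)/2$ via the Palm-measure identity, pass to the Brownian limit via Donsker, and let the outer parameters go to their limits. The main line of reasoning is sound.

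There is, however, one concrete slip in your error budget coming from a notational collision that you carry into the estimates. The letter ``$y$'' in Corollary \ref{cor:Feynman-Kac} is a \emph{lower bound for $S_n - R_n$} which must be sent to infinity together with $k$, whereas in the statement of Proposition \ref{prop:func-equation-phi} it denotes the fixed argument of $\phi$. You set $\delta_n = \underline\delta(y)$ and claim $(A/w(r_k))(\P(\Lambda>\RR)+w(y)^2 g(w(y))) = o(w(r_k))$; but with $y$ the fixed argument of $\phi$, $w(y)^2 g(w(y))$ is a positive constant, so the remainder is at least of order $1/w(r_k)\to\infty$, the opposite of what you need. In the same vein, the $(\sigma^2+1)w(r-R)^2(w(r-R)+\delta)$ term in Proposition \ref{prop:convolution-w} forces $\delta_n\to 0$ along the trajectory; a fixed $\delta_n = \underline\delta(y)$ does not achieve that. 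The paper's choice is $\delta_n = \underline\delta(x-\RR)$ with $\RR = a/\sqrt{w(x)}$ (and $a$ small so that $w(x-\RR)\le(1+\nu)w(x)$ by Lemma \ref{lem:continuity-w-broad}), which makes the corollary's ``$y$'' equal to $x - \RR\to\infty$ and gives $w(x-\RR)^2 g(w(x-\RR)) = o(w(x)^2)$ as required. Once that substitution is made, your argument goes through: the martingale-plus-nonnegative-drift split of the first sum (the paper instead absorbs the indicator so that $M_n$ is directly a martingale with increments bounded by $w(x-\RR)$; both work), the Palm-calculus identification of the $\sigma^2$ coefficient, the Riemann-sum convergence, and taking $A\to\infty$, $\nu\to 0$ all match the paper. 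For the final exchange of limit and expectation, note that uniform integrability of $\kW^{(r)}_{n\wedge T_a}$ alone does not immediately bound $\kW^{(r)}_{T'}\,w(S_{T'})/w(r_k)$; the paper instead shows $\kW^{(r)}_{T'}$ is close to $\exp(-\tfrac{\sigma^2}{2}\sum w(S_k))\le 1$ on a high-probability event and uses $x-\RR \le S_{\tau_x}\le x$ (the truncation to $R_n=\RR$ caps the undershoot) to get $w(S_{T'})/w(x)\le 1+\nu$, which is the cleaner route.
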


\begin{proof}[Proof of Proposition \ref{prop:func-equation-phi}]
Fix $\nu>0$, $A>0$ and $y>0$, and let $r = x + \frac{y}{\sqrt{w(x)}}$ and $R_n = \RR = \frac{a}{\sqrt{w(x)}}$ for every $n$. Thanks to Lemma \ref{lem:continuity-w-broad} we can choose $a \in(0,a_0)$ such that that $w(x) \leq w(x-\RR) \leq (1+\nu) w(x)$ for every large enough $x$. 
Let and $\delta_n = \delta = \underline{\delta}(x-\RR)$ for every $n\geq 0$. Taking $x$ large enough ensures that $T_a > \tau_x$, hence $T' = \inf(\tau_x, A/w(x))$, and that $S_n-R_n \geq x-\RR$ on $\{T'>n\}$. 
We can then apply Corollary \ref{cor:Feynman-Kac}:
\[ \left| w(r) - \E\left[ W^{(r)}_{T'}\right] \right| \leq \frac{100 A}{w(x)}\left(\P(\Lambda>\RR) + w(x-\RR)^2 g(w(x-\RR))\right) \leq A w(x) g_1(x) \]
for some $g_1(x) \to 0$ as $x\to \infty$. 
We now aim to control 
\[ \E\left[ W^{(r)}_{T'}\right] = \E\left[ w(S_{T'}) \kW^{(r)}_{T'} \right] . \]

Recall the expression from $\kW^{(r)}_n$ in \eqref{eq:def-Z}: 
\begin{multline*}
\kW^{(r)}_n = \left(\prod_{k=0}^{n-1} I_{k+1} \right) \exp\Bigg( - \underbrace{\sum_{k=0}^{n-1} \left( I_{k+1} w(S_{k+1}) - \E\left[I^{(S_k, \RR, \delta)} w(S_k - Z) \, | \, \FF_k\right] \right)}_{M_n} \\
-\frac{1}{2} \underbrace{ \sum_{k=0}^{n-1} I_{k+1} \int w(S_{k}-y) \d M_{S_k - S_{k+1}}^{(S_k, \RR, \delta)}(y)}_{Y_n} \Bigg) . 
\end{multline*}
The process $(M_{n\wedge T'})_{n\geq 0}$ is a martingale with increments bounded in absolute value by $w(x-\RR)$, and we have almost surely $w(x-\RR) T' \leq w(x-\RR) \frac{A}{w(x)} \leq A(1+\nu)$. By Doob's maximal inequality, since $\E[M_{T'}^2] \leq (A/w(x)) (1+\nu)^2 w(x)^2 \to 0$ as $x\to 0$, there exists $c_x\to 0$ as $x\to\infty$ such that 
\begin{equation}\label{eq:controle-Z-1}
\P\left( \sup_{0\leq k \leq A/w(x)} |M_{k\wedge T'}| \geq c_x \right) \leq c_x .
\end{equation}
On the other hand, writing 
\[ X_k := I_{k+1} \int w(S_{k}-z) \d M_{S_{k}-S_{k+1}}^{(S_k, \RR, \delta)}(z) \quad , \quad \tilde Y_n = \sum_{k=0}^{n-1} \E\left[ X_k \ | \ \FF_k \right] \]
then $\tilde Y_n$ is a predictable process such that $\tilde M_n := (Y_{n\wedge T'} - \tilde Y_{n\wedge T'})_{n\geq 0}$ is a martingale. By \eqref{eq:Palm-controle-M}, $0\leq X_k \leq \delta$ on $\{T'>k\}$, meaning that $(\tilde M_n)_{n\geq 0}$ is a martingale with increments bounded in absolute value by $\delta$. Hence $\E[\tilde M_n^2] \leq \delta \E[Y_{n\wedge T'}]$. By Doob's maximal inequality, since $\delta\to 0$ as $x\to\infty$ it thus suffices to show that $\E[Y_{n\wedge T'}]$ remains bounded as $x\to\infty$ to conclude that there exists $c'_x\to 0$ as $x\to\infty$ such that 
\begin{equation}\label{eq:controle-Z-2}
\P\left( \sup_{0\leq k \leq A/w(x)} |\tilde M_{k\wedge T'}| \geq c'_x \right) \leq c'_x .
\end{equation}
Let us compute $\E[X_k \, | \, \FF_{k}]$: for every $u$ large enough, 
\begin{align*}
\E\left[ I^{(u,\RR,\delta)} \int w(u-z) \d M_{Z}^{(u, \RR, \delta)}(z) \right] &= \E\left[ \ind{E(u,\RR,\delta)} \int \left(\int w(u-\ell) \d(\chi-\delta_z)(\ell)\right) \d\chi(z) \right] \\
&= \E\left[ \ind{E(u,\RR,\delta)} (\chi(\R)-1)\int w(u-z)\d\chi(z) \right] .
\end{align*}
Using again Lemma \ref{lem:continuity-w-broad}, for every large enough $x$ we have $w(u) \leq w(u-\RR) \leq (1+\nu)w(u)$ for every $u\geq x$. As $u\to\infty$ we have in addition that $\inf_{u\geq x} \P(E(u,\RR,\delta)) = \P(E(x,\RR,\delta)) \to 1$ as $x\to\infty$ --- recall that $\underline{\delta}$ was chosen in the proof of Corollary \ref{cor:Feynman-Kac} in such a way that $\underline{\delta}(y)/w(y) \to \infty$ as $y\to\infty$. Then since $w(u-j)/w(u) \to 1$ for every fixed $j$ as $u\to\infty$ by Lemma \ref{lem:continuity-w-broad}, and by \eqref{eq:Palm-controle-M} and the dominated convergence theorem, we have as $u\to\infty$ 
\begin{align*}
\frac{1}{w(u)} \E\left[ I^{(u,\RR,\delta)} \int w(u-z) \d M_{Z}^{(u, \RR, \delta)}(z) \right] \to \E\left[ \chi(\R)(\chi(\R)-1)\right] = \sigma^2 .
\end{align*}
Writing $d_v = \sup_{u\geq v} \left|\frac{1}{\sigma^2 w(u)} \E\left[ I^{(u,\RR,\delta)} \int w(u-z) \d M_{Z}^{(u, \RR, \delta)}(z) \right] - 1\right|$, we have $d_v \to 0$ as $v\to \infty$. 
The first consequence is that $\E[Y_{T'}] \leq (1+d_x) (A/w(x)) \sigma^2 w(x)$ 
which remains bounded as $x\to\infty$, thus finishing the proof of \eqref{eq:controle-Z-2}. 
In addition, as $x\to\infty$, for every $n$
\begin{equation}\label{eq:controle-Z-3}
\left| \frac{\tilde Y_{n\wedge T'}}{\sigma^2 \sum_{k=0}^{n\wedge T'-1} w(S_k)} - 1 \right| \leq d_x \to 0 .
\end{equation}
Write $\AA$ for the intersection of the complement of the events in \eqref{eq:controle-Z-1} and \eqref{eq:controle-Z-2}; on $\AA$, for $x$ large enough we have for every $n$ 
\[ \left| \kW^{(r)}_{n\wedge T'} - \exp\left(-\frac{\sigma^2}{2} \sum_{k=0}^{n\wedge T'-1} w(S_k) \right) \right| \leq 2(c_x + c'_x + d_x) . \]
On the event $T' = \tau_x$, for every $n< T'$ we have $w(S_n) \leq w(x)$, and since $x-\RR \leq S_{\tau_x} \leq x $, we have $w(x) \leq w(S_{\tau_x}) \leq (1+\nu)w(x)$; this replaces \cite[Lemma 11]{lalley2015maximal}. We conclude that 
\begin{align*}
\left| \frac{w(r)}{w(x)} - \E\left[ \exp\left( - \frac{\sigma^2}{2} \sum_{j=1}^{T'} w(S_{j-1}) \right) \right] \right| \leq \nu + 3(c_x+c'_x+d_x) (1+\nu) + \P(\tau_x>A/w(x)) + A g_1(x) .
\end{align*}
Since $w$ is decreasing, $\left(\frac{S_{\lfloor Nu\rfloor}-x}{\sqrt{\lfloor Nu \rfloor}}\right)_{0\leq u\leq A} \to (\eta B_u)_{0\leq u\leq A}$ as $N\to\infty$ in distribution,
where $(B_u)_{u\geq 0}$ is a standard Brownian motion with $B_0=y/\eta$ a.s., and $w(x) \tau_x \to \tau^B_0$, taking $N = 1/w(x)$ and by the locally uniform convergence to $\phi$ continuous using Proposition \ref{prop:continuity-phi}, taking $x$ along the sequence used to define $\phi$
\[ \sum_{k=0}^{T'-1} w(S_k) = w(x) \sum_{k=0}^{T'-1} \frac{w(S_k)}{w(x)} \to \int_0^{\tau_0^B \wedge A} \phi(\eta (r-x+B_u)) \d u \]
in distribution as $x\to\infty$, hence 
\begin{align*}
\left| \phi(y) - \E^{y/\eta}\left[ \exp\left( - \frac{\sigma^2}{2} \int_0^{\tau_0^B \wedge A} \phi(\eta B_u) \d u \right) \right] \right| \leq \nu + \P^{y/\eta}(\tau^B_0>A)  .
\end{align*}
Since this holds for every $\nu$ and $A$, taking $A\to\infty$ and $\nu\to 0$ we finally get the statement of the Proposition. 
\end{proof}


We can use the rest of the proof of \cite{lalley2015maximal} directly: their Corollary 12 gives the limit form 
\[ \phi(y) = \left( \frac{\sigma y}{\eta\sqrt{6}} +1 \right)^{-2} \]
and their Lemma 13 follows from our observation at the end of Section \ref{sec:lower-bound-ratio}. The proof of their Theorem 1 adapts in a straightforward manner, giving our Theorem \ref{th:main-tail}.

\section{Asymptotic of the probability distribution function of the supremum}
\label{sec:proof-tail}

We prove Theorem \ref{th:main-condition} in this section. The idea is to show that $g(r) := \P(\sup_{v\in T} \Lambda_v = r)$ is sufficiently regular, namely that 
\[ \lim_{\varepsilon\to 0} \limsup_{r\to\infty} \sup_{(1-\varepsilon)r \leq y \leq (1+\varepsilon)r} \left| \frac{g(y)}{g(r)} - 1 \right| = 0 . \]
We then use the fact that 
$ w(r) \sim \sum_{y>r} g(y) $ as $r\to\infty$, and thus that 
\[ \sum_{(1-\varepsilon)r < y \leq (1+\varepsilon)r} g(y) \sim w(r(1-\varepsilon)) - w(r(1+\varepsilon)) \]
to deduce Theorem \ref{th:main-condition}.

\subsection{Probability that a random walk visits a point}
\label{sec:RW}

Let $(U_n)_{n\geq 0}$ be a random walk on $\Z$ with $U_0 = 0$ a.s. and i.i.d. steps with $\E[U_1]=0$ and $\E[U_1^2]=\eta^2<\infty$. The random walk is recurrent, so that writing $\tau_k$ for the $k$-th visit of $0$, we have $\tau_1 < \infty$ a.s. In particular, $ \P(\tau_k \leq n/2) \to \infty$ as $n\to\infty$. Then for every fixed $y\in\Z$, since by the strong Markov property $\P(\exists\, \tau_k \leq u < \tau_{k+1} : U_u = y) = \P(\exists\, 0 \leq u < \tau_{1} : U_u = y)>0$, we have 
\begin{equation}\label{eq:limit-P-being-at-a-point}
\P(\exists \, 0 \leq u \leq n/2 : U_u = y) \ulim n \infty 1  .  
\end{equation}
%
For every $y<0$, write $T_{y} = \inf\{n\geq 0, U_n \leq y\}$. Following \cite[Section 5.1.1]{lawler2010random}, by Theorem 5.1.7 therein, there exists a constant such that for every $n\geq 1$
\[ \P(T_y > n/2) \leq \cste \frac{1-y}{\sqrt{n}} , \]
and by Lemma 5.1.9, for every $m\geq 0$
\[ \P(U_{T_y} \leq y - m) \leq \cste \E\left[ U_1^2 \ind{|U_1| \geq m} \right] \ulim m \infty 0 . \]
For every $\varepsilon>0$, taking $m$ large enough that the right-hand side is smaller than $\varepsilon/2$, and using \eqref{eq:limit-P-being-at-a-point} to choose $n_\varepsilon$ such that for every $n\geq n_\varepsilon$
\[ \inf_{0\leq k \leq m} \P\left(\exists \, 0\leq u \leq \frac{n}{2}: U_u = k\right) \geq 1-\frac{\varepsilon}{2} , \]
we obtain that for every $y<0$ and every $n \geq n_\varepsilon$, by the strong Markov property
\begin{equation}\label{eq:proba-visit-RW}
\P\left(\exists \, 0 \leq u \leq n, U_u = y\right) \geq 1-\varepsilon-\cste \frac{1-y}{\sqrt{n}} . 
\end{equation}
We easily see that the same holds for $y>0$, up to a change in the constants involved.


\subsection{Lower bound on the probability distribution function of the supremum}
\label{sec:LLT}

In this section, we prove Theorem \ref{th:main-condition}. 
Assume that $\chi$ is supported on $\Z$, that $M$ has maximum span $1$, and that $\Lambda$ is $\N$-valued. 
Write $g(r) = \P\left(\sup_{v\in T} \Lambda_v = r\right)$. We obtain, similarly to Proposition \ref{prop:convolution-equation},
\begin{multline}\label{eq:convolution-LLT}
g(r) = \E\Bigg[ \ind{\Lambda = r} \prod_{u=1}^{\chi(\R)} h(r-X_u) \\
+ \ind{\Lambda < r} \sum_{j=1}^{\chi(\R)} \left(\prod_{u=1}^{j-1} h(r-1-X_u)\right) g(r-X_j) \left(\prod_{v=j+1}^{\chi(\R)} h(r-X_v) \right) \Bigg] \ind{r\geq 0} .
\end{multline}
Bound the first term inside the expectation from below by $0$. Since $h$ is increasing, $\e^{-x} \geq 1-x$, $\ln(1+x)\leq x$ hence $-\ln h(x) \leq w(x)$, and recalling $E(r,R,\delta)$ from \eqref{eq:event-E-rRd}, 
\begin{align}
g(r) &\geq \E\left[ \ind{E(r,R,\delta)} \int g(r-x) \left(1 + \int \ln h(r-1-y) \d(\chi-\delta_x)(y) \right) \d\chi(x) \right] \\
&\geq \E\left[ I^{(r,R,\delta)} g(r-Z) \left(1 - \int w(r-1-y) \d M_Z^{(r,R,\delta)}(y) \right) \right] .
\end{align}
By \eqref{eq:Palm-controle-M} we know that a.s. $M^{(r,R,\delta)}_Z(\R)\leq \frac{\delta}{w(r-R)}$ and $M_Z^{(r,R,\delta)}((r,\infty)=0)$. Taking $r>2r_0+1$ and $r_0+1\leq R \leq r/2$, on $\{I^{(r,R,\delta)}\neq 0\}$, for $M_Z^{(r,R,\delta)}$-a.e. $y$ we have $w(r-1-y) \leq w(r-R-1)$. 
This means that 
\begin{equation}\label{eq:bound-integral-M}
\int w(r-1-y) \d M_Z^{(r,R,\delta)}(y) \leq \delta \frac{w(r-R-1)}{w(r-R)} .
\end{equation}
Since $w(x-1)/w(x)\to 1$ as $x\to\infty$, we can assume that the right-hand side is smaller than $2\delta$ for every $x\geq r_0$ (up to taking $r_0$ larger). 
Taking $\delta\leq 1/4$, and using that $1 - x \geq \e^{- (2 \ln 2) x}$ for every $x\in (0,1/2)$, 
we obtain
\begin{align}\label{eq:supermg-g}
g(r) &\geq \E\left[ I^{(r,R,\delta)} g(r-Z) \exp\left(- (4\ln 2) \int w(r-y) \d M_Z^{(r,R,\delta)}(y) \right) \right] .
\end{align}
Let $\nu>0$ and $\varepsilon>0$. For every $y$ with $|y-r|\leq \nu r$, define $T_y := \inf\{n \geq 0 : S_n = y\}$. By \eqref{eq:proba-visit-RW}, there exists $c<\infty$ such that for every $n\geq n_\varepsilon$ 
\[ \P(T_y > n) \leq \varepsilon + c\frac{1+|y-r|}{\sqrt{n}} . \]
Assume henceforth that $n = \frac{c^2 \nu^2 r^2}{\varepsilon^2}$, so that $\P(T_y > n) \leq 2\varepsilon$ uniformly over every $|y-r|\leq \nu r$. 
Fix $R_j = R = r/8$ and $\delta_j = \delta$ for every $j$, and define the event $\AA := \{ \inf_{0\leq k \leq n} S_k \geq 3r/4 \}$. 
Since
\[ \P(I_{k+1} = 0 \, | \, \FF_k) \leq \P\left(\Lambda > R\right) + \P\left(\chi(\R)>\frac{\delta}{w(S_k-R)}\right) , \]
we can find $\delta = \delta(r) \to 0$ as $r\to\infty$ such that as $r\to\infty$, 
\[ \P\left(\AA \cap \left\{\prod_{k=0}^{n-1} I_{k+1} = 0 \right\}\right) \ulim r \infty 0 . \]
By the Markov property together with \eqref{eq:supermg-g}, if $T''(\omega) := \inf\{j : S_j < 3r/4\}$ and 
\begin{align*}
W_j := g(S_j) \left(\prod_{k=0}^{j-1} I_{k+1}\right) \exp\left(- (4\ln 2) \sum_{k=0}^{j-1} \int w(S_k-z) \d M_{S_k-S_{k+1}}^{(S_k, R, \delta)}(z) \right)
\end{align*}
then $(W_{j\wedge T''})_{j\geq 0}$ 
is a positive $(\FF_j)_{j\geq 0}$-supermartingale. By the optional stopping theorem on the bounded stopping time $n \wedge T_y \wedge T''$, and noting that $\{T''>n\} \supset \AA$, 
\begin{align}\label{eq:g-supermg}
g(r) \geq \E\left[ \ind{\AA} g(S_{n\wedge T_y}) \left(\prod_{k=0}^{n\wedge T_y-1} I_{k+1}\right) \exp\left(- (4\ln 2) \sum_{k=0}^{n\wedge T_y-1} \int w(S_k-z) \d M_{S_k-S_{k+1}}^{(S_k, R, \delta)}(z) \right) \right] .
\end{align}
Clearly $g(S_{T_y}) = g(y)$. We aim to bound the other factors in \eqref{eq:g-supermg}. By Doob's maximal inequality, 
\[ \P(\AA^c) = \P\left( \sup_{0\leq j \leq n} -(S_j-r) > r/4\right) \leq \frac{\E[(S_n-r)^2]}{(r/4)^2} \leq 16 \eta^2 \frac{c^2 \nu^2}{\varepsilon^2} \ulim \nu 0 0 . \]
On the other hand, recalling $d_r$ from the proof of Proposition \ref{prop:func-equation-phi}, 
\[ \E\left[ \ind{\AA} I_{k+1} \int w(S_k-z) \d M_{S_k-S_{k+1}}^{(S_k, R, \delta)}(z) \ | \ \FF_k \right] \leq w(r/2) \sigma^2 (1+d_{r/2}) \]
hence, assuming that $r$ is large enough that $d_{r/2}\leq 1$ and $w(r/2)\leq C r^{-2} / 2$ for some $C<\infty$
\[ \E\left[ \ind{\AA} \left(\prod_{k=0}^{j-1} I_{k+1}\right) \sum_{k=0}^{j-1} \int w(S_k-z) \d M_{S_k-S_{k+1}}^{(S_k, R, \delta)}(z) \right] \leq n w(r/2)\sigma^2 (1+d_{r/2}) \leq C \frac{\sigma^2 c^2 \nu^2}{\varepsilon^2} . \]
By Markov’s inequality, writing 
\[ \EE := \left\{ \ind{\AA} \left(\prod_{k=0}^{n-1} I_{k+1}\right) \sum_{k=0}^{n-1} \int w(S_k-y) \d M_{S_k-S_{k+1}}^{(S_k, R, \delta)}(y) \ > \ C \frac{\sigma^2 c^2 \nu^2}{\varepsilon^3} \right\} \]
we have $\P(\EE)\leq \varepsilon$ for every $r$ large enough, and outside of $\EE$, for every $0\leq j \leq n$
\begin{align*}
& \ind{\AA} \left(\prod_{k=0}^{j-1} I_{k+1}\right) \exp\left( - (4\ln 2)  \sum_{k=0}^{j-1} \int w(S_k-y) \d M_{S_k-S_{k+1}}^{(S_k, R, \delta)}(y) \right) \\
&\geq \ind{\AA}\left(\prod_{k=0}^{n-1} I_{k+1}\right) \exp\left( - (4\ln 2)  C \frac{c^2 \nu^2 \sigma^2}{\varepsilon^3} \right) .
\end{align*}
Take $\nu>0$ small enough and $r$ large enough that
\[ \exp\left( - (4\ln 2) C\frac{c^2 \nu^2 \sigma^2}{\varepsilon^3} \right) \geq 1-\varepsilon 
\quad \text{and} \quad 
\P\left(\AA \cap \left\{\prod_{j=1}^{n} I_j = 0 \right\}\right) \leq \varepsilon
\quad \text{and} \quad 
\P(\AA^c) \leq \varepsilon, 
\] 
and we get that on the event $\EE^c \cap \AA \cap \left\{\prod_{j=1}^{n} I_j = 0 \right\} \cap \{T_y\leq n\}$ of probability at least $1-5\varepsilon$, the content of the expectation in \eqref{eq:g-supermg} is larger than $g(y)(1-\varepsilon) $, hence 
\[ g(r) \geq (1-6\varepsilon) g(y) . \]
It follows that for every $\varepsilon>0$ small enough, we can find $\nu>0$ with $\nu\ulim \varepsilon 0 0$ such that for every $y$ with $|y-r|\leq \frac{\nu r}{2}$, we have for $r$ large enough 
\[ (1-6\varepsilon) g(r) \leq g(y) \leq (1+7\varepsilon) g(r) . \]
This means
\[ (1-6\varepsilon) r g(r) \leq \frac{1}{\nu} \sum_{y=(1-\nu/2)r}^{(1+\nu/2) r} g(y) \leq (1+7\varepsilon) r g(r) , \]
and since $\sum_{y>r} g(y) = 1-h(r) = \frac{w(r)}{1+w(r)}$, 
\[ \lim_{r\to\infty} \frac{r^2}{\nu} \sum_{y=(1-\nu/2)r}^{(1+\nu/2) r} g(y) = \lim_{r\to\infty} \frac{r^2}{\nu} (w((1-\nu/2)r) - w((1+\nu/2)r)) = \frac{6 \eta^2}{\sigma^2 \nu}\left( \left(1-\frac{\nu}{2}\right)^{-2} - \left(1+\frac{\nu}{2}\right)^{-2} \right) , \]
with the limit converging to $\frac{12 \eta^2}{\sigma^2}$ as $\nu\to 0$ (thus also as $\varepsilon\to 0$), 
we find that 
\[ \limsup_{r\to\infty} r^3 \left| g(r) - \frac{12 \eta^2}{\sigma^2 r^3} \right| \ulim \varepsilon 0 0 . \]
Since the left-hand side does not depend on $\varepsilon$, this concludes the proof of Theorem \ref{th:main-condition}.

\section{On the volume}
\label{sec:2Volume-general}

This section aims to establish Theorems \ref{th:main-volume-smaller} and \ref{th:main-condition-vol}. It is convenient for us to use Theorem \ref{th:main-tail}, hence this section does not supersede Section \ref{sec:main-proof}. After establishing preliminary estimates on the distribution of the volume without conditioning on the displacement in Section \ref{sec:2Laplace-volume-without-displacement}, followed by preliminary bounds on the quantities of interest in Section \ref{sec:2uniform-control} and \ref{sec:2coupling-RW}, we rephrase the quantities of interest in terms of the maximum displacement of a \emph{subcritical} \brw. This reframing allows us to then use the same approach as for the proof of Theorems \ref{th:main-tail} and \ref{th:main-condition}. The key Lemma establishing the ``continuity’’ of the ratio, analogous to Lemma \ref{lem:continuity-w-broad}, is established in Section \ref{sec:2continuity-tail}. From there the rest of the proof follows like in Section \ref{sec:main-proof}, see Figure \ref{fig:structure-sec-volume}.

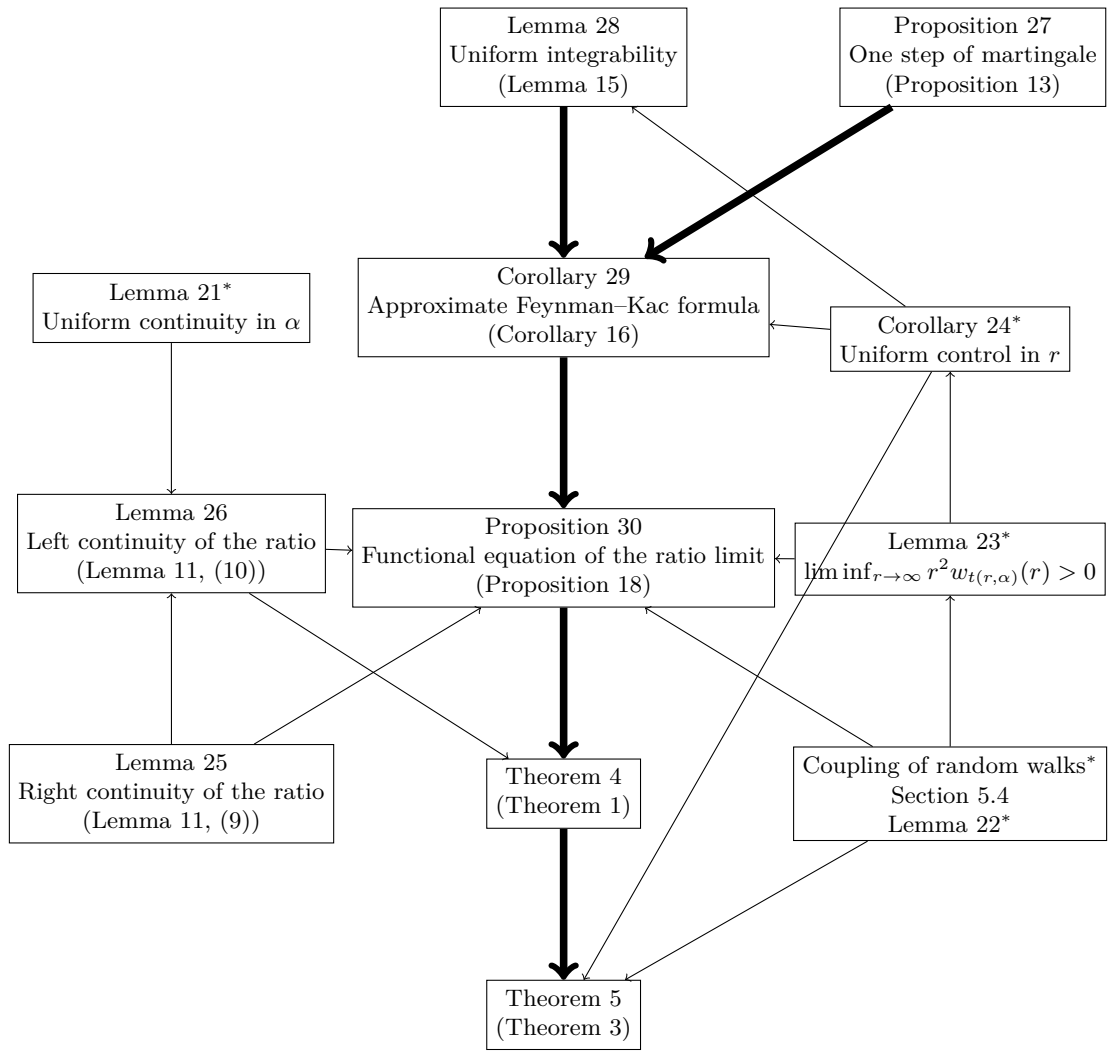
\begin{figure}
\begin{center}
\begin{tikzpicture}[node distance=2cm, auto, scale=1.5, every node/.style={rectangle, draw, align=center, font=\small}]

\node (L38) {Lemma \ref{lem:2discount-UI-vol}\\Uniform integrability\\(Lemma \ref{lem:discount-UI})};
\node (P37) [right=of L38] {Proposition \ref{prop:2convolution-w-vol}\\One step of martingale\\(Proposition \ref{prop:convolution-w})};
\node (C39) [below=of L38] {Corollary \ref{cor:2Feynman-Kac-vol}\\Approximate Feynman--Kac formula\\(Corollary \ref{cor:Feynman-Kac})};
\node (P40) [below=of C39] {Proposition \ref{prop:2func-equation-phi-vol}\\Functional equation of the ratio limit\\(Proposition \ref{prop:func-equation-phi})};
\node (theorem4) [below=of P40] {Theorem \ref{th:main-volume-smaller}\\(Theorem \ref{th:main-tail})};
\node (coupling) [right=of theorem4] {Coupling of random walks$^*$\\Section \ref{sec:2coupling-RW}\\Lemma \ref{lem:coupling-explicit}$^*$};
\node (L35) [left=of theorem4] {Lemma \ref{lem:2continuity-w-broad-volume}\\Right continuity of the ratio\\(Lemma \ref{lem:continuity-w-broad}, \eqref{eq:continuity-ratio-right})};
\node (L36) [above=of L35] {Lemma \ref{lem:2continuity-w-broad-volume-strong}\\Left continuity of the ratio\\(Lemma \ref{lem:continuity-w-broad}, \eqref{eq:continuity-ratio-left})};
\node (L32) [above=of coupling] {Lemma \ref{lem:positivity-c-alpha}$^*$\\$\liminf_{r\to\infty} r^2 w_{t(r,\alpha)}(r)>0$};
\node (L30) [above=of L36] {Lemma \ref{lem:2uniform-continuity-u}$^*$\\Uniform continuity in $\alpha$};
\node (C33) [above=of L32] {Corollary \ref{cor:2uniform-limit-r2wt}$^*$\\Uniform control in $r$};
\node (T5) [below=of theorem4] {Theorem \ref{th:main-condition-vol}\\(Theorem \ref{th:main-condition})};

\draw[->] (C33) -- (L38); 
\draw[->, line width=1mm] (P37) -- (C39);
\draw[->, line width=1mm] (L38) -- (C39); 
\draw[->] (C33) -- (C39); 
\draw[->] (L35) -- (L36);
\draw[->] (L32) -- (C33);
\draw[->] (L30) -- (L36); 
\draw[->] (L32) -- (P40); 
\draw[->] (L35) -- (P40); 
\draw[->] (L36) -- (P40); 
\draw[->, line width=1mm] (C39) -- (P40); 
\draw[->] (L36) -- (theorem4);
\draw[->, line width=1mm] (P40) -- (theorem4); 
\draw[->] (coupling) -- (P40);
\draw[->] (coupling) -- (L32);
\draw[->, line width=1mm] (theorem4) -- (T5); 
\draw[->] (C33) -- (T5);
\draw[->] (coupling) -- (T5);

\end{tikzpicture}
\end{center}
\caption{Dependency structure of the results of Section \ref{sec:2Volume-general}, starting from Section \ref{sec:2continuity-tail}. Results marked with an asterisk are new to this section, the others are adaptations of the result in parenthesis. The bold arrows represent the main line of the proof.}
\label{fig:structure-sec-volume}
\end{figure}

\subsection{Laplace transform of the volume}
\label{sec:2Laplace-volume-without-displacement}

We control the Laplace transform of the weight of $T$ conditionally on $\sup_{v\in T} \Lambda_v$ by considering for every $t\geq 0$
\[h_t(r) = \E\left[ \e^{-t \sum_{v\in T} D_v} \ind{\sup_{v\in T} \Lambda_v \leq r}\right] \quad , \quad h_t(\infty) = \E\left[ \e^{-t \sum_{v\in T} D_v}\right] . \]
The function $h_t$ is increasing and right-continuous, and under Assumptions \ref{assum} we have $h_t(0)<1$, $h_t(r)=0$ for every $r<0$, and $h_t(r)\to h_t(\infty)$ as $r\to\infty$. By the Markov property we have
\begin{equation}\label{eq:2convolution-equation-1}
h_t(r) = \E\left[ \e^{-t D} \prod_{i=1}^{\chi(\R)} h_t(r-X_i) \ind{\Lambda\leq r} \right] .
\end{equation}
When $r=\infty$, and writing $\Phi_t(x) = \E\left[\e^{-tD} x^{\chi(\R)}\right]$, this becomes 
\begin{align}\label{eq:2htinf-point-fixe}
h_t(\infty) = \Phi_t(h_t(\infty)) .
\end{align}
For every $t\geq 0$, the function $\Phi_t$ is convex, $\Phi_t(0)>0$ and $\Phi_t(1)\leq 1$, so $h_t(\infty)$ is the unique solution of the equation $x = \Phi_t(x)$ in $[0,1]$. Let us obtain its asymptotic. 

\begin{lemma}\label{lem:phit-taylor-expansion}
For every $p \in (1,\infty)$ and every random variable $W\in L^p$,
\[ \E\left[W \left(1 - \e^{-tD}\right)\right] = o\left(t^{\frac{p-1}{p}}\right) . \] 
For every $t\geq 0$ define
\[ f(t) := \E\left[\chi(\R) \e^{-tD}\right] -1 . \] 
Then $f(t) = o(t^{1/2})$, 
\begin{equation}\label{eq:phit-taylor-expansion}
\Phi_t(x) = x + \E\left[ \e^{-tD}-1 \right] + (x-1)f(t) + \frac{(x-1)^2}{2} \E\left[ (\chi(\R)^2 - \chi(\R))\e^{-t D} \right] + o((x-1)^2)
\end{equation} 
with a $o((x-1)^2)$ that is uniform over all $t\geq 0$, and as $t\to 0$ with $t\geq 0$
\begin{equation}\label{eq:htinfty-taylor-expansion}
1-h_t(\infty) \sim \sqrt{\frac{2 t}{\sigma^2}} \ .
\end{equation}
\end{lemma}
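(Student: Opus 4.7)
The plan is to prove the four claims in order, with the main obstacle lying in obtaining a Taylor expansion uniform in $t$ under only the $L^2$ moment assumption on $\chi(\R)$.

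For the first claim, I would use a truncation argument. For any $\delta>0$, split $\E[|W|(1-\e^{-tD})]$ at the threshold $\{|W|\leq \delta t^{-1/p}\}$. On this set, bound $|W|\leq \delta t^{-1/p}$ and $1-\e^{-tD}\leq tD$, so the contribution is at most $\delta t^{(p-1)/p}\E[D]$. On the complement, apply Hölder with conjugate exponents $(p, q)$ where $q=p/(p-1)\geq 1$, and use that $(1-\e^{-tD})^q\leq 1-\e^{-tD}\leq tD$ since $0\leq 1-\e^{-tD}\leq 1$; this yields $\E[(1-\e^{-tD})^q]^{1/q}\leq t^{1/q}=t^{(p-1)/p}$, times $\E[|W|^p\ind{|W|>\delta t^{-1/p}}]^{1/p}\to 0$ by integrability of $|W|^p$. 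Dividing by $t^{(p-1)/p}$ and taking $\limsup_{t\to 0}$ gives $\leq\delta\E[D]$ for every $\delta>0$, whence $o(t^{(p-1)/p})$. The second claim is immediate by taking $W=\chi(\R)\in L^2$ (Assumption~\ref{assum}.3) with $p=2$, since $\E[\chi(\R)]=1$ implies $f(t)=-\E[\chi(\R)(1-\e^{-tD})]$.

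For the Taylor expansion of $\Phi_t$, I would first compute $\Phi_t(1)=\E[\e^{-tD}]$, $\Phi_t'(1)=1+f(t)$ and $\Phi_t''(1)=\E[(\chi(\R)^2-\chi(\R))\e^{-tD}]$, after which the claimed formula is simply the second-order Taylor expansion around $1$. The main obstacle is controlling the remainder uniformly in $t$ using only $\chi(\R)\in L^2$: a naive bound on $\Phi_t'''$ fails since $\E[\chi(\R)^3]$ may be infinite. The key observation is that for $n\geq 3$ and $x\in[0,1]$, the integral form of the remainder for $y\mapsto y^n$ at $1$ yields
\[ \left| x^n - 1 - n(x-1) - \binom{n}{2}(x-1)^2 \right| = \int_x^1 \frac{(s-x)^2}{2} n(n-1)(n-2) s^{n-3}\, ds \leq \binom{n}{2}(1-x)^2(1-x^{n-2}), \]
after using $s^{n-3}\leq 1$ and $(s-x)^2\leq (1-x)^2$ (the left side being $0$ for $n\leq 2$). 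Hence $|\Phi_t(x)-(\text{Taylor polynomial})|\leq (1-x)^2\,\E[\binom{\chi(\R)}{2}(1-x^{\chi(\R)-2})]$, bounding $\e^{-tD}\leq 1$, and the bracket vanishes as $x\to 1$ by dominated convergence on $\binom{\chi(\R)}{2}\in L^1$, with no dependence on $t$.

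For the last claim, set $u:=1-h_t(\infty)$. Since the critical Bienaymé--Galton--Watson tree is a.s.\ finite, $\sum_v D_v<\infty$ a.s., so dominated convergence gives $h_t(\infty)=\E[\e^{-t\sum_v D_v}]\to 1$ as $t\to 0$, i.e.\ $u\to 0$. Substituting $x=1-u$ into the fixed point equation $\Phi_t(1-u)=1-u$ combined with \eqref{eq:phit-taylor-expansion} gives
\[ \E[1-\e^{-tD}] = -u f(t) + \frac{u^2}{2}\E[(\chi(\R)^2-\chi(\R))\e^{-tD}] + o(u^2). \]
As $t\to 0$, $\E[1-\e^{-tD}]\sim t$ (dominated convergence, $\E[D]=1$), $\E[(\chi(\R)^2-\chi(\R))\e^{-tD}]\to\sigma^2$, and $f(t)\leq 0$. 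Dropping the nonnegative term $-uf(t)$ yields $u^2\sigma^2/2\leq t(1+o(1))$, hence $u=O(t^{1/2})$. Combined with $f(t)=o(t^{1/2})$ this gives $uf(t)=o(t)$, so the relation reduces to $t(1+o(1))=\frac{u^2\sigma^2}{2}(1+o(1))$, giving $u\sim\sqrt{2t/\sigma^2}$.
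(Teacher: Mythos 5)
Your proof is correct, and in two places it takes a genuinely different (and arguably cleaner) route than the paper. For the first claim, the paper applies H\"older the same way but then controls $\E[(1-\e^{-tD})^q]$ by splitting on $\{tD>\varepsilon\}$ and invoking the refined Markov inequality of Lemma~\ref{lem:refined-Markov} for $D$; you instead truncate $W$ at level $\delta t^{-1/p}$ and use only the plain integrability of $|W|^p$, which sidesteps $\psi_D$ entirely. Both give $o(t^{(p-1)/p})$.

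For the Taylor expansion of $\Phi_t$, the paper's argument is noticeably heavier: it splits on $\{\chi(\R)\ln x < -\varepsilon\}$, uses a modulus $\omega$ near zero on the complement, and again appeals to Lemma~\ref{lem:refined-Markov} (this time for $\chi(\R)^2$) on the tail. Your route --- writing the integral form of the second-order remainder of $y\mapsto y^n$ at $y=1$, bounding $(s-x)^2\leq (1-x)^2$, and integrating $(n-2)s^{n-3}$ exactly to obtain $\binom{n}{2}(1-x)^2(1-x^{n-2})$ for $n\geq 3$, then applying dominated convergence to $\binom{\chi(\R)}{2}\in L^1$ after discarding $\e^{-tD}\leq 1$ --- is more elementary and delivers the uniformity in $t$ for free. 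One small wording slip: the factor $(1-x^{n-2})$ arises from integrating $(n-2)s^{n-3}$ exactly, not from using $s^{n-3}\leq 1$; but the displayed inequality is correct as written.

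For the final asymptotic, you follow the same substitution into the fixed-point equation as the paper, but you are more careful: instead of jumping to the quadratic formula, you first extract $u=O(t^{1/2})$ by dropping the nonnegative term $-uf(t)$ (valid since $f\leq 0$), then combine with $f(t)=o(t^{1/2})$ to conclude $uf(t)=o(t)$, and only then read off $u\sim\sqrt{2t/\sigma^2}$. This cleanly handles the $o(u^2)$ remainder rather than absorbing it implicitly. All steps are sound.
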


\begin{remark}
More generally, if $\mu_q(t) = \E[(1-\e^{-tD})^q ]$ then the Lemma holds with $1-h_t(\infty)\sim\sqrt{2\mu_1(t)/\sigma^2}$ as soon as $\mu_2(t) = o(\mu_1(t))$. This is the case if $\E[D]<\infty$, and there are cases where $\E[D]=\infty$ where this does not hold---typically when the tail of $D$ is not regular enough. 
\end{remark}


\begin{proof}
Using Hölder's inequality, letting $q = \frac{p}{p-1}$
\[ \left| \E\left[W \left(1 - \e^{-tD}\right)\right]\right| \leq \E\left[|W|^p\right]^{1/p} \E\left[\left(1 - \e^{-tD}\right)^q\right]^{1/q} . \]
Split the second expectation on $\{ tD>\varepsilon\}$ and $\{tD\leq \varepsilon\}$, and use $|\e^{-x}-1|\leq x$ for $x\geq 0$:
\begin{align*}
\E\left[\left(1 - \e^{-tD}\right)^q\right] &\leq  \E\left[\left(1 - \e^{-tD}\right)^q \ind{tD \leq \varepsilon}\right] +  \E\left[\left(1 - \e^{-tD}\right)^q \ind{tD>\varepsilon} \right] \\
&\leq \varepsilon^{q-1} \E[tD] + \P\left( D > \frac{\varepsilon}{t}\right) \\
&\leq \varepsilon^{q-1} t + \frac{2t}{\varepsilon \psi_D(\varepsilon/t)} 
\end{align*}
with $\psi_D$ given by Lemma \ref{lem:refined-Markov}: we can take $\varepsilon = \varepsilon(t)$ that goes to zero sufficiently slowly that the above expression is $o(t)$, proving our claim. The bound on $f$ follows from it with $p=2$ and $W = \chi(\R)$. Next is the Taylor expansion of $\Phi_t(x)$ near $x=1$ (with $x\leq 1$): for every $\varepsilon>0$
\begin{multline*}
\left| \Phi_t(x) - \E\left[ \e^{-tD}\left( 1 + \chi(\R) \ln x + \frac{\chi(\R)^2}{2} (\ln x)^2 \right) \right] \right| \\
\leq \E\left[ \left| \e^{\chi(\R)\ln x} - 1 - \chi(\R) \ln x - \frac{\chi(\R)^2}{2} (\ln x)^2 \right| \right] .
\end{multline*}
Split it on $\{ \chi(\R)\ln x < -\varepsilon\}$ and $\{\chi(\R)\ln x \geq -\varepsilon\}$, using $|\e^{-u} - 1 - u - u^2/2| \leq \omega(u) u^2$ for $u\leq 0$ with $\omega$ increasing and $\omega(u) \to 0$ as $u\to 0$:
\begin{equation}\label{eq:lem-phit-taylor-exp-1}
\E\left[ \left| \e^{\chi(\R)\ln x} - 1 - \chi(\R) \ln x - \frac{\chi(\R)^2}{2} (\ln x)^2 \right| \ind{\chi(\R)\ln x > -\varepsilon} \right] \leq \omega(\varepsilon) (\ln x)^2 \E\left[ \chi(\R)^2 \right] .
\end{equation}
On the other hand, using first $|\e^{-x}-1| \leq 1$, then Lemma \ref{lem:refined-Markov} to obtain $\psi_\chi$ (increasing to $+\infty$) and $C := \E[\chi(\R)^2 \psi_\chi(\chi(\R))]$,
\begin{align}
& \E\left[ \left| \e^{\chi(\R)\ln x} - 1 - \chi(\R) \ln x - \frac{\chi(\R)^2}{2} (\ln x)^2 \right| \ind{\chi(\R)\ln x \leq -\varepsilon} \right]  \nonumber \\
&\leq \P\left( \chi(\R) \leq \frac{-\varepsilon}{\ln x} \right) - (\ln x) \E\left[ \chi(\R)\ind{\chi(\R)\ln x \leq -\varepsilon} \right] + \frac{(\ln x)^2}{2} \E\left[ \chi(\R)^2 \ind{\chi(\R)\ln x \leq -\varepsilon} \right]  \nonumber \\
&\leq \frac{C}{\varepsilon^2 \psi_\chi(-\varepsilon/\ln x)} (\ln x)^2 + \frac{(\ln x)^2}{\varepsilon \psi_\chi(-\varepsilon/\ln x)} \E\left[ \chi(\R)^2 \psi_\chi(\chi(\R)) \ind{\chi(\R)\ln x \leq -\varepsilon} \right]  \nonumber \\
&\quad + \frac{(\ln x)^2}{2 \psi_\chi(-\varepsilon/\ln x)} \E\left[ \chi(\R)^2 \psi_\chi(\chi(\R)) \ind{\chi(\R)\ln x \leq -\varepsilon} \right] \nonumber \\
&\leq C\left( \varepsilon^{-2} + \varepsilon^{-1} + \frac 1 2 \right) \frac{(\ln x)^2}{\psi_\chi(-\varepsilon / \ln x)} .  \label{eq:lem-phit-taylor-exp-2}
\end{align}
We can find $\varepsilon = \varepsilon(x)$ that goes to $0$ sufficiently slowly as $x\to 1$ such that the right-hand side of \eqref{eq:lem-phit-taylor-exp-1} and \eqref{eq:lem-phit-taylor-exp-2} are both $o((x-1)^2)$, uniformly in $t\geq 0$. To prove \eqref{eq:phit-taylor-expansion} we need to control
\begin{align*}
&\Bigg| \E\left[ \e^{-tD}\left( 1 + \chi(\R) \ln x + \frac{\chi(\R)^2}{2} (\ln x)^2 \right) \right] \\
&\qquad - x - \E\left[ \e^{-tD}-1 \right] - (x-1) \left(\E\left[\chi(\R) \e^{-tD}\right]-1\right) - \frac{(x-1)^2}{2}\E\left[ \e^{-tD}\left( \chi(\R)^2 - \chi(\R)\right)\right] \Bigg| \\
&\leq \E\left[ \chi(\R) \left|\ln x - (x-1) + \frac{(x-1)^2}{2}\right| + \chi(\R)^2 \left|\ln(x)^2 - (x-1)^2\right| \right] = o((x-1)^2)
\end{align*}
uniformly in $t\geq 0$. 

Let us now prove \eqref{eq:htinfty-taylor-expansion} from the previous estimates. 
Since $\Phi_t'(x) = \E[\chi(\R) \e^{-t D} x^{\chi(\R)}]$ we have $0\leq \Phi_t'\leq 1$ on $[0,1]$; in addition $\Phi_t(1) \to 1$ as $t\to 0$, so that $h_t(\infty) \to 1$ as $t\to 0$. Hence using \eqref{eq:phit-taylor-expansion}, writing for simplicity $x := 1-h_t(\infty)$ in the rest of the proof
\begin{align*}
1-x = 1-x + \E\left[\e^{-tD}-1\right] - x f(t) + \frac{\sigma^2 + o(1)}{2} x^2 .
\end{align*}
Using that $\E[\e^{-tD}-1] \sim t$ as $t\to 0$ with $t\geq 0$, the solution must be
\begin{align*}
1-h_t(\infty) = \frac{-f(t) + \sqrt{f(t)^2 + 2 \sigma^2 t + o(t)}}{\sigma^2 + o(1)} \sim \sqrt{\frac{2 t}{\sigma^2}} \ .
\end{align*}
This finishes the proof of Lemma \ref{lem:phit-taylor-expansion}. 
\end{proof}

Finally, we can control the speed of convergence of certain expectations. 
\begin{lemma}\label{lem:speed-cv-expectation-Et}
Let $W \in L^2$, then as $t\to 0$
\begin{equation}
\E[W] - \E\left[ \e^{-tD} h_t(\infty)^{\chi(\R)-1} W \right] = \sqrt{\frac{2t}{\sigma^2}} \ \E[ W(\chi(\R)-1) ] + o(t^{1/2}) .
\end{equation}
\end{lemma}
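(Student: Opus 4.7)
The plan is to rewrite the difference as $\E[W(1 - \e^{-tD} h_t(\infty)^{\chi(\R)-1})]$ and expand the factor $1 - \e^{-tD}h_t(\infty)^{\chi(\R)-1}$ for small $t$. Setting $x_t := 1-h_t(\infty)$, which by \eqref{eq:htinfty-taylor-expansion} satisfies $x_t \sim \sqrt{2t/\sigma^2}$ as $t\to 0$, I would use the decomposition
\[
1 - \e^{-tD}(1-x_t)^{\chi(\R)-1} = \bigl(1 - (1-x_t)^{\chi(\R)-1}\bigr) + (1-\e^{-tD})(1-x_t)^{\chi(\R)-1},
\]
reducing the task to two separate estimates, each handled by a different tool.

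The $(1-\e^{-tD})$ term is straightforward via Cauchy--Schwarz: once $t$ is small enough that $x_t \leq 1/2$, one checks that $\bigl|(1-x_t)^{\chi(\R)-1}\bigr| \leq 2$ (the case $\chi(\R)=0$ is the worst, giving the factor $1/(1-x_t)$), so
\[
\bigl|\E[W(1-\e^{-tD})(1-x_t)^{\chi(\R)-1}]\bigr| \leq 2 \|W\|_2 \, \|1-\e^{-tD}\|_2 .
\]
Inspection of the proof of Lemma \ref{lem:phit-taylor-expansion} with $p=q=2$ shows that $\E[(1-\e^{-tD})^2] = o(t)$, with a rate depending only on the law of $D$; hence $\|1-\e^{-tD}\|_2 = o(t^{1/2})$ and this term is $o(t^{1/2})$.

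For the first term, I would apply dominated convergence to the ratio $\frac{1-(1-x_t)^{\chi(\R)-1}}{x_t}$. Pointwise, for each fixed integer $n\geq 0$, this ratio converges to $n-1$ as $t\to 0$. The domination comes from Bernoulli's inequality, which gives $0 \leq 1-(1-x_t)^{n-1} \leq (n-1)x_t$ for $n\geq 1$, combined with $|1 - (1-x_t)^{-1}|/x_t = 1/(1-x_t) \leq 2$ for $n=0$ and $x_t\leq 1/2$, yielding overall
\[
\left|\frac{1-(1-x_t)^{\chi(\R)-1}}{x_t}\right| \leq \chi(\R) + 1 .
\]
Since $W\in L^2$ and $\chi(\R)\in L^2$ by Assumption \ref{assum}.3, Cauchy--Schwarz gives $|W|(\chi(\R)+1)\in L^1$, so dominated convergence yields
\[
\frac{1}{x_t}\,\E\bigl[W\bigl(1-(1-x_t)^{\chi(\R)-1}\bigr)\bigr] \ulim t 0 \E[W(\chi(\R)-1)] .
\]

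Combining the two estimates and replacing $x_t$ by $\sqrt{2t/\sigma^2}(1+o(1))$ gives the claim. There is no significant obstacle; the only care needed is to extract a $o(t^{1/2})$ bound from Lemma \ref{lem:phit-taylor-expansion} that is insensitive to multiplying $W$ by the almost-bounded factor $(1-x_t)^{\chi(\R)-1}$, and to dispose of the exceptional case $\chi(\R)=0$ by restricting to $t$ so small that $x_t \leq 1/2$.
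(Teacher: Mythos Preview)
Your argument is correct and follows the same decomposition as the paper: split off the $(1-\e^{-tD})$ contribution and control it via Cauchy--Schwarz together with $\E[(1-\e^{-tD})^2]=o(t)$ from the proof of Lemma~\ref{lem:phit-taylor-expansion}, then treat the $1-h_t(\infty)^{\chi(\R)-1}$ contribution. For that second part the paper subtracts the linear term $(1-h_t(\infty))(\chi(\R)-1)$ and bounds the second-order remainder by Cauchy--Schwarz plus a splitting argument in the style of Lemma~\ref{lem:phit-taylor-expansion}; your dominated-convergence argument is a cleaner packaging of the same idea and avoids the explicit splitting. One arithmetic slip: your dominating bound $\chi(\R)+1$ fails at $\chi(\R)=0$, where the ratio is bounded by $2$ but $\chi(\R)+1=1$; use $\chi(\R)+2$ (or $\max(\chi(\R),2)$) instead, which is still in $L^2$ and leaves the argument intact.
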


\begin{proof}
Write
\begin{align*}
&\E[W] - \E\left[ \e^{-tD} h_t(\infty)^{\chi(\R)-1} W \right] - (1-h_t(\infty)) \E[ W(\chi(\R)-1) ]  \\
&= \E\left[ W\left( 1 - \e^{-tD}h_t(\infty)^{\chi(\R)-1} - (1-h_t(\infty))(\chi(\R)-1) \right)\right] \\
&= \E\left[ W\left(1 + (h_t(\infty)-1)(\chi(\R)-1) - h_t(\infty)^{\chi(\R)-1} + h_t(\infty)^{\chi(\R)-1}\left( 1-\e^{-tD} \right) \right)\right] .
\end{align*}
Then the absolute value of the left-hand side is smaller than 
\begin{align*}
\E\left[|W| \left|1 + (h_t(\infty)-1)(\chi(\R)-1) - h_t(\infty)^{\chi(\R)-1}\right|\right] + h_t(\infty)^{-1} \E\left[|W| \left( 1-\e^{-tD} \right)\right] .
\end{align*}
We use Lemma \ref{lem:phit-taylor-expansion} and the fact that $h_t(\infty)\to 1$ to bound the second term by $o(t^{1/2})$ as $t\to 0$. The Cauchy--Schwarz inequality and the same kind of control as in the proof of Lemma \ref{lem:phit-taylor-expansion} can be used to bound the first term by $o(1-h_t(\infty)) = o(t^{1/2})$. We conclude by using the asymptotic of $1-h_t(\infty)$ from Lemma \ref{lem:phit-taylor-expansion}. 
\end{proof}

\subsection{Preliminary bounds on the tail}
\label{sec:2uniform-control}

Let us obtain two easy bounds:
\[ h_t(r) = \E\left[\e^{-t\sum_{v\in T} D_v} \ind{\sup_{v\in T} \Lambda_v \leq r}\right] \leq \E\left[ \ind{\sup_{v\in T} \Lambda_v \leq r}\right] = \P(\sup_{v\in T} \Lambda_v \leq r) = h(r) \]
where $h(r)$ is the same as in Section \ref{sec:Markov-ppty-BRW}. On the other hand, since $1-ab \leq (1-a)+(1-b)$ for $a,b\in [0,1]$, 
\[ 1-h_t(r) \leq \E\left[ 1 - \e^{-t\sum_{v\in T} D_v} \right] + \E\left[1 - \ind{\sup_{v\in T}\Lambda_v \leq r} \right] = 2 - h_t(\infty) - h(r) . \]
To sum up, 
\begin{equation}\label{eq:easy-bounds-htr}
h(r) - (1- h_t(\infty)) \leq h_t(r) \leq h(r) .
\end{equation}

Define $w_t(r) = \frac{h_t(\infty)}{h_t(r)}-1$; this is well-defined for every $r\geq 0$ since $h_t(0)>0$, and it decreases to $0$ as $r\to\infty$. 
Let us compute bounds on $w_t$ from \eqref{eq:easy-bounds-htr}:
\begin{equation}\label{eq:2lower-bound-wt}
w_t(r) = \frac{h_t(\infty)}{h_t(r)}-1 \geq \frac{h_t(\infty)}{h(r)}-1 = h_t(\infty) w(r) - (1-h_t(\infty)) 
\end{equation}
and 
\begin{equation}\label{eq:2upper-bound-wt}
w_t(r) \leq \frac{h_t(\infty)}{h(r) -(1- h_t(\infty))}-1 = \frac{1-h(r)}{h(r)-(1-h_t(\infty))} \leq \frac{w(r)}{h(r) - (1-h_t(\infty))} .
\end{equation}
For every $\alpha\geq 0$ and $r>0$ define
\begin{equation}\label{eq:def-t-r-alpha}
t(r, \alpha) := \frac{\alpha^2}{2\sigma^2} \left( \frac{6\eta^2}{\sigma^2 r^2} \right)^2 \ .
\end{equation}
Since $w(r) \sim \frac{6\eta^2}{\sigma^2 r^2}$ as $r\to\infty$ and $1-h_t(\infty) \sim \sqrt{\frac{2t}{\sigma^2}}$ as $t\to\infty$, for every $\alpha\geq 0$ we find from \eqref{eq:2upper-bound-wt} 
\begin{equation*}
\limsup_{r\to\infty} r^2 w_{t(r,\alpha)}(r) \leq \limsup_{r\to\infty} r^2 w(r) \leq \frac{6\eta^2}{\sigma^2} 
\end{equation*}
and from \eqref{eq:2lower-bound-wt}
\begin{equation}\label{eq:2bound-w-liminf-1-alpha}
\liminf_{r\to\infty} r^2 w_{t(r,\alpha)}(r) \geq -\alpha\frac{6\eta^2}{\sigma^2} + \liminf_{r\to\infty} r^2 w(r) \geq (1-\alpha) \frac{6\eta^2}{\sigma^2} .
\end{equation}
Note that this immediately shows $w_{t(r,\alpha)}(r)\to 0$ as $r\to\infty$ for every $\alpha\geq 0$. 
A stronger version of these bounds is found in Lemma \ref{lem:positivity-c-alpha}. We can strengthen \eqref{eq:2lower-bound-wt} and \eqref{eq:2upper-bound-wt}: 
let $0\leq s \leq t$; by the same argument as around \eqref{eq:easy-bounds-htr}, 
\[ h_s(r) + h_t(\infty) - h_s(\infty) \leq h_t(r) \leq h_s(r) , \]
thus on one hand
\begin{equation}\label{eq:2lower-bound-wt-strong}
w_t(r) = \frac{h_t(\infty)}{h_t(r)} - 1 \geq \frac{h_t(\infty)}{h_s(r)}-1 = \frac{h_t(\infty)}{h_s(\infty)} w_s(r) + \frac{h_t(\infty)}{h_s(\infty)} - 1
\end{equation}
and on the other hand
\begin{multline}\label{eq:2upper-bound-wt-strong}
w_t(r) \leq \frac{h_t(\infty)}{h_s(r) + h_t(\infty) - h_s(\infty)}-1 = \frac{h_s(\infty)-h_s(r)}{h_s(r) + h_t(\infty) - h_s(\infty)} = \frac{1-\frac{h_s(r)}{h_s(\infty)}}{\frac{h_s(r)}{h_s(\infty)} + \frac{h_t(\infty)}{h_s(\infty)} - 1} \\
= \frac{w_s(r)}{-w_s(r) + \frac{h_t(\infty)}{h_s(\infty)}(1+w_s(r))} = \frac{w_s(r)}{\frac{h_t(\infty)}{h_s(\infty)} + w_s(r)\left(\frac{h_t(\infty)}{h_s(\infty)}-1\right)} . 
\end{multline}

\begin{remark}

A significant complexity is introduced by the need to control the asymptotic as $r\to\infty$ for every fixed $t$, as well as for $t$ and $r\to\infty$ jointly. Here is what we expect: as $t\to 0$ and $r\to\infty$, if $t = o(r^{-4})$ then $w_t(r) \sim w(r)$; on the other hand if $r = o(t^{-1/4})$ then $w_t(r) = o(w(r)) = o(r^{-2})$. Only in the regime where $t r^4$ is of order $1$, for example when $t=t(r,\alpha)$, will we see a limit in $(0,6\eta^2/\sigma^2)$. 
 
\end{remark}

A control of the regularity of $w_{t(r,\alpha)}(r)$ in $\alpha$ will be useful. 

\begin{lemma}\label{lem:2uniform-continuity-u}
For every $\alpha>0$ and every $\varepsilon>0$, there exists $r_\varepsilon$ and $f_\varepsilon(r) \to 6 \eta^2 \varepsilon / \sigma^2$ as $r\to\infty$ such that for every $r\geq r_\varepsilon$, 
\[ \sup_{\gamma, \beta \in [0,\alpha]: |\beta-\gamma|\leq \varepsilon} r^2 \left| w_{t(r,\beta)}(r) - w_{t(r,\gamma)}(r) \right| \leq f_\varepsilon(r) . \]
\end{lemma}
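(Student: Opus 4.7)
The plan is to control $|w_t(r) - w_s(r)|$ directly via the monotonicity bounds \eqref{eq:2lower-bound-wt-strong} and \eqref{eq:2upper-bound-wt-strong} already established in Section \ref{sec:2uniform-control}. By symmetry I may assume $\beta\leq\gamma$ and set $s:=t(r,\beta)\leq t(r,\gamma)=:t$. Rearranging \eqref{eq:2lower-bound-wt-strong} gives
\[ w_s(r)-w_t(r) \leq \left(1-\frac{h_t(\infty)}{h_s(\infty)}\right)(1+w_s(r)), \]
while subtracting $w_s(r)$ from both sides of \eqref{eq:2upper-bound-wt-strong} and simplifying yields
\[ w_t(r)-w_s(r) \leq \frac{w_s(r)\left(1-h_t(\infty)/h_s(\infty)\right)(1+w_s(r))}{h_t(\infty)/h_s(\infty)+w_s(r)(h_t(\infty)/h_s(\infty)-1)}. \]
Since $w_s(r)\to 0$ and $h_s(\infty)\to 1$ as $r\to\infty$, both uniformly in $\beta\in[0,\alpha]$, combining the two inequalities yields $|w_t(r)-w_s(r)| \leq \left(1-h_t(\infty)/h_s(\infty)\right)(1+o(1))$ as $r\to\infty$.

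Next, I would estimate the ratio $1-h_t(\infty)/h_s(\infty) = (h_s(\infty)-h_t(\infty))/h_s(\infty)$ using Lemma \ref{lem:phit-taylor-expansion}. Its expansion $1-h_u(\infty)\sim\sqrt{2u/\sigma^2}$ as $u\to 0$, evaluated at $u=t(r,\alpha)$, produces $1-h_{t(r,\alpha)}(\infty)\sim\frac{6\alpha\eta^2}{\sigma^4 r^2}$. Subtracting the two expansions and multiplying by $r^2$ then gives
\[ r^2\left(1-\frac{h_t(\infty)}{h_s(\infty)}\right) \longrightarrow \frac{6\eta^2(\gamma-\beta)}{\sigma^4}, \]
uniformly over $\beta,\gamma\in[0,\alpha]$. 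Taking the supremum over $|\beta-\gamma|\leq\varepsilon$ in this expression, and absorbing the $(1+o(1))$ factor from the previous paragraph, defines an admissible $f_\varepsilon(r)$ converging to a fixed multiple of $\varepsilon$.

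The main technical issue is verifying uniformity of the various limits in $\beta,\gamma\in[0,\alpha]$. The bound $w_s(r)=O(r^{-2})$ uniformly in $\beta\in[0,\alpha]$ follows from \eqref{eq:2upper-bound-wt} combined with Theorem \ref{th:main-tail} (noting that \eqref{eq:2upper-bound-wt} is monotone in $t$, so taking $t=t(r,\alpha)$ gives a uniform upper bound). The Taylor expansion of Lemma \ref{lem:phit-taylor-expansion} is a one-parameter limit in $u$; applied along any sequence $u=t(r,\beta)\to 0$ as $r\to\infty$ it is automatically uniform in $\beta\in[0,\alpha]$ because $t(r,\beta)\leq t(r,\alpha)$, so all the relevant values of $u$ are dominated by a single scalar sequence tending to $0$. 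I do not anticipate any further substantive obstacle: the argument is essentially a direct combination of the comparison inequalities of Section \ref{sec:2uniform-control} with the asymptotic for $1-h_t(\infty)$.
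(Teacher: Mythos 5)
Your proposal uses the same ingredients as the paper's proof—the comparison inequalities \eqref{eq:2lower-bound-wt-strong} and \eqref{eq:2upper-bound-wt-strong} combined with the asymptotic $1-h_t(\infty)\sim\sqrt{2t/\sigma^2}$—and the argument is correct; your uniformity step (dominating all $t(r,\beta)$, $\beta\in[0,\alpha]$, by the single sequence $t(r,\alpha)\to 0$) is a clean replacement for the paper's discretization over a finite $\varepsilon'$-grid in $\beta$, and since $\rho(u):=(1-h_u(\infty))/\sqrt{2u/\sigma^2}-1$ is continuous on a right neighborhood of $0$ and tends to $0$, the domination does give uniform control. One remark worth flagging: your limiting constant $6\eta^2\varepsilon/\sigma^4$ disagrees with the $6\eta^2\varepsilon/\sigma^2$ stated in the Lemma and in the paper's proof, but plugging \eqref{eq:def-t-r-alpha} into $\sqrt{2t/\sigma^2}$ gives $1-h_{t(r,\alpha)}(\infty)\sim 6\alpha\eta^2/(\sigma^4 r^2)$, so your value appears to be the correct one and the paper's looks like a typo (harmless here, since the downstream use in Lemma \ref{lem:2continuity-w-broad-volume-strong} only needs $f_\varepsilon(r)=O(\varepsilon)$ as $r\to\infty$).
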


\begin{proof}
Since $1-h_t(\infty) \sim_{t\to 0} \sqrt{\frac{2t}{\sigma^2}}$, which implies $1-h_{t(r,\alpha)}(\infty) \sim_{r\to\infty} \alpha \frac{6\eta^2}{\sigma^2 r^2}$, and since $h_{t(r,\alpha)}(\infty)$ is decreasing in $\alpha$, we have that for every $\varepsilon, \varepsilon'>0$ and $\alpha>0$, for every $0\leq \beta \leq \alpha$ 
\[ \limsup_{r\to\infty} r^2\left(1 - \frac{h_{t(r,\beta+\varepsilon+\varepsilon')}(\infty)}{h_{t(r,\beta)}(\infty)} \right) \leq \frac{6\eta^2}{\sigma^2} (\varepsilon+\varepsilon') . \]
Taking the supremum over the finitely many $\beta \in [0,\alpha] \cap \varepsilon' \Z$ and by monotonicity of $\beta \mapsto h_{t(r,\beta)}(\infty)$, then taking $\varepsilon'\to 0$, we find $g_\varepsilon(r) \to 6\eta^2/\sigma^2$ as $r\to\infty$ such that 
\begin{equation*}
\sup_{0\leq \beta \leq \alpha} r^2\left(1 - \frac{h_{t(r,\beta+\varepsilon)}(\infty)}{h_{t(r,\beta)}(\infty)} \right) \leq \varepsilon g_\varepsilon(r) . 
\end{equation*}

Fix $\alpha>0$. 
By \eqref{eq:2upper-bound-wt}, there exists $f_1(x)$ (that depends on $\alpha$) that converge to $6\eta^2/\sigma^2$ as $x\to\infty$ with $f_1(x)\geq 6\eta^2/\sigma^2$ such that $r^2 w_{t(r,\beta)}(r) \leq \frac{6 \eta^2}{\sigma^2} f_1(r) $ for every $\beta \in [0,\alpha]$. 
By \eqref{eq:2lower-bound-wt-strong}, for every $\beta \in [0,\alpha]$
\begin{align*}
r^2 w_{t(r,\beta+\varepsilon)}(r) &\geq \left( 1 - \varepsilon g_\varepsilon(r)\right) r^2 w_{t(r,\beta)}(r) - \varepsilon g_\varepsilon(r) \\
&\geq r^2 w_{t(r,\beta)}(r) - \varepsilon g_\varepsilon(r) \left(1+ \frac{f_1(r)}{r^2}\right) .
\end{align*}
On the other hand, by \eqref{eq:2upper-bound-wt-strong}, taking $\varepsilon>0$ small enough, then $r$ large enough that $r^{-2} \varepsilon g_\varepsilon(r) \left( 1 + \frac{f_1(r)}{r^2} \right) \leq 1/2$, using that $|(1-x)^{-1} - 1| \leq 2x$ for $|x|\leq 1/2$, 
\begin{align*}
w_{t(r,\beta+\varepsilon)}(r) &\leq w_{t(r,\beta)}(r) \left( 1 - r^{-2} \varepsilon g_\varepsilon(r) \left( 1 + \frac{f_1(r)}{r^2} \right) \right)^{-1} \\
&\leq w_{t(r,\beta)}(r) + 2 w_{t(r,\beta)} r^{-2} \varepsilon g_\varepsilon(r) \left( 1 + \frac{f_1(r)}{r^2} \right) .
\end{align*}
The Lemma follows by taking 
\[ f_\varepsilon(r) = \max \left( 2f_1(r) r^{-2} \varepsilon g_\varepsilon(r) \left( 1 + \frac{f_1(r)}{r^2} \right) , \varepsilon g_\varepsilon(r) \left(1+ \frac{f_1(r)}{r^2}\right) \right) , \]
and we can check that indeed $f_\varepsilon(r) \to 6\eta^2 \varepsilon/\sigma^2$ as $r\to\infty$. 
\end{proof}

\subsection{Change of measure to subcritical}
\label{sec:2change-of-measure}

Recall the definition of a \brw from Section \ref{sec:preliminaries}. 
Define a family $(\E^{(t)})_{t\geq 0}$ of expectations on the same probability space as $\E$ such that $(k_u, \chi_u, D_u, \Lambda^{(u)}, (X^{(u)}_i)_{1\leq i \leq \chi_u(\R)})_{u\in \mathcal U}$ are i.i.d. with the distribution of $(k, \chi, D, \Lambda, (X_i)_{1\leq i \leq \chi(\R)})$ such that for every nonnegative and measurable $f$
\begin{equation}
\E^{(t)}\left[ f\left( k, \chi, D, \Lambda, (X_i)_{1\leq i \leq \chi(\R)} \right) \right] \\
= \E\left[ \e^{-tD} h_t(\infty)^{\chi(\R)-1} f\left( k, \chi, D, \Lambda, (X_i)_{1\leq i \leq \chi(\R)} \right) \right] .
\end{equation}
We can then check that for every nonnegative and measurable $f$
\begin{equation}
\E^{(t)}\left[ f\left( T, (X_v)_{v\in T}, (D_v)_{v\in T}, (\Lambda_v)_{v\in T} \right) \right] 
= \frac{\E\left[ \e^{-t\sum_{v\in T} D_v} f\left( T, (X_v)_{v\in T}, (D_v)_{v\in T}, (\Lambda_v)_{v\in T} \right) \right]}{\E\left[ \e^{-t\sum_{v\in T} D_v} \right]} \ ,
\end{equation}
noting that $h_t(\infty) = \E[ \e^{-t\sum_{v\in T} D_v} ]$. 
Differentiating $\Phi_t$ gives 
\begin{equation}\label{eq:2expectation-chi-vol}
\E^{(t)}\left[ \chi(\R) \right] = \Phi_t'(h_t(\infty)) \leq 1 
\end{equation}
with a strict inequality when $t>0$. 
In other words, the \brw becomes subcritical under $\E^{(t)}$. We can be more precise: by Lemma \ref{lem:speed-cv-expectation-Et} 
\begin{equation}\label{eq:asymptotic-expectation-chi-vol}
1- \E^{(t)}[ \chi(\R) ] \sim \sqrt{2t\sigma^2} .
\end{equation}
Because $\e^{-tD} h_t(\infty)^{\chi(\R)-1} \leq h_t(\infty)^{-1}$ a.s. we have by the dominated convergence theorem $\E^{(t)}[W] \to \E[W]$ as $t\to 0$ as soon as $W$ is integrable, in particular 
\begin{equation}\label{eq:2variance-chi-vol}
\E^{(t)}\left[ \chi(\R)(\chi(\R)-1) \right] \ulim t 0 \sigma^2 ,
\end{equation}
and there exists $t_0>0$ such that for every $t\in[0,t_0]$ and every $W\geq 0$ integrable 
\begin{equation}\label{eq:bound-Et-by-2E}
\E^{(t)}[W] \leq 2\E[W] .
\end{equation}
We can rewrite \eqref{eq:2convolution-equation-1} as an equation on 
\[ \tilde h_t(r) := \P^{(t)}\left( \sup_n \sup G_n \leq r \right) = \frac{h_t(r)}{h_t(\infty)} = \frac{1}{1+w_t(r)} \ , \]
namely
\begin{equation}\label{eq:2convolution-equation-vol}
\tilde{h}_t(r) = \E^{(t)}\left[ \prod_{i=1}^{\chi(\R)} \tilde h_t(r-X_i) \ind{\Lambda\leq r} \right] = \E^{(t)}\left[ \ind{\Lambda\leq r} \exp\left( \int \ln\tilde h_t(r-x) \d\chi(x) \right) \right] . 
\end{equation}
This interpretation allows us to draw an analogue with our work in Section \ref{sec:main-proof}, except that now the \brw is subcritical.

\subsection{On coupling the random walks}
\label{sec:2coupling-RW}

For every $t\geq 0$, there exists a random variable $Z_t$ such that for every positive and measurable $f$
\begin{equation}\label{eq:2def-Zt}
\E^{(t)}[\chi(\R)] \cdot \E[f(Z_t)] = \E^{(t)}\left[ \int f(x) \d\chi(x) \right] .
\end{equation}
If $t=0$ then $Z_0 =Z$ in distribution, where $Z$ is from Section \ref{sec:main-proof}. 
Since $\E^{(t)}[\chi(\R)] \to 1$, there exists $t_0>0$ such that 
for every positive and measurable $f$ and every $t\in [0,t_0]$
\[ \E[f(Z_t)] \leq 2 \E[f(Z_0)] . \]
In fact, if $\E[f(Z_0)]<\infty$ then $\E[f(Z_t)]\to \E[f(Z_0)]$ as $t\to 0$ by dominated convergence; in particular, $\E[Z_t^2] \to \eta^2$ and $\E[Z_t] \to 0$ as $t\to 0$.

Let us construct these random variables on the same probability space in a way that $\P(Z_t = Z_0)$ converges to $1$ as $t\to 0$. 
For every $t\geq 0$ let 
\[ V_t := \frac{\e^{-tD} h_t(\infty)^{\chi(\R)-1}}{\E^{(t)}[\chi(\R)]} . \]
Clearly $\E[V_t \chi(\R)]=1$. 
Let $U$ be uniform on $[0,1]$ and independent from the \repscheme, in particular of $(D,\chi)$, 
and for every $t\geq 0$ let $J_t$ be a random variable such that for every nonnegative and measurable $f$
\[ \E[f(Z, J_t)] = \E\left[ \int f\left( x, \ind{U < V_t}\right) \d\chi(x) \right] . \]
Finally, let $\tilde{Z_t}$ independent from $(Z, J_t)$ such that for every nonnegative and measurable $f$
\[ \E[f(\tilde Z_t)] = \E\left[ \frac{(V_t-1) \ind{V_t\geq 1}}{\E[(V_t-1) \ind{V_t\geq 1} \chi(\R)]} \int f(x) \d\chi(x) \right] . \]
Finally, define
\begin{equation}\label{eq:def-Zt-coupling}
Z_t := Z \ind{J_t=1} + \tilde Z_t \ind{J_t=0}
\end{equation} 

\begin{lemma}\label{lem:coupling-explicit}
The random variables $(Z_t)_{t\geq 0}$ from \eqref{eq:def-Zt-coupling} satisfy \eqref{eq:2def-Zt}, and in addition
\begin{equation}
\limsup_{t\to 0} \ (2\sigma^2 t)^{-1/2} \P(Z_t \neq Z) \leq 1 .
\end{equation}
and
\begin{equation}\label{eq:coupling-first-step}
\E[(Z_t - Z_0)^2] \ulim t 0 0 .
\end{equation}
\end{lemma}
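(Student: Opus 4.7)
The plan hinges on three properties: $\E[V_t \chi(\R)] = 1$ (from the definition of $V_t$), the decomposition $\min(V_t, 1) + (V_t - 1)^+ = V_t$, and the sharp asymptotics of $1 - h_t(\infty)$ and $1 - \E^{(t)}[\chi(\R)]$ from Lemmas \ref{lem:phit-taylor-expansion} and \ref{lem:speed-cv-expectation-Et}.

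First I would verify that $Z_t$ satisfies \eqref{eq:2def-Zt}. Conditioning on $(V_t, \chi)$ and using the independence of the uniform variable $U$ yields $\E[f(Z)\ind{J_t = 1}] = \E[\int f(x)\min(V_t, 1) \d\chi(x)]$. Taking $f(x, j) = 1 - j$ in the defining formula of $(Z, J_t)$, combined with $\E[V_t \chi(\R)] = 1$, gives $\P(J_t = 0) = \E[(V_t - 1)^+ \chi(\R)]$. The definition of $\tilde Z_t$ and its independence from $(Z, J_t)$ then produce $\E[f(\tilde Z_t)\ind{J_t = 0}] = \E[(V_t - 1)^+ \int f(x) \d\chi(x)]$. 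Summing and using $\min(V_t, 1) + (V_t - 1)^+ = V_t$ yields $\E[f(Z_t)] = \E[V_t \int f \d\chi]$, which rearranges to \eqref{eq:2def-Zt}.

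Second, note $\P(Z_t \neq Z) \leq \P(J_t = 0) = \E[(V_t - 1)^+ \chi(\R)]$. Since $\E[(V_t - 1)\chi(\R)] = 0$, the positive part equals $\frac{1}{2} \E[|V_t - 1| \chi(\R)]$. Applying the triangle inequality to the decomposition
\[
V_t - 1 = \frac{\e^{-tD}h_t(\infty)^{\chi(\R) - 1} - 1}{\E^{(t)}[\chi(\R)]} + \frac{1 - \E^{(t)}[\chi(\R)]}{\E^{(t)}[\chi(\R)]},
\]
combined with the elementary bounds $1 - \e^{-tD}h_t(\infty)^{\chi(\R) - 1} \leq (1 - \e^{-tD}) + (1 - h_t(\infty)^{\chi(\R) - 1})$ on $\{\chi(\R) \geq 1\}$ and $1 - h_t(\infty)^{k - 1} \leq (k - 1)(1 - h_t(\infty))$ for $k \geq 1$, reduces the question to three asymptotics: $\E[(1 - \e^{-tD})\chi(\R)] = o(t^{1/2})$ and $1 - h_t(\infty) \sim \sqrt{2t/\sigma^2}$ (both from Lemma \ref{lem:phit-taylor-expansion}), and $1 - \E^{(t)}[\chi(\R)] \sim \sqrt{2t\sigma^2}$ (from Lemma \ref{lem:speed-cv-expectation-Et}). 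The two non-negligible contributions $(1 - h_t(\infty))\E[\chi(\R)(\chi(\R) - 1)] = \sigma^2(1 - h_t(\infty))$ and $1 - \E^{(t)}[\chi(\R)]$ are each asymptotic to $\sqrt{2t\sigma^2}$, so their sum divided by $2$ gives $\sqrt{2 t \sigma^2}(1 + o(1))$, matching the target constant.

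Third, for \eqref{eq:coupling-first-step}, since $Z_t - Z = (\tilde Z_t - Z) \ind{J_t = 0}$, we have $\E[(Z_t - Z)^2] \leq 2 \E[\tilde Z_t^2 \ind{J_t = 0}] + 2 \E[Z^2 \ind{J_t = 0}]$. Unwinding definitions via the independence of $\tilde Z_t$ from $(Z, J_t)$ and the Palm-type formula defining $(Z, J_t)$, these two terms equal $\E[(V_t - 1)^+ \int x^2 \d\chi(x)]$ and $\E[(1 - V_t)^+ \int x^2 \d\chi(x)]$ respectively, each dominated by $\E[|V_t - 1| \int x^2 \d\chi(x)]$. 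On $\{\chi(\R) \geq 1\}$ (where the integrand is nonzero), $V_t \leq 1/\E^{(t)}[\chi(\R)]$ is uniformly bounded for small $t$ and $V_t \to 1$ almost surely; Assumption \ref{assum}.4 ensures integrability of $\int x^2 \d\chi(x)$, so dominated convergence gives the result. The main obstacle is the sharp constant $1$ in the probability bound: the triangle inequality must be arranged so that the two surviving contributions---one from the Taylor expansion of $h_t(\infty)^{\chi(\R)-1}$ and one from the normalization $\E^{(t)}[\chi(\R)]$---each match $\sqrt{2\sigma^2 t}$ at leading order, which holds precisely because $\sigma^2 \cdot \sqrt{2t/\sigma^2} = \sqrt{2t\sigma^2}$.
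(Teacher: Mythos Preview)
Your proof is correct and follows the paper's approach closely for parts (1) and (3). For part (2), you take a slightly different route: the paper bounds $\P(J_t=0) \leq \E[\chi(\R)\ind{V_t<1}] - \E^{(t)}[\chi(\R)]\,\E[\chi(\R)V_t\ind{V_t<1}]$ (using $\E^{(t)}[\chi(\R)]\leq 1$) and then applies Lemma~\ref{lem:speed-cv-expectation-Et} directly with $W=\chi(\R)\ind{V_t<1}$ to obtain the constant in one step; you instead pass through the identity $\P(J_t=0)=\tfrac12\E[|V_t-1|\chi(\R)]$ and split $|V_t-1|$ by the triangle inequality into a numerator piece and a normalization piece. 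Your route is slightly more hands-on but has the advantage of only invoking the asymptotics of $1-h_t(\infty)$ and $1-\E^{(t)}[\chi(\R)]$ rather than the full statement of Lemma~\ref{lem:speed-cv-expectation-Et} applied to a $t$-dependent $W$; the paper's route is more compact once that lemma is available. Both recover the sharp constant because, as you correctly observe, the two leading contributions each equal $\sqrt{2t\sigma^2}$ at first order and the factor $\tfrac12$ exactly cancels their sum.
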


\begin{proof}
Clearly, outside of the event $J_t=0$ we have $Z_t = Z$. This event has probability
\begin{equation}\label{eq:PJt0}
\P(J_t=0) = \E[\chi(\R) \ind{U>V_t}] = \E[\chi(\R) (1-V_t)\ind{V_t<1}] = \E[\chi(\R)\ind{V_t<1}] - \E[\chi(\R) V_t \ind{V_t<1}] .
\end{equation}
Using that $\E[\chi(\R)] = \E[V_t\chi(\R)] = 1$, we can rewrite this as 
\[ \P(J_t=0) = \E[\chi(\R) V_t \ind{V_t\geq 1}] - \E[\chi(R) \ind{V_t\geq 1}] = \E\left[ (V_t-1) \ind{V_t\geq 1} \chi(\R) \right] . \]
Then
\begin{align*}
\E[f(Z_t)] &= \E[\ind{J_t=1} f(Z)] + \E[\ind{J_t=0} f(\tilde Z_t)] \\
&= \E\left[ \ind{U<V_t} \int f(x)\d\chi(x) \right] + \P(J_t=0) \E\left[ f(\tilde Z_t) \right] \\
&= \E\left[ \min(1, V_t) \int f(x)\d\chi(x) \right] + \E\left[ (V_t-1) \ind{V_t\geq 1} \int f(x)\d\chi(x) \right] \\
&= \E\left[V_t \int f(x) \d\chi(x) \right] ,
\end{align*}
thus proving \eqref{eq:2def-Zt}. On the other hand, by Lemma \ref{lem:speed-cv-expectation-Et} and from $\E^{(t)}[\chi(\R)]\to 1$ as $t\to 0$
\[ \E[\chi(\R)\ind{V_t<1}] -  \E^{(t)}[\chi(\R)] \E[\chi(\R) V_t \ind{V_t<1}] = \sqrt{\frac{2t}{\sigma^2}} \E[(\chi(\R)-1)\chi(\R) \ind{V_t<1}] + o(t^{1/2}) . \]
Plugging this into \eqref{eq:PJt0}, 
\begin{equation*}
\P(Z_t \neq Z) \leq \P(J_t =0) \leq \E[\chi(\R)\ind{V_t<1}] - \E^{(t)}[\chi(\R)]\E[\chi(\R) V_t \ind{V_t<1}]
\end{equation*}
is such that 
\[ \limsup_{t\to 0} \frac{1}{\sqrt{2t \sigma^2}} \P(Z_t \neq Z) \leq \limsup_{t\to 0}  \frac{1}{\sigma^2} \E[(\chi(\R)-1)\chi(\R) \ind{V_t<1}] \leq 1 . \]
Finally, from $(a-b)^2 \leq 2a^2 + 2b^2$
\begin{align*}
\E[ (Z_t-Z)^2] &= \E\left[ \ind{J_t=0} (\tilde Z_t-Z)^2 \right] \\
&\leq 2\P(Z_t=0) \E\left[ (\tilde Z_t)^2 \right] + 2\E[\ind{J_t=0} Z^2] \\
&= 2\E\left[ (V_t-1) \ind{V_t\geq 1} \int x^2 \d\chi(x) \right] + 2\E[\ind{J_t=0} Z^2]
\end{align*}
and both terms converge to $0$ as $t\to 0$ by dominated convergence. 
\end{proof}

Let $(Z_n^{t})_{n\geq 1, t\geq 0}$ be i.i.d. copies of $(Z_t)_{t\geq 0}$, and define for every $t\geq 0$ and every $n\geq 0$
\begin{equation}\label{eq:def-Unt}
U_n^t = Z_1^t + \dots + Z_n^t
\end{equation}
where $U_0^t = 0$. 
The many-to-one formula takes the following form: for every measurable and bounded $g$, for every $n$
\begin{equation}\label{eq:many-to-one-chiR}
\E^{(t)}\left[\int g \d G_n\right] = \E^{(t)}[\chi(\R)]^n \cdot \E[g(U_n^t)] . 
\end{equation} 
A first consequence of the many-to-one formula is the following.

\begin{lemma}\label{lem:positivity-c-alpha}
For every $\alpha\geq 0$, 
\[ 0 < c(\alpha) := \liminf_{r\to\infty} r^2 \inf_{t\in[0,t(r,\alpha)]} w_t(r) \leq \limsup_{r\to\infty} r^2 \sup_{t\in[0,t(r,\alpha)]} w_t(r) \leq \frac{6\eta^2}{\sigma^2} . \]
\end{lemma}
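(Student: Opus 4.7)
The upper bound is immediate from \eqref{eq:2upper-bound-wt}. Since $t\mapsto h_t(\infty)$ is non-increasing, for every $t\in[0,t(r,\alpha)]$,
\[ w_t(r) \leq \frac{w(r)}{h(r)-(1-h_{t(r,\alpha)}(\infty))}.\]
By Lemma \ref{lem:phit-taylor-expansion}, $1-h_{t(r,\alpha)}(\infty)\sim \alpha \cdot 6\eta^2/(\sigma^2 r^2)=o(1)$, and $h(r)\to 1$, so the denominator tends to $1$; combined with Theorem \ref{th:main-tail} this gives $\limsup_{r\to\infty} r^2 \sup_{t\in[0,t(r,\alpha)]} w_t(r)\leq 6\eta^2/\sigma^2$.

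For the lower bound, the first step is a reduction to $t=t(r,\alpha)$. Applying \eqref{eq:2upper-bound-wt-strong} with $s\in[0,t(r,\alpha)]$ and the pair $(s,t(r,\alpha))$ and inverting the inequality yields an explicit lower bound on $w_s(r)$ in terms of $w_{t(r,\alpha)}(r)$; since $h_{t(r,\alpha)}(\infty)/h_s(\infty)\in[h_{t(r,\alpha)}(\infty),1]\to 1$ and $w_{t(r,\alpha)}(r)\to 0$, this bound reads $w_s(r)\geq (1+o(1))\,w_{t(r,\alpha)}(r)$ uniformly in $s\in[0,t(r,\alpha)]$. Hence it suffices to show $\liminf_{r\to\infty} r^2 w_{t(r,\alpha)}(r)>0$.

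Writing $t=t(r,\alpha)$ and using the change of measure of Section \ref{sec:2change-of-measure}, $w_t(r)\geq h_t(\infty)-h_t(r)=h_t(\infty)\,\P^{(t)}(\sup_v\Lambda_v>r)$; since $h_t(\infty)\to 1$, it is enough to prove $\P^{(t)}(\sup_v\Lambda_v>r)\geq c(\alpha)/r^2$ for some $c(\alpha)>0$. I apply the Paley--Zygmund second-moment method under $\P^{(t)}$ to the random variable $N:=G_n([r,\infty))$ with $n=\lfloor K r^2\rfloor$ for a suitable $K>0$. By the many-to-one formula \eqref{eq:many-to-one-chiR}, $\E^{(t)}[N]=(\E^{(t)}[\chi(\R)])^n\,\P(U_n^t\geq r)$; the first factor tends to $\exp(-\alpha K\cdot 6\eta^2/\sigma^2)>0$ by \eqref{eq:asymptotic-expectation-chi-vol}, and the coupling Lemma \ref{lem:coupling-explicit} provides $\Var(U_n^t-U_n)=n\,\E[(Z_t-Z)^2]=o(r^2)$ and $\E[U_n^t-U_n]=n\,\E[Z_t]=O(1)$, so that $(U_n^t-U_n)/r\to 0$ in probability; combined with the ordinary CLT for the centered i.i.d.\ walk $U_n$ (step variance $\eta^2$), Slutsky's lemma yields $U_n^t/r\Rightarrow \sqrt{K}\,\eta\,\mathcal N(0,1)$, so $\P(U_n^t\geq r)$ converges to the positive number $\P(\mathcal N(0,1)\geq 1/(\sqrt K \eta))$, and $\E^{(t)}[N]$ has a positive liminf. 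For the second moment, $N\leq N_n$ together with the classical subcritical Bienaym\'e--Galton--Watson variance formula and \eqref{eq:asymptotic-expectation-chi-vol}, \eqref{eq:2variance-chi-vol} give $\E^{(t)}[N^2]\leq\E^{(t)}[N_n^2]\leq C(K,\alpha)\,r^2$ for $r$ large. Paley--Zygmund then produces $\P^{(t)}(N>0)\geq\E^{(t)}[N]^2/\E^{(t)}[N^2]=\Omega(r^{-2})$, as required.

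The main technical point is the CLT for the time-dependent walk $U_n^t$ along the critical scaling $n\asymp r^2$, $t\asymp r^{-4}$; this is exactly what the coupling built in Section \ref{sec:2coupling-RW} is designed to handle, reducing the claim to the standard CLT for the fixed-step walk $U_n$.
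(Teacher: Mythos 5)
Your proof is essentially correct and follows the same strategy as the paper: a second-moment (Paley--Zygmund) argument under the tilted measure $\P^{(t)}$, combining the many-to-one formula, the coupling of Lemma~\ref{lem:coupling-explicit}, the CLT for $U_n^0$, and the second-moment recursion for $G_n(\R)$. Two small remarks. First, your preliminary reduction of the $\inf_{t\in[0,t(r,\alpha)]}$ to the single value $t=t(r,\alpha)$ via \eqref{eq:2upper-bound-wt-strong} is valid but not needed: the paper simply notes that the Paley--Zygmund estimate, the CLT input $\P(U^t_{n}\geq r)$, and the bound $\E^{(t)}[\chi(\R)]\geq\e^{-2\sqrt{t\sigma^2}}$ all hold uniformly for $t\in[0,t(r,\alpha)]$, which gives the lower bound on $\inf_t w_t(r)$ directly. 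Second, the claim $\E[U_n^t-U_n]=n\E[Z_t]=O(1)$ is stronger than what Lemma~\ref{lem:coupling-explicit} delivers: the Cauchy--Schwarz route $|\E[Z_t]|\leq\sqrt{\P(Z_t\neq Z)}\,\sqrt{\E[(Z_t-Z)^2]}$ only yields $|\E[Z_t]|=o(t^{1/4})$, hence $n\E[Z_t]=o(r)$ when $n\asymp r^2$ and $t\asymp r^{-4}$. This weaker bound is what the lemma supports, and it is exactly sufficient for $(U_n^t-U_n)/r\to 0$ in probability, so your conclusion stands; only the intermediate claim should be weakened.
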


\begin{proof}
The upper bound follows from \eqref{eq:2upper-bound-wt}. 
We prove the lower bound by the second moment method. By the many-to-one formula \eqref{eq:many-to-one-chiR}, 
\[ \E^{(t)}[G_n([r, \infty))] = \E^{(t)}[\chi(\R)]^n \P(U^t_n \geq r) . \]
The second moment is
\begin{align*}
\E^{(t)}[G_n(\R)^2] 
&= \E^{(t)}\left[ \left(\sum_{x\in G_{n-1}} \chi_x(\R)\right)^2\right] \\
&= \E^{(t)}[G_{n-1}(\R)] \E^{(t)}[\chi(\R)^2] + \E^{(t)}[G_{n-1}(\R)(G_{n-1}(\R)-1)] \E^{(t)}[\chi(\R)]^2 \\
&= \E^{(t)}[G_{n-1}(\R)^2] \E^{(t)}[\chi(\R)]^2 + \E^{(t)}[G_{n-1}(\R)] \Var^{(t)}(\chi(\R))
\end{align*}
so that
\begin{align}\label{eq:second-moment-Gr2}
\E^{(t)}[G_n(\R)^2] &\leq \E^{(t)}[\chi(\R)]^{2n} + n\Var^{(t)}(\chi(\R)) \E^{(t)}[\chi(\R)]^{n-1} \\
&\nonumber \leq \E^{(t)}[\chi(\R)]^{n-1}\left(1+n\Var^{(t)}(\chi(\R))\right) \\
&\nonumber \leq 2n\E^{(t)}[\chi(\R)]^{n}\sigma^2 
\end{align}
as soon as $t, n$ and $r$ are large enough. Taking $n = r^2/\eta^2$, by the Paley--Zygmund inequality we get
\begin{equation*}
\frac{r^2 w_t(r)}{1+w_t(r)} = r^2 \P^{(t)}(G_{r^2}([r,\infty))>0) \geq \E^{(t)}[\chi(\R)]^{r^2} \frac{\eta^2 \P(U_{r^2}^t \geq r)^2}{2\sigma^2} .
\end{equation*}
Using \eqref{eq:coupling-first-step} we can check that if we take $t_0>0$ small enough then $\liminf_{r\to\infty} \inf_{t\in [0,t_0]} \P(U_{r^2/\eta^2}^t \geq r) > \P(N\geq 2)$, where $N\sim \NN(0,1)$.  
Using that $\E^{(t)}[\chi(\R)] = 1 - \sqrt{2t\sigma^2} + o(t) \geq \exp( - 2\sqrt{t\sigma^2})$ for every $t\in [0,t_0]$ (taking $t_0$ smaller if needed), we finally find 
\[ \liminf_{r\to\infty} r^2 \inf_{t\in [0, t(r,\alpha)]} w_t(r) \geq \frac{\P(N\geq 2)^2}{2}\frac{\eta^2}{\sigma^2} \exp\left(-\alpha\frac{6\sqrt{2}}{\sigma^2} \right) . \]
\end{proof}

\begin{corollary}\label{cor:2uniform-limit-r2wt}
For every $\alpha>0$ and $b \in (0,1)$, 
\[ c(\alpha) \leq \liminf_{r\to\infty} \inf_{br \leq x \leq r} x^2 w_{t(r,\alpha)}(x) \leq \limsup_{r\to\infty} \sup_{br \leq x \leq r} x^2 w_{t(r,\alpha)}(x) \leq \frac{6\eta^2}{\sigma^2} . \]
More generally, 
\[ c(\alpha) \leq \liminf_{r\to\infty} \inf_{s\in [0,t(r,\alpha)]} \inf_{br \leq x \leq r} x^2 w_{s}(x) \leq \limsup_{r\to\infty} \sup_{s\in [0,t(r,\alpha)]}  \sup_{br \leq x \leq r} x^2 w_{s}(x) \leq \frac{6\eta^2}{\sigma^2} . \]
\end{corollary}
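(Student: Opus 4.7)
The middle inequality is immediate, and the first assertion reduces to the second by specializing to $s = t(r,\alpha)$, so the plan focuses on the more general statement with the extra $\inf_s$ and $\sup_s$.

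For the upper bound, I would apply the pointwise estimate \eqref{eq:2upper-bound-wt}, namely $w_s(x) \leq w(x)/(h(x) - (1-h_s(\infty)))$, uniformly in $s \in [0, t(r,\alpha)]$ and $x \in [br, r]$. As $r \to \infty$ we have $1-h_s(\infty) \leq 1-h_{t(r,\alpha)}(\infty) = O(1/r^2)$ by \eqref{eq:htinfty-taylor-expansion}, and $h(x) \to 1$ uniformly since $x \geq br \to \infty$; meanwhile $x^2 w(x) \to 6\eta^2/\sigma^2$ uniformly in $x \in [br, r]$ by Corollary \ref{cor:main-tail}. These three facts combine to give the required $\limsup \leq 6\eta^2/\sigma^2$.

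For the lower bound, the key identity is the scaling relation
\[ t(r,\alpha) = (x/r)^4 \, t(x,\alpha) \leq t(x,\alpha) \qquad \text{for every } x \in [br, r] . \]
Consequently, any $s \in [0, t(r,\alpha)]$ automatically lies in $[0, t(x,\alpha)]$, so $w_s(x) \geq \inf_{t \in [0, t(x,\alpha)]} w_t(x)$. Multiplying by $x^2$ and invoking Lemma \ref{lem:positivity-c-alpha} at $x$ (rather than at $r$): for each $\varepsilon > 0$ one can find $x_\varepsilon$ such that $x^2 \inf_{t \in [0, t(x,\alpha)]} w_t(x) \geq c(\alpha) - \varepsilon$ for all $x \geq x_\varepsilon$, and this bound is inherited uniformly for $x \in [br, r]$ once $r \geq x_\varepsilon/b$.

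The point to appreciate is this substitution trick: a naive approach using only spatial monotonicity, e.g. $w_s(x) \geq w_s(r)$ for $x \leq r$ combined with Lemma \ref{lem:positivity-c-alpha} applied at $r$, only produces a bound of $b^2 c(\alpha)$ and loses the factor $1/b^2$. Applying the lemma at $x$ instead (made possible by the scaling $t(r,\alpha) \leq t(x,\alpha)$) is what recovers $c(\alpha)$ itself; this is the only mild obstacle, since once noticed, the remaining arguments are routine uniformity checks based on $br \to \infty$.
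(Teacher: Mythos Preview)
Your proof is correct and uses the same key idea as the paper: the scaling relation $t(r,\alpha) \leq t(x,\alpha)$ for $x\in[br,r]$ (equivalently $t(r,\alpha)=t(qr,q^2\alpha)$ with $q^2\alpha\leq\alpha$), which allows one to apply Lemma~\ref{lem:positivity-c-alpha} at $x$ rather than at $r$ and thereby avoid the $b^2$ loss. The only organizational difference is that the paper first proves the first display (with $s=t(r,\alpha)$ fixed) via this scaling and then deduces the second display from it using the monotonicity bounds \eqref{eq:2upper-bound-wt-strong} and \eqref{eq:2upper-bound-wt}, whereas you prove the second display directly and specialize; your route is slightly more direct but the substance is identical.
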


\begin{proof}
For the second part, we use the correspondence $t(r,\alpha) = t(br, b^2 \alpha)$. Fix $\varepsilon>0$ and assume that $r_\bullet$ is such that for every $r\geq r_\bullet$
\[ (1-\varepsilon)c(\alpha) \leq \inf_{0\leq t \leq t(r,\alpha)} r^2 w_t(r) \leq \sup_{0\leq t \leq t(r,\alpha)} r^2 w_t(r) \leq (1+\varepsilon) \frac{6\eta^2}{\sigma^2} . \] 
Then for every $r\geq r_\bullet/b$ and every $q \in [b,1]$, because $q^2 \alpha \leq \alpha$, 
\[ (1-\varepsilon)c(\alpha) \leq (qr)^2 w_{t(qr, q^2 \alpha)}(qr) = (qr)^2 w_{t(r, \alpha)}(qr) \leq (1+\varepsilon) \frac{6\eta^2}{\sigma^2} . \]
This concludes the proof of the first equation. The second follows from the first and \eqref{eq:2upper-bound-wt-strong} (for the lower bound) and \eqref{eq:2upper-bound-wt} (for the upper bound). 
\end{proof}

\subsection{Continuity of the tail}
\label{sec:2continuity-tail}

In this section, we gathered technical estimates which play the role of Lemma \ref{lem:continuity-w-broad}. Lemma \ref{lem:2continuity-w-broad-volume} is its straightforward adaptation; we will however need the stronger version found in Lemma \ref{lem:2continuity-w-broad-volume-strong}.

\begin{lemma}\label{lem:2continuity-w-broad-volume}
For every $\alpha\geq 0$, 
define 
\[ D(\alpha) = \frac{4}{\eta\sqrt{\pi}} \sqrt{\sigma^2 + \frac{\alpha}{c(\alpha)}} \ . \]
For every $y\geq 0$
\begin{equation}\label{eq:2continuity-ratio-vol-right}
\liminf_{r\to\infty} \frac{w_{t(r, \alpha)}\left(r + \frac{y}{\sqrt{w_{t(r, \alpha)}(r)}}\right)}{w_{t(r, \alpha)}(r)} \geq 1 - y D(\alpha) .
\end{equation}
\end{lemma}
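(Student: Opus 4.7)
The proof is an adaptation of that of Lemma \ref{lem:continuity-w-broad} to the subcritical setting induced by the tilted measure $\E^{(t)}$ of Section \ref{sec:2change-of-measure}, using the fixed-point equation \eqref{eq:2convolution-equation-vol} in place of Proposition \ref{prop:convolution-equation}. The claim is trivial for $y \leq 0$, so fix $y > 0$. Iterating \eqref{eq:2convolution-equation-vol} via the Markov property yields $\tilde h_t(r+u) \leq \E^{(t)}[\prod_{x \in G_n} \tilde h_t(r+u-x)]$; since $\tilde h_t$ is increasing we have $\tilde h_t(r+u-x) \leq \tilde h_t(r)$ for $x \geq u$, and applying $\e^X \leq 1 + X + X^2/2$ with
\[ X = \ln \tilde h_t(r)\, G_n([u,\infty)) + \int_{(-\infty,u)}\ln \tilde h_t(r+u-x)\,\d G_n(x) \leq 0 \]
yields
\[ \frac{w_t(r+u)}{w_t(r)} \geq \frac{-\E^{(t)}[X]}{w_t(r)} - \frac{\E^{(t)}[X^2]}{2\, w_t(r)}. \]
The first moment is computed via the many-to-one formula \eqref{eq:many-to-one-chiR} and the second is bounded using \eqref{eq:second-moment-Gr2}: the new feature compared to the critical case is that both expressions pick up a factor $\E^{(t)}[\chi(\R)]^n$ (respectively $\E^{(t)}[\chi(\R)]^{n-1}$), absent in the critical regime.

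With the scaling $u = y/\sqrt{w_t(r)}$ and $n = \lfloor s/(\eta^2 w_t(r))\rfloor$ for fixed $s>0$, Lemma \ref{lem:coupling-explicit} and the CLT give the convergence $U_n^t \sqrt{w_t(r)} \to B_s \sim \NN(0,s)$ in distribution, while $\Var^{(t)}(\chi(\R)) \to \sigma^2$ by \eqref{eq:2variance-chi-vol}. The key new quantity is $\E^{(t)}[\chi(\R)]^n$: by \eqref{eq:asymptotic-expectation-chi-vol}, $1 - \E^{(t)}[\chi(\R)] \sim \sqrt{2t\sigma^2}$, and from $\sqrt{2\, t(r,\alpha)\, \sigma^2} = 6\alpha\eta^2/(\sigma^2 r^2)$ we get
\[ n\bigl(1 - \E^{(t)}[\chi(\R)]\bigr) \sim \frac{6\alpha s}{\sigma^2\, r^2\, w_t(r)} . \]
Lemma \ref{lem:positivity-c-alpha} furnishes the uniform lower bound $r^2 w_t(r) \geq c(\alpha)$ in the liminf, so $\liminf_{r\to\infty} \E^{(t)}[\chi(\R)]^n \geq \e^{-\lambda s}$ with $\lambda := 6\alpha/(\sigma^2 c(\alpha))$.

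Assuming by induction that there is a decreasing $0 \leq f \leq 1$ with $\liminf_{r\to\infty} w_t(r + y'/\sqrt{w_t(r)})/w_t(r) \geq f(y')$ for every $y' > 0$ (starting from the trivial $f(y') = \ind{y' \leq 0}$), the estimates above yield the inductive step
\[ f^{(m+1)}(y) \geq \e^{-\lambda s}\bigl(\P(B_s \geq y) + \E[\ind{B_s < y}\, f^{(m)}(y-B_s)]\bigr) - \frac{s\sigma^2}{2\eta^2}. \]
Iterating, then letting $s \to 0$ with $ms \to x$, and finally $x \to \infty$, produces a closed-form bound involving the Laplace transform of the Brownian hitting time $\tau_y$ of level $y$. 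Combining it with $\e^{-\lambda\tau_y} \geq 1 - \lambda\tau_y$, the standard estimates $\P(\tau_y > x) \leq 2y/\sqrt{2\pi x}$ and $\E[\tau_y \ind{\tau_y \leq x}] \leq 4y\sqrt{x}/\sqrt{2\pi}$, and optimizing over $x$ (the optimum being at some explicit $x$ depending on $\lambda$, $\sigma$, $\eta$) yields a lower bound of the form $1 - yD(\alpha)$ with $D(\alpha)$ as stated. The principal obstacle is the precise book-keeping of the subcritical factor $\E^{(t)}[\chi(\R)]^n$ together with the scales needed for the CLT, which rely crucially on Lemma \ref{lem:positivity-c-alpha} and the coupling of Lemma \ref{lem:coupling-explicit}.
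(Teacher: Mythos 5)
Your proposal follows the paper's line of argument exactly: iterate the fixed-point equation \eqref{eq:2convolution-equation-vol} under the tilted (subcritical) measure $\E^{(t)}$, expand $\e^{X}\leq 1+X+X^2/2$, apply the many-to-one formula \eqref{eq:many-to-one-chiR} and the second-moment bound \eqref{eq:second-moment-Gr2}, track the new factor $\E^{(t)}[\chi(\R)]^n$ via Lemma \ref{lem:positivity-c-alpha} and the coupling of Lemma \ref{lem:coupling-explicit} as a Brownian discount $\e^{-\lambda\tau_y}$, iterate, and optimize over the horizon. This is the same mechanism the paper uses, and you correctly identify the subcritical discount as the essentially new ingredient.

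There is, however, a constant-tracking issue at the end. Your rate $\lambda = 6\alpha/(\sigma^2 c(\alpha))$ is the one that follows from the stated definitions: since $\sqrt{2\,t(r,\alpha)\sigma^2} = \alpha\, 6\eta^2/(\sigma^2 r^2)$ and $n\sim s/(\eta^2 w_t(r))$, one gets $n(1-\E^{(t)}[\chi(\R)]) \sim 6s\alpha/(\sigma^2 r^2 w_t(r)) \leq 6s\alpha/(\sigma^2 c(\alpha))$ in the limsup, using $\liminf r^2 w_{t(r,\alpha)}(r)\geq c(\alpha)$ exactly as Lemma \ref{lem:positivity-c-alpha} defines $c(\alpha)$. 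But carrying out the final optimization with this $\lambda$ together with the second-moment coefficient $\sigma^2/(2\eta^2)$ yields $D(\alpha) = \tfrac{4}{\eta\sqrt{2\pi}}\sqrt{\sigma^2 + 12\alpha\eta^2/(\sigma^2 c(\alpha))}$, which does not equal the stated $\tfrac{4}{\eta\sqrt{\pi}}\sqrt{\sigma^2 + \alpha/c(\alpha)}$. The paper's own proof reaches the stated constant by (i) keeping the extra factor $2$ in \eqref{eq:second-moment-Gr2}, giving coefficient $\sigma^2/\eta^2$ rather than $\sigma^2/(2\eta^2)$, and (ii) writing $\liminf r^2 w_{t(r,\alpha)}(r) \geq c(\alpha)\,6\eta^2/\sigma^2$ — a bound that Lemma \ref{lem:positivity-c-alpha} does not actually provide (it gives only $\geq c(\alpha)$), so the published $D(\alpha)$ appears to be off by a factor $6\eta^2/\sigma^2$ inside the square root. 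You should therefore not assert that your $\lambda$ reproduces the stated $D(\alpha)$; the explicit value does not matter downstream (Lemma \ref{lem:2continuity-w-broad-volume-strong} and Proposition \ref{prop:2func-equation-phi-vol} use only the finiteness and monotonicity of $D(\alpha)$), but the claim as written does not close.
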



\begin{proof}
In this proof, for conciseness we write $t = t(r,\alpha)$. We follow the proof of Lemma \ref{lem:continuity-w-broad}, with adaptations. 
Let $r_y = \frac{y}{\sqrt{w_t(r)}}$. 
Assume that $r$ is large enough that $u_t(r) \geq c(\alpha)/2$.
By \eqref{eq:2convolution-equation-vol}, using that $\tilde h_t(x) \leq \tilde h_t(r)$ when $x\leq r$, 
\[ \tilde h_t(r+r_y) \leq \E^{(t)}\left[ \exp\left( \ln \tilde h_t(r) G_n([r_y,\infty)) + \int_{(-\infty,r_y]} \ln\tilde h_t(r+r_y-x) \d G_n(x) \right) \right] . \]
Write $X$ for the expression inside the exponential. As in Lemma \ref{lem:continuity-w-broad}, we bound $\E[\e^{X}] \leq \E[1 + X + \frac{X^2}{2}]$, and do so by bounding the first and second moments of $X$ under $\E^{(t)}$: by the many-to-one formula \eqref{eq:many-to-one-chiR}
\begin{align}
\nonumber & \E^{(t)}[\chi(\R)]^{-n} \frac{-\E^{(t)}[X]}{w_t(r)} \\
\nonumber &= \E\left[ \frac{\ln(1+w_t(r))}{w_t(r)} \ind{U_n^t \geq r_y} + \frac{\ln(1+w_t(r+r_y-U_n^t))}{w_t(r)}\ind{U_n^t < r_y} \right] \\
\nonumber &\geq \frac{\ln(1+w_r(t))}{w_t(r)}\P(U_n^t \geq r_y) + \E\left[ \frac{\ln(1+w_t(r+r_y-U_n^t))}{w_t(r)} \ind{U_n^t<r_y , |U_n^t-U_n^0|\leq \delta\sqrt{n}} \right]\\
\nonumber &\geq \frac{\ln(1+w_r(t))}{w_t(r)}\P(U_n^0 \geq r_y-\delta\sqrt{n}) \\
\nonumber &\qquad - \underset{(a)}{\underbrace{\P\left(|U_n^t - U_n^0|>\delta\sqrt{n}\right)}} - \underset{(b)}{\underbrace{\P\left( U_n^0 \geq r_y-\delta\sqrt{n}, U_n^t < r_y, |U_n^t-U_n^0|\leq \delta\sqrt{n} \right)}} \\
\nonumber &\qquad + \E\left[ \frac{\ln(1+w_t(r+r_y-U_n^0 + \delta\sqrt{n}))}{w_t(r)} \ind{U_n^0 < r_y+\delta\sqrt{n}} \right] \\
&\qquad - \underset{(c)}{\underbrace{\P\left( |U_n^t - U_n^0| > \delta\sqrt{n} \right)}} - \underset{(d)}{\underbrace{\P\left( |U_n^t-U_n^0|\leq \delta\sqrt{n}, U_n^t\geq r_y, U_n^0<r_y+\delta\sqrt{n} \right)}} \label{eq:discounted-expectation-X}
\end{align}
where we used that $w_t$ is a decreasing function and $0< \ln(1+x)/x \leq 1$ for $x>0$. Terms $(b)$ and $(d)$ are clearly smaller than
\[ \P(U_n^0 \in [r_y - \delta\sqrt{n}, r_y+\delta\sqrt{n}) ) . \]
The central limit theorem gives us a sequence $\omega_1(n) \to 0$ as $n\to\infty$ and a constant $c$ such that this probability is smaller than $c\delta + \omega_1(n)$. On the other hand, by Bienaymé--Chebyshev's inequality
\[ \P(|U_n^t-U_n^0|>\delta\sqrt{n}) \leq \frac{\E[(U_1^t - U_1^0)^2]}{\delta^2} , \]
which goes to zero as $t\to 0$ by \eqref{eq:coupling-first-step}. 

Let $(B_u)_{u\geq 0}$ be a centered standard Brownian motion. Fix some $s>0$, and from now on set $n = n(r) = \lfloor \eta^{-2} s w_t(r)^{-1}\rfloor$. The sequence $U_n^0 (\eta^2 n)^{-1/2}$ converges to $B_1$ in distribution as $r\to\infty$, so that $U_n^0/r_y$ converges to $y^{-1} B_s$. 
Assume that \eqref{eq:2continuity-ratio-vol-right} holds with the right-hand side being replaced by some bounded, non-increasing function $f(y)$. Then 
\begin{multline*}
\liminf_{r\to\infty} \E\left[ \frac{\ln\left(1+w_t\left(r+r_y+\delta\sqrt{n} - U_n^0\right)\right)}{w_t(r)} \ind{U_n^0 < r_y\left(1 + \frac{\delta \sqrt{s}}{\eta}\right)} \right] \\
\geq \E\left[ f\left( y\left[ 1 + \frac{\delta\sqrt{s}}{\eta} \right] - B_s \right) \ind{y\left[ 1 + \frac{\delta\sqrt{s}}{\eta} \right] - B_s > 0} \right] .
\end{multline*}
We can check this by taking a coupling of the sequence $U_n^0 (\eta^2 n)^{-1/2}$ that converges a.s. towards $B_1$ by the Skorokhod representation theorem, then using the monotonicity of $f$ with Fatou's Lemma. Clearly $f(y) = \ind{y\leq 0}$ works. 

Since $w_t(r)\to 0$ as $r\to\infty$, taking the liminf of \eqref{eq:discounted-expectation-X} as $r\to\infty$, 
\begin{multline*}
\liminf_{r\to\infty} \E^{(t)}[\chi(\R)]^{-n} \frac{-\E^{(t)}[X]}{w_t(r)} \\
\geq \P\left(B_s \geq y-\frac{\delta\sqrt{s}}{\eta} \right) + \E\left[ f\left( y\left[ 1 + \frac{\delta\sqrt{s}}{\eta} \right] - B_s \right) \ind{y\left[ 1 + \frac{\delta\sqrt{s}}{\eta} \right] > B_s} \right] - 2c\delta .
\end{multline*}
Take $\delta\to 0$:
\begin{equation}\label{eq:liminf-EX}
\liminf_{r\to\infty} \E^{(t)}[\chi(\R)]^{-n} \frac{-\E^{(t)}[X]}{w_t(r)}
\geq \P(B_s\geq y) + \E\left[\ind{B_s < y} f\left(y-B_s\right)\right] .
\end{equation}
On the other hand, by \eqref{eq:second-moment-Gr2} 
and taking $t_0>0$ small enough that $\E^{(t)}[\chi(\R)]^{-1} \Var^{(t)}(\chi(\R))\leq 2\sigma^2$ for every $t\leq t_0$, 
we get uniformly in $t\in[0,t_0]$, 
\begin{align*}
\E^{(t)}[X^2] \leq (\ln \tilde h_t(r))^2 \E^{(t)}[G_n(\R)^2] \leq (\ln \tilde h_t(r))^2 \E^{(t)}[\chi(\R)]^n (1+2n\sigma^2) ,
\end{align*}
Since $n \ln \tilde h_t(r) \ulim r \infty s \eta^{-2}$ as $r\to\infty$ from the definition of $n$, 
\begin{equation}\label{eq:liminf-EX2}
\limsup_{r\to\infty} \E^{(t)}[\chi(\R)]^{-n} \frac{\E^{(t)}[X^2]}{w_t(r)} \leq \frac{2 s \sigma^2}{\eta^2} .
\end{equation}
Combining \eqref{eq:liminf-EX} and \eqref{eq:liminf-EX2}, 
\begin{equation}\label{eq:2why-t-must-vary}
\liminf_{r\to\infty} \frac{w_t(r+r_y)}{w_t(r)} \E^{(t)}[\chi(\R)]^{-n} 
\geq \E\left[ \ind{B_s\geq y} + \ind{B_s<y} f\left(y-B_s\right)\right] - s \frac{\sigma^2}{\eta^2} . 
\end{equation}
Recall from \eqref{eq:asymptotic-expectation-chi-vol} that $1-\E^{(t)}[\chi(\R)] \sim \sqrt{2t\sigma^2}$ as $t\to 0$. Since $\liminf_{r\to\infty} r^2 w_t(r) \geq c(\alpha) \frac{6\eta^2}{\sigma^2} > 0$ by Lemma \ref{lem:positivity-c-alpha} and since $t = t(r,\alpha)$ we get 
\begin{align*}
\liminf_{r\to\infty} \frac{w_t(r+r_y)}{w_t(r)} &\geq \exp\left(- \frac{s}{w_t(r) \eta^2}\frac{\alpha}{r^2} \right) \left\{ \E\left[ \ind{B_s\geq 1} + \ind{B_s<1} f\left(y-yB_s\right)\right] - s \frac{\sigma^2}{\eta^2} \right \} \\
&\geq \E\left[ \ind{B_s\geq 1} + \ind{B_s<1} f\left(y-yB_s\right)\right] - \frac{s}{\eta^2}\left(\sigma^2 + \frac{\alpha}{c(\alpha)}\right) .
\end{align*}
The rest of the proof follows that of Lemma \ref{lem:continuity-w-broad}:
\begin{align*}
\liminf_{r\to\infty} \frac{w_t(r+r_y)}{w_t(r)} &\geq \P(\tau_y \leq x) - \left( \frac{\sigma^2}{\eta^2} + \frac{\alpha}{\eta^2 c(\alpha)} \right) \E\left[\tau_y \ind{\tau_y\leq x}\right] \\
&\geq 1 - \frac{2}{\sqrt{2\pi}} \inf_{x>0} \left( \frac{1}{\sqrt{x}} + y^2 \sqrt{x} \frac{2}{\eta^2}\left(\sigma^2 + \frac{\alpha}{c(\alpha)}\right) \right) \\
&= 1 - \frac{4y}{\eta\sqrt{\pi}} \sqrt{\sigma^2 + \frac{\alpha}{c(\alpha)}} \ .
\end{align*}
\end{proof}

\begin{lemma}\label{lem:2continuity-w-broad-volume-strong}
For every $\alpha>0$ and $y\geq 0$, recalling $D(\alpha)$ from Lemma \ref{lem:2continuity-w-broad-volume}, 
\begin{equation}\label{eq:2continuity-ratio-vol-right-strong}
\liminf_{r\to\infty} \inf_{0\leq t \leq t(r,\alpha)} \frac{w_t\left(r + \frac{y}{\sqrt{w_t(r)}}\right)}{w_t(r)} \geq
\begin{cases}
1 - y D(\alpha)  &\text{ if } y\geq 0, \\
1 &\text{ if } y\leq 0 ,
\end{cases}
\end{equation}
and for every $y > - \frac{2}{\e D(\alpha)}$, 
\begin{equation}\label{eq:2continuity-ratio-vol-left-strong}
\limsup_{r\to\infty} \sup_{0\leq t \leq t(r,\alpha)} \frac{w_t\left(r + \frac{y}{\sqrt{w_t(r)}}\right)}{w_t(r)} \leq 
\begin{cases}
f_\alpha^{-1}(-y)  &\text{ if } y\leq 0 , \\
1 &\text{ if } y\geq 0 ,
\end{cases}
\end{equation}
where $f_\alpha^{-1}$ is the inverse of the function $f_\alpha : [1,3] \to [0, \frac{2}{D(\alpha) 3\sqrt{3}} ] , x \mapsto \frac{1-x^{-1}}{D(\alpha)\sqrt{x}} $. 
\end{lemma}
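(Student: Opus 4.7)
The plan is to mirror the two-step strategy of Lemma \ref{lem:continuity-w-broad}: first establish the right-continuity bound \eqref{eq:2continuity-ratio-vol-right-strong} uniformly in $t$, then derive the left-continuity bound \eqref{eq:2continuity-ratio-vol-left-strong} from it by a monotonicity argument. The cases $y \leq 0$ in \eqref{eq:2continuity-ratio-vol-right-strong} and $y \geq 0$ in \eqref{eq:2continuity-ratio-vol-left-strong} are immediate from the fact that $w_t$ is decreasing in its argument, so only the opposite signs require work.

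For \eqref{eq:2continuity-ratio-vol-right-strong} with $y \geq 0$, I would rerun the proof of Lemma \ref{lem:2continuity-w-broad-volume} while tracking uniformity in $t \in [0, t(r,\alpha)]$ at each step. With $n = \lfloor s/(\eta^2 w_t(r)) \rfloor$ and $\tilde h_t(r + r_y)$ bounded via \eqref{eq:2convolution-equation-vol} as in the original proof, the three inputs needing uniform control are: the Brownian approximation of $(U_k^t)$, which follows uniformly in $t \in [0, t_0]$ from Lemma \ref{lem:coupling-explicit} since $\E[(U_n^t - U_n^0)^2] = n \E[(Z_t - Z_0)^2] \to 0$ as $t \to 0$; the subcritical discount $\E^{(t)}[\chi(\R)]^n \geq \exp(-n\sqrt{2t\sigma^2}(1+o(1)))$, which for $t \leq t(r,\alpha)$ gives an exponent bounded by $s\alpha/c(\alpha) + o(1)$ after using the uniform lower bound $r^2 w_t(r) \geq c(\alpha) + o(1)$ from Lemma \ref{lem:positivity-c-alpha} and Corollary \ref{cor:2uniform-limit-r2wt}; and the second moment bound \eqref{eq:second-moment-Gr2} for $G_n(\R)$, which is uniform in $t \in [0,t_0]$ by \eqref{eq:2variance-chi-vol}. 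Optimizing over $s > 0$ then reproduces the constant $D(\alpha) = (4/\eta\sqrt\pi)\sqrt{\sigma^2 + \alpha/c(\alpha)}$ from the pointwise version.

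For \eqref{eq:2continuity-ratio-vol-left-strong} with $y < 0$, I would adapt the second half of the proof of Lemma \ref{lem:continuity-w-broad}. A first consequence of the uniform right-continuity is the uniform asymptotic continuity at $O(1)$ scale: $\sup_{0 \leq t \leq t(r,\alpha)} \sup_{|K'| \leq K} |w_t(r + K')/w_t(r) - 1| \to 0$ as $r \to \infty$ for every fixed $K > 0$, since $K \ll 1/\sqrt{w_t(r)}$ eventually. For each $t$, define $r_x^t := \inf\{r \geq 0 : w_t(r) \leq x\}$; by monotonicity and right-continuity of $w_t$, $w_t(r_x^t) \leq x$, and the asymptotic continuity implies $x^{-1} w_t(r_x^t) \to 1$ as $x \to 0$ uniformly in $t$. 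Fix $A \in (1,3)$ and, for each $t \leq t(r, \alpha)$, consider $r' := r^t_{Aw_t(r)} \leq r$. Applying \eqref{eq:2continuity-ratio-vol-right-strong} at the new anchor $r'$ (note $t \leq t(r,\alpha) \leq t(r',\alpha)$ since $r' \leq r$) gives, for $y' < (1-A^{-1})/D(\alpha)$, that $w_t(r' + y'/\sqrt{w_t(r')})/w_t(r) \to A(1 - y'D(\alpha)) > 1$, which by monotonicity of $w_t$ forces $r' + y'/\sqrt{w_t(r')} < r$ for large $r$, i.e., $r' + (y'/\sqrt A)/\sqrt{w_t(r)} < r$ asymptotically. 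Thus $w_t(r - z/\sqrt{w_t(r)})/w_t(r) \leq A$ for $z = y'/\sqrt A$ in a suitable range, and inverting the resulting map $A \mapsto z$ as in the original proof yields $f_\alpha^{-1}$.

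The main obstacle is maintaining the uniformity in $t$ throughout; in particular, in the second step we apply the right-continuity bound along a family of anchors $r' = r^t_{Aw_t(r)}$ depending on both $r$ and $t$. The key observation that makes this work is that $r' \leq r$ implies $t(r',\alpha) \geq t(r,\alpha) \geq t$, so the same parameter $\alpha$ can be used throughout without deterioration. The rest of the uniformity in $t$ rests on the uniform estimates of Lemma \ref{lem:positivity-c-alpha}, Corollary \ref{cor:2uniform-limit-r2wt}, and the $L^2$ coupling of Lemma \ref{lem:coupling-explicit}, which together control the random walk under $\E^{(t)}$ uniformly over $t \in [0, t_0]$.
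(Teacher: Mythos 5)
Your proposal is correct, but for the right-continuity estimate \eqref{eq:2continuity-ratio-vol-right-strong} it takes a genuinely different route from the paper, so let me compare. The paper's proof does \emph{not} rerun the machinery of Lemma~\ref{lem:2continuity-w-broad-volume} with $t$ tracked uniformly. Instead it observes that every $t\in[0,t(r,\alpha)]$ can be written as $t(r,\beta)$ for a unique $\beta\in[0,\alpha]$, and reduces the uniform bound to finitely many applications of the pointwise Lemma~\ref{lem:2continuity-w-broad-volume} at the mesh points $\gamma\in\varepsilon\Z\cap[0,\alpha]$, bridging the gaps with the Lipschitz-in-parameter estimate of Lemma~\ref{lem:2uniform-continuity-u} (which gives $|w_{t(r,\beta)}(r)/w_{t(r,\gamma)}(r)-1|\leq\varepsilon$ for $|\beta-\gamma|\leq\varepsilon$), together with the monotonicity of $\gamma\mapsto D(\gamma)$. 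This $\varepsilon$-net argument is considerably shorter than re-deriving the Brownian-approximation and bootstrap steps uniformly in $t$. Your brute-force approach is feasible -- the three ingredients you identify (the $L^2$ coupling, the discount factor controlled by $c(\alpha)$, and the uniform second-moment bound) do suffice, and the bootstrap preserves the uniform hypothesis because at each step the shifted ratio $w_t(r+r_{y''})/w_t(r)$ is applied at the \emph{original} anchor $r$, so the range $t\in[0,t(r,\alpha)]$ never needs to be enlarged -- but it is more work, and it bypasses Lemma~\ref{lem:2uniform-continuity-u} entirely, which the paper went to the trouble of proving precisely to enable the $\varepsilon$-net shortcut. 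For the left-continuity estimate \eqref{eq:2continuity-ratio-vol-left-strong}, your use of a family of $t$-indexed inverses $r^t_x$, with the key observation that $r'\leq r\Rightarrow t(r',\alpha)\geq t(r,\alpha)\geq t$ keeps the anchor in range, is a clean way to get the uniform version directly; the paper instead proves the $t=t(r,\alpha)$ version first using the single inverse $r_x=\inf\{r':w_{t(r',\alpha)}(r')\leq x\}$ and then upgrades with the same $\varepsilon$-net device, so here the two approaches are close variants of the same inversion argument.
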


A consequence of \eqref{eq:2continuity-ratio-vol-right-strong} is that for every fixed $z\in\R$, 
\begin{equation}\label{eq:2continuity-ratio-vol-constant-offset}
\frac{w_{t(r,\alpha)}(r+z)}{w_{t(r,\alpha)}(r)} \ulim r \infty 1 .
\end{equation}
It follows directly when $z\geq 0$. For $z<0$, taking $\beta>\alpha$ we find that $t(r+z,\beta) \geq t(r,\alpha)$ for every large enough $r$, and using \eqref{eq:2continuity-ratio-vol-right-strong} with $r+z$ instead of $r$ and $\beta$ instead of $\alpha$ gives the desired bound.

\begin{proof}
Fix $\alpha>0$, 
$\varepsilon>0$ and $r_\varepsilon$ such that for every $r\geq r_\varepsilon$, by Lemma \ref{lem:2uniform-continuity-u}, for every $\beta,\gamma \in [0,\alpha]$ with $|\beta-\gamma|\leq \varepsilon$ we have $\left| \frac{w_{t(r,\beta)}(r)}{w_{t(r,\gamma)}(r)} - 1 \right| \leq \varepsilon$. Writing $s = t(r,\gamma)$ and $t=t(r,\beta)$, 
\begin{align*}
\frac{w_t\left(r+\frac{y}{\sqrt{w_t(r)}}\right)}{w_t(r)} 
&\geq (1-\varepsilon)^2 \frac{w_s\left( r + \frac{y}{\sqrt{w_t(r)}}\right)}{w_s(r)} 
\geq (1-\varepsilon)^2 \frac{w_s\left( r + \frac{\frac{y}{1-\varepsilon}}{\sqrt{w_s(r)}}\right)}{w_s(r)}  .
\end{align*}
Taking $r_\varepsilon$ larger if necessary, by Lemma \ref{lem:2continuity-w-broad-volume} we can assume that for every $\gamma \in \varepsilon\Z \cap [0,\alpha]$
\[ \frac{w_{t(r,\gamma)}\left( r + \frac{\frac{y}{1-\varepsilon}}{\sqrt{w_{t(r,\gamma)}(r)}}\right)}{w_{t(r,\gamma)}(r)} \geq (1-\varepsilon) \left(1 - D(\gamma) \frac{y}{1-\varepsilon} \right) \geq 1-\varepsilon - D(\alpha) y .
\]
Here we used that $c$ is a nonincreasing function. It follows that for every $r\geq r_\varepsilon$
\[ \inf_{0\leq t \leq t(r,\alpha)} \frac{w_t\left(r + \frac{y}{\sqrt{w_t(r)}}\right)}{w_t(r)}  \geq  (1-\varepsilon)^2 (1 - \varepsilon - D(\alpha) y) .\]
Taking the liminf as $r\to\infty$, then $\varepsilon\to 0$ gives \eqref{eq:2continuity-ratio-vol-right-strong}.

We first prove \eqref{eq:2continuity-ratio-vol-left-strong} without the supremum; \eqref{eq:2continuity-ratio-vol-left-strong} will follow by the same reasoning as that we used above to deduce \eqref{eq:2continuity-ratio-vol-right-strong} from \eqref{eq:2continuity-ratio-vol-right}. 
Consider as in the proof of Lemma \ref{lem:continuity-w-broad} the quantity $r_{x} = \inf\{ r \geq 0, w_{t(r,\alpha)}(r) \leq x\}$. Let $B\geq 1$ and $t=t(r,\alpha)$; 
by \eqref{eq:2continuity-ratio-vol-right-strong}, 
\[ \liminf_{r\to\infty} \inf_{A \in[1,B]} \frac{w_t\left(r_{Aw_t(r)} + \frac{y}{\sqrt{w_t(r_{Aw_t(r)})}}\right)}{w_t(r_{Aw(r)})} \geq 1 - y D(\alpha) . \]
For every $\varepsilon>0$ there exists $r_\varepsilon$ such that for every $A\in [1,B]$ and every $r\geq r_\varepsilon$, the ratio inside the $\inf$ is larger than $(1-\varepsilon) (1-y D(\alpha))$. Since $w_t(r_{Aw_t(r)}-1)>Aw_t(r)$, using \eqref{eq:2continuity-ratio-vol-constant-offset} with $z=-1$ and taking $r_\varepsilon$ larger if necessary, we have $w_t(r_{Aw_t(r)}) \geq (1-\varepsilon) Aw_t(r)$ for every $r\geq r_\varepsilon$, thus 
\[ \frac{w_t\left(r_{Aw_t(r)} + \frac{y}{\sqrt{w_t(r_{Aw_t(r)})}}\right)}{w_t(r)} \geq A(1-\varepsilon) \frac{w_t\left(r_{Aw_t(r)} + \frac{y}{\sqrt{w_t(r_{Aw_t(r)})}}\right)}{w_t(r_{Aw(r)})} \geq A (1-y D(\alpha)) (1-\varepsilon)^2 , \]
so that for every $y\leq D(\alpha)^{-1}(1 - (A(1-\varepsilon)^2)^{-1})$, the right-hand side is larger than $1$, hence since $w_t(r_{Aw_t(r)}) \leq A w_t(r)$
\[ r > r_{Aw_t(r)} + \frac{y}{\sqrt{w_t(r_{Aw_t(r)})}} \geq r_{Aw_t(r)} + \frac{y/\sqrt{A}}{\sqrt{w_t(r)}} \implies r_{Aw_r(t)} \leq r - \frac{y/\sqrt{A}}{\sqrt{w_t(r)}} \ , \] 
whence 
\[ \frac{w_t\left( r - \frac{y/\sqrt{A}}{\sqrt{w_t(r)}} \right)}{w_t(r)} \leq \frac{w_t(r_{Aw_t(r)})}{w_t(r)} \leq A . \]
We conclude with the same reasoning as in the proof of Lemma \ref{lem:continuity-w-broad}. 

Finally, we can use the same reasoning as in the proof of Corollary \ref{cor:2uniform-limit-r2wt} to prove the final statement of the Lemma. 
\end{proof}

\subsection{The Feynman--Kac representation}
\label{sec:Feynman-Kac-volume}

Recall $E(r,R,\delta)$ from \eqref{eq:event-E-rRd}.  
Define the distribution of $(Z, I^{(r,R,\delta)})$ under $\E^{(t)}$ to be such that for every positive and measurable $f$, 
\begin{equation}\label{eq:2def-ZI-vol}
\E^{(t)}[\chi(\R)] \cdot \E^{(t)}\left[ f(Z, I^{(r,R,\delta)}) \right] = \E^{(t)}\left[ \int f(x,\ind{E(r,R,\delta)}) \d\chi(x) \right] .
\end{equation}
Note that $\E^{(t)}[f(Z)] = \E[f(Z_t)]$ for every $t$. Define the Markov kernel $((z,i), \Gamma) \mapsto P^{(r,R,\delta)}_{z,i}(\Gamma)$ (that implicitely depend on $t$), where $\Gamma$ is a measurable subset of the set of point measures, $i\in\{0,1\}$ and $z\in\R$, such that for every positive and measurable $g$, if $(X_i)_{1\leq i \leq \chi(\R)}$ is a measurable numbering of the atoms of $\chi$, 
\begin{equation}\label{eq:2def-Palm-vol}
\E^{(t)}[\chi(\R)] \cdot  \E^{(t)}\left[ \int g\left(Z, I^{(r,R,\delta)}, \phi \right) \d P^{(r,R,\delta)}_{Z, I^{(r,R,\delta)}}(\phi) \right] = \E^{(t)}\left[ \sum_{i=1}^{\chi(\R)} g\left(X_i , \ind{E(r,R,\delta)}, \chi - \delta_{X_i} \right) \right] .
\end{equation} 
This Markov kernel defines a ``mean measure function'' $M^{(r,R,\delta)}_{z}$ (that implicitely depend on $t$) such that for every measurable subset $A$ of $\R$, 
\begin{equation}\label{eq:2def-Palm-mean-vol}
M^{(r,R,\delta)}_{z}(A) = \int \phi(A) \d P^{(r,R,\delta)}_{z,1}(\phi) . 
\end{equation}

Proposition \ref{prop:convolution-w} becomes 

\begin{proposition}\label{prop:2convolution-w-vol}
Under Assumptions \ref{assum} 1. to 4., there exists $\delta^0, t_0, r_0>0$ and $C>0$ such that for every $\delta \in (0,\delta^0)$, $t\in [0,t_0)$, $r_\bullet \geq r_0$, $r\geq 2r_\bullet$ and every $r_\bullet \leq R \leq  r-r_\bullet$, 
\begin{multline}\label{eq:2convolution-w-vol}
w_t(r) = \E^{(t)}\Bigg[ I^{(r,R,\delta)} w_t(r-Z) \exp\Big( \ln\E^{(t)}[\chi(\R)] - w_t(r-Z) - \frac{1}{2} \int w_t(r-y) \d M^{(r,R,\delta)}_{Z} (y) \\
+ \E^{(t)}\left[ I^{(r,R,\delta)} w_t(r-Z) \right] \Big) \Bigg] + \mathrm{Remainder}_t(r,R,\delta) ,
\end{multline}
where 
\begin{equation*} 
C_\mathrm{rem} |\mathrm{Remainder}_t(r,R,\delta)| \leq \P(\Lambda>R) + \P\left(\chi(\R)>\frac{\delta}{w_t(r-R)}\right) + (\sigma^2 + 1) w_t(r-R)^2 \left( w_t(r-R) + \delta \right) .
\end{equation*}
\end{proposition}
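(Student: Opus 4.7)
The proof will follow the structure of Proposition \ref{prop:convolution-w}, adapted to the subcritical tilted measure $\E^{(t)}$ in place of $\E$, and with $w_t$ and $\tilde h_t = 1/(1+w_t)$ replacing $w$ and $h$. Starting from the convolution identity \eqref{eq:2convolution-equation-vol}, I would split the expectation on $E(r,R,\delta)$ and its complement. On $E(r,R,\delta)$, we have $Y_r^{(t)} := -\int \ln \tilde h_t(r-x) \, d\chi(x) \leq \chi(\R) w_t(r-R) \leq \delta$, so Taylor-expanding $e^{-Y_r^{(t)}}$ to second order together with $-\ln \tilde h_t(r-x) = w_t(r-x) - w_t(r-x)^2/2 + O(w_t^3)$ controls $\E^{(t)}[\ind{E(r,R,\delta)} Y_r^{(t)}]$ and $\E^{(t)}[\ind{E(r,R,\delta)} (Y_r^{(t)})^2]$ up to cubic errors in $w_t(r-R)$. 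The contribution from $E(r,R,\delta)^c$ is bounded by $\P^{(t)}(\Lambda > R) + \P^{(t)}(\chi(\R) > \delta/w_t(r-R))$, which via \eqref{eq:bound-Et-by-2E} is at most twice the corresponding $\P$-probabilities.

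The one substantive difference from the critical proof lies in the Palm calculus under $\E^{(t)}$. By \eqref{eq:2def-ZI-vol} and \eqref{eq:2def-Palm-vol}, converting an integral $\int \cdot \, d\chi$ into an expectation involving $(Z, I^{(r,R,\delta)})$ or its reduced Palm mean $M_Z^{(r,R,\delta)}$ introduces a prefactor of $\E^{(t)}[\chi(\R)]$, whereas in the critical case this factor was $1$ and therefore invisible. Inverting the resulting expansion for $\tilde h_t(r)$ via $w_t = (1 - \tilde h_t) + (1-\tilde h_t)^2 + O(w_t^3)$ yields an identity of the form $w_t(r) = \E^{(t)}[\chi(\R)] \cdot \E^{(t)}[I w_t(r-Z)(1+\xi_{\mathrm{nat}})] + \mathrm{cubic}$, where $\xi_{\mathrm{nat}}$ agrees with the stated $\xi = -w_t(r-Z) - \frac{1}{2}\int w_t(r-y) \, dM_Z(y) + \E^{(t)}[I w_t(r-Z)]$ except that the final summand is replaced by $\E^{(t)}[\chi(\R)] \cdot \E^{(t)}[I w_t(r-Z)]$. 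The resulting discrepancy is of order $(1-\E^{(t)}[\chi(\R)]) w_t(r-R)^2 = O(\sqrt{t_0}\, w_t(r-R)^2)$ by \eqref{eq:asymptotic-expectation-chi-vol}, and is absorbed into the cubic remainder by shrinking $t_0$ if needed.

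Next, the bound $|\xi|\leq w_t(r-R) + \delta/2$ (using the analog of \eqref{eq:Palm-controle-M}, namely $M_Z^{(r,R,\delta)}(\R) \leq \delta/w_t(r-R)$), combined with $|e^x - (1+x)| \leq x^2$ for $|x|\leq 1$, lets us replace $1+\xi$ by $e^\xi$ at cost $O\big((\sigma^2+1) w_t(r-R)^2(w_t(r-R)+\delta)\big)$, with the variance uniformly controlled via \eqref{eq:2variance-chi-vol}. Writing $\E^{(t)}[\chi(\R)] = \exp(\ln \E^{(t)}[\chi(\R)])$ and absorbing this prefactor into the exponential gives the form \eqref{eq:2convolution-w-vol}. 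Summing the contributions from the initial split, the Taylor cubic terms, the Palm discrepancy, and the exponential approximation yields the claimed bound on $\mathrm{Remainder}_t(r,R,\delta)$ for a suitable numerical constant $C_{\mathrm{rem}}$.

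The main technical obstacle is obtaining estimates that are \emph{uniform} in $t\in[0,t_0)$: one must choose $t_0$ small enough that $\E^{(t)}[\chi(\R)^2]$ is bounded uniformly by \eqref{eq:2variance-chi-vol}, that $1-\E^{(t)}[\chi(\R)] = O(\sqrt{t_0})$ is small enough to absorb the Palm discrepancy into the cubic remainder, and that $\P^{(t)}$-probabilities are dominated by $\P$-probabilities through \eqref{eq:bound-Et-by-2E}. Once $t_0$ is fixed, $\delta^0$ and $r_0$ are chosen as in the critical proof, possibly shrunk further to accommodate the uniform constants; the rest of the argument then proceeds verbatim from Proposition \ref{prop:convolution-w}.
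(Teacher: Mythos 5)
Your proposal follows the paper's overall route: transport the critical argument under the tilted measure $\E^{(t)}$ (substituting $\tilde h_t$ and $w_t$ for $h$ and $w$), track the prefactor $\E^{(t)}[\chi(\R)]$ that \eqref{eq:2def-ZI-vol}--\eqref{eq:2def-Palm-mean-vol} introduce at each Palm conversion, and fold it into the exponent as $\ln\E^{(t)}[\chi(\R)]$. You are right, and more explicit than the paper's one-paragraph remark, that after Palm conversion the natural exponent $\xi_{\mathrm{nat}}$ carries the final summand $\E^{(t)}[\chi(\R)]\cdot\E^{(t)}[I^{(r,R,\delta)} w_t(r-Z)]$ rather than $\E^{(t)}[I^{(r,R,\delta)} w_t(r-Z)]$, and that matching the statement therefore introduces an extra remainder term $\E^{(t)}[\chi(\R)]\left(1-\E^{(t)}[\chi(\R)]\right)\E^{(t)}[I^{(r,R,\delta)} w_t(r-Z)]^2 \leq \left(1-\E^{(t)}[\chi(\R)]\right) w_t(r-R)^2$.

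The flaw is the claim that this discrepancy is ``absorbed into the cubic remainder by shrinking $t_0$.'' The stated remainder bound carries the factor $w_t(r-R)^2\left(w_t(r-R)+\delta\right)$, whereas the discrepancy carries $\left(1-\E^{(t)}[\chi(\R)]\right) w_t(r-R)^2 \asymp \sqrt{t}\,w_t(r-R)^2$. The quantifiers in the statement fix $\delta^0,t_0,r_0$ first and then allow any $\delta\in(0,\delta^0)$, any $t\in[0,t_0)$ and any admissible $r_\bullet, r, R$; so $\delta$ and $w_t(r-R)$ can be made arbitrarily small while $t$ stays close to $t_0$. No choice of $t_0$ forces $\sqrt{t}\lesssim w_t(r-R)+\delta$ uniformly over that range; shrinking $t_0$ controls $\sqrt{t}$ in absolute terms, not relative to $w_t(r-R)+\delta$. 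A correct treatment either adds $(1-\E^{(t)}[\chi(\R)])\,w_t(r-R)^2$ to the remainder bound --- harmless downstream, because with $t=t(r,\alpha)\asymp r^{-4}$ one has $1-\E^{(t)}[\chi(\R)]\asymp r^{-2}$, so the extra term is $O(r^{-6})=o(r^{-4})$ and disappears in the estimate of Corollary \ref{cor:2Feynman-Kac-vol} --- or keeps the factor $\E^{(t)}[\chi(\R)]$ inside $\xi$ and tracks that change through the discount process \eqref{eq:2def-Z-vol} and Lemma \ref{lem:2discount-UI-vol}. The paper itself does not comment on this second-order $\E^{(t)}[\chi(\R)]$ factor, so noticing it is a genuine improvement; the ``shrink $t_0$'' step is the part that must be replaced.
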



The proof of this Proposition is identical to that of Proposition \ref{prop:convolution-w} with $\E$ replaced by $\E^{(t)}$, $h$ replaced by $\tilde h_t$ and $w$ replaced by $w_t$, since most of the estimates therein are deterministic and the rest have been controlled in Section \ref{sec:2Laplace-volume-without-displacement}. Note the additional $\ln\E^{(t)}[\chi(\R)]$ in the exponential which comes from \eqref{eq:2def-ZI-vol}, \eqref{eq:2def-Palm-vol} and \eqref{eq:2def-Palm-mean-vol}. A useful observation to bound the remainder with expressions in $\P$ and not in $\P^{(t)}$ is \eqref{eq:bound-Et-by-2E}. 
More precisely, we use the following substitutions.

\begin{center}
\begin{tabular}{|r|l|}
\hline
Proposition \ref{prop:convolution-w} & Proposition \ref{prop:2convolution-w-vol} \\
\hline
\eqref{eq:event-E-rRd} & \eqref{eq:event-E-rRd} \\ \hline
\eqref{eq:def-ZI} & \eqref{eq:2def-ZI-vol} \\ \hline
\eqref{eq:def-Palm} & \eqref{eq:2def-Palm-vol} \\ \hline
\eqref{eq:def-Palm-mean} & \eqref{eq:2def-Palm-mean-vol} \\ \hline
Proposition \ref{prop:convolution-equation} resp. \eqref{eq:z1} & \eqref{eq:2convolution-equation-vol} \\ \hline
$w(x) \ulim x \infty 0$ & $\sup_{t \in [0,t_0]} w_{t}(r) \ulim r \infty 0$ for some $t_0>0$, from \eqref{eq:2upper-bound-wt} \\
& (It is important to keep $r_0$ independent of $t$.) \\ \hline
$\E[\chi(\R)^2] = \sigma^2+1$ & $\E^{(t)}[\chi(\R)^2] \leq 2(\sigma^2+1)$ for every $t\in [0,t_0]$, see \eqref{eq:2variance-chi-vol} \\
& (This will change the constants in Bound$_t$ and in Remainder$_t$.)\\
\hline
\end{tabular}
\end{center}

Fix $r_\bullet \geq r_0$ and $r\geq 2r_\bullet$. 
Define $S_n$, $R_n$, $\delta_n$ and $\FF_n$ as in Section \ref{sec:the-martingale}, and 
\[ T = \inf\{n\geq 0: S_n < 2r_\bullet \text{ or } R_n\notin [r_\bullet, S_n-r_\bullet] \text{ or } \delta_n \notin (0,\delta^0)\}. \]
Proposition \ref{prop:almost-martingale} also holds, with an extra $n\ln\E^{(t)}[\chi(\R)]$ in the exponential in $\kW^{(r)}_n$
\begin{multline}\label{eq:2def-Z-vol}
\kW^{(r)}_n = \left(\prod_{k=0}^{n-1} I_{k+1}\right) \exp\Bigg( n \ln\E^{(t)}[\chi(\R)] - \sum_{k=0}^{n-1} \left(w_t(S_{k+1}) - \E^{(t)}\left[I^{(S_{k}, R_{k}, \delta_{k})} w_t(S_{k}-Z) \, | \, \FF_{k} \right] \right) \\
-\frac{1}{2} \sum_{k=0}^{n-1} \int w_t(S_{k}-y) \d M_{S_k-S_{k+1}}^{(S_k, R_k, \delta_k)}(y) \Bigg) 
\end{multline}
and replacing $\E$ by $\E^{(t)}$, $w$ by $w_t$ and $\mathrm{Remainder}$ by $\mathrm{Remainder}_t$. 
Next comes the analogue of Lemme \ref{lem:discount-UI}:

\begin{lemma}\label{lem:2discount-UI-vol}
For every $\alpha>0$ and $0<C_-<1<C_+<\infty$, there exists $a_\alpha$ and $r_\alpha$ such that for every $a\in (0,a_\alpha)$, every $r_\bullet \geq r_\alpha$, $r\geq 2 r_\bullet$ and every $t\in [0,t(r,\alpha)]$, letting $R_n = \RR := a / \sqrt{w_t(C_- r)}$ for every $n$ and $T_a = T \wedge \inf \{ n\geq 0 : S_n \notin [C_-r, C_+r] \}$, 
the stopped process $(\kW^{(r)}_{n\wedge T_a})_{n\geq 0}$ is a uniformly integrable supermartingale under $\E^{(t)}$. 
\end{lemma}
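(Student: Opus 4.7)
The strategy closely mirrors the proof of Lemma \ref{lem:discount-UI}, replacing $\E$ by $\E^{(t)}$ and $w$ by $w_t$, while handling two new ingredients: (i) the additional factor $\E^{(t)}[\chi(\R)]^{2n}\leq 1$ that the extra term $n\ln\E^{(t)}[\chi(\R)]$ in the exponent of $\kW^{(r)}_n$ (cf.\ \eqref{eq:2def-Z-vol}) introduces in the second moment---since $\E^{(t)}[\chi(\R)]\leq 1$, this factor can only help; and (ii) the need for bounds uniform in $t\in[0,t(r,\alpha)]$, which is the main new difficulty and is supplied by Corollary \ref{cor:2uniform-limit-r2wt} (together with \eqref{eq:2continuity-ratio-vol-constant-offset} and \eqref{eq:2variance-chi-vol}). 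The key step is the one-step estimate
\[ \E^{(t)}[\chi(\R)]^{2}\,\E^{(t)}\!\left[I^{(S_n,\RR,\delta_n)}\, e^{2\xi_{t,n}} \,\big|\, \FF_n\right] \leq 1 \qquad \text{on } \{T_a > n\}, \]
where $\xi_{t,n}=A_n+B_n$ with $A_n := -I^{(S_n,\RR,\delta_n)} w_t(S_{n+1})+\E^{(t)}[I^{(S_n,\RR,\delta_n)} w_t(S_n - Z)\mid\FF_n]$ (mean zero and bounded by $w_t(S_n-\RR)$) and $B_n := -\tfrac12 I^{(S_n,\RR,\delta_n)}\int w_t(S_n-y)\,dM^{(S_n,\RR,\delta_n)}_{Z}(y)$, which by \eqref{eq:Palm-controle-M} satisfies $-\delta_n/2\leq B_n\leq 0$.

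To establish this, I would expand $e^{2(A+B)}\leq e^{2B}(1+2A+4A^2)$, valid for $r_\bullet$ large by Corollary \ref{cor:2uniform-limit-r2wt}, and bound the three resulting terms exactly as in Lemma \ref{lem:discount-UI}. For the first, applying Palm calculus under $\E^{(t)}$ via \eqref{eq:2def-Palm-vol} gives
\[ \E^{(t)}[B_n\mid\FF_n] = -\tfrac{1}{2}\,\E^{(t)}[\chi(\R)]^{-1}\,\E^{(t)}\!\left[\ind{E(S_n,\RR,\delta_n)}\int (\chi(\R)-1) w_t(S_n-x)\,d\chi(x)\,\Big|\,\FF_n\right], \]
and the same truncation/continuity argument as in Lemma \ref{lem:discount-UI}, combined with \eqref{eq:2variance-chi-vol}, produces constants $c>0$ and $K\in\R$ (independent of $t\in[0,t_0]$ up to taking $t_0$ smaller) such that $\E^{(t)}[B_n\mid\FF_n]\leq -c\,w_t(S_n-K)\,\E^{(t)}[\chi(\R)]^{-1}$, and thus (via $e^x\leq 1+x/2$ for $-\delta\leq x\leq 0$ with $\delta^0$ small) $\E^{(t)}[e^{2B_n}\mid\FF_n]\leq 1-c\,w_t(S_n-K)\,\E^{(t)}[\chi(\R)]^{-1}$. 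For the cross-term, since $\E^{(t)}[A_n\mid\FF_n]=0$, writing it as $2\E^{(t)}[A_n(e^{2B_n}-1)\mid\FF_n]$ shows it is $O(w_t(S_n-\RR)^2)$; and $4\E^{(t)}[A_n^2 e^{2B_n}\mid\FF_n]\leq 4w_t(S_n-\RR)^2\,\E^{(t)}[\chi(\R)]$.

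Combining and multiplying by $\E^{(t)}[\chi(\R)]^2\leq 1$ yields
\[ \E^{(t)}[\chi(\R)]^{2}\,\E^{(t)}\!\left[I^{(S_n,\RR,\delta_n)} e^{2\xi_{t,n}}\mid\FF_n\right]\leq 1 - c\,w_t(S_n-K)\,\E^{(t)}[\chi(\R)] + C\,w_t(S_n-\RR)^{2}. \]
Using Corollary \ref{cor:2uniform-limit-r2wt} to get $r^2 w_t(x)\asymp 1$ uniformly for $x\in[C_-r,C_+r]$ and $t\in[0,t(r,\alpha)]$, together with \eqref{eq:2continuity-ratio-vol-constant-offset} to compare $w_t(S_n-\RR)$ with $w_t(S_n-K)$, the last term is $o(w_t(S_n-K))$ for $a_\alpha$ small enough and $r_\alpha$ large enough, so the right-hand side is $\leq 1$.

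By induction $\E^{(t)}[(\kW^{(r)}_{n\wedge T_a})^2]\leq 1$, hence $(\kW^{(r)}_{n\wedge T_a})_{n\geq 0}$ is uniformly integrable by de la Vallée Poussin's theorem. The supermartingale property follows from Cauchy--Schwarz: on $\{T_a>n\}$,
\[ \E^{(t)}\!\left[\kW^{(r)}_{n+1}\mid\FF_n\right] = \kW^{(r)}_n\cdot\E^{(t)}[\chi(\R)]\,\E^{(t)}\!\left[I_{n+1} e^{\xi_{t,n}}\mid\FF_n\right] \leq \kW^{(r)}_n\cdot\bigl(\E^{(t)}[\chi(\R)]^2\,\E^{(t)}[I_{n+1} e^{2\xi_{t,n}}\mid\FF_n]\bigr)^{1/2}\leq \kW^{(r)}_n. \]
The main obstacle throughout is point (ii) above: many of the implicit constants in the proof of Lemma \ref{lem:discount-UI} depend on the law of $\chi$; here they depend on $t$ and the whole machinery of Sections \ref{sec:2Laplace-volume-without-displacement}--\ref{sec:2continuity-tail} (in particular the uniform bounds $r^2 w_t(x)\asymp 1$ and the coupling of Section \ref{sec:2coupling-RW}) is precisely what is needed to control them uniformly in $t\in[0,t(r,\alpha)]$.
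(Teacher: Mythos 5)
Your proposal is correct and follows essentially the same route as the paper's proof: bound the second moment via the one‑step estimate $\E^{(t)}[I\,\e^{2(A+B)}]\leq 1$, using the expansion $\e^{2(A+B)}\leq\e^{2B}(1+2A+4A^2)$, the sign $\E^{(t)}[B]\leq -c\,w_t(\cdot-K)$ from the Palm/truncation argument, and Corollary \ref{cor:2uniform-limit-r2wt} for the uniform-in-$t$ comparison $w_t(x-\RR)^2 = o(w_t(x-K))$, then conclude by de la Vallée Poussin and Cauchy--Schwarz. The paper records the same observation you make about the extra $n\ln\E^{(t)}[\chi(\R)]$ term (it only helps because $\E^{(t)}[\chi(\R)]\leq 1$), so carrying the factor $\E^{(t)}[\chi(\R)]^2$ explicitly is a cosmetic difference, not a new idea.
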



\begin{proof}
The majority of the proof is a straightforward adaptation of that of Lemma \ref{lem:discount-UI} with the modifications described before Lemma \ref{lem:2discount-UI-vol}. 
Instead of Lemma \ref{lem:lower-bound-w} we use Corollary \ref{cor:2uniform-limit-r2wt} with $(2C_+)^2\alpha$ instead of $\alpha$, $2C_+ r$ instead of $r$, and $b=C_- / C_+ / 4$. Recalling $t(r,\alpha) = t(2C_+ r, (2C_+)^2 \alpha)$, we find $r_\alpha\geq r_0$ such that that for every $r\geq 2r_\alpha$, 
\[ \frac{1}{2}c((2C_+)^2\alpha) \leq \inf_{t\in [0,t(r,\alpha)]} \inf_{C_-r/2 \leq y \leq 2C_+r} y^2 w_{t}(y) \leq \sup_{t\in [0,t(r,\alpha)]} \sup_{C_-r/2 \leq y \leq 2C_+r} y^2 w_{t)}(y) \leq \frac{12 \eta^2}{\sigma^2} . \]
Take $a_\alpha = \sqrt{c((2C_+)^2\alpha)/8}$, then for every $a \in (0,a_\alpha)$ and for every $C_- r \leq x \leq C_+ r$ we have $\RR \leq \frac{x}{2}$ hence $C_- r/2 \leq x/2 \leq x-\RR \leq C_+ r$, thus 
\begin{equation}\label{eq:wt2-over-wt-goes-to-zero}
\sup_{t\in [0,t(r,\alpha)]} \sup_{r\geq 2r_\alpha} \sup_{a\in (0,a_\alpha)} \sup_{x\in [C_-r, C_+r]} \frac{w_{t}(x-\RR)^2}{w_{t}(x-K)} 
\ulim {r_\alpha} \infty 0 .
\end{equation}
Call $(\delta, r_\bullet, r, x)$ ``admissible’’ if $\delta\in (0,\delta^0)$, $r_\bullet \geq r_\alpha$, $r\geq 2r_\bullet$ and $x \in [C_-r, C_+r] \cap [2r_\bullet, \infty)$. 
We point out that 
\begin{equation}\label{eq:Ta-is-admissible}
T_a = \inf \{ n\geq 0 \ : \ (\delta_n, r_\bullet, r, S_n) \text{ is not admissible} \} .
\end{equation}
A key step of the proof is to show that for every admissible $(\delta, r_\bullet, r, x)$, letting
\[ A = - I^{(x,\RR,\delta)}w_t(x-Z) + \E^{(t)}\left[I^{(x,\RR,\delta)} w_t(x-Z)\right] \quad , \quad B = - \frac{I^{(x,\RR,\delta)}} 2 \int w_t(x-y) \d M_Z^{(x,\RR,\delta)}(y) \]
then $\E^{(t)}[I^{(x, \RR, \delta)} \e^{2(A+B)}] \leq 1$; let us prove it. 
Obviously $|A|\leq w_t(x-\RR)$ from \eqref{eq:2def-ZI-vol}. 
By the analogue of \eqref{eq:Palm-controle-M} (using \eqref{eq:2def-Palm-vol} and \eqref{eq:2def-Palm-mean-vol}) applied to the new $M_z^{(r,\RR,\delta)}$ we have $M_z^{(x,\RR,\delta)}(\R) \leq \frac{\delta}{w_t(x-\RR)}-1$ on $E(r,\RR,\delta)$ for every $z, x, \RR, \delta$, hence $-\frac\delta 2 \leq B\leq 0$. Then
\begin{equation*}
\E^{(t)}[B] \geq -\frac{w_t(x-\RR)}{2} \E^{(t)}\left[\ind{E(x,\RR,\delta)} (\chi(\R)-1)\chi(\R)) \right] \geq -\sigma^2 w_t(x-\RR)
\end{equation*}
as soon as $t\in [0,t_0)$ by \eqref{eq:2variance-chi-vol}, where we take $t_0>0$ smaller than in Proposition \ref{prop:2convolution-w-vol} if necessary. On the other hand, by \eqref{eq:bound-Et-by-2E} and dominated convergence
\begin{align*}
\sup_{t\in [0,t_0)} \E^{(t)}\left[(1-\ind{E(x,\RR,\delta)}) \chi(\R) (\chi(\R)-1) \right] \leq 2\E\left[(1-\ind{E(x,\RR,\delta)}) \chi(\R) (\chi(\R)-1) \right] \ulim {r_0} \infty 0 
\end{align*}
and 
\begin{align*}
\sup_{t\in [0,t_0)} \E^{(t)}\left[ \chi((-\infty, K)) (\chi(\R)-1) \right] \leq 2 \E\left[ \chi((-\infty, K)) (\chi(\R)-1) \right] \ulim K {-\infty} 0 ,
\end{align*}
which together with \eqref{eq:2variance-chi-vol} means that we can find $K\in\R$ that only depends on $t_0$ such that, taking $r_\alpha$ larger if necessary,
\begin{align*}
\inf_{t\in[0,t_0)} \inf_{r\geq 2r_\alpha} \inf_{\delta \in (0,\delta^0)} \E^{(t)}\left[ \ind{E(x,\RR,\delta)} \chi([K,\infty)) (\chi(\R)-1) \right] \geq \frac{\sigma^2}{2} ,
\end{align*}
hence $\E^{(t)}[B] \leq - c w_t(x-K)$ for every $x\in [C_-r, C_+r]$, where $c=\sigma^2/4$. We deduce 
\[ \E^{(t)}\left[ I^{(x,\RR,\delta)} \e^{2(A+B)} \right] \leq 1 - c w_t(x-K) + 4(\sigma^2+1) w_t(x-\RR)^2 . \]
By \eqref{eq:wt2-over-wt-goes-to-zero}, taking $r_\alpha$ larger if necessary, we can thus ensure that for every admissible $(\delta, r_\bullet, r, x)$ we have $t(r,\alpha)<t_0$ and $\E^{(t)}\left[ I^{(x,\RR,\delta)} \e^{2(A+B)} \right] \leq 1$. The uniform integrability and the bound on the conditional expectation follows as in Lemma \ref{lem:discount-UI}. 
\end{proof}

We finally state the Feynman--Kac representation. Its proof is a straightforward adaptation of that of Corollary \ref{cor:Feynman-Kac}, since we only need to control the remainder of Proposition \ref{prop:2convolution-w-vol}, which is (up to a multiplicative constant and up to replacing $w$ by $w_t$) the same as that of Proposition \ref{prop:convolution-w}; the only change is the use of Lemma \ref{lem:2discount-UI-vol} instead of Lemma \ref{lem:discount-UI}, hence the more restrictive conditions in the statement of the Corollary.

\begin{corollary}\label{cor:2Feynman-Kac-vol}
For every $\alpha>0$, $0<C_-<1<C_+<\infty$ and $a\in (0,a_\alpha)$, recalling $a_\alpha$, $r_\alpha$ and $T_a$ from Lemma \ref{lem:2discount-UI-vol}, 
there exist two functions $\underline{\delta}(y)\ulim y \infty 0$ with values in $(0,\delta^0)$ and $g(y) \ulim y \infty 0$ such that 
for every $A>0$, $r\geq 2r_\alpha$, $x \in [C_- r, C_+ r]$ and every $t\in[0,t(r,\alpha)] $, letting $\delta_n = \underline{\delta}(r)$ for every $n$, $R_n = \RR := a/\sqrt{w_t(C_- r)}$ for every $n$, and $S_0 = r$ a.s., 
writing $\tau_x = \inf\{ n\geq 0 : S_n \leq x \}$ and $T' = \inf(\tau_x, A/w(x), T_a)$, 
\begin{equation}\label{eq:2FK-vol}
\left| w_t(r) - \E^{(t)}\left[ W^{(r)}_{T'}\right] \right| \leq A w_t(x) g(x) . 
\end{equation}
\end{corollary}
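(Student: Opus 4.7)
The plan is to mirror the proof of Corollary \ref{cor:Feynman-Kac}, using the volume-version machinery we have already built. The starting point is Proposition \ref{prop:almost-martingale} in its $\E^{(t)}$ version (equation \eqref{eq:2def-Z-vol}): by Proposition \ref{prop:2convolution-w-vol}, the process
\[ Y_n := W^{(r)}_n - \sum_{k=0}^{n-1} \kW^{(r)}_k \, \mathrm{Remainder}_t(S_k,R_k,\delta_k) \]
is an $(\FF_n)$-martingale under $\E^{(t)}$ when stopped at $T$, and thus also when stopped at $T' \leq T_a \leq T \wedge A/w_t(x)$. Applying the optional stopping theorem at the bounded stopping time $T'$ yields
\[ w_t(r) = \E^{(t)}\left[W^{(r)}_{T'}\right] - \E^{(t)}\left[\sum_{k=0}^{T'-1} \kW^{(r)}_k \,\mathrm{Remainder}_t(S_k,R_k,\delta_k)\right]. \]

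The first main step is to bound $|\mathrm{Remainder}_t(S_k,R_k,\delta_k)|$ uniformly on $\{k<T'\}$ in terms of $w_t(x)$. On this event, $S_k \in [C_- r, C_+ r]$ by the definition of $T_a$ and $R_k = \RR = a/\sqrt{w_t(C_- r)}$. By Corollary \ref{cor:2uniform-limit-r2wt}, for $r$ large enough (depending on $\alpha, C_-, C_+$) we have $w_t(C_- r) \geq c(\alpha^*)/(C_- r)^2$ for some constant, so that $\RR \leq a r/\sqrt{c(\alpha^*)}$; hence, provided $a \in (0, a_\alpha)$ is chosen small enough, $S_k - R_k \geq C_- r - \RR$ remains of order $r$, and by monotonicity $w_t(S_k-R_k) \leq w_t(C_- r - \RR)$. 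Combining this with Lemma \ref{lem:2continuity-w-broad-volume-strong} (which, via \eqref{eq:2continuity-ratio-vol-left-strong}, controls $w_t(C_- r - \RR)/w_t(C_- r)$ by a factor independent of $r$ and $t$, provided $a$ is small) and Corollary \ref{cor:2uniform-limit-r2wt} again, we obtain $w_t(S_k-R_k) \leq C' w_t(x)$ for a constant $C'$ depending only on $C_\pm$ and $\alpha$.

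The second main step is to choose $\underline{\delta}$ and $g$. The remainder bound from Proposition \ref{prop:2convolution-w-vol} has three pieces. For the first, $\P(\Lambda>\RR) = o(\RR^{-4}) = o(w_t(C_- r)^2) = o(w_t(x)^2)$ by Assumption \ref{assum}.5. For the second, since $\E[\chi(\R)^2]<\infty$ we have $\P(\chi(\R)>K) = o(K^{-2})$, so if $\underline{\delta}(r) \to 0$ slowly enough that $\underline{\delta}(r)/w_t(S_k-R_k) \to \infty$ uniformly for $t \in [0,t(r,\alpha)]$ (which is possible because $w_t(S_k-R_k) \leq C' w_t(x)$ and $w_t(x) \to 0$ uniformly in $t$ by Corollary \ref{cor:2uniform-limit-r2wt}), then
\[ \P\!\left(\chi(\R)>\tfrac{\underline{\delta}(r)}{w_t(S_k-R_k)}\right) \leq w_t(x)^2\, g_1(w_t(x)) \]
for some $g_1$ with $g_1(y)\to 0$ as $y\to 0$. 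For the third, $(\sigma^2+1)w_t(S_k-R_k)^2(w_t(S_k-R_k)+\underline{\delta}(r)) \leq w_t(x)^2 g_2(w_t(x))$ for some $g_2(y) \to 0$, again by choosing $\underline{\delta}(r) \to 0$. Packing these three estimates together, one can pick $\underline{\delta}$ going to $0$ sufficiently slowly and a function $g(y) \to 0$ as $y \to 0$ (combining $g_1$, $g_2$, and the $o(w_t(x)^2)$ term from $\P(\Lambda>\RR)$) so that uniformly on $\{k<T'\}$,
\[ |\mathrm{Remainder}_t(S_k,R_k,\delta_k)| \leq w_t(x)^2\, g(x). \]

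The final step is to sum. By Lemma \ref{lem:2discount-UI-vol}, $(\kW^{(r)}_{n\wedge T_a})_n$ is a nonnegative supermartingale with $\E^{(t)}[\kW^{(r)}_{n\wedge T'}] \leq \E^{(t)}[\kW^{(r)}_0] = 1$, so using $T' \leq A/w_t(x)$,
\[ \left|\E^{(t)}\!\left[\sum_{k=0}^{T'-1}\kW^{(r)}_k\, \mathrm{Remainder}_t(S_k,R_k,\delta_k)\right]\right| \leq \frac{A}{w_t(x)} \cdot w_t(x)^2\, g(x) = A\, w_t(x)\, g(x), \]
which gives \eqref{eq:2FK-vol}. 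The main obstacle, and the reason for the careful setup in Section \ref{sec:2continuity-tail}, is ensuring that the estimates on $w_t$ are uniform in $t\in[0,t(r,\alpha)]$; without the uniform continuity from Lemma \ref{lem:2continuity-w-broad-volume-strong} and the uniform lower bound of Corollary \ref{cor:2uniform-limit-r2wt}, one could not choose a single $\underline{\delta}$ and $g$ that work simultaneously for all such $t$.
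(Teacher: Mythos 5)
Your proof is correct and takes essentially the same route as the paper: optional stopping of the approximate martingale at the bounded stopping time $T'$, followed by a uniform-in-$t$ bound on the remainder that is $o(r^{-4})$ after choosing $\underline{\delta}(r)\to 0$ slowly enough, and finally the supermartingale bound $\E^{(t)}[\kW^{(r)}_{n\wedge T'}]\leq 1$ from Lemma \ref{lem:2discount-UI-vol}. The key observation—that the uniformity in $t\in[0,t(r,\alpha)]$ provided by Corollary \ref{cor:2uniform-limit-r2wt} is what makes a single choice of $\underline{\delta}$ and $g$ possible—is exactly the point the paper relies on.

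Two small imprecisions worth flagging. First, in your opening chain ``$T'\leq T_a\leq T\wedge A/w_t(x)$'', the middle inequality is fine but $T_a$ is \emph{not} bounded by $A/w_t(x)$; only $T'$ carries that truncation (and via $w$, not $w_t$). The relevant facts are simply $T'\leq T_a\leq T$ together with $T'\leq A/w(x)$, which make $T'$ a bounded stopping time below $T$. Second, when summing the remainder you write $T'\leq A/w_t(x)$, whereas the Corollary's definition gives $T'\leq A/w(x)$. One does not automatically have $w(x)\geq w_t(x)$ pointwise (from \eqref{eq:2upper-bound-wt} the ratio $w_t/w$ can exceed $1$ by $O(r^{-2})$). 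The cleaner way to close the argument is the paper's: use $T'\leq A/w(x)$ and the two-sided bound from Corollary \ref{cor:2uniform-limit-r2wt}, which guarantees that $w(x)$ and $w_t(x)$ are both comparable to $r^{-2}$ uniformly over the admissible range, so that $\frac{A}{w(x)}\cdot o(r^{-4})\leq A\,w_t(x)\,g(x)$ for a suitable $g$. With that adjustment your argument matches the paper's.
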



\begin{proof}
By Corollary \ref{cor:2uniform-limit-r2wt}, up to taking $r_\alpha$ larger, using that $t(r,\alpha) = t(C_+ r, C_+^2 \alpha)$, we have for every $z\in [C_- r/2, C_+ r]$ and every $t\in [0,t(r,\alpha)]$
\[ \frac{c((C_-/2)^2 \alpha)}{2 z^2} \leq w_t(z) \leq \frac{12 \eta^2}{\sigma^2 z^2} . \] 
Let $m = C_- r/2$. 
Recall from \eqref{eq:Ta-is-admissible} that on $\{T' > n\}$, $m \leq S_n-R_n \leq C_+r$. Together with Lemma \ref{lem:refined-Markov}, we get a function $g_1(z)\ulim z 0 0$ (that only depends on the law of $\Lambda$ and $\chi$ under $\P$) such that on $\{T'>n\}$, for $c_1 = \frac{48 \eta^2}{\sigma^2 C_-^2}$, 
\begin{align*}
&C_\mathrm{rem} |\mathrm{Remainder}_t(S_n, R_n, \delta_n)| \\
\leq& \ \P(\Lambda>R_n) + \P\left(\chi(\R)>\frac{\delta_n}{w_t(S_n-R_n)}\right) + (\sigma^2+1) w_t(S_n-R_n)^2(w(S_n-R_n)+\delta_n) \\
\leq& \ \frac{w_t(m)^2}{a^2} g_1\left(\frac{w_t(m)}{a^2}\right) + \frac{w_t(m)^2}{\delta_n^2} g_1\left(\frac{w_t(m)}{\delta_n}\right) + (\sigma^2+1) w_t(m)^2 (w_t(m)+\delta_n) \\
\leq& \ \frac{c_1^2}{r^4} g_1\left(\frac{c_1}{a^2 r^2}\right) + \frac{c_1^2}{r^4 \underline{\delta}(r)^2} g_1\left(\frac{c_1}{\underline{\delta}(r) r^2}\right) + \frac{c_1 (\sigma^2+1)}{r^4}\left( \frac{c_1}{r^2} + \underline{\delta}(r)\right)
\end{align*}
and we see that we can find $\underline{\delta}(r)$ that goes to zero sufficiently slowly as $r\to\infty$ that this expression is $o(r^{-4})$ uniformly in $t$. 
Use the analogue of Proposition \ref{prop:almost-martingale} and the stopping theorem as in the proof of Corollary \ref{cor:Feynman-Kac}: 
\begin{align*}
w_t(r) = \E^{(t)}\left[ W_{T'}^{(r)} \right] - \E^{(t)}\left[ \sum_{k=0}^{T'-1} \kW^{(r)}_k \mathrm{Remainder}_t(S_k, R_k, \delta_k) \right] .
\end{align*}
By Lemma \ref{lem:2discount-UI-vol}, on $\{T_a>n\} \supset \{T'>n\}$ we have $\E^{(t)}\left[\kW^{(r)}_{n+1} \, | \, \FF_n \right] \leq 1$. With this, together with our upper bound on the remainder and our lower bound on $w_t(x)$, and following the same ideas as for Corollary \ref{cor:Feynman-Kac}, we find $g(z) \to 0$ as $z\to \infty$ that makes \eqref{eq:2FK-vol} hold. 
\end{proof}

\subsection{The functional equation of the limit}

We have seen in Lemma \ref{lem:positivity-c-alpha} that for every $\alpha\geq 0$ 
\[ 0 < c(\alpha) = \liminf_{r\to\infty} r^2 w_{t(r,\alpha)}(r) \leq \limsup_{r\to\infty} r^2 w_{t(r,\alpha)}(r) \leq \frac{6\eta^2}{\sigma^2} . \]
Along every sequence $(r_k)_{k\geq 1}$, letting $t_k = t(r_k,\alpha)$, we can find some $\LL(\alpha)$ such that, up to extracting a subsequence, we have $r^2 w_t(r) \to \LL(\alpha)$. Clearly $\LL(0) = \frac{6\eta^2}{\sigma^2}$. 
Like in Section \ref{sec:ratio-limit}, by diagonal extraction we can find $\phi_\alpha$ and a subsequence $(r_{k_n})_n$ such that in addition, for every rational $y\geq 0$
\begin{equation}\label{eq:2cv-phialpha}
\frac{w_{t_{k_n}}\left(r_{k_n} + \frac{y}{\sqrt{w_{t_{k_n}}(r_{k_n})}}\right)}{w_{t_{k_n}}(r_{k_n})} \ulim k \infty \phi_\alpha(y) .
\end{equation}
In what follows, for conciseness, and until we specify otherwise, whenever we take the convergence $r\to\infty$ we do so along this subsequence. 
By the same reasoning as in Proposition \ref{prop:continuity-phi} with Lemma \ref{lem:continuity-w-broad} replaced by Lemma \ref{lem:2continuity-w-broad-volume}, $\phi_\alpha$ is continuous, non-increasing, positive, $\phi_\alpha(0)=1$, and the convergence in \eqref{eq:2cv-phialpha} holds uniformly (in $y$) over every compact of $[0,\infty)$. We now establish the analogue of Proposition \ref{prop:func-equation-phi}:

\begin{proposition}\label{prop:2func-equation-phi-vol}
Any limit $\phi_\alpha$ in \eqref{eq:2cv-phialpha} satisfies
\begin{equation}
\phi_\alpha(y) = \E^{y/\eta}\left[\exp\left( -\frac{\sigma^2}{2} \int_0^{\tau_0} \phi_\alpha(\eta B_s) \d s - \frac{\alpha \LL(0)}{\LL(\alpha)} \tau_0 \right)\right] . 
\end{equation}
\end{proposition}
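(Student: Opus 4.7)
The plan is to adapt closely the argument of Proposition \ref{prop:func-equation-phi} to the subcritical regime, using the subcritical analogues of the key estimates developed earlier in this section, and tracking the one genuinely new contribution---the factor $n\ln\E^{(t)}[\chi(\R)]$ that appears in the exponent of the discount process \eqref{eq:2def-Z-vol} and produces the $-\tfrac{\alpha\LL(0)}{\LL(\alpha)}\tau_0$ term in the final equation.

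First I would set up. Fix $\nu, A, y > 0$, let $x$ run along the subsequence $(r_{k_n})$ defining $\phi_\alpha$, and set $t = t(x,\alpha)$, $r = x + y/\sqrt{w_t(x)}$, $\RR = a/\sqrt{w_t(x)}$, $\delta_n = \underline\delta(x-\RR)$. Since $r/x$ tends to $1+y/\sqrt{\LL(\alpha)} > 1$ (rather than $1$), we have $t > t(r,\alpha)$ so Corollary \ref{cor:2Feynman-Kac-vol} does not directly apply with parameter $\alpha$; I would fix this by invoking it with $\alpha' := 2\alpha(1+y/\sqrt{\LL(\alpha)})^2$, which ensures $t \le t(r,\alpha')$ for large $x$, its error bound still taking the form $Aw_t(x)g(x)$ with $g(x)\to 0$. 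Using Lemma \ref{lem:2continuity-w-broad-volume-strong} to choose $a \in (0, a_{\alpha'})$ small enough that $w_t(x-\RR) \le (1+\nu) w_t(x)$, and noting $T_a > \tau_x$ for large $x$, the Corollary yields
\[ \Bigl|\,w_t(r) - \E^{(t)}\bigl[w_t(S_{T'})\,\kW^{(r)}_{T'}\bigr]\,\Bigr| \le Aw_t(x)g(x), \qquad T' = \inf(\tau_x, A/w_t(x)). \]

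Next I would expand $\kW^{(r)}_{T'}$ from \eqref{eq:2def-Z-vol} into four factors: the indicator product $\prod I_{k+1}$, which equals $1$ with probability $1-o(1)$ thanks to the choice of $\underline\delta$; the martingale $M_{T'} = \sum_{k<T'}(w_t(S_{k+1})-\E^{(t)}[I_{k+1}w_t(S_{k+1})\mid\FF_k])$, which vanishes in probability by Doob's maximal inequality exactly as in the critical case; the predictable Palm drift $\tfrac12 Y_{T'}$, whose conditional expectation per step equals $\tfrac{\sigma^2}{2}w_t(S_k)(1+o(1))$ via the many-to-one/Palm computation underlying \eqref{eq:controle-Z-3}, adapted under $\E^{(t)}$ using \eqref{eq:2variance-chi-vol}; and finally the new factor $T'\ln\E^{(t)}[\chi(\R)]$, for which \eqref{eq:asymptotic-expectation-chi-vol} gives $\ln\E^{(t)}[\chi(\R)] \sim -\sqrt{2t\sigma^2} = -\alpha\LL(0)/x^2$, so that $T'\ln\E^{(t)}[\chi(\R)] = -\tfrac{\alpha\LL(0)}{\LL(\alpha)}\,(w_t(x)T')(1+o(1))$ since $w_t(x)\sim\LL(\alpha)/x^2$ along the subsequence.

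Finally I would pass to the scaling limit: Lemma \ref{lem:coupling-explicit} gives $(S_{\lfloor Nu\rfloor}-x)/\sqrt{N} \to \eta B_u$ with $B_0 = y/\eta$ under $\P^{y/\eta}$ and $N = 1/w_t(x)$, hence $w_t(x)T' \to \tau_0\wedge A$; combined with the locally uniform convergence $w_t(S_k)/w_t(x) \to \phi_\alpha(\eta B_{k/N})$---which follows from the continuity of $\phi_\alpha$ established as in Proposition \ref{prop:continuity-phi} via Lemma \ref{lem:2continuity-w-broad-volume-strong}---and the Riemann-sum approximation $w_t(x)\sum_{k<T'} w_t(S_k)/w_t(x) \to \int_0^{\tau_0\wedge A}\phi_\alpha(\eta B_s)\,\d s$, all this gives that $w_t(r)/w_t(x)$ converges to $\E^{y/\eta}\bigl[\exp\bigl(-\tfrac{\sigma^2}{2}\int_0^{\tau_0\wedge A}\phi_\alpha(\eta B_s)\,\d s - \tfrac{\alpha\LL(0)}{\LL(\alpha)}(\tau_0\wedge A)\bigr)\bigr]$ up to an $O(\nu)$ error. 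Since the left side tends to $\phi_\alpha(y)$ by definition of the subsequence, letting first $\nu \to 0$ and then $A \to \infty$ (using dominated convergence, the integrand being bounded by $1$) finishes the proof. The main obstacle is the treatment of the new factor: one must show that $w_t(x)T'$ genuinely concentrates around $\tau_0\wedge A$ rather than merely be bounded by $A$, so that multiplication by $\sqrt{2t\sigma^2}/w_t(x) \to \alpha\LL(0)/\LL(\alpha)$ produces the correct deterministic term inside the exponential; a secondary difficulty is justifying the $\alpha'$ trick above, which relies on the constants in Corollary \ref{cor:2Feynman-Kac-vol} depending continuously enough on the parameter.
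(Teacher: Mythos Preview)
Your approach is essentially the same as the paper's: apply the subcritical Feynman--Kac representation with a boosted parameter $\alpha'$, decompose the exponent of $\kW^{(r)}_{T'}$ into the martingale $M_n$, the Palm drift $Y_n$, and the new term $n\ln\E^{(t)}[\chi(\R)]$, control each piece as in the critical case, then pass to the Brownian limit via the coupling of Section~\ref{sec:2coupling-RW} and let the auxiliary parameters go to their limits. Your identification of the extra factor $-\tfrac{\alpha\LL(0)}{\LL(\alpha)}\tau_0$ from $T'\ln\E^{(t)}[\chi(\R)]$ is exactly right.

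There is one small but genuine slip. You write ``noting $T_a>\tau_x$ for large $x$'', by direct analogy with the critical proof. In the critical Lemma~\ref{lem:discount-UI}, $T_a$ is triggered only by the condition $R_n>a/\sqrt{w(S_n)}$, which is automatically violated while $S_n\ge x$; hence $T_a>\tau_x$ there. But in the subcritical Lemma~\ref{lem:2discount-UI-vol}, $T_a$ is instead $T\wedge\inf\{n:S_n\notin[C_-r,C_+r]\}$, and the walk can exit through the upper boundary $C_+r$ before reaching $x$, so $T_a>\tau_x$ is \emph{not} automatic. The paper does not attempt to argue this; instead it keeps the upper boundary in the limit, obtaining the exit time $\tau^B_{0,\tilde C}$ from $[0,\tilde C]$ (with $\tilde C=(C-1)\sqrt{\LL(\alpha)}/\eta+yC/\eta$) in place of $\tau^B_0$, and only then sends $C\to\infty$ alongside $A\to\infty$ and $\nu\to0$, using that $\phi_\alpha(\eta B_{\tau^B_{0,\tilde C}})\to\phi_\alpha(0)=1$ almost surely. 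This is a routine fix, but the structure of $T_a$ genuinely differs between the two cases and your reduction of $T'$ to $\inf(\tau_x,A/w_t(x))$ is not justified as stated.
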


\begin{proof}
We follow closely the proof of Proposition \ref{prop:func-equation-phi}.  
Fix $\nu>0$, $A>0$ and $y>0$. Let $r = x + \frac{y}{\sqrt{w_t(x)}}$ and $C>1$ large enough that $r \leq Cx$ for every $x$ large enough. Fix $C_- = 1/C$ and $C_+ = C$. In what follows we always have $t = t(x,\alpha)$. Let $\alpha' = C^2 \alpha$, so that $t \leq t(r,\alpha')$. 
By Lemma \ref{lem:2continuity-w-broad-volume-strong}, for every $a>0$ small enough and every large enough $x$
\[ \sup_{u\geq x} \frac{w_t\left(u - \frac{a}{\sqrt{w_t(u)}}\right) }{w_s(u)} \leq \sup_{u\geq x} \sup_{s\in [0,t(r,\alpha')]} \frac{w_s\left(u - \frac{a}{\sqrt{w_s(u)}}\right) }{w_s(u)} \leq 1+\nu . \]
Apply Corollary \ref{cor:2Feynman-Kac-vol} with $\alpha'$, taking $a$ smaller if necessary so that $a < a_{\alpha'}$, letting $R_n = \RR := a / \sqrt{w_t(C_- r)}$, $\delta_n = \delta := \underline{\delta}(r)$, and $S_0 = r$ a.s.:
\[ \left| w_t(r) - \E^{(t)}\left[ W^{(r)}_{T'}\right] \right| \leq A w_t(x) g(x) . \]
We now aim to control 
\[ \E^{(t)}\left[ W^{(r)}_{T'}\right] = \E^{(t)}\left[ w_t(S_{T'}) \kW^{(r)}_{T'} \right] . \] 
Recall the expression from $\kW^{(r)}_n$ in \eqref{eq:2def-Z-vol}:
\begin{multline*}
\kW^{(r)}_n = \left(\prod_{k=0}^{n-1} I_{k+1} \right) \exp\Bigg( - \underbrace{\sum_{k=0}^{n-1} \left( I_{k+1} w_t(S_{k+1}) - \E^{(t)}\left[I^{(S_k, R_k, \delta)} w_t(S_k - Z) \, | \, \FF_k\right] \right)}_{M_n} \\
+ n \ln\E^{(t)}\left[\chi(\R)\right]  - \frac{1}{2} \underbrace{ \sum_{k=0}^{n-1} I_{k+1} \int w_t(S_{k}-y) \d M_{S_k - S_{k+1}}^{(S_k, R_k, \delta)}(y)}_{Y_n} \Bigg) . 
\end{multline*}
We control $M_n$ exactly as in Proposition \ref{prop:func-equation-phi} since (taking $a>0$ and $\nu>0$ smaller if necessary) $w_t(u) \leq (1+\nu) w_t(x) \leq 2 w(x)$ for every $u\geq x-\RR$ and $x$ large enough by \eqref{eq:2upper-bound-wt} and Theorem \ref{th:main-tail}, giving $c_x\to 0$ as $x\to\infty$ such that 
\begin{equation}\label{eq:2controle-Z-1-vol}
\P\left( \sup_{0\leq k \leq A/w_t(x)} |M_{k\wedge T'}| \geq c_x \right) \leq c_x .
\end{equation}
On the other hand, writing 
\[ X_k := I_{k+1} \int w_t(S_{k}-z) \d M_{S_{k}-S_{k+1}}^{(S_k, R_k, \delta)}(z) \quad , \quad \tilde Y_n = \sum_{k=0}^{n-1} \E^{(t)}\left[ X_k \ | \ \FF_k \right] \]
then $\tilde Y_n$ is a predictable process such that $\tilde M_n := (Y_{n\wedge T'} - \tilde Y_{n\wedge T'})_{n\geq 0}$ is a martingale. 
Let us compute $\E^{(t)}[X_k \, | \, \FF_{k}]$. 
For every $u\geq x$, using \eqref{eq:2def-Palm-mean-vol} instead of \eqref{eq:def-Palm-mean} 
\begin{align}
\nonumber & \frac{1}{w_t(u)}\E^{(t)}\left[ I^{(u,\RR,\delta)} \int w_t(u-z) \d M_{Z}^{(u, \RR, \delta)}(z) \right] \\
\nonumber =& \ \E^{(t)}[\chi(\R)]^{-1} \cdot \E^{(t)}\left[ \ind{E(u,\RR,\delta)} (\chi(\R)-1)\int \frac{w_t(u-z)}{w_t(u)} \d\chi(z) \right] \\
=& \ \E^{(t)}[\chi(\R)]^{-1} \cdot \E^{(t)}\left[ \ind{E(u,\RR,\delta)} (\chi(\R)-1)\chi(\R) \right] \label{eq:2term1} \\
&+ \E^{(t)}[\chi(\R)]^{-1} \cdot \E^{(t)}\left[ \ind{E(u,\RR,\delta)} (\chi(\R)-1)\int \left(\frac{w_t(u-z)}{w_t(u)}-1\right) \d\chi(z) \right] . \label{eq:2term2}
\end{align}
By \eqref{eq:bound-Et-by-2E}
\[ \sup_{u\geq x} \E^{(t)}\left[ (1-\ind{E(u,\RR,\delta)}) (\chi(\R)-1)\chi(\R) \right] \leq 2 \sup_{u\geq x} \E\left[ (1-\ind{E(u,\RR,\delta)}) (\chi(\R)-1)\chi(\R) \right] \ulim x \infty 0 \]
which together with \eqref{eq:2expectation-chi-vol} shows that 
$\sup_{u\geq x} |\eqref{eq:2term1}-\sigma^2| \ulim x \infty 0$.
On the other hand, using again \eqref{eq:bound-Et-by-2E} to bound \eqref{eq:2term2} by twice the same expression with $\E$ instead of $\E^{(t)}$, then recalling that $w_t(u-\RR) \leq (1+\nu) w_t(u)$ from our choice of $a$, using \eqref{eq:2continuity-ratio-vol-constant-offset} to get that $w_t(u-z) / w_t(u) \to 1$ as $x\to\infty$ for every fixed $z$ and every $C_- r \leq u \leq C_+ r$, we get by dominated convergence that \eqref{eq:2term2}$\to 0$ as $x\to\infty$. 
Hence 
\begin{equation}
d_x := \sup_{u\geq x} \left|\frac{1}{\sigma^2 w_t(u)} \E^{(t)}\left[ I^{(u,\RR,\delta)} \int w_t(u-z) \d M_{Z}^{(u, \RR, \delta)}(z) \right] - 1\right| \ulim x \infty 0 . 
\end{equation}
The same argument as in Proposition \ref{prop:func-equation-phi} 
gives $c'_x\to 0$ as $x\to\infty$ such that 
\begin{equation}\label{eq:2controle-Z-2-vol}
\P^{(t)}\left( \sup_{0\leq k \leq A/w_t(x)} |\tilde M_{k\wedge T'}| \geq c'_x \right) \leq c'_x 
\end{equation}
and as $x\to\infty$, for every $n$
\begin{equation}\label{eq:2controle-Z-3-vol}
\left| \frac{\tilde Y_{n\wedge T'}}{\sigma^2 \sum_{k=0}^{n\wedge T'-1} w_t(S_k)} - 1 \right| \leq d_x \to 0 .
\end{equation}
As in Proposition \ref{prop:func-equation-phi}, 
\begin{align*}
\limsup_{x\to\infty} \left| \frac{w_t(r)}{w_t(x)} - \E^{(t)}\left[ \frac{w_t(S_{T'})}{w_t(x)} \exp\left( n \ln\E^{(t)}[\chi(\R)] - \frac{\sigma^2}{2} \sum_{j=1}^{T'} w_t(S_{j-1}) \right) \right] \right| \leq \nu + \P(\tau_x>A/w(x)) .
\end{align*}
Take $x$ in the sequence used to define $\phi_\alpha$ and $\LL(\alpha)$. 

Recall the coupling from Section \ref{sec:2coupling-RW}, in particular Lemma \ref{lem:coupling-explicit} and the definition of $(U^t_n)_{n\geq 0}$ afterward: $(S_n-r)_{n\geq 0}$ under $\P^{(t)}$ has the same distribution as $(U^t_n)_{n\geq 0}$ under $\P$. By Doob's maximal inequality applied to the martingale $(U^t_n - U^0_n - \E[U^t_n])_{n\geq 0}$, given that $|\E[U^t_n]| = n|\E[Z_t-Z]| \leq n \sqrt{\P(Z_t \neq Z) \E[(Z_t-Z_0)^2]} = n t^{1/4} o_t(1)$ by Lemma \ref{lem:coupling-explicit} and that 
\[ \E[(U_1^t - U_1^0 - \E[U_1^t])^2] = n\mathrm{Var}(Z_t-Z) \leq n \E[(Z_t-Z)^2] = n o_t(1) \]
by the same Lemma, recalling Lemma \ref{lem:positivity-c-alpha} we find that $\E[ (\sup_{0\leq n \leq M/w_t(x)} |U^t_n - U^0_n|)^2] = o(w_t(x)^{-1})$. Since 
\[ \left(\left(\frac{\eta}{w_t(x)}\right)^{-1/2} \left(U^0_{\lfloor s/w_t(x)\rfloor} + \frac{y}{\sqrt{w_t(x)}} \right)\right)_{s\in [0,M]} \ulimd x \infty (\eta B_s)_{s\in [0,M]} \]
in distribution in the space of càdlàg functions equipped with the $L^\infty$ norm over $[0,M]$, where $(B_s)_{s\geq 0}$ is a standard Brownian motion with $B_0 = y/\eta$ a.s., the same convergence holds with $U^0$ replaced by $U^t$, hence with $((\eta / w_t(x))^{-1/2}(S_{\lfloor s/w_t(x) \rfloor}^t-x))_{s\in [0, M]}$ under $\P^{(t)}$. 
%
%
Since in addition $w(x) T' \to \tau_{0,\tilde C}^B$ where 
\[ \tilde C = \lim_{x\to\infty} \frac{C_+ r - x}{\eta}\sqrt{w_t(x)} = \frac{C-1}{\eta} \sqrt{\LL(\alpha)} + \frac{y C}{\eta} \]
and $\tau_{0,\tilde C}^B = \inf\{t>0: B_t \notin [0,\tilde C]\}$, we have 
\[ \sum_{k=0}^{T'-1} w_t(S_k) = w_t(x) \sum_{k=0}^{T'-1} \frac{w_t(S_k)}{w_t(x)} \ulim x \infty \int_0^{\tau_{0,\tilde C}^B \wedge A} \phi_\alpha(\eta B_u)) \d u \]
in distribution.  
In addition, 
\[ - n \ln\E^{(t)}[\chi(\R)] \ulim x \infty (A\wedge \tau_{0,\tilde C}^B) \lim_{x\to\infty} \frac{1}{w_t(x)} \sqrt{2t\sigma^2} = (A\wedge \tau_{0,\tilde C}^B) \frac{1}{\LL(\alpha)} \lim_{x\to\infty} \sqrt{2\sigma^2 t x^4} = (A\wedge \tau_{0,\tilde C}^B) \frac{\alpha \LL(0)}{\LL(\alpha)} , \]
hence, letting $\P^z$ be a probability such that $(B_u)_{u\geq 0}$ is a standard Brownian motion with $B_0 = z$ a.s. under $\P^z$, 
\begin{multline*}
\left| \phi_\alpha(y) - \E^{y/\eta}\left[ \phi_\alpha\left( \eta B_{\tau^B_{0,\tilde C} \wedge A} \right) \exp\left( - \frac{\sigma^2}{2} \int_0^{\tau_{0,\tilde C}^B \wedge A} \phi_\alpha(\eta B_u) \d u - (A\wedge \tau_{0,\tilde C}^B) \frac{\alpha \LL(0)}{\LL(\alpha)} \right) \right] \right| \\
\leq \nu + \P^{y/\eta}(\tau^B_{0,\tilde C}>A)  .
\end{multline*}
This holds for every $\nu, C$ and $A$; by continuity of $\phi_\alpha$, a.s. 
\[ \phi_\alpha\left( \eta B_{\tau^B_{0,\tilde C} \wedge A} \right) \ulim A \infty \phi_\alpha\left( \eta B_{\tau^B_{0,\tilde C}} \right) \ulim C \infty \phi_\alpha(\eta B_{\tau^B_0}) = 1 . \]
We finally get the statement of the Proposition by taking $A\to\infty$, $C\to\infty$ and $\nu\to 0$. 
\end{proof}

\subsection{Proof of Theorem \ref{th:main-volume-smaller}}


We now deduce Theorem \ref{th:main-volume-smaller} from Proposition \ref{prop:2func-equation-phi-vol}. 
A pair $(\alpha, \br = (r_k)_{k\geq 0}$) with $\alpha\geq 0$ and $r_k\to\infty$ as $k\to\infty$ is called said to be ``convergent'' if $r_k^2 w_{t(r_k,\alpha)}(r_k)$ converges; in that case we write $\LL(\br, \alpha)$ for this limit. Up to extracting a subsequence from $\br$ we can define $\phi_\alpha$ from \eqref{eq:2cv-phialpha}. 
By the reasoning of \cite[Corollary 12]{lalley2015maximal} with $V(x) = \phi_\alpha + \frac{2\alpha\LL(0)}{\sigma^2 \LL(\br,\alpha)}$, using Kac's theorem, $\phi_\alpha$ is the unique solution that is bounded on $[0,\infty)$ with $\phi_\alpha(0)=1$ of
\begin{equation}\label{eq:2equadiff-phi-alpha}
\phi_\alpha'' = \frac{\sigma^2}{\eta^2} \phi_\alpha^2 + \frac{12\alpha}{\LL(\br,\alpha) \sigma^2} \phi_\alpha .
\end{equation}
This guarantees the uniqueness of the limit $\phi_\alpha$ in \eqref{eq:2cv-phialpha} for any subsequence of $\br$, from which we deduce that the convergence in \eqref{eq:2cv-phialpha} holds along $\br$. 

Let us study the differential equation \eqref{eq:2equadiff-phi-alpha}. 
Let $\psi_\alpha(y) = \frac{\LL(\br,\alpha) \sigma^4}{12 \alpha \eta^2} \phi_\alpha\left( \sqrt{\frac{\sigma^2\LL(\br,\alpha)}{12\alpha}} y \right) $. Then 
\[ \psi_\alpha''(y) = \frac{\sigma^2 \LL(\br,\alpha)}{12 \alpha} \frac{\LL(\br,\alpha) \sigma^4}{12 \alpha \eta^2} \phi''_\alpha\left(\sqrt{\frac{\sigma^2\LL(\br,\alpha)}{12\alpha}} y \right) = \psi_\alpha^2(y) + \psi_\alpha(y) , \]
and $\psi_\alpha$ is positive and decreases to $0$ towards $+\infty$ with $\psi_\alpha(0) = \frac{\LL(\br,\alpha) \sigma^4}{12 \alpha \eta^2}$. 
The differential equation $\psi'' = \psi^2 + \psi$ admits a unique positive solution that is bounded towards $+\infty$ (in fact $\psi(x)\to 0$ as $x\to\infty$), up to translation in the argument (see Section \ref{sec:appendix-EDO}), meaning that if $\psi$ is the unique such solution with $\psi(x)\to\infty$ as $x\to 0$ and $x_\alpha = \psi^{-1}\left( \frac{\LL(\br,\alpha) \sigma^4}{12 \alpha \eta^2} \right)$ then $\psi_\alpha(x) = \psi(x + x_\alpha)$ hence
\begin{equation}\label{eq:2phi-psi-vol}
\frac{1}{\alpha} \phi_\alpha(x) = \frac{12 \eta^2}{\LL(\br,\alpha) \sigma^4} \psi\left( x_\alpha + x \sqrt{\frac{12\alpha}{\sigma^2\LL(\br,\alpha)}} \right) . 
\end{equation}

For $\beta \geq \alpha$, let
\[ y = \sqrt{\LL(\alpha)}\left(\sqrt{\frac{\beta}{\alpha}} -1\right) \]
and define the sequence $\br^\beta$ such that $r^\beta_k = r_k + y / \sqrt{w_{t(r_k, \alpha)}(r_k)}$. Since $w_{t(r_k,\alpha)}(r^\beta_k) / w_{t(r_k,\alpha)}(r_k) \to \phi_\alpha(y)$ and $r_k^2 w_{t(r_k,\alpha)}(r_k) \to \LL(\br,\alpha)$, 
\begin{align*}
r_k^2 w_{t(r_k,\alpha)}(r^\beta_k) \ulim k \infty \LL(\br,\alpha) \phi_\alpha(y) . 
\end{align*}
Noting that $t(r_k, \alpha) = t(r_k\sqrt{\beta/\alpha}, \beta)$, 
we deduce
\begin{equation*}
r^\beta_k w_{t(r_k\sqrt{\beta/\alpha},\beta)}(r^\beta_k) \ulim k \infty \LL(\br,\alpha)\phi_\alpha(y) \left( 1 + \frac{y}{\sqrt{\LL(\br,\alpha)}} \right)^2 .
\end{equation*}
By our choice of $y$ we have $r^\beta_k \sim r_k \sqrt{\frac{\beta}{\alpha}}$ as $k\to\infty$, which by Lemma \ref{lem:2continuity-w-broad-volume-strong}
implies $w_{t(r_k \sqrt{\beta/\alpha}, \beta)}(r_k\sqrt{\beta/\alpha}) \sim w_{t(r_k\sqrt{\beta/\alpha}, \beta)}(r^\beta_k)$ as $k\to\infty$. 
It follows that $(\beta, \br \sqrt{\beta/\alpha})$ is convergent and 
\begin{align*}
\LL\left(\br\sqrt{\frac{\beta}{\alpha}}, \beta\right) 
= \frac{\beta}{\alpha} \LL(\br,\alpha)\phi_\alpha(y) 
= \beta \frac{12 \eta^2}{\sigma^4} \psi\left( \sqrt{\frac{2}{\sigma^2}} (\sqrt{\beta} - \sqrt{\alpha}) + \psi^{-1}\left( \frac{\LL(\br, \alpha) \sigma^4}{12 \alpha \eta^2} \right) \right) ,
\end{align*} 
i.e. writing $\br' = \br\sqrt{\beta/\alpha}$ both $(\beta,\br')$ and $(\alpha, \br'\sqrt{\alpha/\beta})$ are convergent with 
\begin{equation}\label{eq:LL-when-beta-larger-alpha}
\psi^{-1}\left( \frac{\LL\left(\br', \beta\right) \sigma^4}{12\beta\eta^2} \right) - \sqrt{\frac{2\beta}{\sigma^2}} = \psi^{-1}\left( \frac{\LL\left(\br' \sqrt{\frac{\alpha}{\beta}}, \alpha\right) \sigma^4}{12\alpha\eta^2} \right) - \sqrt{\frac{2\alpha}{\sigma^2}} .
\end{equation}

Consider now $\gamma\leq \alpha$. 
Along any subsequence of $\br \sqrt{\gamma/\alpha}$ such that $r^2 w_{t(r,\gamma)}(r^2)$ converges, by \eqref{eq:LL-when-beta-larger-alpha} (replacing $\alpha$ by $\gamma$ and $\beta$ by $\alpha$) the limit $\ell$ must satisfy 
\begin{equation}\label{eq:LL-when-gamma-smaller-alpha}
\psi^{-1}\left( \frac{\LL\left(\br, \alpha\right) \sigma^4}{12\alpha\eta^2} \right) - \sqrt{\frac{2\alpha}{\sigma^2}} = \psi^{-1}\left( \frac{\ell \sigma^4}{12\gamma\eta^2} \right) - \sqrt{\frac{2\gamma}{\sigma^2}} 
\end{equation}
which uniquely determines it, hence $(\gamma, \br\sqrt{\gamma/\alpha})$ is convergent and  this unique $\ell$ is such that $\ell = \LL(\br\sqrt{\gamma/\alpha}, \gamma)$. By \eqref{eq:2bound-w-liminf-1-alpha} we know that $(1-\gamma)\LL(0) \leq \LL(\br\sqrt{\gamma/\alpha}, \gamma) \leq \LL(0)$, hence $\LL(\br\sqrt{\gamma/\alpha},\gamma)\sigma^4 / (12\gamma \eta^2) \to \infty$ as $\gamma\to 0$. Since $\psi^{-1}(x) \to 0$ as $x\to\infty$ we deduce by taking $\gamma \to 0$ in \eqref{eq:LL-when-gamma-smaller-alpha} that
\begin{equation*}
\LL(\br, \alpha) = \LL(\alpha) := \frac{12 \alpha \eta^2}{\sigma^4} \psi\left( \sqrt{\frac{2\alpha}{\sigma^2}} \right) .
\end{equation*}
Since the limit is uniquely characterized it means that $(\alpha, \br)$ is convergent for every sequence $\br$ with $r_k\to\infty$ as $k\to\infty$, hence 
\[ r^2 w_{t(r,\alpha)}(r) \ulim r \infty \LL(\alpha) \]
in the usual sense.

By \eqref{eq:psi-expansion} $\LL$ is analytic near $0$ with first terms given by 
\[ \frac{\LL(\alpha)}{\LL(0)} = \frac{\sigma^2}{6\eta^2}\LL(\alpha) = \frac{2\alpha}{\sigma^2}\psi\left(\sqrt{\frac{2\alpha}{\sigma^2}}\right) 
= 1 - \frac{\alpha}{\sigma^2} + \frac{3}{5}\left(\frac{\alpha}{\sigma^2}\right)^2 - \frac{2}{7}\left(\frac{\alpha}{\sigma^2}\right)^3 + \frac{3}{25}\left(\frac{\alpha}{\sigma^2}\right)^4 + \dots 
\]
Thus, as $r\to\infty$ with $t=t(r,\alpha)$, 
\[ \E\left[ \e^{-t\sum_{v\in T} D_v} \, | \, \sup_{v\in T} \Lambda_v \leq r \right] = \frac{h_t(r)}{h(r)} = \frac{h_t(\infty) \tilde h_t(r)}{h(r)} \]
is such that
\begin{multline*}
r^2\left( 1 - \E\left[ \e^{-t\sum_{v\in T} D_v} \, | \, \sup_{v\in T} \Lambda_v \leq r \right]\right) \ulim r \infty \LL(\alpha) - \LL(0) + \alpha \frac{6\eta^2}{\sigma^4} \\
= \frac{6\eta^2}{\sigma^2} \left(\frac{3}{5}\left(\frac{\alpha}{\sigma^2}\right)^2 - \frac{2}{7}\left(\frac{\alpha}{\sigma^2}\right)^3 + \frac{3}{25}\left(\frac{\alpha}{\sigma^2}\right)^4 + o\left( \left(\frac{\alpha}{\sigma^2}\right)^4\right) \right) . 
\end{multline*} 
On the other hand, 
\[ \E\left[ \e^{-t\sum_{v\in T} D_v} \, | \, \sup_{v\in T} \Lambda_v > r \right] = \frac{h_t(\infty)(1-\tilde h_t(r))}{1-h(r)} \sim \frac{w_t(r)}{w(r)} \sim \frac{\LL(\alpha)}{\LL(0)} = \frac{2\alpha}{\sigma^2}\psi\left( \sqrt{\frac{12\alpha}{\sigma^2}} \right) . \]
This constitutes Theorem \ref{th:main-volume-smaller}.

\subsection{Conditionning on the depth.}


We follow closely Section \ref{sec:LLT} to prove Theorem \ref{th:main-condition-vol}.  Assume that $\chi$ is supported on $\Z$ and that $M$ has maximum span $1$, and that $\Lambda$ is supported in $\N$. 
In the rest of this section we write $t = t(r,\alpha)$.
Write $g_t(r) = \E\left[ \e^{-t\sum_{v\in T} D_v} \ind{\sup_{v\in T} \Lambda_v = r} \right]$. We obtain, similarly to Proposition \ref{prop:convolution-equation},
\begin{multline}\label{eq:2convolution-LLT-vol}
g_t(r) = \E\Bigg[\e^{-tD_\emptyset} \ind{\Lambda_\emptyset = r} \prod_{u=1}^{\chi_\emptyset(\R)} h_t(r-X_u) \\
+ \ind{\Lambda_\emptyset < r} \sum_{j=1}^{\chi_\emptyset(\R)} \left(\prod_{u=1}^{j-1} h_t(r-1-X_u)\right) g_t(r-X_j) \left(\prod_{v=j+1}^{\chi_\emptyset(\R)} h_t(r-X_v) \right) \Bigg] \ind{r\geq 0} \\
\geq \E^{(t)}\left[ \ind{\Lambda < r} \int g_t(r-x) \exp\left( - \int w_t(r-1-X_u) \d(\chi-\delta_{x})(y)\right) \d\chi(x) \right] \ind{r\geq 0}
\end{multline}
by bounding the first term inside the expectation by $0$, and $h_t(r-x)$ by $h_t(r-1-x)$ from below as well as $\ln h_t(\infty)-\ln h_t(x) \leq w_t(x)$. Using $\e^{-x}\geq 1-x$, recalling $E(r,R,\delta)$ from \eqref{eq:event-E-rRd} as well as \eqref{eq:2def-ZI-vol}, \eqref{eq:2def-Palm-vol} and \eqref{eq:2def-Palm-mean-vol}, 
\begin{align}
g_t(r) 
&\geq \E^{(t)}[\chi(\R)] \cdot \E^{(t)}\left[ I^{(r,R,\delta)} g(r-Z) \left( 1 - \int w_t(r-1-y) \d M_Z^{(r,R,\delta)}(y) \right) \right] .
\end{align}
As before $\int w_t(r-1-y) \d M_Z^{(r,R,\delta)}(y) \leq \delta \frac{w_t(r-R-1)}{w_t(r-R)}$ on $\{I^{(r,R,\delta)}\neq 0\}$. 
By e.g. Corollary \ref{cor:2uniform-limit-r2wt} or Lemma \ref{lem:2continuity-w-broad-volume-strong}, we can find a constant $C$ (that may depends on $\alpha$) such that 
for every $r$ large enough we have $\sup_{x\geq r} w_t(x-1)/w_t(x) \leq C$. Taking $\delta$ small enough we get as before
\[ g(r) \geq \E^{(t)}[\chi(\R)] \cdot \E^{(t)}\left[ I^{(r,R,\delta)} g(r-Z) \exp\left( - (4\ln 2) \int w_t(r-y) \d M_Z^{(r,R,\delta)}(y) \right) \right] . \]

Take $R_n = R = r/8$ and $\delta_n = \delta$ for every $n$. 
By the Markov property, for every $n\geq 0$ and using the same notations as in Proposition \ref{prop:almost-martingale}, 
\begin{align}\label{eq:2g-supermg-vol}
g_t(S_n) \left(\prod_{j=1}^{n} I_j\right) \exp\left( n\ln\E^{(t)}[\chi(\R)] - (4\ln 2) \sum_{k=0}^{n-1} \int w_t(S_k-y)) \d M_{S_k-S_{k+1}}^{(S_k, R_k, \delta_k)}(y) \right)
\end{align}
is a positive $(\FF_n)_{n\geq 0}$-supermartingale. For every $\nu>0$ and every $y$ with $|y-r|\leq \nu r$, define $T_y := \inf\{n \geq 0 : S_n = y\}$. 
Using the coupling of Section \ref{sec:2coupling-RW} (in particular Lemma \ref{lem:coupling-explicit}) and \eqref{eq:proba-visit-RW}, there exists $c<\infty$ and $n_\varepsilon$ 
such that for every $\varepsilon>0$, every $r$ large enough and every $n\geq n_\varepsilon$, 
\[ \P^{(t)}(T_y > n) \leq 2n\sqrt{t\sigma^2} + \varepsilon + c\frac{1+|y-r|}{\sqrt{n}} . \]
Assume henceforth that 
$n = \frac{c_\alpha^2 \nu^2 r^2}{\varepsilon^2}$, with $c_\alpha>0$ small enough that $\P^{(t)}(T_y > n) \leq 2\varepsilon$ uniformly over every $|y-r|\leq \nu r$. 
Clearly $g_t(S_{T_y}) = g_t(y)$. We aim to bound the other factors in \eqref{eq:g-supermg}. Let us work on $0\leq k \leq n$. By \eqref{eq:2upper-bound-wt} we can take $r$ large enough that $x^2 w_t(x) \in [m, M]$ for every $x\geq r/2$, for some $0<m\leq M<\infty$. 
Define the event $\AA := \{ \inf_{0\leq j \leq n} S_j \geq 3r/4 \}$. By the same reasoning as in Section \ref{sec:LLT}, we can find $\delta = \delta(r) \to 0$ as $r\to\infty$ such that if $\delta_k = \delta(r)$ a.s. for every $k$, 
\[ \P^{(t)}\left(\AA \cap \left\{\prod_{k=0}^{n-1} I_k = 0 \right\}\right) \ulim r \infty 0 \quad \text{and} \quad \P^{(t)}(\AA^c) \ulim \nu 0 0 . \]
On the other hand, recalling $d_r$ from the proof of Proposition \ref{prop:2func-equation-phi-vol} and checking that we also have $d_r \to 0$ as $r\to\infty$, by the same argument, we can take $\nu>0$ small enough that $\P^{(t)}(\AA^c)\leq \varepsilon$ and that for every $r$ large enough and every $0\leq n' \leq n$ 
\begin{align*}
&\ind{\AA} \left(\prod_{k=0}^{n'-1} I_{k+1}\right) \exp\left(n\ln\E^{(t)}[\chi(\R)] - (4\ln 2) \sum_{k=0}^{n'-1} \int w_t(S_k-y) \d M_{S_k-S_{k+1}}^{(S_k, R, \delta)}(y) \right) \\
&\geq \ind{\AA}\left(\prod_{k=0}^{n'-1} I_{k+1} \right) (1-\varepsilon) . 
\end{align*}
By the same reasoning, for every $r$ large enough and
for every $y$ with $|y-r|\leq \frac{\nu r}{2}$, 
\[ (1-6\varepsilon) g_t(r) \leq g_t(y) \leq (1+7\varepsilon) g_t(r) . \]
Since $r^2 \sum_{s>r} g_t(s) \sim r^2 w_t(r) \to \LL(\alpha)$ as $r\to\infty$ and $\LL$ is continuous, we deduce
\[ r^3 g_t(r) \ulim r \infty 2 \LL(\alpha) . \]
From there,
\[ \E\left[ \e^{-t\sum_{v\in T} D_v} \, | \, \sup_{v\in T} \Lambda_v = r\right] = \frac{h_t(\infty) g_t(r)}{g(r)} \ulim r \infty \frac{\LL(\alpha)}{\LL(0)} , \]
which concludes the proof of Theorem \ref{th:main-condition-vol}.

\section{Appendix}

\subsection{Improvement on Markov's inequality}

In order to get optimal moment assumptions, we need to refine Markov's inequality:
\[ \P(X\geq t) \leq \frac{\E[X]}{t} \]
when $X\geq 0$ a.s., $\E[X]<\infty$ and $t>0$. A motivation for that comes from the layer-cake formula: 
\[ \int_0^\infty \P(X\geq t) \, \d t = \E[X] <\infty \]
so that Markov's inequality, which gives a non-integrable tail estimate, fails to catch the actual tail behavior. We make extensive use of the following refinement of Markov's inequality. 
\begin{lemma}\label{lem:refined-Markov}
Let $X\geq 0$ be a random variable in $L^p$ with $p\in [1,\infty)$. Then we can find $\psi: [0,\infty) \to [1,\infty)$, increasing, with $\psi(x) \to +\infty$ as $x\to\infty$, such that $\E[X^p \psi(X)]\leq 2 \E[X^p] < \infty$. It follows that $\P(|X|\geq t) \leq \frac{2\E[X^p]}{t^p\psi(t)} = o(t^{-p})$ as $t\to\infty$. 
\end{lemma}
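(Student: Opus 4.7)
The plan is to use an explicit construction of $\psi$ as a step function whose jumps are placed sparsely enough that the resulting weighted moment remains controlled, essentially a concrete instance of the de la Vallée Poussin criterion for uniform integrability.

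First, I would observe that since $X \in L^p$, dominated convergence gives $\E[X^p \ind{X \geq n}] \to 0$ as $n \to \infty$. Consequently I can choose an increasing sequence $0 = n_0 < n_1 < n_2 < \dots$ with $n_k \to \infty$ such that
\[ \E\bigl[X^p \ind{X \geq n_k}\bigr] \leq 2^{-k} \E[X^p] \quad \text{for every } k \geq 1. \]

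Next, I would define
\[ \psi(x) := \#\{ k \geq 0 : x \geq n_k \} = \sum_{k \geq 0} \ind{x \geq n_k}. \]
By construction $\psi$ is nondecreasing, integer-valued, bounded below by $1$ (thanks to $n_0 = 0$), and tends to $+\infty$ as $x \to \infty$ since $n_k \to \infty$. Swapping sum and expectation by Tonelli,
\[ \E[X^p \psi(X)] = \sum_{k \geq 0} \E\bigl[X^p \ind{X \geq n_k}\bigr] \leq \E[X^p] + \sum_{k \geq 1} 2^{-k} \E[X^p] = 2\E[X^p], \]
which gives the main inequality.

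Finally, for the tail estimate, since $\psi$ is nondecreasing, on the event $\{X \geq t\}$ we have $X^p \psi(X) \geq t^p \psi(t)$, so Markov's inequality yields
\[ \P(X \geq t) \leq \frac{\E[X^p \psi(X)]}{t^p \psi(t)} \leq \frac{2 \E[X^p]}{t^p \psi(t)} = o(t^{-p}) \]
as $t \to \infty$, since $\psi(t) \to \infty$. There is no real obstacle here; the only care needed is to make sure the thresholds $n_k$ are chosen summably (geometrically works) so that $\sum_k \E[X^p \ind{X \geq n_k}]$ converges, while still letting $\psi$ diverge at infinity.
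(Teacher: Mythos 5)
Your proposal is correct and takes essentially the same approach as the paper: choose an increasing sequence of thresholds along which the tail moment $\E[X^p\ind{X\geq n_k}]$ decays geometrically, define $\psi$ as a nondecreasing step function built from indicators at those thresholds, and sum. Your version is in fact slightly cleaner, as it works directly with the truncated moments rather than detouring through the layer-cake integral as the paper does.
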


The function $\psi$ obviously depends on the distribution of $X$. 

\begin{proof}
By the layer-cake formula, 
\[ \E[X^p] = \int_0^\infty t^{p-1} \P(Y\geq t) \d t . \]
Write $t_k := \inf \{t : \int_t^\infty t^{p-1} \P(X\geq t) \d t \leq 2^{-2k} \E[X^p] \}$, and let $\psi(t) = 1 + 2^{k-1}$ for every $t_k \leq t < t_{k+1}$. Then
\[ \E[X^p \psi(X)] = \frac{3}{2}\E[X^p] + \sum_{k\geq 1} 2^{k-1} \int_{t_k}^\infty t^{p-1} \P(X\geq t) \d t \leq \left( \frac{3}{2} + \sum_{k\geq 1} 2^{k-1} 2^{-2k}\right) \E[X^p] \leq 2 \E[X^p] . \]
\end{proof}

\subsection{Study of the differential equation}  
\label{sec:appendix-EDO}

We are interested in solutions $y$ that are positive and are bounded towards $+\infty$ of
\begin{equation}\label{eq:EDO-1}
y'' = y^2 + y . 
\end{equation}
By multiplying with $y'$ and integrating, we get 
\begin{equation*}
(y')^2 = \frac{2}{3}y^3 + y^2 + c
\end{equation*}
for some $c$. Assuming $y\geq 0$, we get $y''\geq 0$, hence $y$ is convex; if $y$ is bounded towards $+\infty$ then $y'(x)\to 0$ as $x\to\infty$, which implies $c=0$ and $y(x)\to 0$ as $x\to\infty$. Since $y\geq 0$ we have $y'\leq 0$ hence 
\begin{equation}\label{eq:EDO-2}
y' = - \sqrt{\frac{2}{3}y^3 + y^2} .
\end{equation}
Writing $F$ for a primitive of $f : x \mapsto -(\frac 2 3 x^3+x^2)^{-1/2}$, the equation becomes
\[ (F\circ y)' = 1 . \]
Since $f$ is integrable towards $+\infty$ but not towards $0$, we set the integration constant by deciding that $F(x)\to 0$ as $x\to\infty$, in which case every solution that is defined in a neighborhood of $a\in\R$ 
must satisfy 
\begin{equation}\label{eq:EDO-sol}
F(y(t)) = F(y(a)) + t-a . 
\end{equation}
Since $F>0$ we must have $t > a-F(y(a))$. Then the solution can be expressed as
\[ y(t) = F^{-1}\left( t - (a-F(y(a)) \right) , \]
i.e. they are the translation by some constant in the argument of $t \mapsto F^{-1}(t)$, and we can check that any such function is also a solution of \eqref{eq:EDO-1} and \eqref{eq:EDO-2} with the desired assumptions. From now on, we consider only $y(t) = F^{-1}(t)$.

By series expansion, we get the asymptotics as $x\to 0$
\[ F(x) = F(1) + \int_x^1 \left(\frac{1}{t\sqrt{1+\frac{2t}3}} - \frac{1}{t}\right)\d t + \int_x^1 \frac{\d t}{t} = -\ln x + C + o(1) \]
for some constant $C$, i.e. $y(x) \sim C \e^{-t}$ as $x\to\infty$ for some $C>0$, and as $x\to\infty$, 
\begin{multline*}
F(x) = -\int_x^\infty \frac{\sqrt{3}\d t}{t^{3/2}\sqrt{2}} \left(1+\frac{3}{2t}\right)^{-1/2} = -\int_x^\infty \d t \sum_{n\geq 0} \frac{(-1)^n}{4^n} {{2n}\choose n} \left(\frac{2t}{3}\right)^{-n} t^{-3/2} \sqrt{\frac{3}{2}} \\
= \sqrt{6} \sum_{n\geq 0} {{2n}\choose n} \left(\frac{-3}{8}\right)^n \frac{ t^{-(n+1/2)}}{2n+1} = \sqrt{6} \left( x^{-1/2} - \frac{1}{4} x^{-3/2} + \frac{27}{160} x^{-5/2} - \frac{135}{896} x^{-7/2} + \dots \right) .
\end{multline*}
Let
\[ G(z) = \sum_{n\geq 0} {{2n}\choose n} \left(\frac{-3}{8}\right)^n \frac{z^n}{2n+1} = 1 - \frac{1}{4} z + \frac{27}{160} z^2 - \frac{135}{896} z^3 + \frac{315}{2048} z^4 - \frac{15309}{90112} z^5 + \dots \]
which is analytic on a neighborhood of $0$ and is such that $F(x) = \sqrt{\frac{6}{x}} G(x^{-1})$. Then $y(t) = \frac{6}{t^2} G(y(t)^{-1})^2$, which gives us that $y(t) - \frac{6}{t^2}$ is analytic near $0$ with
\begin{equation}\label{eq:psi-expansion}
y(t) - \frac{6}{t^2} = - \frac{1}{2} + \frac{t^2}{40} - \frac{t^4}{1008} + \frac{t^6}{28800} + \dots
\end{equation}

\subsection{Proof of the results on multitype branching random walks}
\label{sec:proof-multitype}

\subsubsection{Proof of Proposition \ref{prop:equiv-assum}}

We start with two very useful observation. First, for every $i\in\N_*$ 
the subtree $\{ u\in\TT_\emptyset, u\geq i\}$ is exactly $\widetilde\TT_i$. Second, $\widetilde\TT_\emptyset$ under $\E_y$ induces a \brw with branching process $\widetilde\BB = (\widetilde\BB^{(z)})_{z\in\kX}$ such that $\widetilde \BB^{(z)} = \BB^{(z)}$ (where $\BB = (\BB^{(z)})_{z\in\kX}$) if $z\neq x$ and $\BB^{(x)} = (\delta_0, \Lambda^{(x)}, D^{(x)}, \emptyset)$. This branching process is such that
\[ \E\left[\widetilde\chi^{(y)}(\R\times\{z\}) \right] = \wM_{y,z} .
\]
We claim that there exists $n_0>0$ such that $\wM^{n_0}$ has matrix norm strictly smaller than $1$, hence $I-\wM$ is invertible (where $I$ is the identity matrix), and if $v$ has non-negative coefficients then so has $(I-\wM)^{-1}v$ since $(I-wM)^{-1} v = (\sum_{n\geq 0} \wM^n) v$. Assume indeed that $v_z\neq 0$ for some $z\in\kX$, then by irreducibility of $M$ we can find $n_z$ such that $M^{n_z-1}_{z,x}>0$, and then $\| \wM^n v \|_1 < \| M^n v\|_1 \leq \| v \|_1$. Taking $n = \max_z n_z$ gives the claim.

In what follows, we successively prove that each item of Assumptions \ref{assum-multitype} holds for the \brw if and only if the item with the same number in Assumptions \ref{assum} holds for the reduced \brw, provided that all the previous items hold for the \brw.

We start by proving that 1. is equivalent to Assumption \ref{assum}.1:
\begin{align*}
\E_x\left[ \LL_\emptyset(\R) \right] = \E_x\left[ \sum_{i=1}^{\chi^{(x)}_\emptyset(\R\times\kX)} \E_{\ke_i}\left[\widetilde\LL_\emptyset(\R) \right] \right] = \sum_{y\in\kX} M_{x,y} \E_y\left[\widetilde\LL_\emptyset(\R)\right] ,
\end{align*}
$\E_x[\widetilde\LL_\emptyset(\R)] = 1$, 
and for every $y\neq x$
\begin{align*}
\E_y\left[ \widetilde\LL_\emptyset(\R) \right] = \E_y\left[ \sum_{i=1}^{\widetilde\chi^{(y)}_\emptyset(\R\times\kX)} \E_{\ke_i}\left[\widetilde\LL_\emptyset(\R) \right] \right] = \sum_{z\in\kX} \wM_{y,z} \E_y\left[\widetilde\LL_\emptyset(\R)\right] .
\end{align*}
In other words, $\be = (\E_z[\widetilde\LL_\emptyset(\R)])_{z\in\kX}$ satisfies the equation $\be = \wM \be + 1_x$, where $1_x(y) = 1$ if $x=y$ and $0$ otherwise. On the other hand, $(b_y/b_x)_{y\in\kX}$ also satisfies this equation, where we recall that $\bb$ is the right eigenvector of $M$. The solution is unique by invertibility of $I-\wM$, hence for every $y\in\kX$
\begin{equation}
\E_y\left[\widetilde\LL_\emptyset(\R)\right] = \frac{b_y}{b_x} .
\end{equation}
This immediately gives us that $\E_x[\LL_\emptyset(\R)] = (M\be)_x$ equals the largest eigenvalue of $M$. The equivalence between 1. and Assumption \ref{assum}.1 follows.

Let us now consider 2. Define 
\[ \be_y^{(1)} = \E_y\left[\int t \d\widetilde\LL_\emptyset(t)\right]  \]
and recall $N_{y,z}$. 
Then
\begin{align*}
\E_x\left[ \int t \d\LL_\emptyset(t) \right] &= \E_x\left[ \sum_{1\leq i \leq \chi^{(x)}_\emptyset(\R\times\kX)} \int (X^{(x)}_i + t)\d\widetilde\LL_i(t) \right] 
= \sum_{z\in\kX} \be_z N_{x,z} + \sum_{z\in\kX} M_{x,z} \be^{(1)}_z ,
\end{align*}
$\be^{(1)}_x = 0$ and for every $y\neq x$ 
\begin{align*}
\be^{(1)}_y &= \sum_{z\in\kX} \be_z N_{y,z} + \sum_{z\in\kX} M_{y,z} \be^{(1)}_z 
\end{align*}
i.e.
$ ((I-\wM)\be^{(1)})_y = (N\be)_y $. The mean displacement $(N\be + M\be^{(1)})_x$ of the reduced \brw is thus zero if and only if the condition in 2. holds.

Consider now 3; for every $y\in\kX$ define $\bf_y = \E_y\left[ \widetilde\LL_\emptyset(\R)^2 \right]$. 
Then 
\begin{align*}
\E_x\left[\LL_\emptyset(\R)^2\right] &= \E_x\left[\left(\sum_{i=1}^{\chi^{(x)}_\emptyset(\R\times\kX)} \widetilde\LL_i(\R)\right)^2\right] 
= \E_x\left[\sum_{i=1}^{\chi^{(x)}_\emptyset(\R\times\kX)} \bf_{\kx_i} + \sum_{1\leq i,j\leq\chi^{(x)}_\emptyset(\R\times\kX)} \be_{\kx_i} \be_{\kx_j} \right] ,
\end{align*}
$\bf_x = 1$, and for every $y\neq x$
\begin{multline*}
\bf_y = \sum_{z\in\kX} M_{y,z} \bf_z + \sum_{z\in\kX} \E\left[\chi^{(y)}_\emptyset(\R\times\{z\}) \left(\chi^{(y)}_\emptyset(\R\times\{z\})-1\right)\right] \left(\be_z\right)^2 \\
+ \sum_{z\neq w\in\kX} \E_y\left[\chi^{(y)}_\emptyset(\R\times\{z\}) \chi^{(y)}_\emptyset(\R\times\{w\})\right] \be_z \be_w .
\end{multline*}
Then $\E_x[\LL_\emptyset(\R)^2]$ is finite if and only if $\E[\chi^{(y)}_\emptyset(\R,\kX)^2]<\infty$ for every $y\in\kX$. Consider also that, writing $\tilde\bf_z = \bf_z - (\be_z)^2$,
\begin{multline*}
\bf_y = \sum_{z\in \bX} M_{y,z} \left(\bf_z - (\be_z)^2\right) + \sum_{z,w\in\kX} \mathrm{Cov}(\chi^{(y)}(\R\times\{z\}), \chi^{(y)}(\R\times\{w\})) \be_z \be_w \\
+ \sum_{z,w\in\kX} \left(\E[\chi^{(y)}(\R\times\{z\})]\be_z\right)\left(\E[\chi^{(y)}(\R\times\{w\})]\be_w\right) 
\end{multline*}
i.e.
\[ \tilde\bf_y = (M\tilde\bf)_y + (\be^t C^{(y)} \be) \]
where $C^{(y)}_{z,w} = \mathrm{Cov}(\chi^{(y)}(\R\times\{z\}), \chi^{(y)}(\R\times\{w\}))$. Rewriting $\E_x[\LL_\emptyset(\R)^2]$ in the same manner we can then see that $\tilde\bf_y \neq 0$ for some $y$, and that the variance of $\LL_\emptyset(\R)$ is non-zero, if and only if at least one of $C^{(y)}$ has a non-zero entry, which is the case if $\Var(\chi^{(y)}(\R\times\{z\}))$ is non-zero for at least one pair $y,z\in\kX$. This is already guaranteed by the fact that the process is non-degenerate: let us see why by contradiction. If the $\chi^{(y)}(\R\times\{z\})$ are constant r.v. then $M$ must have integer entries. The sum of the coefficient of each line must be non-zero (or $M$ is not irreducible), so they must be at least $1$; but then it is easy to see that they must be exactly one for $M$ to have largest eigenvalue $1$, meaning that for every $y$ we have $\chi^{(y)}(\R\times\kX) = 1$ a.s., and the process is degenerate.

For 4, define
\[ \be^{(2)}_y = \E_y\left[\int t^2 \d\widetilde\LL_\emptyset(t)\right] \]
and recall $O_{y,z}$. 
As for point 2., 
\[ \E_x\left[ \int t^2 \d\LL_\emptyset(t) \right] = \sum_{z\in\kX} O_{x,z} \be_z + \sum_{z\in\kX} N_{x,z} \be^{(1)}_z + \sum_{z\in\kX} M_{x,z} \be^{(2)}_z , \]
$\be^{(2)}_x = 1$ and for every $y\neq x$
\[ ((I-\wM) \be^{(2)})_y = \sum_{z\in\kX} O_{y,z} \be_z + \sum_{z\in\kX} N_{y,z} \be^{(1)}_z . \]
Then $\E_x\left[\int t^2\d\LL_\emptyset(t)\right]<\infty$ if and only if $\E\left[\int t^2 \d\chi^{(y)}(t,\kx)\right]<\infty$ for every $y\in\kX$.

For 5, let $\tilde b$ and $\tilde a$ be the right, resp. left eigenvector of $\wM$ with the maximal eigenvalue $\rho<1$. 
Let $v_x(r) = \tilde b_x^{-1} \P_x(\sup_{u\in\TT^\circ_\emptyset} \Lambda_u > r)$. We have
\begin{align*}
1-\tilde b_y v_y(r) &= \E_y\left[\ind{\Lambda^{(y)}_{\emptyset}\leq r} \prod_{1\leq j \leq\chi^{(y)}_\emptyset(\R\times\kX)} (1-\tilde b_{\kx^{(y)}_{\emptyset,j}} v_{\kx_{\emptyset,j}}(r-X^{(y)}_{\emptyset,j})) \right] \\
&\geq 1 - \P(\Lambda^{(y)}>r) + \E\left[ \sum_j \ln(1-\tilde b_{\kx^{(y)}_{\emptyset,j}} v_{\kx_{\emptyset,j}}(r-X^{(y)}_{\emptyset,j})) \right] \\
&\geq 1 - \P(\Lambda^{(y)}>r) - \E\left[ \sum_j \tilde b_{\kx^{(y)}_{\emptyset,j}} v_{\kx_{\emptyset,j}}(r-X^{(y)}_{\emptyset,j}) \right] .
\end{align*}
This means that, writing $v(r) = \sum_{y\in\kX} \tilde a_y \tilde b_y v_y(r)$, 
\begin{align*}
v(r) &\leq \sum_{y\in\kX} \tilde a_y \P_y(\Lambda>r) + \sum_{y,z\in\kX} \tilde a_y m_{y,z} \tilde b_z \E\left[v_z(r-Y_{y,z})\right] .
\end{align*}
where for every $y,z\in\kX$ and every positive and measurable $f$ we have $\E[f(Y_{y,z})] = m_{y,z}^{-1} \E[\int f(t) \ind{\kx=z} \d\chi^{(y)}(t,\kx)]$. We can find $\bY$ that bounds all the $Y_{y,z}$ stochastically, i.e. for every $y,z\in\kX$, $\P(Y_{y,z}\geq t) \leq \P(\bY\geq t)$. Then since $v_z$ is decreasing for every $z$, 
\begin{align*}
v(r) &\leq \sum_{y\in\kX} \tilde a_y \P_y(\Lambda>r) + \sum_{y,z\in\kX} \tilde a_y m_{y,z} \tilde b_z \E\left[v_z(r-\bY)\right] \\
&\leq \sum_{y\in\kX} \tilde a_y \P_y(\Lambda>r) + \rho \E\left[v(r-\bY)\right] .
\end{align*}
Since $v(r) \leq \sum_y \tilde a_y \tilde b_y$, we can bound
\[ \E[v(r-Y)] \leq v(r(1-\varepsilon))] + \P(\bY\geq \varepsilon r) \sum_y \tilde a_y \tilde b_y = v(r(1-\varepsilon)) + o(r^{-4}) . \]
Let $M(r) = r^4 v(r)$: then assuming that for every $y$, $\P(\Lambda^{(y)}>r) = o(r^{-4})$,
\[ M(r) \leq o(1) + \rho M(r(1-\varepsilon)) . \]
We easily deduce from this that $M(r)\to 0$ as $r\to\infty$, hence $v(r) = o(r^{-4})$ and $\P_x(\Lambda^\T_\emptyset > r) = o(r^{-4})$. 
On the other hand, by the irreducibility of $M$, for every $z$ we have $\P(\exists u\in \TT_\emptyset, t(u) = z)>0$, from which we may get that 
\[ \liminf_{r\to\infty} \frac{\P(\sup_{u\in\TT_\emptyset^\circ} \Lambda_u>r)}{\sup_{z\in\kX} \P(\Lambda^{(z)}>r)} > 0 , \]
hence $\P(\Lambda^\T_\emptyset >r)=o(r^{-4})$ if and only if $\P(\Lambda^{(y)}>r)=o(r^{-4})$ for every $y\in\kX$.

Finally, let us check 6.: letting $\bd_y = \E_y\left[\sum_{v\in\widetilde\TT^\circ_\emptyset} D_v\right]$, then $\bd_x = 0$ and 
\[ \E_x\left[\sum_{v\in\TT^\circ_\emptyset} D_v\right] = \E[D^{(x)}] + (M\bd)_x 
\quad , \quad 
\bd_y = \E[D^{(y)}] + (\wM \bd)_y .
\]
We easily conclude that $\E_x\left[\sum_{v\in\TT^\circ_\emptyset} D_v\right]<\infty$ if and only if $\E[D^{(y)}]<\infty$ for every $y\in\kX$, and that it is positive if and only if one of the $\E[D^{(y)}]$ is positive.

\subsubsection{Proof of Lemma \ref{lem:simpler-assum}}

The first statement is immediate from Assumption \ref{assum-multitype}.2. 
To prove the second, recall
\begin{align*}
\E_x\left[\int t^2 \d\LL_\emptyset(t) \right] &= \E_x\left[\sum_{j} \int (X_j+t)^2 \d\widetilde\LL_j(t) \right] \\
&= \sum_{z\in\kX} M_{x,z} \E_z\left[ \int (X_\emptyset+t)^2 \d\widetilde\LL_\emptyset(t) \right] .
\end{align*}
Letting $X_{y,z}$ be a random variable such that $M_{y,z} \E[f(X_{y,z})] = \E_y\left[\sum_{i: \kx_i = z} f(X_i)\right]$, 
\begin{align*}
\E_x\left[\int t^2 \d\LL_\emptyset(t) \right] &= \sum_{z\in\kX} M_{x,z} \left( \E[X_{x,z}^2] \E_z[\widetilde\LL_\emptyset(\R)] + 2 \E[X_{x,z}] \E_z\left[\int t \d\widetilde\LL_\emptyset(t)\right] + \E_z\left[\int t^2 \d\widetilde\LL_\emptyset(t)\right] \right) . 
\end{align*}
Since 
\begin{equation}\label{eq:constant-Xxz}
\E[X_{x,z}^2] \geq \E[X_{x,z}]^2
\end{equation}
and
\begin{equation}\label{eq:constant-LLempty}
\E_z\left[\int t^2 \d\widetilde\LL_\emptyset(t)\right]\E_z\left[\widetilde\LL_\emptyset(\R)\right] \geq \E_z\left[\int t \d\widetilde\LL_\emptyset(t)\right]^2 
\end{equation}
we have 
\begin{align*}
\E_x\left[\int t^2 \d\LL_\emptyset(t) \right] 
&\geq \sum_{z\in\kX} \frac{M_{x,z}}{\E_z\left[\widetilde\LL_\emptyset(\R)\right]} \left( \E_z[X_{x,z}] \E_z[\widetilde\LL_\emptyset(\R)] + \E_z\left[\int t \d\widetilde\LL_\emptyset(t)\right] \right)^2
\end{align*}
with equality if and only if the equality hold in \eqref{eq:constant-Xxz} and \eqref{eq:constant-LLempty}. For the variance to be zero, we see that we need $X_{x,z}$ to be a.s. constant, and for the variance of the mean measure of $\LL_\emptyset$ to be zero; a similar reasoning as above for $\E_y[\int t^2\d\widetilde\LL_\emptyset(t)]$ implies that we must have $X_{y,z}$ a.s. constant for every $y,z\in\kX$. Note however that this is not a sufficient condition: indeed we could have $X_{y,z}$ a.s. constant for every $y,z$ and still a non-zero variance for $\int t^2\d\LL_\emptyset(t)$ under $\E_x$.

\subsection{Proof of the result of Boltzmann planar maps}
\label{sec:prop-mobile}

Note first that the \repscheme is non-degenerate: indeed $\mu_V$ is a geometric distribution with non-zero expectation hence is not a.s. constant. 

Compute
\begin{align*}
\E[N_F+1] = \sum_{k\geq 0} (k+1) \mu_F(k) &= \frac{1}{1-Z_\bq^{-1}} \sum_{j\geq 1} j {{2j-1}\choose j} \bq_j Z_\bq^{j-1} 
= \frac{1}{1-Z_\bq^{-1}} f_\bq'(Z_\bq) = \frac{Z_\bq}{Z_\bq-1} ,
\end{align*} 
\begin{align*}
\E[N_F^2 + N_F] = \sum_{k\geq 0} k(k+1) \mu_F(k) &= \frac{1}{1-Z_\bq^{-1}} \sum_{j\geq 1} j(j-1) {{2j-1}\choose j} \bq_j Z_\bq^{j-1} 
= \frac{Z_\bq}{1-Z_\bq^{-1}} f_\bq''(Z_\bq) ,
\end{align*} 
hence 
\[ \E[N_F] = \frac{Z_\bq^{-1}}{1-Z_\bq^{-1}} \quad , \quad \Var(N_F) \leq \frac{Z_\bq}{1-Z_\bq^{-1}} f_\bq''(Z_q). \]
On the other hand, 
\[ \E[N_V] = Z_\bq(1-Z_\bq^{-1}) \quad , \quad \Var(N_V) = Z_\bq^2(1-Z_\bq^{-1}) . \]
The mean reproduction matrix is then
\[ \left(\begin{matrix} 0 & \E[N_F] \\ \E[N_V] & 0 \end{matrix}\right) \]
which is irreducible, has finite coefficients, and has maximal eigenvalue $1$. This proves Assumption \ref{assum-multitype}.1 as well as the preliminary conditions before Assumptions \ref{assum-multitype}.

We use Lemma \ref{lem:simpler-assum} to show 2.: the only non-trivial task is to check that $\E[\sum_{k\in\DD} b_k] = 0$, where $(b_k)_{0\leq k < 2N_F+2}$ are distributed as in the definition of $\BB_F$. We show that this holds conditionally on $N_F$. Fix $n>0$ such that $\P(N_F=n)>0$, and work conditionally on the event $\{N_F = n\}$. Fix $0<k<2n+2$. Then if $(c_j)_{0\leq j \leq 2n}$ is a uniform bridge with $c_0 = -1, c_{2n} = +1$ and steps in $\pm 1$, we have 
\[ \P(b_k = b \ | \ k\in\DD, N_F=n) = \P(c_{k-1}=b) . \]
Indeed, conditioned on $b_0 = 0$, $b_1 = -1$, $b_k = b$ and $b_{2n+2}=0$, the process $(c_j)_{0\leq j \leq 2n}$ with $c_j = b_{j+1}$ for every $0\leq j \leq k-1$ and $c_j = b_{j+2}+1$ for $k\leq j \leq 2n$ has the desired distribution. On the other hand, since the $n$ downsteps of $(b_k)_{0\leq k < 2n+2}$ (excluding 0) are uniformly distributed among the $2n+1$ indices $j$ with $1\leq j <2n+2$, we have $\P(k\in\DD \ | \ N_F=n) = \frac{n}{2n+1}$. Hence, for every $1\leq k \leq 2n+1$, 
\[ \E[b_k \ind{k\in\DD} \ | \ N_F=n] = \frac{n}{2n+1} \E[c_{k-1}] , \]
hence 
\[ \E\left[\sum_{k\in\DD} b_k \ | \ N_F=n\right] = \frac{n}{2n+1} \E\left[ \sum_{0\leq j \leq 2n} c_j \right] . \]
Since $(c_{2n-j})_{0\leq j\leq 2n}$ has the same distribution as $(-c_j)_{0\leq j \leq 2n}$, this latter expectation is zero, proving point 2.

Condition 3 is immediate for $\mu_V$, and follows from the generic criticality of $\bq$ for $\mu_F$.


The finiteness of the variance in point 4. follows from a control over $\E[\sum_{k\in\DD} b_k^2]$. We follow the proof of point 2. Conditionally on $N_F = n$, where $\P(N_F=n)>0$, for every $1\leq k \leq 2n+1$, 
\[ \E[b_k^2 \ind{k\in\DD} \ | \ N_F=n] = \frac{n}{2n+1} \E[c_{k-1}^2] . \]
The distribution of $\frac 12 (c_j+j+1)$ is hypergeometric with population size $2n$, $n+1$ successes, and $j$ draws, with variance
\[ \frac{n^2-1}{4n^2}\frac{j(2n-j)}{2n-1} . \]
On the other hand, $\E[c_j] = \frac{j}{n}-1$. Thus
\begin{align*}
\E\left[\sum_{k\in\DD} b_k^2 \ | \ N_F=n\right] &= \frac{n^2-1}{n(2n-1)(2n+1)} \sum_{j=0}^{2n} j(2n-j) + \frac{1}{n(2n+1)} \sum_{j=0}^{2n} (j-n)^2  \\
&= \frac{n^2-1}{n(2n-1)(2n+1)} \frac{2n(2n+1)(2n-1)}{6} + \frac{2}{n(2n+1)} \frac{n(n+1)(2n+1)}{6}  \\
&= \frac{n^2-1}{3} + \frac{n+1}{3} = \frac{n^2 + n}{3} .
\end{align*}
Hence
$ \E[ \sum_{k\in\DD} b_k^2 ] < \infty$. 
To show that the variance is non-zero, we use Lemma \ref{lem:simpler-assum}: conditionally on $N_F\geq 1$ (which has non-zero probability) we have at least two downsteps in $\DD$, and with positive probability at least one of them takes a non-zero value. The second assertion of Lemma \ref{lem:simpler-assum} follows easily. We can explicitely compute

Point 6. is immediate, so the only point left to check is point 5. By the remark after Assumptions \ref{assum}, it is enough to check that $\E[(\sup_k b_k)^4]<\infty$. 
Conditioning on $N_F=n$ with $\P(N_F=n)>0$, by the reflection principle, the probability that the maximum of $(b_k)_{0\leq k \leq 2n+2}$ is larger or equal to some $m\geq 0$ is
\[  \frac{ {{2n+1} \choose {n+1+m}} }{ {{2n+1} \choose {n+1}} } = \prod_{k=1}^m \frac{n+k-m}{n+k} \leq \left( 1 - \frac{m}{2n} \right)^m \leq \e^{-\frac{m^2}{2n}} . \]
Hence 
\begin{align*}
\E\left[ \sup_{0\leq k \leq 2n+2} b_k^4 \ | \ N_F = n \right] &= \sum_{m>0} m^3 \P\left( \sup_{0\leq k \leq 2n+2} b_k \geq m \right) \\
&\leq 3^{\frac{3}{2}}\e^{-\frac{3}{2}} n^{\frac{3}{2}} + \int_0^\infty x^3 \e^{-\frac{x^2}{2n}} \d x  \\
&= 3^{\frac{3}{2}}\e^{-\frac{3}{2}} n^{\frac{3}{2}} + 2n \int_0^\infty x \e^{-\frac{x^2}{2n}} \d x  \\
&= 3^{\frac{3}{2}}\e^{-\frac{3}{2}} n^{\frac{3}{2}} + 2n^2 \leq 4 n^2 ,
\end{align*}
hence $\E[(\sup_k b_k)^4]<\infty$, finishing the proof. 

We can compute the explicit values of Proposition \ref{prop:mobile}: first
\begin{align*}
\eta^2 &= \E[N_V] \E\left[\frac{N_F^2 + N_F}{3}\right] 
= \frac{Z_\bq^2}{3} f_\bq''(Z_q) .
\end{align*}
Then letting $(N_F^{(i)})_{i\geq 1}$ be i.i.d. copies of $N_F$, 
\begin{align*}
1+\sigma^2 &= \E\left[\left(\sum_{1\leq i\leq N_V} N_F^{(i)} \right)^2\right] \\
&= \E[N_V] \E[N_F^2] + \E[N_V(N_V-1)] \E[N_F]^2 \\
&= Z_\bq (1-Z_\bq^{-1}) \left( \frac{Z_\bq}{1-Z_\bq^{-1}} f''_\bq(Z_\bq) - \frac{Z_\bq^{-1}}{1-Z_\bq^{-1}} \right) + Z_\bq (1-Z_\bq^{-1}) \left( Z_\bq + Z_\bq (1-Z_\bq^{-1}) - 1 \right) \frac{Z_\bq^{-2}}{(1-Z_\bq^{-1})^2} \\
&= Z_\bq^2 f''_\bq(Z_\bq) - 1 + 2 Z_\bq^2 (1-Z_\bq^{-1})^2 \frac{Z_\bq^{-2}}{(1-Z_\bq^{-1})^2} 
\end{align*}
so that $\sigma^2 = 3\eta^2$. 
Finally, $\E[D]$ is given by $\E[N_V] = Z_\bq-1$ if counting faces, $1$ if counting vertices, and $1+\E[N_V] = Z_\bq$ if counting vertices and faces (i.e. edges by Euler's formula).

\subsection*{Acknowledgements}

The author would like to thank Alejandro Rosalez Ortiz, Armand Riera, Grégory Miermont, Luc Lehéricy, Pascal Maillard and Michel Pain for helpful discussions. 

\bibliographystyle{apalike} 
\bibliography{BibTotale.bib}

\end{document}